\title{Anticanonical tropical cubic del Pezzos contain exactly 27 lines}
\author[M.A.~Cueto and A.~Deopurkar]
       {Maria Angelica Cueto${}^{\S}$ \and Anand Deopurkar}
\thanks{${\S}$ \emph{Corresponding author}}
\date{\today}
\keywords{tropical geometry, del Pezzo cubic surfaces, 27 lines, tritangent planes, reflection arrangement, tropical lines, matroids}
\subjclass[2010]{14T05,14J25 (primary), 14Q10, 17B22 (secondary)}
\numberwithin{equation}{section}
\numberwithin{table}{section}
\declaretheorem[style=plain,parent=section]{theorem}
\declaretheorem[style=plain,sibling=theorem]{corollary}
\declaretheorem[style=plain,sibling=theorem]{lemma}
\declaretheorem[style=plain,sibling=theorem]{proposition}
\declaretheorem[style=definition,sibling=theorem]{definition}
\declaretheorem[style=definition, qed=\hfill $\diamond$, sibling=definition]{example}
\declaretheorem[style=remark,sibling=theorem]{remark}
\declaretheorem[style=plain]{claim}
\providecommand {\Z}{\ensuremath{\mathbb{Z}}}\providecommand {\Q}{\ensuremath{\mathbb{Q}}}\providecommand {\R}{\ensuremath{\mathbb{R}}}\providecommand {\CC}{\ensuremath{\mathbb{C}}} 
\newcommand{\KK}{\ensuremath{\mathbb{K}}}
\providecommand{\resK}{\ensuremath{\widetilde{\KK}}}
\renewcommand {\P}{\ensuremath{\mathbb{P}}}\providecommand {\Gr}{{\bf Gr}}
\providecommand{\SL}{\operatorname{SL}}
\providecommand{\GG}[1]{\mathbb{G}_m^{#1}}
\providecommand{\Gm}{\mathbb{G}_m}
\providecommand{\EGp}[1]{\operatorname{E_{#1}}}
\providecommand{\Wgp}{\operatorname{W(\EGp{6})}}
\providecommand{\Sn}[1]{\ensuremath{\mathfrak{S}_{#1}}}
\providecommand {\from}{{\colon}}
\providecommand{\spec}{\operatorname{Spec}}
\providecommand{\proj}{\operatorname{Proj}}
\providecommand{\Pic}{\operatorname{Pic}}
\providecommand{\Sym}{\operatorname{Sym}}
\providecommand{\charF}{\operatorname{char}}
\DeclareMathOperator{\trop}{trop}
\newcommand \cT {\mathcal{T}}
\DeclareMathOperator{\Trop}{\cT\!} \newcommand \TP {\mathbb{T}}
\newcommand{\TPr} {\ensuremath{\TP\P}}
\providecommand{\Rbar}{\overline{\R}}
\newcommand{\PS}{\ensuremath{\CC\{\!\{t\}\!\}}}
\newcommand{\zspan}[1]{\ensuremath{\Z\! \cdot\!#1}}
\newcommand{\val}{\operatorname{val}}
\newcommand{\expVal}{\operatorname{exp.val}}
\providecommand{\tperm}{\ensuremath{\operatorname{perm}}}
\newcommand{\init}{\operatorname{init}}
\newcommand \cY {\mathcal{Y}}
\newcommand \cB {\mathcal{B}}
\newcommand \cI {\mathcal{I}}
\newcommand \Ccurve {\mathcal{C}}
\renewcommand \O {\mathcal{O}} \newcommand{\fh}{\mathfrak{h}}
\newcommand{\cF}{\mathcal{F}}
\newcommand{\cR}{\mathcal{R}}
\newcommand{\scrS}{\mathscr{S}}
\newcommand{\scrC}{\mathscr{C}}
\newcommand{\E}[1]{\ensuremath{E_{#1}}}
\newcommand{\F}[1]{\ensuremath{F_{#1}}}
\newcommand{\Gc}[1]{\ensuremath{G_{#1}}}
\DeclareMathOperator{\Cox}{Cox}
\newcommand{\Yos}[1]{\ensuremath{\mathcal{Y}_{#1}}}
\newcommand{\Cross}[1]{\ensuremath{\operatorname{Cross}_{#1}}}
\newcommand{\TMexp}{\ensuremath{M_{\exp}}}
\newcommand{\TMnone}{\ensuremath{M_{\operatorname{true}}}}
\newcommand{\Naruki}{\ensuremath{\mathcal{N}}}
\newcommand {\sage} {\texttt{Sage}}
\newcommand {\python}{\texttt{Python}}
\newcommand{\Subsets}{\operatorname{Subsets}}
\newcommand{\expec}{\ensuremath{\operatorname{exp}}}
\newcommand{\noNones}{\ensuremath{\operatorname{true}}}
\newcommand{\new}{\ensuremath{\operatorname{new}}}
\newcommand{\ovalpha}{\overline{\alpha}}
\newcommand{\ovLambda}{\overline{\Lambda}}
\DeclareMathOperator{\Irr}{Irr}
\newcommand{\Fl}{\mathcal{F}}
\newcommand{\reg}{\ensuremath{\operatorname{reg}}}
\newcommand{\sed}{\ensuremath{\mathfrak{s}}}
\newcommand{\ted}{\ensuremath{\mathfrak{t}}}
\newcommand{\rspanone}{\ensuremath{\R\!\cdot\!\mathbf{1}}}
\newcommand{\ww}{\ensuremath{\omega}}
\begin{document}

\begin{abstract}
  The classical statement of Cayley-Salmon that there are 27 lines on every smooth cubic surface in $\P^3$ fails to hold under tropicalization: a tropical cubic surface in $\TPr^3$ often contains infinitely many tropical lines. Under mild genericity assumptions, we show that when embedded using the Eckardt triangles in the anticanonical system, tropical cubic del Pezzo surfaces contain exactly 27 tropical lines.
  In the non-generic case, which we identify explicitly, we find up to 27 extra  lines, no multiple of which lifts to a curve on the cubic surface.

  We realize the moduli space of stable anticanonical tropical cubics as a four-dimensional fan in $\R^{40}$ with an action of the Weyl group $\Wgp$. In the absence of Eckardt points, we show the combinatorial types of these tropical surfaces are determined by the boundary arrangement of 27 metric trees corresponding to the tropicalization of the classical 27 lines on the smooth algebraic cubic surfaces. Tropical convexity and the combinatorics of the root system $\EGp{6}$ play a central role in our analysis. 
\end{abstract}

\maketitle
\section{Introduction}\label{sec:introduction}

A persistent theme in tropical geometry has been to explore tropical analogues of classical results in algebraic geometry. In this paper, we address the tropicalization of the well-known theorem of Cayley and Salmon~\cite{cay:1849} that there are 27 lines on any smooth cubic surface in $\P^3$ over an algebraically closed field. Work of Vigeland~\cite{vig:10} shows that this statement, taken literally, fails in tropical geometry. He provides examples of cubic surfaces $X$ over valued fields whose tropicalization  in $\TPr^3$ contains infinitely many tropical lines, moving along one-parameter families in the interior of the tropical surface $\Trop X$. Recent work of Panizzut-Vigeland~\cite[Theorem 1]{pan.vig:19} complements this result with a complete classification of the combinatorial position of all tropical lines on each smooth tropical cubic surface, building on~\cite[Section 6.2]{ham.jos:17}.

Since tropical varieties are coordinate dependent~\cite{mac.stu:15}, it is conceivable that a different choice of embeddding could correct the discrepancy between the count of classical and tropical lines.
The present paper proposes the \emph{anticanonical embedding} as the appropriate model that corrects this pathology, ending a decade-long search. More explicitly, we show that the tropicalization of the (linearly degenerate) embedding of $X$ in $\P^{44}$ defined by the 45 distinguished sections of the anticanonical bundle corresponding to the 45 (Eckardt) tritangent planes of $X \subset \P^3$ prevents the superabundance of tropical lines on smooth tropical  cubic surfaces in almost all cases.

Throughout this paper, we let $\KK$ be  an algebraically closed valued field with $\charF\, \KK\neq 2,3$ with residue field $\widetilde{\KK}$ with equal characteristic restrictions.
By an \emph{anticanonical} cubic del Pezzo surface over $\KK$, we mean a cubic del Pezzo surface embedded in $\P^{44}_{\KK}$ as described above.

\begin{theorem}\label{thm:anticanonicalNoLines}
  Let $\Trop X \subset \TPr^{44}$ be an anticanonical tropical del Pezzo surface
  associated to a non-zero point in the moduli space of stable tropical cubic surfaces defined over $\KK$.
  Then, $\Trop X$ contains exactly 27 tropical lines, all of which lie in its boundary.
\end{theorem}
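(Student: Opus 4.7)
The plan is to separately establish existence of 27 tropical lines all lying in the boundary of $\Trop X$ and the absence of any further tropical lines. For existence, I would begin by tropicalizing the 27 classical lines on $X$ under the anticanonical map. Each classical line $\ell \subset X \subset \P^3$ is contained in exactly 5 of the 45 Eckardt tritangent planes, so the anticanonical sections indexed by the remaining 40 tritangent planes vanish identically on $\ell$. Consequently $\trop(\ell)$ sits inside a coordinate boundary stratum of $\TPr^{44}$ isomorphic to $\TPr^4$, inside which it is cut out by the tropicalizations of the linear relations among the 5 surviving sections; these relations are precisely those of a standard tropical line in $\TPr^4$. Since each such coordinate boundary stratum lies in the boundary of $\Trop X$, the 27 tropical lines so produced lie in the boundary, and $\Wgp$-equivariance of the construction reduces the verification to a single representative line.

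The main obstacle is ruling out any additional tropical lines. I would combine the description of $\Trop X$ as a four-dimensional fan (via the moduli space of stable tropical cubics) with the combinatorial classification of tropical lines in $\TPr^{44}$: any such line $L$ is parameterised by a metric tree with 45 marked leaves whose Pl\"ucker coordinates satisfy the tropical tree axioms. The support of $L$---the indices where its coordinates are finite---either is contained in the support of one of the 27 boundary strata considered above, in which case the previous paragraph identifies $L$ with one of the 27 known lines, or else projects nontrivially to the relative interior of $\Trop X$. In the second case, the claim reduces to showing that the relative interior of $\Trop X$ contains no tropical line. Here the non-zero hypothesis on the moduli point becomes essential: for a non-zero stable tropical point, the interior maximal cones of $\Trop X$ carry a rigid combinatorial structure dictated by the $\EGp{6}$ root arrangement, and a tropical balancing argument then forces any candidate interior line to meet the boundary arrangement of 27 metric trees, contradicting the assumption that $L$ is not one of them.

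To render the case analysis tractable, I would exploit the $\Wgp$-action on the moduli space and on the 45 anticanonical coordinates to reduce the combinatorial types of candidate tropical lines to a short list of orbit representatives. The most delicate step, as I anticipate it, is verifying that tropical balancing together with the many linear relations imposed by the anticanonical embedding forbids tropical lines traversing the interior cones of $\Trop X$. This rigidity---absent in Vigeland's $\TPr^3$ setting, where only a single cubic relation constrains the tropicalization---is what should ultimately recover the classical count of 27 and prevent any interior superabundance.
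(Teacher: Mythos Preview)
There are genuine gaps in both halves of your plan.

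\textbf{Existence.} Your description of which sections vanish on a line is inverted. A classical line $\ell$ is a component of exactly the five anticanonical divisors coming from the tritangent planes \emph{through} $\ell$, so it is those five sections that vanish identically on $\ell$, not the remaining forty. Thus $\trop(\ell)$ lies in the stratum of $\TPr^{44}$ where five fixed coordinates are $\infty$ (a $\TPr^{39}$), not a $\TPr^{4}$. The paper handles existence via \autoref{lm:IntersectionBoundaryLines}: the 27 tropicalizations are distinct because their quintuples of $\infty$-coordinates are distinct, and they do lie in the boundary for the reason just stated.

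\textbf{Non-existence.} Your proposed dichotomy and balancing argument do not close. A tropical line meeting the interior of $\TPr^{44}$ still has leaves in the boundary; showing it ``meets the boundary arrangement'' does not make it one of the 27 boundary trees, so there is no contradiction there. The paper's actual mechanism is quite different and has two concrete ingredients you are missing. First (\autoref{thm:candidateTropLines}, proved via \textcolor{blue}{Lemmas}~\ref{lm:only1}--\ref{lm:lineWith5BoundaryPoints}), a delicate combinatorial argument using the incidence of the 45 coordinate hyperplanes with the boundary trees shows that any extra tropical line must meet the boundary only at \emph{nodal} points, and in fact at exactly five of them: the five nodes arising from the link of a single vertex of the Schl\"afli graph. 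This already reduces the problem to 27 candidate lines, each with prescribed leaves. Second (\autoref{pr:noInteriorLinesAllCones}, \autoref{lm:noInteriorLinesACone}), the paper writes the coordinates of those five nodes as Laurent monomials in Yoshida and Cross functions (\autoref{lm:CoordinatesOfNodes}) and, for each non-apex Naruki cone, exhibits an explicit tropically non-singular $3\times 3$ minor of the resulting $5\times 45$ matrix. By \autoref{pr:tropicalLinesbyMinors} this rules out tropical collinearity of the five leaves, hence the candidate line does not exist. The key tool is tropical convexity (tropical rank via $3\times 3$ minors), not balancing; and the non-zero moduli hypothesis enters precisely because at the apex all Yoshida valuations coincide and no such non-singular minor can be found.
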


For a more precise statement, we refer to~\autoref{thm:nonTrivialValNoLifting}. Our work builds on recent developments on tropicalization of classical moduli spaces by Ren, Sam, Shaw and Sturmfels~\cite{ren.sam.stu:14, ren.sha.stu:16}. The moduli space of \emph{stable} tropical del Pezzo surfaces mentioned above is the Naruki fan from~\cite{ren.sha.stu:16}. Unstable surfaces are those that cannot be obtained as a limit of tropical surfaces arising from the maximal cones of this fan.

The apex of the  Naruki fan is the single cone not addressed in~\autoref{thm:anticanonicalNoLines}. Its unique associated stable tropical surface corresponds to tropicalizations of smooth cubics $X\subset \P^{44}$ defined over trivially valued fields. In this case, we get exactly 27 extra tropical lines in the interior of  $\Trop X$. In the unstable case, the construction gives an upper bound for the number of extra tropical lines.

\emph{Realizability} of tropical cycles supported on subsets of these extra lines by algebraic curves in $X$  arises as a natural question~\cite{bog.kat:12, bru.sha:15,mak.rud:19, mik:05}. Intersection theory techniques rule out liftings of tropical cycles supported on a single tropical line. Furthermore, even though cubic surfaces contain pencils of residual conics~\cite{ste:1857}, we show that their tropicalizations do not agree with these pairs of extra tropical lines. In summary:

\begin{theorem}\label{thm:extraLinesApex}
  Tropical surfaces associated to the apex of the Naruki fan contain up to  27 additional non-generic tropical lines in its interior: each one is a star tree with five rays. This bound is attained when the surface is stable. In all cases, no pairwise combinations nor multiples of these extra lines are realized by curves on the  cubic surface in $\P^{44}$.
\end{theorem}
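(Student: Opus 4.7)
The plan is to exploit the full $\Wgp$-symmetry that the apex of the Naruki fan enjoys. At the apex the valuation on $\KK$ is trivial, so $\Trop X$ is stabilized setwise by the standard $\Wgp$-action on the 45 tritangent coordinates. Consequently, any extra interior tropical line on $\Trop X$ belongs to a $\Wgp$-orbit, and the occurrence of the number $27$ in the statement matches the index of the stabilizer of a single $(-1)$-class, suggesting that the extras are canonically indexed by the 27 classical lines themselves.

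Accordingly, my first step is to write down a candidate extra line $S_\ell$ for each $(-1)$-class $\ell$. Since each classical line on $X$ lies on exactly five tritangent planes, I would take $S_\ell$ to be a star tree centered at a suitable interior point with one ray in each of the corresponding five coordinate directions of $\TPr^{44}$, with ray weights determined by balancing modulo the all-ones vector. To check $S_\ell \subset \Trop X$, I would verify, for every $\omega$ on an open ray of $S_\ell$, that each generator of the anticanonical ideal admits a cancellation of initial terms at $\omega$. Because the valuation is trivial, this reduces to a purely combinatorial check on the $\Wgp$-equivariant family of cubic relations; the description of the Cox ring and Eckardt cubic relations used elsewhere in the paper reduces the verification to a handful of symmetry-equivalent cases. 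Taking the $\Wgp$-orbit yields at most $27$ such star trees, proving both the upper bound and its attainment at the stable apex; for degenerations of the apex cone inside the Naruki fan, some of the membership checks fail on a sub-orbit, yielding strictly fewer extras and accounting for the ``up to~$27$'' clause.

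For the non-realizability assertion I would combine two kinds of arguments. For a positive multiple $m S_\ell$, a classical lift would be an effective $1$-cycle $\alpha$ on $X \subset \P^{44}$ whose tropicalization is $m S_\ell$. The five-ray support forces every section of $-K_X$ corresponding to a tritangent plane not containing $\ell$ to vanish on $\alpha$; since these sections generate $-K_X$ projectively, $\alpha$ is confined to $\ell$ itself, whose tropicalization is a boundary stratum rather than the interior star $m S_\ell$. For a pair $S_\ell \cup S_{\ell'}$, the only genuinely new classical candidate is an element of Steiner's pencil of residual conics~\cite{ste:1857} in a common tritangent plane. I would tropicalize such a pencil explicitly, parametrizing the conics via the two other lines in the tritangent plane, and show that the resulting tropical conic has support missing at least one of the five rays of $S_\ell$ (or $S_{\ell'}$). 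Non-coplanar pairs are ruled out by the same intersection-theoretic reasoning as the single-line case.

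The main obstacle I anticipate is the pairwise step: one must tropicalize a continuous one-parameter family of residual conics and show that no member has tropical support matching $S_\ell \cup S_{\ell'}$. The $\Wgp$-symmetry cuts this to a single orbit of coplanar pairs, so in principle only one pencil in one tritangent plane needs to be computed, but the computation crucially depends on choosing coordinates adapted to the $\EGp{6}$ root system. I expect the tropical convexity techniques already used in the proof of~\autoref{thm:anticanonicalNoLines} to isolate the precise coordinate direction along which the tropical conic escapes the combined five-ray support, thereby closing the argument.
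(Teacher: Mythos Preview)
Your candidate extra lines are misidentified, and this undermines the rest of the argument. A star tree with rays in the five single coordinate directions corresponding to the tritangent planes containing $\ell$ cannot be a tropical line in $\TPr^{44}$: the leaf directions of a tropical line must partition all $45$ coordinates, not just five of them, so balancing modulo the all-ones vector is impossible. The paper's analysis (\autoref{sec:comb-extra-trop}, culminating in \autoref{thm:candidateTropLines}) establishes, via a combinatorial study of how an interior tropical line can meet the boundary, that any such line has exactly five boundary points, each with \emph{nine} $\infty$-coordinates, and that these are the tropicalizations of the five classical nodes in the \emph{link} of some vertex $E$ of the Schl\"afli graph (i.e.\ the pairwise intersections of the ten lines meeting $E$, grouped by the five triangles through $E$). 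That combinatorial characterization is also what gives the upper bound of $27$; your $\Wgp$-orbit remark only shows that extras come in orbits, not that there is a single orbit.

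Your handling of the ``up to $27$'' clause also misreads the setup: the apex is a single point, so nothing degenerates within the Naruki fan. The variation among apex surfaces comes from the valuations of the Cross functions, which may exceed their expected value even when all Yoshida valuations vanish. The paper shows (\autoref{thm:extraLineCross}) that the five boundary points for $E$ are tropically collinear precisely when the five Cross functions attached to the triangles through $E$ have equal valuation; this holds automatically in the stable case but can fail otherwise, which is what produces fewer than $27$ extras. Both of your non-realizability arguments rest on the incorrect ray description and so do not go through. The paper instead argues via intersection numbers in $\Pic(X)$ for multiples (\autoref{thm:noCyclesLift}): the avoidance conditions $\Ccurve\cap\Gc{1}=\Ccurve\cap\E{k}=\Ccurve\cap\F{ik}=\emptyset$ force the class of any lift to be trivial. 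For pairs (\autoref{thm:noPairsLift}) it shows, by exhibiting an explicit nonzero $4\times 4$ minor, that the five classical nodes for a single $E$ already span the $\P^3$ containing $X$, so no planar conic can pass through them.
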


We now sketch the main ideas underlying our study of tropical anticanonical cubic surfaces. The first key input is the rich combinatorics of the incidence relations among the 27 lines discovered by  Cayley~\cite{cay:1849}, Clebsch~\cite{cle:1861} and Salomon~\cite{sal:1849}. Recall that the pairwise intersection patterns of the 27 lines is independent of the cubic surface. Encoded in a graph, it forms the 10-regular graph on 27 vertices called the \emph{Schl\"afli graph}, which is also the edge-graph of Gosset's six-dimensional polytope $2_{21}$~\cite{cox:1940,sch:1910}. Although pairwise intersections bring no surprises, there are some surfaces on which unexpected triple intersections happen---the three lines forming a tritangent plane (i.e., an anticanonical triangle) may degenerate to three concurrent lines. In this case, the point of concurrency is called an \emph{Eckardt point}~\cite{eck:1876}.  Throughout this paper, we restrict ourselves to cubic surfaces with no Eckardt points.

A second key input in this paper is a uniformization of the moduli space of (classical) cubic del Pezzo surfaces by the root system of type $\E6$. Recall that cubic del Pezzos can be obtained by blowing up $\P^2$ at six points in general position. By choosing suitable planar coordinates, we may assume that these six points lie on the cuspidal cubic $y^3 = x^2z$, which has the rational parametrization $t \mapsto [1:t:t^3]$. As a result, we represent the six points by six parameters $d_1, \dots, d_6$ in $\KK$. The six points are in general position (no three on a line, no six on a conic) if and only if the parameters satisfy
\begin{equation}\label{eq:rootsE6}
d_i-d_j \neq 0 \;\text{ (for } i<j \text{)} \; \text{ , } \quad d_i+d_j+d_k\neq 0 \;\text{ (for } i<j<k \text{)} \quad \text{ and } \quad d_1+\ldots+d_6\neq 0.
\end{equation}
These expressions form the set $\Phi^+$ of 36 positive roots of $\EGp{6}$~\cite{ren.sha.stu:16}. The complement of the root hyperplane arrangement in the projectivized $\E6$ lattice thus yields a parameter space for smooth cubic del Pezzo surfaces. 
The astute reader may have observed that this parameter space is five-dimensional, whereas the moduli space of cubic del Pezzo surfaces is four-dimensional. The discrepancy exists because there is a one-parameter family of choices of $(d_1, \dots, d_6)$ that leads to projectively equivalent sextuples.
The existence of an Eckardt point on a given tritangent plane can be detected by the vanishing of a  quintic polynomial in the six parameters $d_1,\ldots, d_6$. We refer to it as an Eckardt quintic. By assumption, all 45 Eckardt quintics are non-zero, and hence have
finite valuation.

We can recover the moduli space of cubic surfaces from the parameter space described above by identifying functions in $d_1, \dots, d_6$ that descend to the moduli space. These functions have been explicitly described by Coble using invariant theory~\cite{cob:61}. Indeed, Coble provides a $\Wgp$-equivariant set  of 40 degree nine monomials $\cY_0, \ldots, \cY_{39}$ in the positive roots $\Phi^+$ from~\eqref{eq:rootsE6}, called the \emph{Yoshida functions} (see~\autoref{tab:yoshida}), that generates the ring of functions on the moduli space. Needless to say, they are central to this paper.

In addition to the Yoshida functions, another set of 135 functions, called \emph{Cross functions} (see \autoref{tab:crosses}), is essential to our work. Each of them can be expressed as a binomial linear combination of Yoshida functions, and contains an Eckardt quintic as a  factor (the remaining factors are four roots in $\Phi^+$.) Both the Yoshida functions and the Cross functions arise as discriminants of certain root subsystems of $\E6$~\cite{col.van-gee.loo:09}.

The Yoshida functions play a key role in describing not only the classical but also the  tropical moduli space of  cubic del Pezzos. Recall that the process of tropicalization turns varieties into polyhedral complexes by means of valuations. The moduli space of stable tropical cubic surfaces, namely the Naruki fan, is constructed as a complex in $\R^{40}/{\rspanone}$ by taking the valuations of all 40 Yoshida functions~\cite{hac.kee.tev:09,ren.sha.stu:16}.

This fan and its combinatorial properties were studied in~\cite[Section 6]{ren.sam.stu:14}. Each of its 24 cones (up to $\Wgp$-symmetry) determines the combinatorics of the tropicalization of the complement of the 27 lines on a cubic surface, embedded in the torus $\Gm^{27}/\Gm^7$ over $\KK$ via the Cox ring~\cite[Table 1]{ren.sha.stu:16}. By design, the 27 lines on $X$  become an arrangement of metric trees in the boundary of $\Trop X$. We show that the same fan classifies anticanonically embedded stable tropical del Pezzo cubics:
\begin{theorem}\label{thm:Naruki}
  The Naruki fan $\Naruki$ is the moduli space of stable anticanonical smooth tropical cubic surfaces in $\TPr^{44}$. The combinatorial type of the stable tropical cubic surface associated to each point in $\Naruki$ is determined by the arrangement of 27 metric trees at infinity (see Figures~\ref{fig:treesaa2a3a4} and~\ref{fig:treesaa2a3b}.) The metric structure on each tree is a piecewise linear function on the Naruki fan (see~\autoref{tab:treeLabeling}.)
\end{theorem}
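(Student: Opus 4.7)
The plan is to transport the parametrization~\eqref{eq:rootsE6} of smooth anticanonical cubic del Pezzos through tropicalization. I would begin by expressing each of the 45 tritangent plane sections as an explicit rational function of $(d_1,\ldots,d_6)$, using the classical description of Eckardt triangles on the blowup of $\P^2$ at six points of the cuspidal cubic. Because the Cross functions are binomial combinations of Yoshida functions and contain an Eckardt quintic as a factor, the valuations of all 45 sections reduce to piecewise linear combinations of the valuations of the positive roots in $\Phi^+$. Since Coble's Yoshida monomials $\cY_0,\ldots,\cY_{39}$ generate the ring of functions on the moduli space of cubic del Pezzos, and the Naruki fan $\Naruki$ is carved out in $\R^{40}/\rspanone$ by the valuations of these monomials, this yields a piecewise linear map $\Naruki\to\TPr^{44}$ whose image parametrizes anticanonical tropical cubic surfaces.

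To establish that $\Naruki$ is the full moduli space of stable anticanonical tropical cubics, I would check cone-by-cone that distinct cones produce non-equivalent combinatorial types of $\Trop X$. Using $\Wgp$-equivariance, this reduces to the 24 orbit representatives listed in~\cite{ren.sam.stu:14,ren.sha.stu:16}. For each representative, I would explicitly compute the tropical surface produced by a generic lift $(d_1,\ldots,d_6)$ via tropical Gr\"obner methods in \sage. Stability then follows tautologically from the description of $\Naruki$ as the moduli of limits of the universal smooth family.

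For the tree statements, I would identify each boundary tree as the tropicalization of the image of a classical line $\ell_i\subset X$ under the anticanonical embedding. Since $\ell_i\cdot(-K_X)=1$ and exactly five tritangent planes pass through $\ell_i$, the image of $\ell_i$ in $\P^{44}$ lies in a coordinate subspace, so its tropicalization is a metric tree with at most five unbounded rays. That each tree lies at infinity follows from the vanishing of the remaining 42 sections on $\ell_i$. The edge lengths of these trees are differences of valuations of specific root-monomials in $\Phi^+$, and hence piecewise linear in the Yoshida coordinates; these expressions will be tabulated in~\autoref{tab:treeLabeling}.

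The hardest step will be the reconstruction assertion: that the combinatorial type of the full two-dimensional surface $\Trop X$ is determined by the arrangement of its 27 boundary trees. My strategy is a cone-by-cone verification across the 24 $\Wgp$-orbits, showing that the bounded two-cells of $\Trop X$ are forced by the boundary trees via the balancing condition together with the tropical convexity of $\Trop X$: heuristically, each bounded edge is a tropical segment connecting two distinguished tree nodes. A dimension count---$\Naruki$ is four-dimensional, matching the number of free metric parameters in a compatible Schl\"afli arrangement of 27 trees---supports the expected rigidity, but the combinatorial work of confirming this reconstruction in every cone is where the difficulty concentrates.
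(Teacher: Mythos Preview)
Your description of the boundary trees is incorrect, and this error propagates through the rest of the plan. You write that the tropicalization of a line $\ell_i$ is ``a metric tree with at most five unbounded rays'' and that ``the remaining 42 sections'' vanish on $\ell_i$. In fact only the five sections corresponding to tritangent planes \emph{containing} $\ell_i$ vanish identically on it; the other $40$ are generically nonzero. The line $\ell_i$ therefore lives in a coordinate $\P^{39}$, and its tropicalization is a metric tree with \emph{ten} leaves, one for each of the ten other exceptional curves meeting $\ell_i$ (see \autoref{cor:BoundaryPoints} and \autoref{lm:IntersectionBoundaryLines}). Each leaf is the tropicalization of a node $\ell_i\cap\ell_j$ and has exactly nine $\infty$-coordinates. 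The paper reconstructs each tree from these ten leaves via a tropical-convexity algorithm (\autoref{alg:treesFromLeaves}), not from the five tritangent directions.

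More seriously, your plan omits the technical core of the argument: the coordinates of the $135$ nodes are Laurent monomials in the Yoshida \emph{and Cross} functions (\autoref{lm:CoordinatesOfNodes}), and the Cross valuations are not a priori determined by the point in $\Naruki$---each Cross is a binomial in Yoshidas, so ties can occur. Establishing that $\Naruki$ is actually a moduli space requires showing that on the relative interior of every maximal cone these valuations are \emph{always} (not merely generically) the expected ones; this is \autoref{thm:aaaaIsStable}, whose proof (\autoref{lm:Cr37}) involves a delicate analysis of the fibers of the Yoshida map over the Bergman fan of $\EGp{6}$. Without this, ``stability follows tautologically'' is not justified: the dashed arrow in~\eqref{eq:universalDiagram} need not be well-defined.

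Finally, for the reconstruction claim the paper does not do a cone-by-cone balancing check. It invokes geometric tropicalization (\autoref{thm:BoundaryDeterminesEverything}): since the boundary of $X\subset\P^{44}$ is simple normal crossings supported on the $27$ lines, the divisorial valuations they define recover $\Trop X$, and these valuations are encoded in the boundary tree arrangement. The combinatorial types are then identified with those of the Cox embedding via the injective monomial map $\trop(\ovalpha)$ of \autoref{pr:injectivityNewEmbedding}.
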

In particular, the stable tropical surface associated to the apex of the fan is the cone over the Schl\"afli graph: it has 27 vertices (one of each line) and 135 edges. As we move to higher-dimensional cones, each vertex is replaced by a metric tree with 10 leaves. The unstable tropical surfaces will only be detected by recording the valuations of all Cross functions. In turn, the moduli space of all smooth tropical cubic surfaces with no Eckardt points will be obtained as a \emph{tropical modification}~\cite{ite.mik.shu:09, mik:06} of the Naruki fan along the tropical Cross functions.

\medskip

Having described the main results, we turn to some details regarding the proof of~\autoref{thm:anticanonicalNoLines}. The rigidity of the boundary structure of $\Trop X$ discussed in~\autoref{thm:Naruki} ensures that any extra tropical line must lie in the interior of $\Trop X$. A simple combinatorial analysis implies that any such potential tropical line must meet the boundary of $\Trop X$ at precisely five points: the ones determined by the link of a vertex in the Sch\"afli graph. There are 27 such 5-tuples. Our description of the defining ideal of $X$ in $\P^{44}$ allows us to express the coordinates of all 135 intersection points of pairs of  lines in $X$ as Laurent monomials in Yoshida and Cross functions.

The information recorded by the Naruki fan is, a priori, not enough to determine the valuation of the 135 intersection points. However, for each Naruki cone, except the apex, we can determine the valuations of enough coordinates of these 5-tuples of boundary points to conclude they cannot be tropically collinear in $\TPr^{44}$. We do so by exhibiting an explicit tropically non-singular $3\times 3$ minor in each associated $5\times 45$ matrix with entries in $\R\cup \{\infty\}$, as seen in Tables~\ref{tab:coneA} and~\ref{tab:allCones}.  This novel technique lies at the heart of \emph{tropical convexity}~\cite{dev.san.stu:05} and we expect it be applicable to study  more general {Fano schemes}.

For the apex of the Naruki fan, we determine tropical collinearity of these boundary points in terms of valuations of tuples of five Cross functions. This criterion provides only an upper bound for the number of extra lines discussed in~\autoref{thm:extraLinesApex} since  each extra line may not be contained in the tropical cubic surface if the later is unstable.

\textcolor{blue}{Theorems}~\ref{thm:anticanonicalNoLines} and~\ref{thm:extraLinesApex} allow us to reinterpret the superabundance phenonena observed in Vigeland's examples~\cite{vig:10}. Indeed, the space  of anticanonical global sections of $X$ is four-dimensional, but has no natural basis. As a result, there is no canonical way to  embed $X$ into its linear span. However, \autoref{thm:anticanonicalNoLines} shows that the linearly degenerate embedding of $X$ in $\P^{44}$ is more natural from the point of view of tropicalization.

By construction, the linear span of $X$ in $\P^{44}$ is a $\P^3$. It would be interesting to characterize the valuated matroid determining the tropicalization of this linear space in $\TPr^{44}$. Such problem requires knowledge of the valuations of the Pl\"ucker coordinates of the point in $\Gr(4,45)$ associated to $\P^3 \subset \P^{44}$. Although our explicit description of the anticanonical embedding gives precise formul\ae\ for the Pl\"ucker coordinates, determining their valuations is a subtle matter, since their factorization involves not only Laurent monomials in Yoshida and Cross functions, but also some new septic and octic factors in $d_1, \dots, d_6$.

These non-monomial factors give a second motivation to restrict our combinatorial study to tropical stable cubic surfaces: the Naruki fan does not provide a complete description of the tropicalization of the linear span of $X$. That is, the valuated matroid associated to the pair $\P^3\subset \P^{44}$ can vary within a given Naruki cone. As was mentioned earlier, tropical modifications will allow us to overcome this and other difficulties arising from non-generic choices. However, the polyhedral subdivisions required to carry this task in practice will be extremely delicate given the sheer size of the input subspace arrangement. We plan to address it in future work.

 The rest of the paper is organized as follows. In~\autoref{sec:moduli-cubics} we recall the classical construction of the moduli space of smooth marked del Pezzo surfaces and its compactification (the Naruki space.) In this setting, the 27 lines on each surface corresponds to its set of exceptional classes. We provide an equivariant uniformization of this space in terms of the $\EGp{6}$ arrangement complement in $\P^5$, and introduce the two main sets of invariants: the Yoshida and Cross functions. The latter characterizes which surfaces have Eckardt points. \autoref{sec:anti-canon-emb} computes the Cox and anticanonical embeddings of the universal cubic surface. In both cases, the boundary is supported on the 27 lines.  \autoref{sec:appendix1} provides explicit formulas for all Yoshida and Cross functions that complement these results. They are an essential component of all the computations appearing in the paper.

\autoref{sec:bergman-fan-Naruki-fan} discusses the tropicalization of the Naruki space as the image of the Bergman fan of the $\EGp{6}$-reflection arrangement via a linear map, encoded by the Yoshida matrix. We endow this four-dimensional space with a fan structure compatible with the tropicalization of all Cross functions. We discuss the combinatorial data of this fan up to the action of the Weyl group $\Wgp$ and characterize the fibers of the Yoshida matrix over the relative interior of its maximal cones.

\autoref{sec:bergman-fan-Naruki-fan} presents the main techniques involved in the proof of our three main results. First, the characterization of tropical convexity by vanishing of tropical $3\times 3$ determinants, and second, an expicit algorithm to reconstruct any tropical line from its collection of boundary points.

The proof of~\autoref{thm:anticanonicalNoLines} is carried over in~\textcolor{blue}{Sections}~\ref{sec:comb-extra-trop} and~\ref{sec:trop-lines-trop}. Tables~\ref{tab:allCones} and~\ref{tab:coneA} exhibit non-singular tropical minors ruling out potential extra lines. The proof of~\textcolor{blue}{Theorems}~\ref{thm:extraLinesApex} and~\ref{thm:Naruki}  require the explicit knowledge of the generic valuations of all Cross functions. This is carried out in~\autoref{sec:find-repr-cross} and in~\autoref{pr:newExpValCr15}. \autoref{sec:tropical-lines-trivial} contains the proof of~\autoref{thm:extraLinesApex}.

The proof of~\autoref{thm:Naruki} spans~\textcolor{blue}{Sections}~\ref{sec:comb-types-tree} and~\ref{sec:trop-moduli-space}. In~\autoref{sec:comb-types-tree}, we construct the boundary trees for each tropical surface. \autoref{tab:treeLabeling} gives the leaf labeling and metric structure for those surfaces that are stable. The combinatorial types are shown in Figures~\ref{fig:treesaa2a3a4} and \ref{fig:treesaa2a3b}. \autoref{sec:planarConf} gives an alternative way to build these trees in terms of tropically generic configurations of six points in $\TPr^2$. \autoref{sec:trop-moduli-space} provides a test for tropical stability in terms of the valuations of the Cross functions on the algebraic counterparts. In particular,~\autoref{thm:aaaaIsStable} shows that this test is always satisfied for surfaces associated to points in the relative interior of maximal cones in the Naruki fan. We use this property to conclude that this fan is the moduli space of stable tropical cubic surfaces.

Finally, \autoref{sec:comb-types} describes the combinatorics of each stable tropical cubic surface and certifies that it agrees with those obtained from Cox embeddings.

\section*{Supplementary material}\label{sec:suppl-mater}

Many of the results in this paper rely on calculations performed using \sage~\cite{the:15}. Several of them required new implementations within this platform using~\python. We have created supplementary files so that the reader can reproduce all the claimed assertions done via explicit computations. These files can be found at:
\begin{center}
  \url{https://people.math.osu.edu/cueto.5/anticanonicalTropDelPezzoCubics/}
\end{center}

  \noindent
  In addition to all \sage\ scripts, we include all input and output files (in \sage\ and plain text format.)
       
  All computations are performed symbolically using either our own implementation of group-actions on polynomial rings, tropical operations ($\min$ for tropical addition and usual addition for tropical multiplication), or built-in functions for computations with Weyl groups, polyhedra and factorizations of rational functions over the rational numbers. They were performed on a 2.4 GHz Intel(R) Core 2 Duo with 3MB cache and 2GB RAM. The implementation of the construction of the Bergman fan of a general matroid from its nested sets is new and exploits symmetries whenever possible. The computation of the Bergman fan for the $\EGp{6}$-arrangement  takes about one hour to finish. The time is split evenly between the calculation of adjacencies and the whole fan.

  The most time-demanding computations are those in~\autoref{sec:trop-lines-trop}. The calculation of the tropicalization of each $5\times 45$ matrix of rational functions associated to each cone in the Naruki fan takes about 20 minutes.
  The computation-time required to search for tropically singular $3\times 3$-minors for each such matrix is not uniform.  With the exception of a single cone, each calculation takes about 30 seconds. For the problematic cone and  each choice of three rows in the corresponding matrices, the computation takes ten minutes since several exceptional curves have no tropical non-singular $3\times 3$ minors, and the calculation exhausts all 3-element subsets from $\{0,\ldots, 44\}$.
  \section*{Acknowledgements}
  
The authors wish to thank Sachin Gautam, Kristian Ranestad, Dhruv Ranganathan,  Kristin Shaw, Bernd Sturmfels and Jenia Tevelev for fruitful conversations. All the implementations in this paper where done using the software package \sage~\cite{the:15}.
The first author was partially supported by an  NSF postdoctoral
fellowship DMS-1103857 and NSF Standard Grant DMS-1700194 (USA.) The second author was partially supported by a Simons Foundation Travel Grant (USA) and the Discovery Early Career Research Award DE180101360 (Australia.)
Both authors acknowledge the Mathematics Department of both Columbia University and The Ohio State University where most of this project was carried out. Finally, the second author thanks the hospitality of The Fields Institute for Research in Mathematical Sciences and the organizers of the Major Thematic Program on Combinatorial Algebraic Geometry (July-December 2016) where crucial stages of this project were completed.

\section{Moduli of cubic del Pezzo surfaces and the action of $\Wgp$}\label{sec:moduli-cubics}

In this section, we review the classical construction of the moduli space of marked del Pezzo surfaces, originating in the work of Coble.
Our main references are \cite{col.van-gee.loo:09} and \cite{ren.sam.stu:14}, which describe classical and tropical moduli spaces of del Pezzo surfaces of arbitrary degree.
To simplify and focus our exposition, we restrict ourselves to cubic del Pezzos. All varieties in this paper are to be defined over an algebraically closed field $\KK$ of characteristic different from 2 and 3, equipped with a (possibly trivial) non-Archimedean valuation. Its residue field $\resK$ has $\charF \resK \neq 2,3$. For an arithmetic perspective on the enumerative geometry of del Pezzos over such fields, we refer to the recent work of Kass and Wickelgren~\cite{kas.wic:17}.

\begin{definition}\label{def:cubicDelPezzo}
  A \emph{cubic del Pezzo surface} is a smooth projective surface with ample anticanonical bundle whose class has self-intersection three.
\end{definition}
The following equivalent definition will be used throught this work. A cubic del Pezzo surface is a surface obtained from $\P^2$ by blowing up 6 distinct points, no three of which are collection and not all six lie on a conic.

\begin{remark}\label{rm:FanoConvention}
  The authors of \cite{col.van-gee.loo:09} use the term ``del Pezzo'' to mean surfaces with semi-ample canonical bundle, reserving the term ``Fano'' for the ones with ample canonical bundle.
  We alert the reader that our terminology is slightly different.
\end{remark}

Let $X$ be a cubic del Pezzo surface.
Since $X$ is obtained by six blow-ups from $\P^2$, the Picard group of $X$ is isomorphic to $\Z^7$; it is generated by the canonical class $K_X$ and the classes of the six exceptional divisors.
The Picard group contains 27 \emph{exceptional divisor classes}, namely classes $E$ with $K_X \cdot E = -1$ and $E^2 = -1$. These are precisely the classes of the 27 lines.

Each exceptional class is represented by a unique effective divisor.
An ordered collection of six exceptional classes $\E1, \dots, \E6$ with $\E{i} \cdot \E{j} = - \delta_{ij}$ is called a \emph{marking} of $X$.
For example, if $X$ is obtained from $\P^2$ by blowing up six distinct points, then the six exceptional divisors give a marking.
It turns out that there are $72 \cdot 6!$ markings of a del Pezzo surface (72 when disregarding the order.)
So there are essentially 72 ways in which a general cubic surface arises as a blow-up of $\P^2$.

The blow-up construction shows that the moduli space of marked cubic del Pezzos is isomorphic to a dense open $U \subset (\P^2)^6$.
The set $U$ consists of tuples of six  distinct points, no three of which are collinear, and the six do not lie on a conic.
To highlight the role of the markings, we denote this moduli space by $M_{m,3}^\circ$.

The group of automorphisms of the lattice $\langle \Pic X, \cdot \rangle$ that fix $K_X$ is the Weyl group $\Wgp$.
This group acts on the markings, and hence, on $M_{m,3}^\circ$.
The quotient is an open subset of the moduli of (unmarked) del Pezzo cubics, which we denote by $M_{3}^\circ$.
The latter will only play an auxiliary role.

\subsection{The Naruki space and its Coble covariants}\label{sec:naruki-space-coble}
The space $M_3^\circ$ admits a natural compactification $M_3^*$ using Geometric Invariant Theory (GIT.)
The compactified moduli space $M_3^*$ is the GIT quotient $\P \Sym^3 (\CC^4) / \SL(4)$.
 The line bundles $\O(n)$ on $\P \Sym^3 (\CC^4)$ descend to rank-one sheaves on $M_3^*$ (they are line bundles if $M_3^*$ is considered as a stack rather than a coarse space, but we will ignore this point.)
 It is convenient to denote by $\O_{M_3^*}(1)$ the sheaf descended from $\O(4)$.

 Let $M_{m,3}^*$ be the normalization of $M_3^*$ in $M^\circ_{m,3}$.
Then, $M_{m,3}^*$ is a compactification of $M_{m,3}^\circ$, called the \emph{Naruki space}.
The action of $\Wgp$ on $M_{m,3}^\circ$ extends to an action on $M_{m,3}^*$ and the map $M_{m,3}^* \to M_{3}^*$ is the quotient.
Denote by $\O_{M_{m,3}^*}(1)$ the pullback of $\O_{M_{3}^*}(1)$.

\begin{proposition}\label{prop:CobleVSpace}
  The vector space $H^0(M_{m,3}^*, \O(1))$ is ten-dimensional and  irreducible as a representation of $\Wgp$.
  It yields an embedding $M_{m,3}^* \hookrightarrow \P^9$.
\end{proposition}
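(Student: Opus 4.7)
The plan is to reduce the three assertions to classical invariant theory for cubic forms and Coble's explicit construction of ten covariants. The starting point is the GIT description $M_3^* = \P\Sym^3(\CC^4)/\!/\SL(4)$ with $\O_{M_3^*}(1)$ descended from $\O_{\P^{19}}(4)$, this being the minimal positive twist for which descent is possible since the center $\mu_4 \subset \SL(4)$ acts on $\Sym^3(\CC^4)$ by the cube character. Pulling back along the finite quotient $\pi\colon M_{m,3}^* \to M_3^*$ and decomposing the pushforward $\pi_* \O_{M_{m,3}^*} = \bigoplus_{\rho \in \Irr(\Wgp)} \rho \otimes \mathcal{V}_\rho$ into $\Wgp$-isotypic summands, the projection formula yields
\[
H^0(M_{m,3}^*, \O(1)) \;=\; \bigoplus_{\rho} \rho \otimes H^0\bigl(M_3^*, \mathcal{V}_\rho \otimes \O(1)\bigr).
\]
The $\Wgp$-invariant summand $H^0(M_3^*, \O(1))$ vanishes, since Salmon's smallest $\SL(4)$-invariant on $\Sym^3(\CC^4)$ has degree $8$; the claim thus reduces to showing that exactly one nontrivial isotypic summand contributes, namely a single copy of the $10$-dimensional irreducible $\rho_{10}$ of $\Wgp$.

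Next, I would construct ten explicit sections by reproducing Coble's classical covariants. In the uniformization by the $\E6$ parameters $d_1, \ldots, d_6$ of \eqref{eq:rootsE6}, these can be realized as a distinguished $\Wgp$-orbit of ten polynomial expressions of the appropriate multidegree in the $d_i$, indexed by specific combinatorial structures on the root system $\E6$ (as in \cite{col.van-gee.loo:09}). Their $\CC$-span is a $\Wgp$-subrepresentation of $H^0(M_{m,3}^*, \O(1))$ of dimension at most ten; irreducibility and identification with $\rho_{10}$ follow from evaluating the associated character on representatives of the conjugacy classes of $\Wgp$ and matching with the character table of $W(E_6)$, and the same character data forces all other summands $H^0(M_3^*, \mathcal{V}_\rho \otimes \O(1))$ to vanish.

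For the embedding claim, I would invoke the classical theorem of Coble (reproved in \cite{col.van-gee.loo:09}) that this linear system defines a closed embedding of $M_{m,3}^*$ into $\P^9$, with image cut out by an explicit system of $W(E_6)$-invariant quadrics. The main obstacle I anticipate is this last step: separating points and tangent vectors of $M_{m,3}^*$ along the compactifying boundary divisors requires a careful local analysis of the Naruki space as a $\Wgp$-equivariant blow-up, best handled by appealing to the cited classical results rather than reproving them from scratch.
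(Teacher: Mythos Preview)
The paper does not give its own proof of this proposition: immediately after the statement it simply writes ``For a proof we refer to \cite[Corollary~5.9]{col.van-gee.loo:09} and the preceding discussion.'' Your proposal is therefore not competing against an argument in the paper but rather sketching what that reference does; in particular, your final appeal to \cite{col.van-gee.loo:09} for the embedding claim is exactly what the paper itself does for the entire statement.

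That said, your sketch is a reasonable outline of the argument in \cite{col.van-gee.loo:09}, and the ingredients you list (the descent of $\O(4)$ via the $\mu_4$-action, the vanishing of the trivial isotypic piece because the smallest $\SL(4)$-invariant of ternary cubics has degree $8$, the explicit construction of Coble covariants in the $d_i$-coordinates, and the character-table identification with the $10$-dimensional irreducible) are correct and in the right order. One small point: the paper actually realizes the ten-dimensional space via the span of the $40$ Yoshida functions (products of roots indexed by $A_2^{\oplus 3}$ subsystems), which is the concrete incarnation of the Coble covariants you allude to; you might as well name them, since they are the objects the rest of the paper uses. Your anticipated obstacle---verifying very ampleness along the boundary---is indeed the substantive part, and deferring it to the cited reference is exactly what the authors do.
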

\noindent For a proof we refer to \cite[Corollary~5.9]{col.van-gee.loo:09}  and the preceding discussion.
The global sections of $\O_{M_{m,3}}^*(1)$ are called \emph{Coble covariants}, and the image of $M_{m,3}^*$ in $\P^9$ is called the \emph{Naruki space}.
Our next subsection gives an explicit description of these two notions. 

\subsection{An equivariant uniformization}\label{sec:equiv-uniform}
Let ${\fh}_6^*$ be the the $\KK$-vector space spanned by the root lattice of $\EGp{6}$. Explicitly, ${\fh}_6^*$ is generated by six elements $d_1, \dots, d_6$ with a bilinear form given by
\[ d_i \cdot d_j =
  \begin{cases}
    -1/9 & \text{if $i \neq j$},\\
    \hspace{2ex} 8/9 & \text{if $i = j$}.
  \end{cases}
\]
The elements $(d_j - d_i)$ for $i < j$, $(d_i+d_j+d_k)$ for $i < j < k$, and $(d_1 + \dots + d_6)$ together form the set of 36 positive roots $\Phi^+$ of $\E6$ associated to the simple roots
\[
\alpha_1=d_2-d_1,\quad \alpha_2=d_1+d_2+d_3 \; \text{ and }\; \alpha_i= d_{i}-d_{i-1}\, \text{ for }i=3,4,5,6.
\]
The choice of simple roots $\alpha_i$ follows Bourbaki's convention for labeling the Dynkin diagram of type $\EGp{6}$ shown in~\autoref{fig:E6label}.

\begin{figure}[htb]
  \includegraphics[scale=0.3]{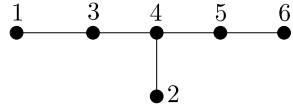}
  \caption{The labeled Dynkin diagram of the
  root system $\EGp{6}$. Each label $i$ corresponds to the simple root $\alpha_i$.\label{fig:E6label}}
\end{figure}

Let $\fh_6$ be the dual of $\fh_6^*$.
We have a map $\fh_6 \to (\P^2)^6$ given by
\[ p \mapsto \left([1:d_1(p):d_1^3(p)], \dots, [1:d_6(p):d_6^3(p)]\right).\]
Denote by $\fh_6^{\reg}$ the complement in $\fh_6$ of the zeros of the roots.
For $p \in \fh_6^{\reg}$, the points $[1:d_i(p): d_i^3(p)]$ for $1 \leq i \leq 6$ are distinct, no three of them lie on a line, and the six do not lie on a conic.
Therefore, the blow-up of $\P^2$ at these points gives a marked cubic del Pezzo surface, where the marking is given by the six exceptional divisors.
Scaling $p$ by $t \in \KK^*$ produces a different set of six points, but they are related to the original six by the automorphism of $\P^2$ defined by $[X:Y:Z] \mapsto [X:tY:t^3Z]$.
As a result, the resulting marked surfaces are isomorphic.
We thus get a morphism
\begin{equation}\label{eq:uniform}
  \P(\fh_6^{\reg}):=\fh_6^{\reg}/\KK^* \longrightarrow M_{m,3}^\circ.
\end{equation}
It is easy to check that this map is equivariant with respect to the action of $\Wgp$. Furthermore, it is surjective and flat~\cite[Theorem~3.1]{col.van-gee.loo:09} of relative dimension one.

Since $M_{m,3}^*$ is complete and $\P(\fh_6)$ is non-singular, the map in \eqref{eq:uniform} extends to a regular map away from a set $Z$ of codimension at least 2.
It turns out that the pullback of $\O_{M_{m,3}^*}(1)$ to $\P(\fh_6)$ is isomorphic to $\O_{\P(\fh_6)}(9)$ \cite[Proposition~4.10]{col.van-gee.loo:09}. 
As a result, we can write (the pullbacks of) the Coble covariants as homogeneous polynomials of degree nine in $d_1, \dots, d_6$.

\subsection{Yoshida and Cross functions}\label{sec:yosh-cross-funct}
Two sets of Coble covariants play a key role in this paper.
Both sets are $\Wgp$-invariant and have a beautiful description in terms of root subsystems of the root system $\E6$ (see~\cite{col.van-gee.loo:09}.), which we discuss below. In addition, we provide  explicit formulas for all invariants that will be heavily exploited in our tropical computations.

We start with some basic definitions involving root systems. By the \emph{discriminant} $\Delta$ of a root system, we mean the square root of the product of all the roots, both positive and negative.
This is a polynomial, well-defined up to a sign.
Prescribing a set of positive roots $R^+$ pins down the sign -- we simply take the product of all the positive roots. Thus, $\Delta = \prod_{\alpha\in R^+}\alpha$.

The first set of Coble covariants are the \emph{Yoshida functions}.
These are  the discriminants of type $A_2^{\oplus 3}$ root subsystems  of $\fh_6$.
For example, the subsystem $S$  below yields the Yoshida function $\Yos{}(S)$:
\begin{equation}\label{eq:yoshida1inD}
  \begin{minipage}{0.45\textwidth}
    \[\begin{aligned}
    S =  &\langle d_5 - d_6, d_1 + d_2 + d_6, d_1 + d_2 + d_5 \rangle \oplus \\
    &\langle d_3 - d_4, d_4 + d_5 + d_6, d_3 + d_5 + d_6 \rangle \oplus \\
    & \langle d_1 - d_2, d_2 + d_3 + d_4, d_1 + d_3 + d_4 \rangle
    \end{aligned}\]
      \end{minipage}
 ;\    \begin{minipage}{0.45\textwidth}
      \[\begin{aligned}
    \Yos{}(S) = &  (d_{5} -  d_{6})(d_{1} + d_{2} + d_{6})(d_1+d_2+d_5) \\
    &(d_3-d_4)(d_4+d_5+d_6)(d_3+d_5+d_6) \\
    &(d_1-d_2)(d_2+d_3+d_4)(d_1+d_3+d_4).
      \end{aligned}\]
  \end{minipage}
\end{equation}

\begin{remark}\label{rm:WeylActionYoshidas}
  The group $\Wgp$ acts transitively on the Yoshida functions, so the others can be computed using the group action.
There are a total of 80 Yoshida functions (40 up to sign.)
Since the Yoshida functions are products of roots, they are invertible on $\P(\fh_6^{\reg})$.
Equivalently, by~\eqref{eq:uniform}, the corresponding Coble covariants are invertible on $M_{m,3}^\circ$.
\end{remark}
The Yoshida functions span the 10-dimensional space of Coble covariants from~\autoref{prop:CobleVSpace}.
Since the linear system defined by the Coble covariants is very ample on $M_{m,3}^*$, we can recover
\begin{equation}\label{eq:naruki_embedding}
  M_{m,3}^* \subset \P^{39}
\end{equation}
as the closure of image of the map
$\P(\fh_6) \dashrightarrow \P^{39}$
defined by the 40 Yoshida functions (up to sign.) Our choice of signs is indicated in~\autoref{tab:yoshida}.

\smallskip

The second set of Coble covariants are the \emph{Cross functions.}
Let $S \subset \fh_6^*$ be a root subsystem of type $A_2^{\oplus 3}$ and let $\alpha \in \fh_6^*$ be a root not orthogonal to any of the summands of $S$.
Let $S^+$ be a set of positive roots for $S$ and denote by $s_\alpha$ the reflection in the plane orthogonal to $\alpha$.
The Cross associated to the pair $(S^+, \alpha)$ is the difference
\begin{equation}\label{eqn:Cross_def}
  \Cross{}(S^+,\alpha) =  \Delta(S^+) - s_\alpha (\Delta(S^+)). 
\end{equation}

\begin{remark}\label{rm:CrossDifferenceYoshidas}
  Note that the Cross is a difference of two Yoshidas.
Furthermore, due to the linear relations between the Yoshida functions, each Cross function can be expressed in four distinct ways as a difference of Yoshidas.
The data of $(S, \alpha)$ (without the choice of positive roots) determines the Cross function up to a sign; we denote it by $\Cross{}(S, \alpha)$.
\end{remark}

By~\cite[Lemma 4.2]{col.van-gee.loo:09}, there are three mutually orthogonal roots in $S^+$ orthogonal to $\alpha$ (say $\alpha_1$, $\alpha_2$, and $\alpha_3$), each of them lying  in a different copy of $A_2$ in $S$.
Furthermore, the four roots divide $\Cross{}(S, \alpha)$, thus inducing the factorization:
\begin{equation}\label{eq:Cross}
  \Cross{}(S, \alpha) = \alpha \,\alpha_1\, \alpha_2\, \alpha_3\, Q,
\end{equation}
where $Q$ is a quintic (irreducible) polynomial (see~\cite[Lemma 4.4]{col.van-gee.loo:09}.) 

\begin{example}\label{ex:cross}
Let $\alpha = d_2 + d_4 + d_5$ and consider the root subsystem  $S$ from~\eqref{eq:yoshida1inD}, 
where the nine positive roots are the listed ones.  It follows that 
\begin{equation*}
  \begin{aligned}
    Q :=
    & d_{1}^{2} d_{2}^{2} d_{3} + d_{1}^{2} d_{2} d_{3}^{2} -  d_{2}^{2} d_{3} d_{4}^{2} -  d_{2} d_{3}^{2} d_{4}^{2} -  d_{1}^{2} d_{2}^{2} d_{5} -  d_{1}^{2} d_{2} d_{3} d_{5} + d_{1} d_{2}^{2} d_{3} d_{5} -  d_{1}^{2} d_{3}^{2} d_{5} 
   + d_{1} d_{2} d_{3}^{2} d_{5} \\ & + d_{1}^{2} d_{4}^{2} d_{5} -  d_{1} d_{2}^{2} d_{5}^{2} -  d_{1} d_{2} d_{3} d_{5}^{2} + d_{2}^{2} d_{3} d_{5}^{2} -  d_{1} d_{3}^{2} d_{5}^{2} + d_{2} d_{3}^{2} d_{5}^{2} + d_{1} d_{4}^{2} d_{5}^{2}
   -  d_{2}^{2} d_{3} d_{4} d_{6}  -  d_{2} d_{3}^{2} d_{4} d_{6} \\ &-  d_{1}^{2} d_{4}^{2} d_{6} + d_{2}^{2} d_{4}^{2} d_{6} + d_{2} d_{3} d_{4}^{2} d_{6} + d_{3}^{2} d_{4}^{2} d_{6} + d_{1}^{2} d_{4} d_{5}d_{6} 
  - d_{1} d_{4}^{2} d_{5} d_{6} + d_{1} d_{4} d_{5}^{2} d_{6} -  d_{4}^{2} d_{5}^{2} d_{6} -  d_{2}^{2} d_{3} d_{6}^{2} \\& -  d_{2} d_{3}^{2} d_{6}^{2} -  d_{1}^{2} d_{4} d_{6}^{2} + d_{2}^{2} d_{4} d_{6}^{2}   + d_{2} d_{3} d_{4} d_{6}^{2} + d_{3}^{2} d_{4} d_{6}^{2} + d_{1}^{2} d_{5} d_{6}^{2} -  d_{1} d_{4} d_{5} d_{6}^{2} + d_{1} d_{5}^{2} d_{6}^{2} -  d_{4} d_{5}^{2} d_{6}^{2}.
  \end{aligned}
\end{equation*}
and $\{\alpha_1,\alpha_2,\alpha_3\}= \{(d_{1} + d_{2} + d_{6}
), (d_{1} + d_{3} + d_{4}), (d_{3} + d_{5} + d_{6})\}$.
\end{example}

\begin{remark}\label{rm:a14tocross}
  By construction, the four roots $\{\alpha, \alpha_1,\alpha_2, \alpha_3\}$  form a root subsystem $R$ of type $\operatorname{A_1}^{\oplus 4}$. Choosing one of these roots determines two root subsytems of type $\operatorname{A_2}^{\oplus 3}$ containing the other three. Furthermore, they are related by the simple reflection induced by our chosen root (see~\cite[Lemma 4.2]{col.van-gee.loo:09}.) Thus, $R$ would induce eight Cross functions (four up to sign), but~\cite[Corollary 4.5]{col.van-gee.loo:09} reveals that this operation produces just one Cross function up to sign, henceforth denoted by  $\Cross{}(R^+)$.  This fact explains the four ways of writing each Cross function as a difference of two Yoshida functions discussed in~\autoref{rm:CrossDifferenceYoshidas}.
\end{remark}

The group $\Wgp$ acts transitively on the Cross functions, so they can all be computed by acting on the one from \autoref{ex:cross}. The action is explicitly described in~\autoref{tab:action}.
There are a total of 270 Cross functions (135 up to sign, listed in~\autoref{tab:crosses}.) Their vanishing loci has the following geometric interpretation.
Recall that an \emph{Eckardt point} on a del Pezzo cubic surface is the point of concurrency of three exceptional curves.
The locus of del Pezzo cubic surfaces with an Eckardt point forms a divisor in $M_{m,3}^\circ$, called the \emph{Eckardt divisor}.
\begin{proposition}\label{prop:cross_eckhard}
  The vanishing locus of the product of all Cross functions in $M_{m,3}^\circ$  is the Eckardt divisor.
\end{proposition}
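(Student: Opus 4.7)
The plan is to use the factorization~\eqref{eq:Cross} together with the $\Wgp$-equivariance of both sides to reduce the statement to a single explicit identification.

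First, observe that for each Cross function, the decomposition $\Cross{}(R^+) = \alpha\,\alpha_1\,\alpha_2\,\alpha_3\,Q$ has four root-factors, each of which is non-vanishing on the parameter space $\P(\fh_6^{\reg})$, and hence on $M_{m,3}^\circ$ via the uniformization~\eqref{eq:uniform}. Therefore the vanishing locus of the product of all 135 Cross functions in $M_{m,3}^\circ$ coincides set-theoretically with the union of the vanishing loci of the 135 quintic factors $Q$. Both this union and the Eckardt divisor are $\Wgp$-invariant closed subsets of $M_{m,3}^\circ$: the former because $\Wgp$ permutes the Cross functions (\autoref{rm:WeylActionYoshidas} and the text following~\autoref{ex:cross}), and the latter because $\Wgp$ acts on the set of tritangent planes. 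Since $\Wgp$ acts transitively on the 135 Cross functions (equivalently, on the $A_1^{\oplus 4}$ subsystems of $\E6$ via~\autoref{rm:a14tocross}) and classically acts transitively on the 45 tritangent planes, it suffices to check that the quintic factor $Q$ of a single Cross function cuts out the Eckardt divisor of a specific tritangent plane; the orbit-size ratio $135/45 = 3$ then predicts that each Eckardt divisor is hit by exactly three quintic factors, which together with surjectivity of the $\Wgp$-equivariant assignment ``Cross function $\mapsto$ tritangent plane'' delivers both inclusions in the proposition.

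For the single verification, I would take the explicit quintic $Q$ from~\autoref{ex:cross} and identify the tritangent triangle associated to its $A_1^{\oplus 4}$ data $\{d_2+d_4+d_5,\, d_1+d_2+d_6,\, d_1+d_3+d_4,\, d_3+d_5+d_6\}$. Using the blow-up model in which the six points on $\P^2$ are $p_i = [1:d_i:d_i^3]$ and the 27 exceptional curves decompose into the six exceptional divisors, the fifteen proper transforms of lines through two points, and the six proper transforms of conics through five points, I would write down the three exceptional curves of the matching tritangent triangle, express the concurrency condition as a polynomial equation in $d_1, \dots, d_6$, and check that this polynomial agrees with $Q$ up to a nonzero scalar. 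Since $Q$ is irreducible of degree five by~\cite[Lemma~4.4]{col.van-gee.loo:09}, matching zero loci at a single generic point suffices to conclude equality of divisors.

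The main obstacle is the explicit dictionary between the $\E6$ root-theoretic data of a Cross function and the three exceptional classes of the corresponding tritangent triangle. This identification is worked out classically in~\cite[\S4]{col.van-gee.loo:09}; once it is invoked, the comparison of polynomials becomes a direct computation that can be automated using the explicit formulas tabulated in~\autoref{sec:appendix1} together with the parametrization of the six blown-up points by $p_i = [1:d_i:d_i^3]$.
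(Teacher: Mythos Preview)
Your proposal is correct and follows essentially the same approach as the paper: reduce to a single explicit identification via $\Wgp$-equivariance, then match the quintic factor $Q$ of one Cross function with the concurrency condition of one tritangent triple. The paper executes the single verification slightly more directly: rather than invoking the root-theoretic dictionary from \cite{col.van-gee.loo:09} to determine which tritangent corresponds to the $A_1^{\oplus 4}$ data, it simply picks the tritangent $\{F_{15},F_{23},F_{46}\}$ (three proper transforms of lines, so concurrency is the vanishing of an explicit $3\times 3$ determinant in the coefficients of $L_{15},L_{23},L_{46}$) and then recognizes this determinant as $-Q$ from \autoref{ex:cross}; the dictionary is thus established by the computation itself rather than imported.
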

\begin{proof}
  It suffices to prove the statement on $\P(\fh_6^{\reg})$, where we can do a direct computation.
  Let $p_i = [1:d_i:d_i^3]$ for $i = 1, \dots, 6$ and distinct $d_i$ and let $X$ be the blow-up of $\P^2$ at $p_1, \dots, p_6$.
  Consider the triple of exceptional curves on the cubic surface $X$ given by the proper transforms of the lines $L_{ij} = \overline{p_i p_j}$ for $(i,j) = (1,5)$, $(2,3)$, and $(4,6)$.
  The equation of $L_{ij}$ is
  \[ d_id_j(d_i+d_j) X - (d_i^2+d_id_j+d_j^2)Y + Z  = 0.\]
  The three lines $L_{15}$, $L_{23}$, and $L_{46}$ are concurrent if and only if the determinant of the matrix
  \begin{equation}\label{eq:detM}
    M = 
    \begin{pmatrix}
      d_1d_5(d_1+d_5) & d_1^2+d_1d_5+d_5^2 & 1 \\
      d_2d_3(d_2+d_3) & d_2^2+d_2d_3+d_3^2 & 1 \\
      d_4d_6(d_4+d_6) & d_4^2+d_4d_6+d_6^2 & 1 \\
    \end{pmatrix}.
  \end{equation}
  vanishes.
  Note that $\det M$ is an irreducible homogeneous quintic polynomial. Its negative is the quintic polynomial from~\autoref{ex:cross}, so it  is a factor of the corresponding Cross function.
\end{proof}

\subsection{Markings, exceptional curves and anticanonical triangles}\label{sec:except-curv-antic}
Let $(X, \E1, \dots, \E6)$ be a marked del Pezzo cubic surface.
Express $X$ as the blow-up of $\P^2$ at $p_1, \dots, p_6$ such that the $\E{i}$ is the exceptional divisor over $p_i$. The marking on $X$ yields a decomposition of the  27 exceptional curves on $X$ into three groups~\cite{cle:1866}:
  \begin{itemize}
\item $\E{i}$: the exceptional divisor over $p_i$, for $1 \leq i \leq 6$;
\item $\F{ij}$: the proper transform of the line through $p_i$ and $p_j$, for $1 \leq i \neq j \leq 6$;
\item $\Gc{j}$: the proper transform of the conic through $\{p_1, \dots, p_6 \} \setminus \{p_j\}$, for $1\leq j\leq 6$.
\end{itemize}
Note that the two indices in $\F{ij}$ are \emph{unordered}, namely $\F{ij} = \F{ji}$. For concrete computations, we always choose indices satisfying $i<j$. 

\begin{definition}
  \label{def:anticanonical triangle}
  An \emph{anticanonical triangle} in $X$ is a triple of exceptional curves whose pairwise intersection numbers are one. 
\end{definition}
\begin{remark} \label{rm:TriangleNames}
  Alternative names include \emph{Eckardt triangles} or \emph{tritangent trios}~\cite[Chapter 9]{dol:12}. The terminology provided above is rooted in a simple fact.   The sum of the triple giving an anticanonical triangle is an anticanonical divisor, i.e.\ the zero locus of a section of the anticanonical bundle.
\end{remark}
  There are 45 anticanonical triangles on a cubic surface. 
On a del Pezzo cubic surface marked as above, they come in two flavors:
\begin{itemize}
\item $x_{ij} = \{\E{i}, \F{ij}, \Gc{j}\}$, for $1 \leq i \neq j \leq 6$.
\item $y_{ijklmn} = \{\F{ij}, \F{kl}, \F{mn} \}$, for a tripartition $\{\{i,j\},\{k,l\},\{m,n\}\}$ of   $\{1, \dots, 6\}$.
\end{itemize}
Note that in the first group of 30 triangles, the indices are \emph{ordered},  namely $x_{ij} \neq x_{ji}$. While doing computations involving the second group of 15 triangles, we choose the indices of the tripartition $\{\{i,j\}, \{k,l\}, \{m,n\} \}$ so that  $i < j$, $k < l$, $m < n$, and $i < k < m$.

\begin{remark}\label{rm:markings}
  We let $\sed$ be the set consisting of the 27 symbols $\E{i}$, $\F{ij}$, and $\Gc{j}$, and $\ted$ be the set consisting of the 45 symbols $x_{ij}$  and $y_{ijklmn}$. Our earlier correspondences give two bijections: one between $\sed$ and the set of 27 exceptional curves on $X$, and the second one between $\ted$ and the set of 45 anticanonical triangles on $X$. The action of $\Wgp$ on the markings of $X$ induces an action on both  $\sed$ and $\ted$.
\end{remark}

The computation of  pairwise intersections described above confirms that each exceptional curve on a marked del Pezzo cubic curve with no Eckardt points meets ten others. Furthermore, such curves come in five pairs. The dual intersection complex of the arrangement of 27 exceptional curves is encoded by the 10-regular \emph{Schl\"afli graph} consisting of 27 vertices, 135 edges and 45 hollow triangles. The action of $\Wgp$ on the set of exceptional curves descends to a transitive action on the graph.

\begin{remark}\label{rm:triangleQuintics}
  The proof of \autoref{prop:cross_eckhard} gives a bijection between the 45 anticanonical triangles and the 45 quintic factors of the Cross functions (up to sign.) This correspondence is equivariant with respect to the  action of $\Wgp$.
  As such, it is generated by the identification $y_{152346} \longleftrightarrow \det(M)$, where $M$ is the matrix from~\eqref{eq:detM}.
More intrinsically, the bijection is characterized by the property that the vanishing locus of the quintic associated to an anticanonical triangle is the locus of marked cubic surfaces where the triangle degenerates to a concurrency point of the  three lines. It is for this reason that we call the 45 quintics in the $\Wgp$-orbit of  $\det M$ the \emph{Eckardt quintics}.

A direct computation reveals that each Eckardt quintic appears as a factor of precisely three Cross functions. Our labeling in~\autoref{tab:crosses} describes each such triple as $\{\Cross{3n}, \Cross{3n+1},\Cross{3n+2}\}$ for $n=0,\ldots, 44$, together with the anticanonical triangle indexing the quintic.
\end{remark}

\section{The anticanonical embedding}\label{sec:anti-canon-emb}

In this  section, we give an explicit $\Wgp$-equivariant description of the anticanonical map of the universal cubic surface, and use this to characterize the tropicalized anticanonically embedded universal cubic surface. The following two field extensions of $\KK$ will play a prominent role:
\begin{equation}\label{eq:fields}
  L := \KK( \Yos{i}\colon 0\leq i \leq 39) \subset  \widehat{L} := \KK(\fh_6) =   \KK(d_1,\ldots, d_6). 
\end{equation}
Here, the parameters $d_1,\ldots, d_6$ are algebraically independent over $\KK$ and form a basis of the $\KK$-vector space $\fh_6^*$, as discussed in~\autoref{sec:equiv-uniform}. Note that $L$ is the fraction field of $M_{m,3}^\circ$ and  $\widehat{L}$ is the fraction field of $\fh_6$. The  inclusion $L \hookrightarrow \widehat{L}$ is  induced by the uniformization map $\P(\fh_6^{\reg}) \rightarrow M_{m,3}^{\circ}$ from~\eqref{eq:uniform}.

The key input in our description is the explicit presentation of the Cox ring of the universal del Pezzo cubic surface given in \cite{ren.sha.stu:16}, which we now recall.
Let $X_L \to \spec L$ be the universal marked cubic surface.
Let $\E{i}$, $\F{ij}$, and $\Gc{j}$ be the 27 exceptional curves of $X_L$ as defined in \autoref{sec:yosh-cross-funct}.
We have an isomorphism
\begin{equation}\label{eq:Pic}
  \Pic(X_L) = \Z \langle H, \E{1}, \dots, \E{6} \rangle,
\end{equation}
where $H$ is the pullback of $\O_{\P^2}(1)$ under the map $X_L \to \P^2_{L}$ that blows down $\E{1}, \dots, \E{6}$.

\begin{definition}\label{def:CoxR} The \emph{Cox ring} of $X_L$ is the $\Z^7$-graded $L$-algebra
\begin{equation}\label{eqn:cox}
  \Cox (X_L) = \bigoplus_{(n_0, \dots, n_6) \in \Z^7} H^0(X_L, n_0K_{X_L} + n_1 \E1 + \dots + n_6 \E6).
\end{equation}
\end{definition}\noindent
Each effective divisor in $X_L$ gives an element of $\Cox(X_L)$, well-defined up to scaling.
For any field extension $L\subset L'$ (such as~\eqref{eq:fields}), we define the Cox ring $\Cox(X_{L'})$ as in~\eqref{eqn:cox},  replacing $L$ by $L'$. We have a natural isomorphism
\begin{equation*}\label{eq:CoxL'} \Cox(X_{L'}) = \Cox(X_L) \otimes_L L'.
\end{equation*}

As in~\autoref{rm:markings} we let  $\sed$ be the 27 element set consisting of symbols $\E{i}$ and $\Gc{i}$ for $1 \leq i \leq 6$ and $\F{ij}$ for $1 \leq i < j \leq 6$ giving a marking on the 27 exceptional curves on $X_L$. 
 The polynomial ring 
 $L[\sed]$  has two natural gradings.
The first one is $\Z$-valued, where each variable has degree one.
The other one is $\Z^7$-valued, and it is induced from the isomorphism  $\Pic(X_L) \cong \Z^7$ from~\eqref{eq:Pic}.
Explicitly, under this grading, we have
\begin{equation}\label{eq:Z^7grading}
  \deg \E{i}\! := e_i\; ; \quad \deg \Gc{i}\! := 2e_0 +e_i-\! \sum_{l=1}^6\! e_l
    \;\; (1\leq i \leq 6)\;;\;\; \deg \F{ij}\!:= e_0 -\! e_i -\! e_j \quad (1\leq i<j\leq 6),
\end{equation}
where $e_0,\ldots, e_6$ are the standard basis elements of $\Z^7$.

The next result give an explicit presentation of $\Cox(X_{\widehat L})$:

\begin{theorem}[{\cite[Proposition~2.2]{ren.sha.stu:16}}]
  \label{thm:universalCoxL}
  We have a $\Wgp$-equivariant surjection
  \[ \widehat L[\sed] \to \Cox(X_{\widehat L})\]
  that sends a variable $E \in \sed$ to a generator of $H^0(X_{\widehat L}, \O(E))$.
  The kernel is generated by 270 quadratic trinomials, all of which are $\Wgp$-conjugates (up to sign) of the following one:
  \[ (d_3-d_4)(d_1+d_3+d_4) \E{2}\F{12} - (d_2-d_4)(d_1+d_2+d_4)\E{3}\F{13} + (d_2-d_3)(d_1+d_2+d_3)\E{4}\F{14}.\]
\end{theorem}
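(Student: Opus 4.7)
The plan is to construct the presentation map explicitly via the blow-up model, verify the given trinomial by direct computation, propagate it through $\Wgp$ to obtain the 270 relations, and conclude by a Hilbert-function comparison. First, I would fix normalized representatives for the 27 generators. Using the uniformization $\P(\fh_6^{\reg}) \to M_{m,3}^\circ$ from~\autoref{sec:equiv-uniform}, the universal cubic $X_{\widehat L}$ comes equipped with an explicit blow-up model at the six points $p_i = [1 : d_i : d_i^3]$. For each variable $E \in \sed$, the corresponding exceptional curve on $X_{\widehat L}$ defines a distinguished section of $H^0(X_{\widehat L}, \O(E))$ (unique up to scalar), pinning down the map $\widehat L[\sed] \to \Cox(X_{\widehat L})$.

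Surjectivity is classical: each graded piece $H^0(X_{\widehat L}, aH - \sum b_i \E{i})$ is identified with degree-$a$ polynomials on $\P^2_{\widehat L}$ vanishing to order at least $b_i$ at $p_i$, and an induction on the intersection $-K_{X_{\widehat L}} \cdot D$ shows the 27 sections monomially generate every such linear system for a smooth cubic del Pezzo.

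To verify the specific trinomial, I would observe that $\E{2}\F{12}$, $\E{3}\F{13}$, and $\E{4}\F{14}$ all lie in $H^0(X_{\widehat L}, H - \E{1})$, a two-dimensional space parametrizing lines in $\P^2$ through $p_1$. The equation of the line $\overline{p_1 p_i}$ is precisely the one appearing in the proof of~\autoref{prop:cross_eckhard}, namely $d_1 d_i(d_1 + d_i)\, X - (d_1^2 + d_1 d_i + d_i^2)\, Y + Z = 0$. A direct $3 \times 3$ determinantal expansion on the coefficients of the three lines for $i=2,3,4$ recovers the displayed coefficients as the $2 \times 2$ cofactors, e.g.\ $(d_3-d_4)(d_1+d_3+d_4)$ for the term $\E{2}\F{12}$. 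Since $\Wgp$ acts compatibly on $\sed$ and on $\widehat L$ via its action on $\fh_6^*$, each $\Wgp$-translate of this relation is again a relation, yielding 270 trinomials (135 up to sign).

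The main obstacle is showing that these trinomials generate the entire kernel, not merely that they lie in it. My approach is a $\Z^7$-graded Hilbert-function comparison: compute $\dim_{\widehat L} \Cox(X_{\widehat L})_{(n_0,\dots,n_6)}$ via Riemann--Roch combined with the vanishing of higher cohomology for nef divisors, and verify that the quotient of $\widehat L[\sed]$ by the trinomial ideal has the same dimension in every multidegree. The delicate point is that syzygies in high multidegrees need not be $\Wgp$-translates of the single displayed relation; they arise as products of it with monomials, so one must invoke a quadratic-generation result for the Cox-ring ideal of a degree-three del Pezzo (or certify it directly via a Gr\"obner basis computation) to reduce this infinite check to a finite, tractable one.
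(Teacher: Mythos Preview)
The paper does not prove this statement itself; it is quoted directly from \cite[Proposition~2.2]{ren.sha.stu:16}. The only contribution the present paper adds is \autoref{rm:Saturation}: in the cited reference the result is established over the ring $\widehat R$ obtained by inverting the 36 roots, and there the trinomial ideal generates the kernel only \emph{up to saturation} by the 27 variables in $\sed$. The paper then remarks that over the field $\widehat L$ this saturation step is unnecessary, and certifies that claim by a \sage\ computation after specializing the parameters $d_i$.

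Your proposal is therefore a genuine from-scratch argument rather than a comparison target. The outline is sound: the determinantal verification of the displayed trinomial is exactly right (three lines through $p_1$ span a two-dimensional space, and the $2\times 2$ cofactors give the stated coefficients), and propagating by $\Wgp$ correctly produces the 270 relations. Your identified ``main obstacle'' --- showing that these 270 quadrics generate the full kernel rather than merely lying in it --- is precisely the saturation issue. The paper sidesteps your proposed Hilbert-function or quadratic-generation argument entirely by delegating the structural work to \cite{ren.sha.stu:16} and closing the gap between $\widehat R$ and $\widehat L$ computationally. If you wanted a self-contained proof, your Gr\"obner-basis certification is the same flavor of argument as the paper's saturation check, just packaged differently; the Riemann--Roch comparison you sketch is more conceptual but, as you note, requires an independent quadratic-generation input that neither this paper nor your proposal supplies without a machine computation.
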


\noindent
Note that the surjection $\widehat L[\sed] \to \Cox(X_{\widehat L})$ is compatible with the $\Z^7$-gradings and the $\Wgp$-actions on both sides.

\begin{remark}\label{rm:Saturation}
  Let $\widehat R \subset \widehat L$ be the subring obtained by inverting the 36 roots of $\Phi^+$ in the polynomial ring $\KK[d_1, \dots, d_6]$.
  We have a universal cubic del Pezzo surface $X_{\widehat R} \to \spec \widehat R$, and we can form the Cox ring
  \[ 
    \Cox (X_{\widehat R}) = \bigoplus_{(n_0, \dots, n_6) \in \Z^7} H^0(X_{\widehat R}, n_0K_{X_{\widehat R}} + n_1 \E1 + \dots + n_6 \E6).
  \]
  In~\cite[Proposition~2.2]{ren.sha.stu:16}, the authors show that we have a surjection
  \[
    \widehat R [\sed] \to  \Cox (X_{\widehat R}),
  \]
  whose ideal is generated by the 270 trinomials above, up to saturation by the 27 variables in $\sed$. However, no such saturation is needed over the field $\widehat L$. Indeed, by specializing the variables $d_i$, a \sage~computation certifies the ideal generated by the trinomials is already saturated.
\end{remark}

\begin{definition}\label{def:anticRing}
  The \emph{anticanonical} ring of $X_L$ is the $\Z$-graded $L$-algebra
\[ A(X_L) = \sum_{n \in \Z} H^0(X_L, -n K_{X_L}).\]
Since $K_X$ is anti-ample, the nonzero graded components of $A(X_L)$ are in non-negative degrees.
\end{definition}

As in~\autoref{rm:markings}. we let $\ted$ be the 45 element set consisting of 30 variables $x_{ij}$ for $1 \leq i \neq j \leq 6$ and 15 variables $y_{ijklmn}$ for distinct tripartitions $\{\{i,j\},\{k,l\},\{m,n\}\}$ of $\{1, \dots, 6\}$. We view $\ted$ as the set of markings of the 45 anticanonical triangles on  $X_L$. The group  $\Wgp$ acts on $\ted$.

The next result gives  a presentation of the anticanonical ring of $X_{\widehat L}$ by analogy with~\autoref{thm:universalCoxL}:

\begin{theorem}
  \label{thm:anticanL}
  We have an $\Wgp$-equivariant surjection
  \[\widehat{L}[\ted] \to A(X_{\widehat L}),\]
whose kernel is generated by an $\Wgp$-equivariant (up to sign) set of 270 linear trinomials and 120 cubic binomials.
  Explicitly, the linear trinomials are the $\Wgp$-conjugates of the following one
  \[ (d_3-d_4)(d_1+d_3+d_4) x_{21} - (d_2-d_4)(d_1+d_2+d_4)x_{31} + (d_2-d_3)(d_1+d_2+d_3)x_{41},\]
  and the cubic binomials are the $\Wgp$-conjugates of the following
  \[
    x_{12}x_{23}x_{31} - x_{13}x_{32}x_{21}.
  \]
\end{theorem}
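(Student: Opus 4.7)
The plan is to realize $A(X_{\widehat L})$ as a subring of the Cox ring of $X_{\widehat L}$ and then deduce the presentation by pullback from \autoref{thm:universalCoxL}. Each anticanonical triangle in $\ted$ corresponds to a squarefree cubic monomial of degree $-K_{X_{\widehat L}}$ in $\widehat L[\sed]$, so sending $x_{ij} \mapsto \E{i}\F{ij}\Gc{j}$ and $y_{ijklmn} \mapsto \F{ij}\F{kl}\F{mn}$ defines a $\Wgp$-equivariant ring homomorphism $\phi \colon \widehat L[\ted] \to \widehat L[\sed]$. The composition $\pi \circ \phi$ with the Cox surjection $\pi$ lands in the subring $A(X_{\widehat L}) = \bigoplus_{n \geq 0} \Cox(X_{\widehat L})_{-nK_{X_{\widehat L}}}$. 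Because a smooth cubic in $\P^3$ is projectively normal, $A(X_{\widehat L})$ is generated in degree one with Hilbert series $(1-t^3)/(1-t)^4$; surjectivity of $\pi \circ \phi$ therefore reduces to verifying that the 45 triangle sections span the four-dimensional space $H^0(X_{\widehat L}, -K_{X_{\widehat L}})$, which follows from the standard description of the 45 tritangent planes as hyperplane sections of $X_{\widehat L} \subset \P^3$.

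For the kernel of $\pi \circ \phi$, I invoke the general fact that for a ring map $\phi\colon R \to S$ and an ideal $J \subset S$, the preimage $\phi^{-1}(J)$ is generated by $\ker\phi$ together with any preimages in $R$ of generators of $J$ that happen to lie in $\phi(R)$. Applied to $J = I_{\mathrm{Cox}}$, this produces the two families of relations in the statement. First, $\phi$ is a monomial map, so $\ker \phi$ is a toric binomial ideal; its generators are cubic binomials, exemplified by $x_{12}x_{23}x_{31} - x_{13}x_{32}x_{21}$, that record the distinct partitions of a squarefree degree $3(-K_{X_{\widehat L}})$ monomial in $\widehat L[\sed]$ into three anticanonical triangles, and whose $\Wgp$-orbit (some of whose members involve $y$-variables) has cardinality 120. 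Second, each Cox trinomial has degree $e_0 - e_a$ for some $a$; multiplying by the unique degree-$(2e_0 + e_a - \sum_l e_l)$ monomial $\Gc{a}$ produces a Cox-degree $-K_{X_{\widehat L}}$ trinomial supported on triangle monomials, whose $\phi$-preimage is the claimed linear trinomial.

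The main obstacle is certifying that these $270+120$ relations generate the full kernel rather than sitting properly inside it. I plan to settle this via a Hilbert series comparison. By $\Wgp$-equivariance, the verification that the 270 linear trinomials span the full 41-dimensional kernel of $\phi_1 \colon \widehat L[\ted]_1 \to H^0(X_{\widehat L}, -K_{X_{\widehat L}})$ reduces to a finite-dimensional linear algebra check, which I would carry out with computer algebra; the resulting quotient is then isomorphic to a polynomial ring $\widehat L[u_1,\dots,u_4]$. A parallel computation shows that the 120 cubic binomials reduce modulo the linear trinomials to cubic polynomials in $u_1,\dots,u_4$ all proportional to the defining cubic of $X_{\widehat L} \subset \P^3$, contributing a single additional relation. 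Matching the resulting Hilbert series $(1-t^3)/(1-t)^4$ against that of $A(X_{\widehat L})$ then forces the stated ideal of relations to coincide with the full kernel.
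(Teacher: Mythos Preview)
Your approach is essentially the paper's: both pull the 270 linear trinomials from the Cox presentation by multiplying each Cox quadric by the unique exceptional-curve variable that brings its degree up to $-K_X$, and both finish by observing that modulo these linear forms the quotient is a polynomial ring in four variables, so the residual kernel is principal and is generated by any cubic binomial that does not vanish there (the paper checks nonvanishing by specializing $(d_1,\dots,d_6)$; you phrase it as a Hilbert-series match).

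Two corrections to your write-up, neither of which breaks the argument but both of which should be fixed. First, the ``general fact'' you invoke about $\phi^{-1}(J)$ is false: take $\phi\colon k[y]\to k[t]$, $y\mapsto t^2$, and $J=(t)$; then $\ker\phi=0$, the generator $t$ is not in $\phi(k[y])=k[t^2]$, yet $\phi^{-1}(J)=(y)$. You do not actually rely on this statement---your Hilbert-series verification is independent of it---so simply drop it and present the $270+120$ relations directly as elements of the kernel. Second, not every Cox quadric has degree $e_0-e_a$, and the complementary variable is not always a $\Gc{a}$: under the $\Wgp$-action the quadric degrees run through the full orbit of $H-\E{1}$ in $\Pic(X)$, and the unique variable $v$ with $\deg(vq)=-K_X$ can be any $\E{i}$, $\F{ij}$, or $\Gc{j}$. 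The paper's formulation---for each of the 270 quadric generators $q$ there is a unique $v\in\sed$ with $vq$ of anticanonical degree---is the correct one.
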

\begin{proof}
    The anticanonical map embeds $X_{\widehat{L}}$ as a cubic hypersurface in $\P^3_{\widehat{L}}$, thus giving an isomorphism
  \begin{equation}\label{eq:isoAcubic}
    A(X_{\widehat L}) \simeq \widehat{L}[X,Y,Z,W] / \langle Q\rangle,
  \end{equation}
  for some cubic polynomial $Q$.     In particular, the anticanonical ring is generated in degree one.

  By  \autoref{thm:universalCoxL}, we have a surjection $\widehat{L}[\sed]_a \to \Cox(X_{\widehat{L}})_a$ on each graded component of degree $a \in   \Z^7$. We are interested in the case $a = (3,-1,-1,-1,-1,-1,-1)$ since $\widehat{L}[\sed]_a = \widehat{L}[\ted]_1$.   For this choice, it follows that  $\Cox(X_{\widehat{L}})_a = A(X_{\widehat{L}})_1$ and
\[ \widehat{L}[\sed]_a = \widehat{L} \langle \E{i}\F{ij}\Gc{j}, \F{ij}\F{kl}\F{mn} \rangle.\]
We let $e_i, f_{ij}, g_j$ in $\Cox(X_{\widehat{L}})_1$ be the images of $\E{i}, \F{ij}, \Gc{j}$, respectively, under the above identifications and maps.  The following commutative diagram 
  \begin{equation}\label{eq:coxtoAnticanonical}
  \begin{CD}
    \widehat{L}[\ted]_1 @= \widehat{L}[\sed]_a \\
    @VVV @VVV \\
    A(X_{\widehat{L}})_1 @= \Cox(X_{\widehat{L}})_a. \\
  \end{CD}
  \end{equation}
ensures that  the products  $e_if_{ij}g_j$ and $f_{ij}f_{kl}f_{mn}$ generate $A(X_{\widehat{L}})$.  We use this to define the surjective map  $\widehat{L}[\ted] \to A(X_{\widehat{L}})$, sending  $x_{ij}$ to $e_if_{ij}g_j$, and $y_{ijklmn}$ to $f_{ij}f_{kl}f_{mn}$.  By construction, the map is $\Wgp$-equivariant and  extends the left vertical arrow in~\eqref{eq:coxtoAnticanonical}.

We let $I$ be the kernel of $\widehat{L}[\ted] \to A(X_{\widehat{L}})$ and $J$ be the kernel of $\widehat{L}[\sed] \to \Cox(X_{\widehat{L}})$. The diagram \eqref{eq:coxtoAnticanonical} yields  $I_1 = J_a$. A degree computation reveals that   for each of the 270 quadric generators $q$ of $J$, there is a unique variable $v \in \sed$ such that $vq$ lies in $J_a$.
For example, for the quadric generator listed in \autoref{thm:universalCoxL}, it is the variable $G_1$.
Therefore, the component $J_a$ is spanned by the $\Wgp$-conjugates of 
\[ (d_3-d_4)(d_1+d_3+d_4) \E{2}\F{12}\Gc{1} - (d_2-d_4)(d_1+d_2+d_4)\E{3}\F{13}\Gc{1} + (d_2-d_3)(d_1+d_2+d_3)\E{4}\F{14}\Gc{1}, \]
which we identify with the linear trinomial in $x_{21}$, $x_{31}$ and $x_{41}$ from  the statement. A \sage~computation, available in the Supplementary material, reveals a total of 270 linear equations in this conjugacy class (up to sign.) These forms generate all linear relations among the chosen global sections of the anticanonical bundle.

From the  map $\widehat{L}[\ted] \to A(X_{\widehat{L}})$, it is easy to check that the cubic $x_{12}x_{23}x_{31} - x_{13}x_{21}x_{32}$ lies in the anticanonical ideal $I$. Therefore, so are its 120 $\Wgp$-conjugates (up to sign.) To conclude, we must show that the 120 cubics and the 270 linear forms generate $I$.

By~\eqref{eq:isoAcubic} we know that  $I$ is principal modulo the 270 linear forms. Thus, any cubic that is nonzero modulo the linear polynomials generates the quotient.
By evaluating at a generic choice of $(d_1,\ldots, d_6)$ in $\P(\fh_6^{\reg})$, we check that this is indeed the case for $x_{12}x_{23}x_{31} - x_{13}x_{21}x_{32}$. 
\end{proof}

\begin{remark}\label{rm:gradingAndAntiCanMap}
A simple inspection of the defining equations confirms that the equivariant map from~\autoref{thm:anticanL} is compatible with the $\Z$- and $\Z^7$-grading on $L[\ted]$ and $A(X_L)$, respectively.
\end{remark}

\textcolor{blue}{Theorems}~\ref{thm:universalCoxL} and~\ref{thm:anticanL} describe the universal Cox ring and the universal anticanonical ring after a base change from $L$ to $\widehat L$. A simple change of variables corresponding to a choice of global sections will allow us to describe the anticanonical ring over $L$, as we now explain.
\autoref{rm:triangleQuintics} gives a  $\Wgp$-equivariant bijection between the set $\ted$ and the set of 45 Eckardt quintics. We fix a choice of signs and denote the quintic corresponding to $x_{ij}$ by $Q_{ij}$ and the one corresponding to $y_{ijklmn}$ by $Q_{ijklmn}$.
Starting from $\ted$, we build a new set $T$ of 45 symbols $X_{ij}$ and $Y_{ijklmn}$, along with a map $T \to A(X_L)$ defined by
\begin{equation}\label{eq:YoshidaAdaptedCoords}
  X_{ij} \mapsto x_{ij} / Q_{ij} \text{ and } Y_{ijklmn} \mapsto y_{ijklmn}/Q_{ijklmn}.
\end{equation}

\begin{remark}\label{rm:actionOnT}
  Recall that $\Wgp$ acts on $\ted$ by permutations and it acts on the 45 quintics $\{Q_{ij}, Q_{ijklmn}\}$ by signed permutations.
  Therefore, $\Wgp$ acts on $x_{ij}/Q_{ij}$ and $y_{ijklmn}/Q_{ijklmn}$ by signed permutations.
  We let $\Wgp$ act on $T$  so that the map \label{eq:YoshidaAdaptedCoords} is equivariant.
  The signs depend on the signs chosen in the bijection betwen $\ted$ and the set of quintics.
  \autoref{tab:action} shows the action explicitly for our choice of signs.
\end{remark}

We use the new variables in $T$ to describe the universal anticanonical ring on the moduli space $M_{m,3}^{\circ}$ without using the uniformization map from~\eqref{eq:uniform}. Coefficients for the equations will involve Cross functions associated to root subsystems of type $A_1^{\oplus 4}$ (see~\autoref{rm:a14tocross}), after fixing a choice of signs that is compatible with the one we picked for $T$ in~\eqref{eq:YoshidaAdaptedCoords}.
\begin{theorem}
  \label{thm:anticanK}
  We have an $\Wgp$ equivariant surjection
  \[ L[T] \to A(X_L)\]
  whose ideal is generated by a $\Wgp$-equivariant set of 270 linear trinomials (up to sign) and 120 cubic binomials (up to sign.) The first group is generated by all $\Wgp$-conjugates of
  \begin{equation*}\label{eq:new_linear}
  \Cross{}(S_1) X_{21} - \Cross{}(S_2) X_{31} + \Cross{}(S_3)X_{41},
\end{equation*}
 whereas the second one is obtained as $\Wgp$-conjugates of the following
  \[
    \Cross{}(S_1')\Cross{}(S_2')\Cross{}(S_3') X_{12}X_{23}X_{31} -   \Cross{}({S_1}'')\Cross{}({S_2}'')\Cross{}({S_3}'') X_{13}X_{21}X_{32}.
  \]
Each $S_i, S_i'$ and ${S_i}''$ is the set of positive roots of an $A_1^{\oplus 4}$ root subsystem, specified in~\eqref{eq:allrootsCross} below.
\end{theorem}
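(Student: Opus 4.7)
The plan is to deduce the statement from Theorem~\ref{thm:anticanL} by executing the change of variables~\eqref{eq:YoshidaAdaptedCoords}. The assignment $x_{ij} \mapsto x_{ij}/Q_{ij}$ and $y_{ijklmn} \mapsto y_{ijklmn}/Q_{ijklmn}$ induces an isomorphism of $\widehat L$-algebras $\widehat L[\ted] \cong \widehat L[T]$ sending $x_{ij}$ to $Q_{ij} X_{ij}$ and $y_{ijklmn}$ to $Q_{ijklmn} Y_{ijklmn}$. Pulling back the presentation from Theorem~\ref{thm:anticanL} through this isomorphism yields an equivalent presentation of $A(X_{\widehat L})$ in the new variables. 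The bulk of the proof consists of rescaling the resulting trinomials and binomials so that their coefficients descend from $\widehat L$ to $L$, and then invoking faithful flatness to close the argument.

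For the linear trinomial, substitution produces
\[
  (d_3-d_4)(d_1+d_3+d_4)\, Q_{21} X_{21} - (d_2-d_4)(d_1+d_2+d_4)\, Q_{31} X_{31} + (d_2-d_3)(d_1+d_2+d_3)\, Q_{41} X_{41}.
\]
Each coefficient is the product of two orthogonal roots and an Eckardt quintic (total degree $7$). To upgrade it into a Cross function (total degree $9$) I would look for a single pair of roots $\beta_1,\beta_2$ whose multiplication through the whole trinomial simultaneously augments the root-product in each coefficient to a set of four mutually orthogonal roots, so that the rescaled coefficient becomes the Cross function of an explicit $A_1^{\oplus 4}$ subsystem $S_i$ via the factorization~\eqref{eq:Cross} and the identification of Remark~\ref{rm:a14tocross}. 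Existence of such a common pair is a finite root-system verification at a single representative; the remaining $269$ trinomials then follow by $\Wgp$-equivariance, with signs pinned down by Table~\ref{tab:action} and the bijection of Remark~\ref{rm:triangleQuintics}.

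For the cubic binomial $x_{12}x_{23}x_{31} - x_{13}x_{21}x_{32}$, substitution yields coefficients $Q_{12}Q_{23}Q_{31}$ and $Q_{13}Q_{21}Q_{32}$, each a product of three Eckardt quintics (total degree $15$). To upgrade the two sides into products of three Cross functions (total degree $27$ each), I would multiply the binomial by a product of twelve roots that partitions into three packets of four mutually orthogonal roots, one per quintic, and obtain the $A_1^{\oplus 4}$ subsystems $S_i'$ and $S_i''$ of the statement. Again, the matching is done at a single representative case and propagated by $\Wgp$-equivariance, giving all $120$ cubic relations.

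Once we have a $\Wgp$-equivariant set of trinomials and binomials with coefficients in $L$ that generates the kernel of $\widehat L[T] \to A(X_{\widehat L})$, faithful flatness of $\widehat L/L$ lets us conclude that these same relations generate the kernel of $L[T] \to A(X_L)$, completing the proof. The main obstacle will be combinatorial bookkeeping: identifying the $A_1^{\oplus 4}$ subsystems $S_i,S_i',S_i''$ with sign conventions compatible with those fixed in Table~\ref{tab:action} and Table~\ref{tab:crosses}, and checking that a common rescaling genuinely exists in each of the linear and cubic cases so that all coefficients descend to $L$ at once rather than individually.
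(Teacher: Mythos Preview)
Your proposal is correct and follows essentially the same approach as the paper: deduce the result from Theorem~\ref{thm:anticanL} via the change of variables~\eqref{eq:YoshidaAdaptedCoords}, multiply the linear trinomial by a common pair of roots (the paper uses $(d_5-d_6)(d_1+d_5+d_6)$) and the cubic binomial by a degree-$12$ monomial in the roots, then propagate by $\Wgp$-equivariance. Your explicit invocation of faithful flatness for the descent from $\widehat L$ to $L$ is a small addition; the paper leaves this step implicit, simply observing that the rescaled coefficients are Cross functions and hence lie in $L$.
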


\begin{proof} The statement follows by a direct computation after pre-composing the map from~\autoref{thm:anticanL} with the change of coordinates $L[T]\to \widehat L[\ted]$ from~\eqref{eq:YoshidaAdaptedCoords}. The kernel is $\Wgp$-invariant by construction. The remainder of this proof describes  its generators.
  
  Consider the following nine collections of positive roots of $A_1^{\oplus 4}$ subroot systems in $\EGp{6}$:
\begin{equation}\label{eq:allrootsCross}
  \begin{minipage}{0.53\textwidth}
\[    \begin{aligned}
    &S_1 := \{ d_3-d_4, d_1+d_3+d_4, d_5-d_6, d_1+d_5+d_6\}, \\
    &S_1' :=\{d_3-d_4, d_2+d_3+d_4, d_2+d_5+d_6, d_5-d_6\},\\
    &{S_1}'' \!\!:=\{d_2-d_4, d_2+d_3+d_4, d_3+d_5+d_6, d_5-d_6\},
  \end{aligned}
\]  \end{minipage}
  \begin{minipage}{0.19\textwidth}
  \[  \begin{aligned}
      &S_2 : = (2 3) (S_1),\\
      &S_2' :=(2 1 3)(S_1'), \\
      &{S_2}''\!\!:= (2 3 1)({S_1}''),
  \end{aligned}\]
    \end{minipage}
  \begin{minipage}{0.18\textwidth}
\[    \begin{aligned}
      &S_3 \! :=(2 4)(S_1),\\
      &S_3' \!:=(2 3 1)(S_1'), \\
      &{S_3}''\!\!:= (2 1 3)({S_1}''),
    \end{aligned}\]
  \end{minipage}
 \end{equation}
  where the second and third column subsystems are obtained from the first column ones by applying suitable permutations in $\Sn{6}$ (for an explicit description, see~\autoref{tab:action}.)

  The linear polynomial in~\autoref{thm:anticanL} written in the new $T$-coordinates   becomes:
 \[ (d_3-d_4)(d_1+d_3+d_4)Q_{21} X_{21} - (d_2-d_4)(d_1+d_2+d_4)Q_{31} X_{31} + (d_2-d_3)(d_1+d_2+d_3)Q_{41} X_{41}.\]
 After multiplying throughout by $(d_5-d_6)(d_1+d_5+d_6)$, these three coefficients become the Cross functions $\Cross{}(S_1)$, $\Cross{}(S_2)$ and $\Cross{}(S_3)$. This gives the linear trinomial in the statement. The kernel of the map is generated by all its $\Wgp$-conjugates.
  
The cubic polynomial in~\autoref{thm:anticanL} undergoes a similar transformation.
After rescaling the variables in $\ted$, we obtain the following binomial cubic in $L[T]$:
\begin{equation}\label{eq:newCubic}
  Q_{12}\,Q_{23}\,Q_{31}\,x_{12}\,x_{23}\,x_{31} - Q_{13}\,Q_{21}\,Q_{32}\,x_{13}\,x_{21}\,x_{32}.
\end{equation}
We now multiply throughout by the following degree 12 monomials in the roots of $\EGp{6}$:
\[
  \begin{split}
    P=&(d_1 + d_2 + d_4)(d_1 + d_3 + d_4)(d_1 - d_4)(d_1 + d_5 + d_6)(d_2 + d_3 + d_4)\\
    &(d_2 - d_4)(d_2 + d_5 + d_6)(d_3 - d_4)(d_3 + d_5 + d_6)(d_5 - d_6)^3
  \end{split}
\]
It follows that $P$ can be written in two ways as a product of three quartics arrising from the root subsystems $S_1',S_2',S_3'$, and ${S_1}'', {S_2}'', {S_3}''$, respectively:
\[P = \prod_{i=1}^3(\prod_{\beta\in S_i'} \beta) = \prod_{i=1}^3(\prod_{\beta\in {S_i}''} \beta).
\]
This factorization has the additional property that when multiplied by the quintics in~\eqref{eq:newCubic}, in the given order, each factor produces the Cross function associated to the four roots in  ${S_i}'$ or ${S_i}''$, respectively. For example, $Q_{123456}\prod_{\beta\in S_1'} \beta  = \Cross{}(S_1')$. The binomial cubic in the statement arises in this way. The remaining 120 are obtained by the action of $\Wgp$. 
\end{proof}

\begin{remark}\label{rm:anticComputations}
  The Supplementary material provides the \sage\ computations of
 Yoshida and Cross functions,  the universal anticanonical ring, and the $\Wgp$ action on these objects. The results are collected in \autoref{sec:appendix1}.
In that notation, the linear polynomial in \autoref{thm:anticanK} is 
\[
  \Cross{116} X_{21} - \Cross{2} X_{31} + \Cross{19} X_{41},
\]
Written in terms of  the Yoshida functions it becomes
\begin{equation}\label{eq:YoshidaP3}
  (\Yos{3} - \Yos{37}) X_{21} - (\Yos{20} - \Yos{8}) X_{31} + (\Yos{3} - \Yos{5}) X_{41}.
\end{equation}
The cubic polynomial in \autoref{thm:anticanK} is
\[
  \Cross{9}\Cross{107}\Cross{2}X_{12}X_{23}X_{31} - \Cross{80}\Cross{116}\Cross{86}X_{13}X_{21}X_{32},
\]
and can be described by means of Yoshida functions as follows
\[
  (\Yos{8}-\Yos{11})(\Yos{5}-\Yos{17})(\Yos{20}-\Yos{8})X_{12}X_{23}X_{31} -  (\Yos{8}-\Yos{3})(\Yos{3}-\Yos{37})(\Yos{19}-\Yos{24})X_{13}X_{21}X_{32}.
\]
We generate the anticanonical ideal by applying the $\Wgp$-action described explicitly in~\autoref{tab:action} to these two polynomials. This computation will be essential to prove~\autoref{thm:anticanonicalNoLines} and to describe the arrangement of metric trees from~\autoref{thm:Naruki}, including the data in~\autoref{tab:treeLabeling}. 
\end{remark}

\begin{remark}\label{rm:basisForP3} Over the generic point $\spec \widehat{L}$, the solution set to the 270 linear equations admits a basis where the entries lie in $\hat{R}$. We record it as a $4\times 45$ matrix. A simple computation available in the Supplementary material allows us to re-express each entry as a Laurent monomial in Yoshida and Cross functions. The $4\times 4$-minor of this matrix with columns indexed by the anticanonical triangles $\{x_{12}, x_{13}, x_{21}, x_{23}\}$ has value:
\[
    \frac{\Yos{21}\,\Yos{22}^4\;\Yos{28}\,\Yos{29}\,\Yos{30}^2\,\Yos{31}}
      {\Cross{106}\Cross{11}\Cross{115}\Cross{78}\Yos{17}^2\,\Yos{19}\,\Yos{23}^2\,\Yos{27}^2}.
      \]
      Therefore, a basis for the $\P^3_{\KK}$ linear span of each $X\to\spec \KK$ can be obtained from that of $\P^3_{\hat{R}}$ by specialization of the Yoshidas and Cross functions as long as these do not vanish.
\end{remark}

By construction, if we pick generic $\KK$-values for the Yoshida functions $\Yos{}\in \P^{39}$, then the associated cubic surface $X_{\Yos{}}$ is embedded in $\P^{44}$ by specilization of the equations from~\autoref{thm:anticanK}. For tropicalization purposes discussed in~\autoref{sec:trop-moduli-space}, we must determine these genericity conditions. The following result provides a partial answer to this question:
      \begin{lemma}\label{lm:alwaysGeneric}
 On the set $U = \{\Yos{} \in \GG{40}/\GG{} : \Cross{}\in \GG{135}/\GG{}\}$, each $X_{\Yos{}} \subset \P^{44}_{\KK}$ is determined by specializing the equations from~\autoref{thm:anticanK}.
\end{lemma}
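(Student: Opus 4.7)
The plan is to verify that on $U$ the specialization of the equations in~\autoref{thm:anticanK} is well-defined, does not degenerate, and cuts out exactly the anticanonical embedding of $X_{\Yos{}}$ in $\P^{44}$. The central observation is that every coefficient appearing in both families of equations (linear trinomials and cubic binomials) is a Cross function or a product of Crosses. By~\autoref{rm:CrossDifferenceYoshidas}, each Cross function is a difference of two Yoshida functions, so each coefficient extends to a regular function on all of $\P^{39}$. Consequently, at every $\Yos{}\in \P^{39}$ the equations have well-defined coefficients, and the defining condition of $U$ guarantees that no Cross coefficient vanishes when we restrict to this set.

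First I would treat the 270 linear trinomials. The explicit $4\times 4$ minor of the solution matrix recorded in~\autoref{rm:basisForP3} is a Laurent monomial in the Yoshida and Cross functions, so it is a unit on $U$. Consequently, the rank of the linear system stays equal to $41$ throughout $U$, and its solution locus is a $\P^3_{\KK}\subset \P^{44}_{\KK}$ for every $\Yos{}\in U$. This $\P^3$ coincides with the linear span of $X_{\Yos{}}$ determined by the specialization of the basis in that remark.

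Next I would verify that the 120 cubic binomials, once restricted to this $\P^3$, cut out $X_{\Yos{}}$ itself. Since no Cross vanishes on $U$, each binomial retains both of its monomial terms and remains a genuine cubic relation rather than collapsing to a monomial equation. The universal anticanonical family $X_L\to \spec L$ is flat and is cut out by the equations of~\autoref{thm:anticanK}; because all the coefficients extend as regular functions to $U$, that presentation describes a flat family $X_U\to U$ whose fiber at $\Yos{}$ is the specialized subscheme, namely the anticanonical image of $X_{\Yos{}}$ in $\P^{44}_{\KK}$.

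The main technical obstacle will be ruling out spurious components in the zero locus of the specialized ideal. I would resolve this by combining the two ingredients above: the constant rank of the linear part confines the specialized zero locus to the $\P^3$ spanned by $X_{\Yos{}}$, and flatness of the universal family together with the irreducibility of the generic fiber forces the cubic on this $\P^3$ to remain irreducible of the correct degree, so no extraneous components arise.
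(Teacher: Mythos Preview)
Your treatment of the linear part is fine and matches the paper: the explicit $4\times 4$ minor from \autoref{rm:basisForP3} is a Laurent monomial in Yoshidas and Crosses, so it is a unit on $U$ and the linear system has constant rank, cutting out a $\P^3$ fiberwise.

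The gap is in the cubic part. You argue that since no Cross vanishes on $U$, the binomial $\Cross{}(S_1')\Cross{}(S_2')\Cross{}(S_3')\,X_{12}X_{23}X_{31}-\Cross{}(S_1'')\Cross{}(S_2'')\Cross{}(S_3'')\,X_{13}X_{21}X_{32}$ ``remains a genuine cubic relation.'' That only says the polynomial is nonzero in $\KK[\ted]$; it does \emph{not} show the polynomial is nonzero after restriction to the $\P^3$ cut out by the $270$ linear forms. A nonzero cubic on $\P^{44}$ can vanish identically on a linear subspace, and if that happened here the specialized ideal would define all of $\P^3$ rather than a cubic surface. Your subsequent appeal to flatness is then circular: you assert that the specialized equations ``describe a flat family $X_U\to U$,'' but flatness of the scheme cut out by the equations is exactly what is in question, and extending coefficients to regular functions on $U$ does not by itself yield it. Flatness of the \emph{actual} anticanonical family $X_U\to U$ is true but tells you nothing about the polynomial obtained by restricting the binomial to $\P^3$.

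The paper closes this gap by direct computation. It writes points of the $\P^3$ in the basis $\{v_0,\dots,v_3\}$ of \autoref{rm:basisForP3}, substitutes into the representative cubic binomial, and obtains a cubic $f\in\KK[a_0,\dots,a_3]$ with exactly eight monomials whose coefficients are Laurent monomials in Yoshidas and Crosses, hence units on $U$; this gives nonvanishing. For irreducibility it argues that the support of $f$ forces any factorization to have the shape $(a_0+Ba_1+Ca_2+Da_3)(a_0a_3+Ea_1a_2)$, and matching coefficients produces a binomial identity in Yoshidas and Crosses that simplifies to a Laurent monomial equal to zero, which is impossible on $U$. If you want to keep a flatness-style argument, you still need the explicit nonvanishing step; once you know the restricted cubic is nonzero in every fiber, the family of cubic hypersurfaces in the $\P^3$-bundle is automatically flat and you can compare Hilbert polynomials, bypassing the irreducibility computation.
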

\begin{proof}  Note that $U$ arises from the ring $\hat{R}$ from~\autoref{rm:Saturation}. \autoref{rm:basisForP3} ensures the linear trinomials determine $\P_{\KK}^3$ for $\Yos{} \in U$. 
      To address the validity of the representing binomial cubic equation in the statement on $U$, it is enough to certify that this equation, when restricted to $\P^3_{\KK}$, it remains irreducible and does not vanish everywhere. We write each point in $\P^3_{\KK}$ as a linear combination of our basis $\{v_0,\ldots v_3\}$ with scalars $a_0,\ldots, a_3$. The restricted cubic becomes a cubic $f\in \KK[a_0,\ldots, a_3]$ with exactly eight monomials (all of whose are extremal):
      \[a_0a_1a_2, a_1^2a_2, a_1a_2^2, a_0^2a_3, a_0a_1a_3, a_0a_2a_3, a_1a_2a_3, a_0a_3^2.\]
      Its coefficients are Laurent monomials in Yoshida and Cross functions. Therefore, our original cubic does not vanish along $\P^3_{\KK}$, as we wanted to show.

      To show that the cubic $f \in \KK[a_0,\ldots, a_3]$ remains irreducible we argue by contradiction. A simple inspection of its support forces any factorization to be of the form
      \[f = A( a_0 + B\,a_1 + C\,a_2 + D\,a_3)\, (a_0a_3 + E\,a_1a_2).\]
      A comparison of the resulting coefficients on each side forces three binomial identities on the Yoshida and Cross functions. A simple computation with \sage~allows to re-express one of them as a Laurent monomial in the Yoshida and Cross functions, so this identity never holds when $X_{\Yos{}}$ is smooth and has no Eckardt points. 
\end{proof}

\subsection{The 27 lines on the universal cubic del Pezzo} \label{sec:27-lines-universal} The characterization of the 27 lines on each fiber of $X_{L}\to \spec L$ extends to the universal cubic del Pezzo. In this section, we focus on two particular properties that will play a prominent role in~\autoref{sec:comb-types-tree}. We start by discussing the following classical statement:
\begin{lemma}\label{lm:coverLtoP1} Each  line on a smooth del Pezzo cubic surface admits a 2-to-1 map to $\P^1$.
\end{lemma}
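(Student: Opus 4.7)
The strategy is to realize the 2-to-1 map as the restriction to $L$ of the conic bundle $\pi_L \from X \to \P^1$ associated to $L$. First, we use the standard (degree-one) anticanonical embedding $X \hookrightarrow \P^3$, under which each exceptional curve becomes an honest line. The pencil $\Lambda \simeq \P^1$ of hyperplanes in $\P^3$ containing $L$ cuts out on $X$ divisors of the form $L + Q_H$, with the residual conic $Q_H$ lying in $|-K_X - L|$. The numerical identities
\[
(-K_X - L)^2 = K_X^2 + 2K_X \cdot L + L^2 = 3 - 2 - 1 = 0 \qquad \text{and} \qquad -K_X \cdot (-K_X - L) = 2
\]
confirm that this is the expected conic bundle pencil, with members of degree two.

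The next step is to verify that $\pi_L$ is in fact a morphism, i.e.\ that the pencil $|-K_X - L|$ is base-point-free. Since any two distinct members of the pencil meet only along $L$, any base point must lie on $L$. We rule such points out via the vanishing $h^0(X, -K_X - 2L) = 0$: an effective representative would have $(-K_X)$-degree $1$ and therefore be a single exceptional curve $L'$, but then $-K_X \equiv 2L + L'$ intersected with $L$ yields $L' \cdot L = 3$, contradicting the standard bound $L' \cdot L \leq 1$ for distinct exceptional curves on a cubic del Pezzo (with the case $L' = L$ excluded since $-K_X \equiv 3L$ would force $-K_X \cdot L = -3 \neq 1$).

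With base-point-freeness in hand, the restriction $\pi_L|_L \from L \to \P^1$ is a morphism whose degree equals the intersection of $L$ with a general fiber:
\[
\deg(\pi_L|_L) \;=\; L \cdot Q_H \;=\; L \cdot (-K_X - L) \;=\; -K_X \cdot L - L^2 \;=\; 1 - (-1) \;=\; 2,
\]
which gives the desired 2-to-1 map. The only nontrivial step is the base-point-freeness argument in the middle paragraph; once that is settled, the other steps are immediate bookkeeping with intersection numbers on $X$.
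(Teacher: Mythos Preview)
Your argument is correct and constructs exactly the same map as the paper: both use the pencil of planes through the line and its residual conics, though the paper packages the restriction to $\ell$ as the Gauss map $p \mapsto T_p(X)$ (which for $p \in \ell$ lands in that pencil), while you compute the degree directly via $L \cdot (-K_X - L) = 2$. One small tightening: showing $h^0(-K_X - 2L) = 0$ only rules out $L$ as a fixed component, not isolated base points on $L$; to finish, observe that for $H \neq H'$ the conics $Q_H \subset H$ and $Q_{H'} \subset H'$ can share no component (any common curve would lie in $H \cap H' = L$, which you have excluded), so $Q_H \cdot Q_{H'} = (-K_X - L)^2 = 0$ forces $Q_H \cap Q_{H'} = \emptyset$ and the base locus is empty.
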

\begin{proof} Consider the anticanonical embedding of $X$ in $\P^{44}$ and view its linear span as $\P^3$. Given a line $\ell$ in $X$, we define
  \[\mathcal{S} = \{\pi \colon \pi \text{ plane in }\P^3, \ell\subset \pi\} \subset \operatorname{Gr}(3,45).
  \]
  By construction, $\mathcal{S}$ is a two-dimensional vector space so  $ \P(\mathcal{S}) \simeq \P^1 $.  Each element $\pi$ of $\mathcal{S}$ produces a curve in $X$, namely the residual plane quadric $C_{\pi} := \overline{(\pi\cap X)\smallsetminus \ell} \subset \pi$. The intersection $C_{\pi}\cap \ell$ consists of two points in $\ell$ (up to multiplicity.)

  The smoothness of $X$ yields a map $X\to \operatorname{Gr}(3,45)$ sending each point $p$ in $X$ to its (translated) tangent plane $T_p(X)$. By  construction, $T_p(X)\in \mathcal{S}$, so it defines a 2-to-1 cover $\varphi\colon \ell\mapsto \P^1$ via $\varphi(p) = [T_p(X)]$.   Each of the five tritangent planes in $X$ containing $\ell$ lies in $\mathcal{S}$ and produce five marked points in $\P^1$. The fiber over each marked point $[\pi]$ is the pair of points in $C_{\pi}\cap \ell$. These points are distinct if $X$ has no Eckardt points.
\end{proof}

The 270 linear trinomials described in~\autoref{thm:anticanK} determine a $\P^3_L$ in $\P^{44}_L$, namely, the linear span of $X_L$ in $\P^{44}_L$. The exceptional curves on $X_L$ become the 27 lines in $X_L$.
Their defining equations can be determined as  follows. Each exceptional curve is contained in exactly five anticanonical triangles. Since each triangle corresponds to a variable of the polynomial ring $L[T]$, each exceptional curve is the vanishing locus of precise five variables in $T$.
For example, the curve $\E{1}$ is the vanishing locus of $X_{12}, \ldots, X_{16}$. Furthermore, the node  $\E{1}\cap \F{12}$ lies in the intersection of the nine hyperplanes indexed by $\{X_{1k}\colon k\neq 1\}\cup \{ X_{21}\}\cup \{Y_{123456}, Y_{123546}, Y_{123645}\}$.   The action of $\Wgp$ allows us to extend this characterization from $\E{1}$ to the remaining 26 lines. We conclude:
\begin{corollary}\label{cor:BoundaryPoints}
  In the absence of Eckardt points, any point on a line on an anticanonically embedded cubic surface $X\subset \P^{44}$ lies in exactly nine coordinate hyperplanes if it is an intersection point of two lines, and on five coordinate hyperplanes otherwise.
  \end{corollary}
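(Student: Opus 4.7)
My plan is to reduce the statement to a combinatorial count using the correspondence between the 45 coordinate hyperplanes of $\P^{44}$ and the 45 anticanonical triangles on $X$ from~\autoref{sec:except-curv-antic}. Indeed, under the anticanonical embedding given by~\autoref{thm:anticanK}, the coordinate hyperplane $\{X_T = 0\}$ (for $T \in \ted$) pulls back to the divisor cut out by the section of $-K_X$ supported on the triangle $T = \ell_1 + \ell_2 + \ell_3$. Hence a point $p \in X$ lies on $\{X_T = 0\}$ if and only if one of the three lines of $T$ passes through $p$, so the count of hyperplanes through $p$ reduces to the number of anticanonical triangles containing some line through $p$.

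Two well-known combinatorial facts then drive the count. First, each exceptional curve on $X$ belongs to exactly five anticanonical triangles, which is immediate from the explicit lists of $x_{ij}$ and $y_{ijklmn}$: for instance, $\E{i}$ appears only in the triangles $x_{ik}$ for $k\neq i$, and $\F{ij}$ appears in $x_{ij}$, $x_{ji}$, together with the three $y$-triangles whose tripartition contains $\{i,j\}$. Second, two intersecting lines on $X$ lie in a unique common tritangent plane: the plane they span meets $X$ in a cubic curve whose residual to $\ell + \ell'$ is a single line, which completes the triangle. Writing $A(\ell)$ for the set of five triangles containing $\ell$, inclusion-exclusion then yields both cases of the corollary. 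If $p$ lies on a unique line $\ell$, the contributing triangles are exactly $A(\ell)$, giving five hyperplanes. If $p = \ell \cap \ell'$, the Eckardt-free hypothesis guarantees no third line on $X$ passes through $p$, so the contributing triangles form $A(\ell) \cup A(\ell')$; since $|A(\ell) \cap A(\ell')| = 1$, the count is $5 + 5 - 1 = 9$.

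The $\Wgp$-equivariance of the setup lets us reduce the verification to the two illustrative cases already described in the excerpt (the five triangles $x_{1k}$ through $\E{1}$, and the nine triangles incident to $\E{1} \cap \F{12}$). There is no real obstacle beyond bookkeeping, but the Eckardt-free hypothesis is essential: at an Eckardt point three concurrent lines $\ell_1, \ell_2, \ell_3$ themselves form an anticanonical triangle $T^*$, so every pairwise and triple intersection of $A(\ell_1)$, $A(\ell_2)$, $A(\ell_3)$ reduces to $\{T^*\}$, and inclusion-exclusion gives $3 \cdot 5 - 3\cdot 1 + 1 = 13$ hyperplanes through $p$ instead of $9$. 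This explains why the corollary fails precisely on the Eckardt divisor of~\autoref{prop:cross_eckhard}.
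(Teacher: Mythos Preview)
Your proof is correct and follows essentially the same approach as the paper: both use the bijection between coordinate hyperplanes and anticanonical triangles, the fact that each exceptional curve lies in exactly five triangles, and the $\Wgp$-action to reduce to the representative cases $\E{1}$ and $\E{1}\cap\F{12}$. Your version is somewhat more explicit, spelling out the inclusion--exclusion and the uniqueness of the common tritangent plane, and your closing computation that an Eckardt point lies on $13$ coordinate hyperplanes is a nice addition not present in the paper.
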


\subsection{The boundary of an anticanonical tropical cubic surface}
\label{sec:bound-antic-trop} The proof of~\autoref{thm:anticanonicalNoLines} relies heavily on the rigid combinatorics of the boundary  of each anticanonical tropical cubic surface. By design, the universal embedding from~\autoref{thm:anticanK} ensures that the boundary of $X_L$ consists of precisely 27 lines. Tropicalization  will turn this into an arrangement of tropical lines in the boundary of $\Trop X \subset \TPr^{44}$:

\begin{lemma}\label{lm:boundaryTX} Given an anticanonical triangle $t$ in $\ted$, the  intersection of  $X\subset \P^{44}$ and the hyperplane associated to $t$ is the union of the lines in $\sed$ contained in $t$. The same holds for the tropical surface $\Trop X \subset \TPr^{44}$. 
\end{lemma}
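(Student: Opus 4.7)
My plan is to reduce both assertions to the explicit description of the anticanonical embedding obtained in \autoref{thm:anticanL} (or equivalently \autoref{thm:anticanK}).

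For the algebraic part, the key fact I would invoke is that each coordinate $X_t$ with $t\in\ted$ corresponds, on $X$, to a section of $-K_X$ whose zero divisor is precisely the triangle $t$: $x_{ij}$ maps to $e_i f_{ij} g_j$, supported on $E_i\cup F_{ij}\cup G_j$, and $y_{ijklmn}$ maps to $f_{ij} f_{kl} f_{mn}$, supported on $F_{ij}\cup F_{kl}\cup F_{mn}$. Since $X_t$ is the pullback to $X$ of the $t$-th linear coordinate on $\P^{44}$, the scheme-theoretic intersection $X \cap \{X_t=0\}$ is reduced, one-dimensional, and supported exactly on the union of the three exceptional curves contained in $t$. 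This yields the first claim.

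For the tropical part, I would interpret the hyperplane associated to $t$ in $\TPr^{44}$ as the boundary stratum $\{w_t = +\infty\}$, which is the tropicalization of the torus-invariant divisor $H_t = \{X_t = 0\} \subset \P^{44}$. Since $\dim X = 2$ and $\dim(X \cap H_t) = 1$, the intersection is proper inside $\P^{44}$, and the standard compatibility of tropicalization with proper intersections by a toric boundary divisor gives
\[
  \Trop X \,\cap\, \{w_t = +\infty\} \;=\; \Trop\bigl(X \cap H_t\bigr).
\]
Combining this with the algebraic part and the functoriality of tropicalization under closed immersions, the right-hand side decomposes as $\bigcup_{\ell \in t} \Trop \ell$, the union of the three metric trees associated to the lines in $t$.

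I do not anticipate a substantive obstacle. The only point that requires care is the clean identification of the relevant boundary stratum with $\Trop H_t$ together with the justification that tropicalization commutes with the proper intersection $X \cap H_t$; both become straightforward once one invokes the reducedness and one-dimensionality of $X \cap H_t$ established in the algebraic step, together with the compatibility of tropicalization with the finite union of the three $\P^1$-components of $X \cap H_t$.
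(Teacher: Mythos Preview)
Your proposal is correct and follows essentially the same route as the paper. The paper's proof is a one-line appeal to the definition of the markings $\ted$ and $\sed$ together with the $\Wgp$-action, illustrated by the example $X\cap\{X_{12}=0\}=\E{1}\cup\F{12}\cup\Gc{2}$ and its tropical analogue; you are simply making explicit the two ingredients the paper leaves implicit, namely that the coordinate $X_t$ restricts on $X$ to the anticanonical section cutting out the triangle $t$ (from \autoref{thm:anticanL}), and that the extended tropicalization intersected with the boundary stratum $\{w_t=\infty\}$ agrees with the tropicalization of $X\cap H_t$.
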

\begin{proof}
  The result follows by the definition of the markings $\ted$ and $\sed$. For example,
\[  X \cap \{X_{12}= 0\} =  \E{1} \cup  \F{12} \cup  \Gc{2}\; \text{ and } \;  \Trop X \cap \{X_{12}=\infty\} = \Trop \E{1} \cup \Trop \F{12} \cup \Trop \Gc{2}.
\]
  The action of $\Wgp$ gives a similar identity for the other anticanonical coordinate hyperplanes.
\end{proof}

Each of the 27 tropical lines at infinity is a metric balanced tree with prescribed directions for its leaf edges (see~\autoref{def:tropLines} for a more precise statement.) Thus, the boundary of $\Trop X$ is an \emph{arrangement of metric trees}.
Our next result shows that the combinatorics of this tree arrangement matches that of the intersection complex of the 27 lines in $X$. For this reason, we refer to an intersection point of two boundary tropical lines as a \emph{nodal point} of the boundary of $\Trop X$. 
\begin{lemma}\label{lm:IntersectionBoundaryLines} Let $X$ be a smooth cubic del Pezzo surface without Eckardt points viewed in $\P^{44}$ via the anticanonical embedding. Then,   the 27 classical lines  in $X$ tropicalize to distinct trees in $\TPr^{44}$. Furthermore, two such trees intersect if and only if their classical counterparts do.
\end{lemma}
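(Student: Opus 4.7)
The plan is to leverage~\autoref{cor:BoundaryPoints} together with the incidence combinatorics between the 27 exceptional curves and the 45 anticanonical triangles encoded in $\sed$ and $\ted$ (equivalently, the Schl\"afli graph structure). The whole argument is essentially a bookkeeping exercise tracking which coordinate hyperplanes of $\P^{44}$ can be at infinity along a classical line and its tropicalization.

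For the distinctness claim, I would attach to each line $\ell \in \sed$ the subset $T_\ell \subset \ted$ of the five anticanonical triangles containing $\ell$. From~\autoref{cor:BoundaryPoints} and~\autoref{lm:boundaryTX}, the coordinate $X_t$ vanishes identically on $\ell$ precisely when $t \in T_\ell$, so $\Trop \ell$ is contained in the coordinate stratum $S_\ell := \{p \in \TPr^{44} : X_t(p) = \infty \text{ for all } t \in T_\ell\}$. Each edge of the Schl\"afli graph belongs to exactly one triangle (there are 135 edges and 45 triangles, each contributing three edges), so any two distinct lines share at most one common triangle. Hence $T_{\ell_1} \neq T_{\ell_2}$ whenever $\ell_1 \neq \ell_2$, and the tropical trees $\Trop \ell_1$ and $\Trop \ell_2$ lie in minimal coordinate strata of $\TPr^{44}$ indexed by distinct 5-element subsets of $\ted$, forcing them to be distinct.

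For the intersection criterion, one direction is immediate: if $p \in \ell_1 \cap \ell_2$, then $\trop(p) \in \Trop \ell_1 \cap \Trop \ell_2$. For the converse I would argue by contrapositive. Assume $\ell_1 \cap \ell_2 = \emptyset$; since any triangle containing both lines would force them to intersect at its vertices, we have $T_{\ell_1} \cap T_{\ell_2} = \emptyset$. A purported point of $\Trop \ell_1 \cap \Trop \ell_2$ would have all ten coordinates indexed by $T_{\ell_1} \cup T_{\ell_2}$ equal to $+\infty$. However,~\autoref{cor:BoundaryPoints} limits the number of coordinate hyperplanes containing a point on a classical line to either five (generic point) or nine (a node $\ell_1 \cap \ell_3$ for some $\ell_3 \in \sed$). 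At a generic point of $\ell_1$, only the coordinates in $T_{\ell_1}$ are at infinity, so none in $T_{\ell_2}$ are. At a node $p = \ell_1 \cap \ell_3$, the nine coordinates at infinity correspond to $T_{\ell_1} \cup T_{\ell_3}$; for this set to contain $T_{\ell_2}$, one would need $T_{\ell_2} \subseteq T_{\ell_3}$, which by cardinality forces $T_{\ell_2} = T_{\ell_3}$. By the distinctness step, this gives $\ell_2 = \ell_3$, contradicting $\ell_1 \cap \ell_2 = \emptyset$.

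The main subtlety I anticipate is ensuring that $\Trop \ell \subset \TPr^{44}$ equals the coordinate-wise image $\{\trop(p) : p \in \ell(\KK)\}$ and does not acquire extra limit points that could escape the constraints above. Since $\ell \cong \P^1$ is a complete curve and the coordinate-wise valuation map $\P^{44}(\KK) \to \TPr^{44}$ is continuous in the relevant topology, the image is already closed in $\TPr^{44}$, so no additional care is needed. Alternatively, one may tropicalize the image of $\ell$ inside the 40-dimensional toric subvariety of $S_\ell$ defined by the non-identically-vanishing coordinates and verify that the stratification structure used above is intrinsic; this sidesteps any worry about extraneous limit points and reduces the argument to the combinatorics of $T_\ell$.
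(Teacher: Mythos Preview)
Your proof is correct and follows essentially the same approach as the paper's: both arguments attach to each line the 5-element set $T_\ell\subset\ted$ of triangles containing it, observe these sets are pairwise distinct (yielding distinctness of the trees), and then for non-intersecting $\ell_1,\ell_2$ use that $T_{\ell_1}\cap T_{\ell_2}=\emptyset$ to force a purported common tropical point to have at least ten $\infty$-coordinates, contradicting \autoref{cor:BoundaryPoints}. Your treatment is in fact a bit more explicit than the paper's---you spell out the node case $T_{\ell_2}\subseteq T_{\ell_1}\cup T_{\ell_3}\Rightarrow T_{\ell_2}=T_{\ell_3}$ and you flag the issue of whether $\Trop\ell$ coincides with the pointwise image, which the paper leaves implicit---but the underlying argument is the same.
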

\begin{proof}
By~\autoref{cor:BoundaryPoints}, each classical line in $X$ lies in the intersection of the five hyperplanes determined by the anticanonical triangles containing the corresponding line. The same holds for their tropicalization. These 27 quintuples of hyperplanes are all distinct. Hence, so are the 27 trees.

  The statement regarding the pairwise intersection of all trees follows from the fact that if two classical lines do not meet, then the set of anticanonical triangles containing each one of them is disjoint. Our previous discussion characterizing a tree in terms of the five hyperplanes containing it implies that any intersection point between the tropicalization of two disjoint lines will have at least ten  coordinates with value $\infty$. This contradicts the description of the boundary from~\autoref{lm:boundaryTX}. We conclude that the intersection complex of the tropical and classical boundaries agree.
\end{proof}

\section{The Bergman fan of $\EGp{6}$ and the tropical Naruki space}\label{sec:bergman-fan-Naruki-fan}

Tropical projective varieties are obtained from closed subvarieties of projective space via coordinatewise valuations. Our object of interest in this section is the Naruki space $M_{m,3}^*$, which gives a compactification of the moduli space of marked smooth cubic surfaces, and its tropical counterpart: the tropical Naruki space $\Naruki$ from~\autoref{thm:extraLinesApex}. The later was introduced in \cite{hac.kee.tev:09}, and computed explicitly in~\cite[Section 6]{ren.sam.stu:14} and~\cite[Section 3]{ren.sha.stu:16}. In what follows, we describe the construction of $\Naruki$ from~\cite{ren.sam.stu:14}, and endow it with a fan structure suitable for determining valuations of Yoshida and (enough) Cross functions. This information is crucial to determine the combinatorial structure of tropical stable cubic del Pezzo surfaces in $\TPr^{44}$ by means of~\autoref{thm:anticanK}. It plays a central role in the proof of~\autoref{thm:anticanonicalNoLines}.

By~\eqref{eq:naruki_embedding}, $M_{m,3}^*$ admits a closed embedding in $\P^{39}$ via a (signed) choice of 40 Yoshida functions $\Yos{0}, \dots, \Yos{39}$.
The \emph{tropical Naruki space} $\mathcal N$ equals
\begin{equation}\label{eq:tropNS}
  \mathcal N  = \trop \left(M_{m,3}^*\right) \subset \TPr^{39}.
\end{equation}
 We  realized the above closed embedding as the image of the map 
\begin{equation}\label{eq:lm}
  \P(\fh_6) \overset{\ell}\dashrightarrow   \P^{35} \overset{m}\dashrightarrow \P^{39},
\end{equation}
where the map $\ell$ is linear and the map $m$ is monomial.
The 36 coordinates defining $\ell$ are the 36 positive roots $\Phi^+$ of $\fh_6$ (up to sign.) In turn, the $40$ coordinates defining $m$ are (signed) square-free degree nine monomonials in these roots (see~\autoref{sec:yosh-cross-funct}.) Our choice of signs is given in Tables~\ref{tab:roots} and \ref{tab:yoshida}. For tropicalization purposes, we represent $m$ via its $40\times 36$-matrix of exponents  (with $0/1$ entries.) We refer to it as the \emph{Yoshida matrix}.  The map~\eqref{eq:lm} is well-defined on $\P(\fh_6^{\reg})$, i.e., in the complement of the $\EGp{6}$-reflection arrangement in $\P^5$.

\begin{remark}\label{rm:index3oFYoshidasInDs} A simple computation in \sage, available in the Supplementary material, confirms that the  Yoshida matrix  has rank 16.
  Furthermore, its rows span a sublattice of $\Z^{36}$ of index three. This implies that, when analyzing if products of roots yield a monomial in the Yoshida functions, we will often need to use cube-roots of Yoshidas. 
\end{remark}

The linear-monomial factorization of the map in~\eqref{eq:lm} is ideally suited for tropicalization~\cite[Theorem 3.1]{dic.fei.stu:07} when we restrict our map to $\P(\fh_6^{\reg})$. The image of the tropical map $\trop(\ell)$  becomes the Bergman fan  $\cB$ of the $6\times 36$ matrix encoding the $\EGp{6}$-arrangement. It is a five-dimensional simplicial polyhedral fan in $\R^{36}/{\rspanone}$~\cite{ard.kli:06}. In turn, the monomial map $m$ becomes right-multiplication by the Yoshida matrix under tropicalization. Since all Yoshida functions are non-zero in the $\EGp{6}$-arrangement complement, we conclude
\begin{equation}\label{eq:narukiFan}
  \Naruki = \trop(m)(\cB)\subset \R^{40}/\rspanone \subset \TPr^{39}.
\end{equation}

\begin{remark}\label{rm:Modularinterpretation}
  Since the 40 coordinates in $\TPr^{39}$ correspond to the 40 Yoshida functions listed in~\autoref{tab:yoshida}. Each point in $\Naruki\subset \TPr^{39}$ records  the valuations of the Yoshida functions associated to a point in the  classical Naruki space $M_{m,3}^* \subset \P^{39}$ over the valued field $\KK$. It is in this sense that it plays the role of a tropical moduli space. In~\autoref{sec:comb-types} we characterize it as the moduli space of stable anticanonically embedded tropical smooth cubic surfaces with no Eckardt points.
\end{remark}

The Bergman fan $\cB$ of the matroid $\EGp{6}$ was explicitly computed in~\cite[Lemma 3.1]{ren.sha.stu:16} as a projection of the Bergman fan of the $\EGp{7}$-reflection arrangement in $\P^6$ via determining all its 100\,662\,348 circuits. We choose an alternative approach, realizing $\cB$ as the cone over the \emph{nested set complex} $N(\EGp{6},\Irr)$ associated to the minimal building set $\Irr$ of all proper irreducible flats of the matroid $\EGp{6}$ (see~\cite[Theorem 1.2]{ard.rei.wil:05/07} and~\cite[Theorem 4.1]{fei.stu:05}.) 

The abstract complex $N(\EGp{6},\Irr)$ consists of nested collections
of flats in the geometric lattice of partially ordered flats of the
matroid, ordered by inclusion. The vertices correspond to the
\emph{irreducible flats} (those which cannot be decomposed as a
product.)  Higher-dimensional cells are determined by nested families
of flats. A collection $\cF\subset \Irr$ is \emph{nested} if for every
antichain $\{\cF_1,\ldots, \cF_r\}$ in $\cF$ with $r\geq 2$, the
\emph{join flat} $\cF_1\vee \ldots \vee \cF_r$ does not belong to
$\Irr$ nor it equals $\Phi^+$. The join flat represents the subspace
obtained by intersecting the given subspaces in the
$\EGp{6}$-arrangement.

By \cite[Theorem~3.1]{bar.ihr:99}, there is a one-to-one correspondence between the lattice of flats of the $\EGp{6}$ reflection arrangement (ordered by reverse inclusion) and the poset of parabolic subgroups of $\Wgp$. 
In turn, parabolic subgroups of $\Wgp$ are determined by the root subsystems of $\EGp{6}$. There are precisely $15$ isomorphism classes of root subsystems, and each class consists of a single $\Wgp$-orbit. 
The proper irreducible flats come from the seven connected proper root subsystems of $\EGp{6}$, namely  $\operatorname{A}_i$ for $i = 1, \dots, 5$, $\operatorname{D}_4$, and $\operatorname{D}_5$, as shown in~\autoref{fig:subsystems}. 
Our labeling  of the 15  representatives is compatible with \cite[Table~4]{ren.sha.stu:16}. In total, there are $750$ proper irreducible flats.

To determine $\cB$ we must embed $N(\EGp{6},\Irr)$ in $\R^{36}/\rspanone$, we start by fixing an ordering $\{r_1,\ldots, r_{36}\}$ for the
positive roots $\Phi^+$ of $\EGp{6}$. We realize the vertices of $N(\EGp{6},\Irr)$ as the 0-1 incidence vector $\sum_{i\in \cF} e_i \in \R^{36}/\rspanone$ of each irreducible flat $\cF$, where $e_i$ denotes the $i$th.\ canonical basis element.  The  simplices of $\cB$ are realized as the positive span of the corresponding vertices in $N(\EGp{6},\Irr)$.

\begin{figure}[htb]
  \includegraphics[scale=0.25]{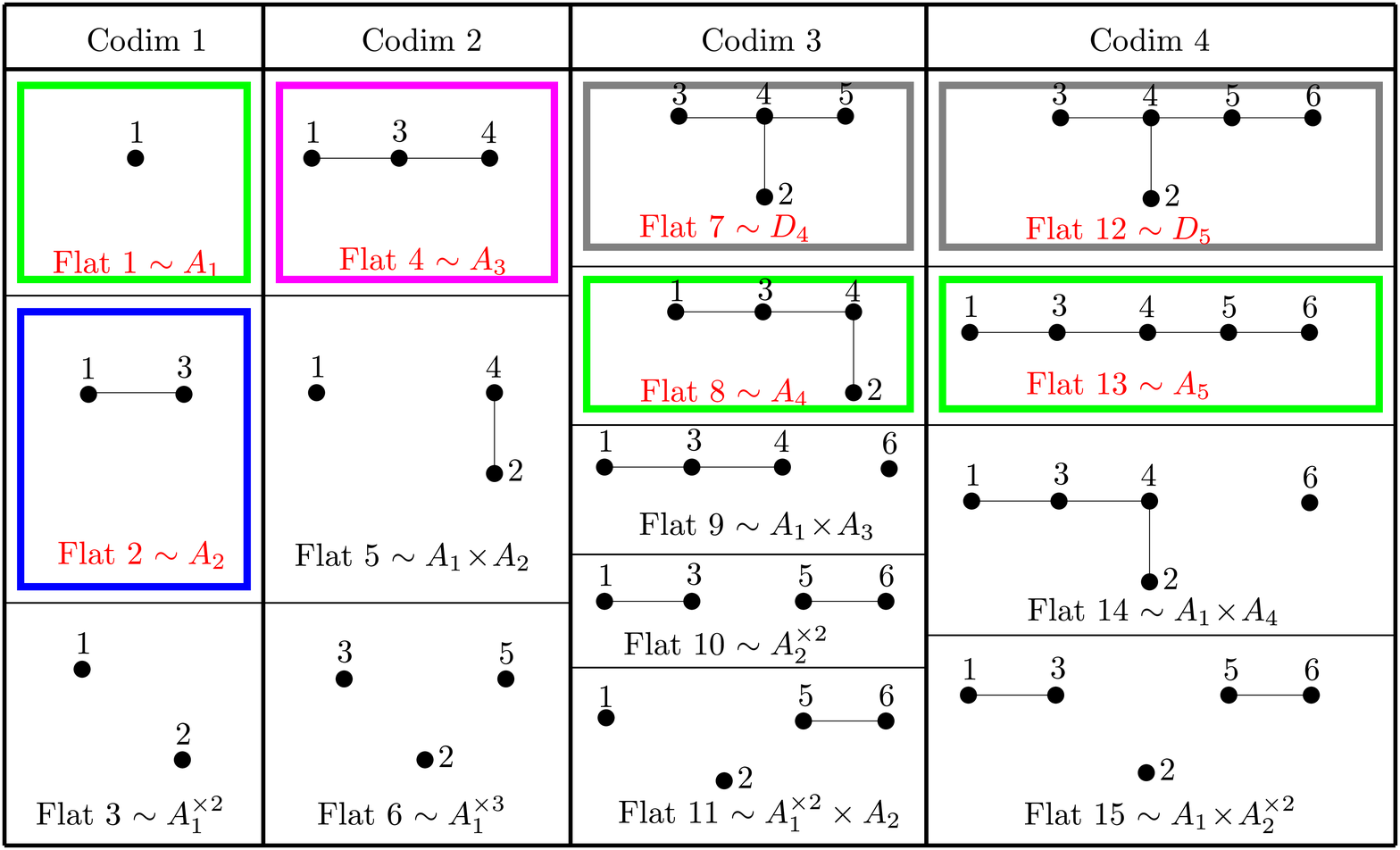}
  \caption{All $\Wgp$-representatives of flats and their associated root subsystems (following the labeling of~\autoref{fig:E6label}.) The boxed
  ones are proper irreducible flats. Their images under the Yoshida matrix are: $\Fl_1, \Fl_8, \Fl_{13}\mapsto \operatorname{(a)}$, $\Fl_2\mapsto \operatorname{(b)}$, $\Fl_4\mapsto$ ($\operatorname{a_2}$), and $\Fl_7, \Fl_{12}\mapsto \mathbf{0}\in \TP\P^{39}$.\label{fig:subsystems}}
\end{figure}

We use the above information to construct the Bergman fan $\cB$ inductively, exploiting the $\Wgp$-action at each step, starting from the $750$ vertices divided into seven orbits. In each iteration, we input the orbit representatives of cells of dimension $k$ and produce  cells of dimension $k + 1$ by testing whether adding a vertex to  a given cell produces an irreducible flat. We then use the $\Wgp$-action to produce all cells of dimension $k +1$ and output a list of its orbit representatives.
We optimize the process for $k = 2$ and above exploiting our knowledge of the  edge-vertex adjacency graph of the complex (computed in the first iteration.) Rather than testing all vertices of the complex against each low-dimensional cell, we restrict our search to vertices that form an edge with every vertex in the input cell. This significantly speeds up the computation. The Bergman fan is obtained after four iteration. The number of orbits in each dimension is given by the vector $(7, 24, 47, 49, 21)$. We refer to the Supplementary material for implementation details, running times  and a list of orbit sizes in each dimension.

\smallskip

Having constructed the Bergman fan, it is straightforward to determine the support of the  Naruki fan $\Naruki$: we simply apply the Yoshida matrix to each cone in $\cB$ as in~\eqref{eq:narukiFan}. A fan structure for this set was already computed in \cite{ren.sha.stu:16}. The $\Wgp$ symmetry of both $\cB$ and $\Naruki$ allows us reduce our computations to orbit representatives on both sides. 

We build $\Naruki$ starting with the rays. The image of the seven orbit representatives of rays in $\cB$ gives three orbits of rays in $\Naruki$, which we call $\operatorname{(a)}$, $\operatorname{(b)}$ and $\operatorname{(a_2)}$, following~\cite[Lemma 3.3]{ren.sha.stu:16}. By construction, $\operatorname{(a_2)}$ rays are sums of two $\operatorname{(a)}$ rays each. There are a total of 36 $\operatorname{(a)}$ rays, 40 $\operatorname{(b)}$ rays and 270 $\operatorname{(a_2)}$ rays. \autoref{fig:subsystems} shows that $\operatorname{(a)}$ rays arise from three root subsystems (of types $\operatorname{A}_1$, $\operatorname{A}_4$ and $\operatorname{A}_5$) whereas the $\operatorname{(b)}$ and $\operatorname{(a_2)}$ rays each come from a single root system (of types $\operatorname{A}_2$ and $\operatorname{A}_3$, respectively.) The remaining two orbits of rays in $\cB$ map to $\mathbf{0}$ under the Yoshida matrix.

To determine the support of $\Naruki$, it suffices to consider the
images of top-dimensional cells $\sigma$ of $\cB$ that are maximal
with respect to inclusion.  A direct computation shows that the
collection $\trop(m)(\sigma)$ of such cells is the union of two
$\Wgp$-orbits, confirming the results of~\cite[Section
  3]{ren.sha.stu:16}.  The first orbit corresponds to the image of a
maximal cone spanned by four rays of type $\operatorname{(a)}$.  The
second orbit corresponds to the image of a maximal cone spanned by
three rays of type $\operatorname{(a)}$ and one ray of type
$\operatorname{(b)}$. All these cones have dimension four, as we
expected from~\eqref{eq:tropNS}. We let $\scrC$ be the collection of cells in these two orbits. Cones on the first orbit will be refered to as cones \emph{of the first type}. We use the terminology  cones \emph{of the second type} for those in the second orbit.

Although the support of $\Naruki$ is the union of all cones in $\scrC$, this collection does not provide a fan structure on it---many pairs of cones intersect along non-faces.
To remedy the situation, for each  cone $\sigma \in \scrC$, we
consider the set $\scrC(\sigma)$ of all cones in $\scrC$  whose intersection with $\sigma$ is not a proper face of $\sigma$. A direct computation shows that $\scrC(\sigma)=\{\sigma\}$  for all cones $\sigma$ in $\scrC$ of the second type. In turn, if $\sigma$ is of the first type, then $\scrC(\sigma)$ consists of ten cones, all of which are of the first type as well.
Furthermore, we observe that the union of all cones in $\scrC(\sigma)$ is itself a simplicial cone, say $\widehat{\sigma}$, spanned by the images of four rays of type $\operatorname{(a)}$.
These cones $\widehat{\sigma}$ along with the cones of the second type give a fan structure to the tropical Naruki space.
Consistent with our previous notation and that of~\cite{ren.sha.stu:16}, we say the cones $\widehat{\sigma}$  are of type $\operatorname{(aaaa)}$. Similarly, we refer to the cones of the second type as $\operatorname{(aaab)}$ cones.

We endow $\Naruki$ with a refinement of the fan structure on the tropical Naruki space described above, and refer to it henceforth as the \emph{Naruki fan}.
It is obtained by taking the barycentric subdivision of the cones of type $\operatorname{(aaaa)}$ and the induced subdivision of the cones of type $\operatorname{(aaab)}$.
It is easy to check that the 24 resulting subcones of a cone of type $\operatorname{(aaaa)}$ lie in a single $\Wgp$-orbit, and so do the six resulting subcones of a cone of type $\operatorname{(aaab)}$.
Thus, the maximal cones of the Naruki fan are again divided into two $\Wgp$-orbits.
One orbit consists of cones of type $\operatorname{(aa_2a_3a_4)}$, that is, spanned by four rays of type $\operatorname{(a)}$, $\operatorname{(a_2)}$, $\operatorname{(a_3)}$, and $\operatorname{(a_4)}$, where a ray of type $\operatorname{(a_i)}$ is the sum of $i$ rays of type $\operatorname{(a)}$. The second orbit consists of  type $\operatorname{(aa_2a_3b)}$ cones. There are a total of $77\,760$ cones of type $\operatorname{(aa_2a_3a_4)}$, and $38\, 880$ of type $\operatorname{(aa_2a_3b)}$.
This structure agrees with the one described in~\cite[Lemma 3.3]{ren.sha.stu:16}. A suitable refinement of $\cB$ turns  $\trop\,(m)\colon \cB\to \Naruki$ into a map of fans.

\begin{remark}\label{rm:BergmanReps} For concrete computations, including those undertaken in~\autoref{sec:comb-types-tree}, it is useful to fix  primitive rays $\operatorname{(a)}$, $\operatorname{(a_2)}$, $\operatorname{(a_3)}$, $\operatorname{(a_4)}$ and $\operatorname{(b)}$ spanning adjacent maximal cone representatives in the Naruki fan. We choose:
  \begin{equation*}\label{eq:NarukiReps}
    \begin{aligned}
      \operatorname{a}\,&\!:= (0, 0, 0, 0, 0, 0, 1, 0, 0, 0, 0, 0, 0, 0, 0, 0, 0, 0, 0, 0, 1, 0, 1, 1, 0, 0, 1, 0, 0, 0, 1, 0, 0, 0, 0, 1, 1, 1, 1, 0),\\
      \operatorname{a_2}&\!:= (0, 1, 0, 0, 1, 0, 1, 0, 0, 0, 0, 1, 0, 0, 0, 0, 0, 0, 1, 0, 1, 1, 2, 1, 0, 0, 2, 0, 0, 0, 1, 0, 0, 0, 0, 1, 1, 2, 2, 1),\\
      \operatorname{a_3}&\!:= (0, 2, 1, 0, 1, 0, 1, 0, 0, 0, 0, 1, 0, 0, 0, 0, 1, 0, 2, 0, 1, 1, 3, 1, 0, 1, 2, 0, 0, 0, 2, 1, 0, 0, 0, 2, 1, 2, 3, 1),\\
      \operatorname{a_4}&\!:= (1, 4, 2, 1, 2, 1, 2, 1, 1, 1, 1, 2, 1, 1, 1, 1, 2, 1, 4, 2, 2, 2, 4, 2, 1, 2, 4, 2, 2, 2, 4, 2, 1, 1, 1, 4, 2, 4, 4, 2),\\
      \operatorname{b}\,&\!:= (1, 1, 1, 1, 1, 1, 1, 1, 1, 0, 1, 1, 1, 0, 0, 0, 1, 1, 1, 0, 1, 1, 3, 1, 0, 1, 1, 0, 0, 0, 1, 1, 0, 0, 0, 1, 1, 1, 1, 1).
    \end{aligned}
\end{equation*}
We use them to build two neighboring cones representing the two orbits of maximal cones in the tropical Naruki space:

\begin{equation}\label{eq:coneReps}
\operatorname{(aa_2a_3a_4)} = \R_{\geq 0}\langle a,a_2,a_3,a_4\rangle \quad \text{ and }\quad \operatorname{(aa_2a_3b)} = \R_{\geq 0}\langle a,a_2,a_3,b\rangle.
\end{equation}

The following vectors in the Bergman fan lie in the fibers of the Yoshida matrix over each of the five rays $\operatorname{a}, \ldots, \operatorname{a_4}$ and $\operatorname{b}$, respectively:

\begin{equation*}\label{eq:BergmanReps}
v_{\operatorname{a}}\!:= e_0, \; v_{\operatorname{a_2}}\!:= e_0 + e_1, \; v_{\operatorname{a_3}}\!:= e_0 + e_1 + e_4, \;
v_{\operatorname{a_4}}\!:= 2(e_0 + e_1) + e_{18} + e_{23} + e_{24} + e_{25}, \;
v_{\operatorname{b}}\!:= e_0 + e_2 + e_9.
\end{equation*}
\end{remark}

Recall from~\autoref{prop:cross_eckhard} that the 135 Cross functions characterize the Eckardt divisor in $M_{m,3}^{\circ}$. These functions also appear as coefficients of generators of the anticanonical ideal defining the universal cubic surface. Thus,  any fan structure on the tropical Naruki space parameterizing tropical cubic surfaces in $\TPr^{44}$ must be compatible with the tropicalization of these Cross function. Given a Cross function $C$, we compute $\trop C$ as follows. We write $C$ as a signed difference of two Yoshida functions  $ C = \pm(\Yos{i }- \Yos{j})$, as described in~\eqref{eqn:Cross_def}. The corner locus of $\trop(C)$ in $\Naruki$ is the closed subset defined by  $\val(\Yos{i}) = \val(\Yos{j})$. This corresponds to the equality of the $i$th and $j$th coordinates in  $\TPr^{39}$.

Our next result result confirms that the Naruki fan  $\Naruki$ described above gives a fan structure to the tropical Naruki space that is well-adapted to the Cross functions, i.e. their valuations will be piecewise linear on all maximal cones of $\Naruki$:

\begin{proposition}\label{prop:naruki_fan_cross}
  Let $C$ be a Cross function.  The corner locus of $\trop(C)$ in the fan $\Naruki$ is the support of a subfan of the Naruki fan. 
\end{proposition}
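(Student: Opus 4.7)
The plan is to exploit $\Wgp$-equivariance to reduce the statement to a finite calculation on cone representatives. Since $\Wgp$ acts transitively on the 135 Cross functions (up to sign) and equivariantly on the Naruki fan, if the corner locus of $\trop(C_0)$ is the support of a subfan for one representative $C_0$, then the same holds for every $w\cdot C_0$ with $w\in \Wgp$. Fix therefore an expression $C = \pm(\Yos{i}-\Yos{j})$ as in Remark \ref{rm:CrossDifferenceYoshidas}, and realize its corner locus as $\Naruki \cap H$, where $H \subset \R^{40}/\rspanone$ is the linear hyperplane $\{v : v_i = v_j\}$.

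The second step is to show that for every maximal cone $\sigma$ of the Naruki fan, spanned by four primitive rays $r_1,\ldots,r_4$ drawn from the list in Remark \ref{rm:BergmanReps}, the intersection $H \cap \sigma$ is a face of $\sigma$ (possibly $\sigma$ itself or $\{0\}$). Writing a general $v\in \sigma$ as $v=\sum_k t_k r_k$ with $t_k\geq 0$, the condition $v\in H$ reads $\sum_k t_k\,\ell(r_k) = 0$ for $\ell := v_i - v_j$. Three mutually exclusive scenarios can occur: first, $\ell$ vanishes on all four rays, so $\sigma \subset H$; second, the nonzero values $\ell(r_k)$ all have the same sign, in which case $H \cap \sigma$ is exactly the face of $\sigma$ spanned by those rays on which $\ell$ vanishes; and third, both signs appear among the $\ell(r_k)$, in which case $H \cap \sigma$ is a hyperplane slice of $\sigma$ that is not a union of faces. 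The proposition is equivalent to ruling out the third scenario for every $\sigma$.

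The verification is a finite symbolic computation in \sage. I would first apply the Yoshida matrix to the Bergman representatives $v_{\operatorname{a}}, v_{\operatorname{a_2}}, v_{\operatorname{a_3}}, v_{\operatorname{a_4}}, v_{\operatorname{b}}$ from Remark \ref{rm:BergmanReps} to recover the explicit coordinate vectors of the rays $\operatorname{a}, \operatorname{a_2}, \operatorname{a_3}, \operatorname{a_4}, \operatorname{b}$ listed there. Combined with the $\Wgp$-action on rays encoded in Table \ref{tab:action}, this produces representatives of each $\Wgp$-orbit of maximal cones in the two types $(aa_2a_3a_4)$ and $(aa_2a_3b)$. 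I would then evaluate the linear form $\ell=v_i-v_j$ on each of the four rays of each representative cone and verify that no sign mismatch occurs. Once the evaluations are tabulated, the conclusion follows by inspection.

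The main obstacle is bookkeeping rather than conceptual. Each Cross function admits four distinct expressions as a difference of Yoshidas (Remark \ref{rm:CrossDifferenceYoshidas}), so the sign conventions of Tables \ref{tab:yoshida} and \ref{tab:crosses} must be tracked consistently in order for $\ell$ to be the correct linear form; the \sage\ routines described in Remark \ref{rm:anticComputations} supply a coherent set of signs. Beyond this, the statement is essentially a consistency check reflecting the design of the refined Naruki fan: the barycentric subdivision of the $(aaaa)$ cones into $(aa_2a_3a_4)$ cones was introduced precisely to make the tropical Naruki space compatible with the tropicalizations of all Cross functions, and the expected outcome is that the third scenario never arises on this refinement.
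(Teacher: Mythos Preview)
Your overall strategy matches the paper's: verify by direct computation that for each cone $\sigma$ of the Naruki fan and each hyperplane $H=\{y_i=y_j\}$ coming from an expression $C=\pm(\Yos{i}-\Yos{j})$, the intersection $H\cap\sigma$ is a face of $\sigma$. The sign-consistency test you describe on the four spanning rays of a simplicial cone is exactly the right criterion.

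However, there is a genuine gap in your symmetry reduction. You correctly observe that $\Wgp$ acts transitively on the Cross functions, so it suffices to prove the statement for a single representative $C_0$. But having fixed $C_0$, you then propose to check only the two \emph{cone} representatives from~\eqref{eq:coneReps}. This is not enough: the claim ``the corner locus of $\trop(C_0)$ is a subfan of $\Naruki$'' is a statement about \emph{every} cone of $\Naruki$, not just two. The group $\Wgp$ acts diagonally on pairs (Cross function, cone); applying $w\in\Wgp$ to the verified pair $(C_0,\sigma)$ yields $(w\cdot C_0,\,w\cdot\sigma)$, whereas what you need is $(C_0,\,w\cdot\sigma)$. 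The action on pairs has many orbits, so two checks do not propagate to all of them.

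The fix is simple and is essentially what the paper does: either fix $C_0$ and loop over all cones of $\Naruki$ (or over all cones modulo the stabilizer of $C_0$), or---equivalently and more economically---fix the two cone representatives and loop over all 135 Cross functions against each. The paper takes the former route and in fact iterates over all four binomial expressions of $C$ for good measure, so that the compatibility holds for each of the four associated hyperplanes. Once the loop is set up correctly, your sign test on the rays finishes the argument.
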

\begin{proof}
  Following earlier conventions, for each $k=0,\ldots, 39$ we let $y_{k} = \val(\Yos{k})$ be the $k$th coordinate of $\TPr^{39}$. Let $\sigma$ be a cone of the Naruki fan.
  We must to show that $\sigma \cap (y_i = y_j)$ is a subcone of $\sigma$.
  By induction on $\dim(\sigma)$, it suffices to show that either $\sigma \subset (y_i = y_j)$ or that the relative interior of $\sigma$ is disjoint from the hyperplane $(y_i = y_j)$.   We check this by direct computation with \sage, available in the Supplementary material.
  For every cone $\sigma$ of the Naruki fan, we iterate over all four expressions $\pm(\Yos{i} - \Yos{j})$ giving the Cross function $C$, and verify that the intersection of the  hyperplane $(y_i = y_j)$ with $C$ is either $\sigma$ itself or a smaller dimensional cone.
\end{proof}

\begin{remark}\label{rm:expValsCross}
  The previous result highlights some interesting features of Cross functions. Given a cone $\sigma$ in $\Naruki$, we denote its relative interior by $\sigma^{\circ}$. Since there are four ways of writing a Cross function as differences of Yoshidas, it is possible that on a given cone $\sigma$ we get $\sigma \subset (y_i=y_j)$ for one of these expressions, whereas $\sigma^{\circ}\cap (y_k=y_l) = \emptyset$ for some other expression. Indeed, whenever $\Cross{i}=\pm(\Yos{k} - \Yos{l})$ and $\val(\Yos{k}) \neq \val(\Yos{l})$ for a single point in $\sigma^{\circ}$, we have $\val(\Cross{i})=\min\{\val(\Yos{k}), \val(\Yos{l})\}$ for all points in $\sigma^{\circ}$. When all four expressions yield ties along $\sigma^{\circ}$, we have
  \begin{equation}\label{eq:expval}
  \val(\Cross{i})_{|\sigma^{\circ}} \geq \max\{ \min\{\val(\Yos{k}), \val(\Yos{l})\}: \Cross{i}= \pm(\Yos{k} - \Yos{l}), \val(\Yos{k}) = \val(\Yos{l}) \text{ on }\sigma^{\circ}\}.  
    \end{equation}
    We refer to the  right-hand side of~\eqref{eq:expval} as the \emph{expected valuation} of $\Cross{i}$.
\end{remark}

A direct computation available in the Supplementary material shows that the valuation of most Cross functions can be determined over a fixed positive-dimensional cone in the Naruki fan. But the behavior varies as we travers the fan. No ambiguities arise on  the relative interior of the  $\operatorname{(aa_2a_3b)}$ cones. However, on the relative interior of any  $\operatorname{(aa_2a_3a_4)}$ cone, the valuations of all Cross functions can be determined, with  three exceptions. The precise triple depends on the input cone. For example, for our chosen $\operatorname{(aa_2a_3a_4)}$ representative from~\autoref{rm:BergmanReps} we can predict all but the valuations of $\Cross{36}, \Cross{37}, \Cross{38}$.

The explicit computation of the fiber over $\operatorname{(aa_2a_3a_4)}^{\circ}$ on $\cB$  will allow us to show that the valuation of  $\Cross{37} = \Yos{34}-\Yos{8}$ (see~\autoref{tab:crosses}) agrees with the expected one, i.e.\ $\Trop(\Cross{37}) = \min\{\val(\Yos{34}), \val(\Yos{8})\}$.
 As we move to the boundary of both cones, more Cross functions will have undertermined valuations and they may or may not agree with the expected valuation. The extremal case is given by the apex of $\Naruki$ where no Cross function valuations can be established. We return to this fact in~\textcolor{blue}{Sections}~\ref{sec:trop-lines-trop} and~\ref{sec:comb-types-tree}.

 \smallskip
 
 We end this section by discussing the fibers of the Yoshida map $\trop(m)\colon \cB\to \Naruki$ on smooth points of $\Naruki$, that is points in the relative interior of the two maximal cone representatives from~\eqref{eq:coneReps}. Here is the precise statement. It  will play a central role in~\autoref{lm:Cr37}. 
 \begin{proposition}\label{pr:fibersOfYoshida}
   Each fiber of the Yoshida matrix over a smooth point of the Naruki fan is a unions of 66 cones. Each component is contained in a maximal cone of the Bergman fan and need not be open nor closed in the quotient topology of $\R^{36}/\rspanone$. Their closures have between five and seven extremal rays. 
   \end{proposition}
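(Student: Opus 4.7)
The plan is to exploit the $\Wgp$-equivariance of $\trop(m)\colon \cB \to \Naruki$ in order to reduce the problem to two specific cases. Since the maximal cones of $\Naruki$ form two $\Wgp$-orbits, represented by $(aa_2a_3a_4)$ and $(aa_2a_3b)$ from~\eqref{eq:coneReps}, it suffices to analyze the fiber over a single generic interior point $p$ of each representative. For such $p$, the preimage $M^{-1}(p) \subset \R^{36}/\rspanone$ is a $20$-dimensional affine subspace, since the Yoshida matrix has rank $16$ by~\autoref{rm:index3oFYoshidasInDs}. I would describe it explicitly by pairing a particular lift of $p$ to $\cB$ (for instance constructed from the five preimage vectors in~\autoref{rm:BergmanReps}) with an explicit basis for the kernel of $M$ on $\R^{36}/\rspanone$.

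Next, I would iterate over the 21 orbit representatives of maximal cones of $\cB$ computed earlier in this section, and for each five-dimensional simplicial cone $\tau$ determine the polyhedral intersection $\tau \cap M^{-1}(p)$. The combinatorial type of this intersection depends on which facet inequalities of $\tau$ are active along $M^{-1}(p)$ and can be read off directly from the nested set description of $\tau$. The $\Wgp$-stabilizer of the chosen Naruki cone then acts on these intersections, and extending across the orbits should produce exactly $66$ nonempty components. Each component is contained in a single maximal cone of $\cB$ by construction; whether or not it is closed (respectively, open) depends on whether $M^{-1}(p)$ meets the relative boundary (respectively, the relative interior) of $\tau$, which explains why the components need not be either.

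Finally, for each component I would take its closure in $\R^{36}/\rspanone$. This closure is a polyhedral cone obtained from the simplicial cone $\tau$ by imposing the affine condition $M(v)=p$, and its extremal rays correspond to the vertices of the resulting $\tau$-slice together with the recession directions coming from $\ker M \cap \tau^{\mathrm{lin}}$. Carrying out this calculation in \sage\ for the two Naruki representatives should yield, after $\Wgp$-averaging, the stated range of five to seven extremal rays across all $66$ components.

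The main obstacle is the bookkeeping: the 21 orbits of maximal cones of $\cB$ must each be analyzed separately, with careful tracking of which boundary strata are hit by $M^{-1}(p)$ in order to decide openness versus closedness, and with the $\Wgp$-stabilizer action on the fiber governing how orbit representatives assemble into the final count. Once the explicit parameterization of $M^{-1}(p)$ from the kernel of the Yoshida matrix is in place, the remaining verifications are linear-algebraic and combinatorial, and can be automated in a manner analogous to the computations used elsewhere in~\autoref{sec:bergman-fan-Naruki-fan}.
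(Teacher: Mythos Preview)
Your proposal diverges from the paper's proof in two ways, and both create genuine gaps.

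First, the paper does not compute the literal fiber $M^{-1}(p)$ over a single point. Despite the proposition's phrasing, what is actually computed (and what is needed later in~\autoref{lm:Cr37}) is the preimage of the entire open cone $\sigma^{\circ}$. For each maximal Bergman cone $\tau$, the paper parametrizes $\tau$ by non-negative scalars $z_1,\ldots,z_5$ attached to its five rays, expresses the image of each ray in the simplicial basis $\{a,a_2,a_3,a_4\}$ of $\sigma$, and imposes the four strict homogeneous inequalities~\eqref{eq:constraintFibers} forcing the image to lie in $\sigma^{\circ}$. This produces a polyhedral cone in $\R^5_{\geq 0}$, and the five-to-seven extremal rays arise from that cone, pushed forward to $\R^{36}/\rspanone$ via the rays of $\tau$. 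Your plan---intersecting $\tau$ with the \emph{affine} subspace $M^{-1}(p)$ for a fixed $p$---yields instead a one-dimensional slice (since $\trop(m)|_{\tau}$ drops dimension by exactly one on the relevant cones), so at most two vertices and one recession direction. You cannot recover the 5--7 count from that; the condition $M(v)=p$ must be replaced by the conic condition $M(v)\in\sigma^{\circ}$. Working in $\R^5$ rather than in your proposed $20$-dimensional kernel parametrization is also what makes the computation tractable.

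Second, the reduction to the 21 $\Wgp$-orbit representatives of maximal Bergman cones does not work as you describe. Intersecting $M^{-1}(p)$ with a fixed representative $\tau$ will almost always be empty, because each $\tau$ maps to one particular region of the Naruki fan and there is no reason a global $\Wgp$-representative should land in the chosen $\sigma$. Acting afterwards by the stabilizer of $\sigma$ cannot repair this: the $\operatorname{Stab}(\sigma)$-orbits partitioning the 66 cones need not contain any of the 21 global representatives. The paper bypasses this entirely by iterating over \emph{all} maximal cones of $\cB$ (already enumerated via the nested-set construction) and testing each directly using the barycenter of $\trop(m)(\tau)\cap\sigma$ to decide whether $\trop(m)(\tau)$ meets $\sigma^{\circ}$.
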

 \begin{proof} The result follows by a direct computation, available  in the Supplementary material. Next, we describe the process for points in the relative interior of $\sigma := \operatorname{(aa_2a_3a_4)}$ since this is the relevant cone for the proof of~\autoref{lm:Cr37}. The method works verbatim for the $\operatorname{(aa_2a_3b)}$ cone representative. Throughout, we fix the following convention to pick a canonical representative for any vector in  $\R^{40}/\rspanone$: all its coordinates are non-negative and at least one of them must vanish. We use the same convention for representatives in $\R^{36}/\rspanone$.

   Given any maximal cone $\tau$ in $\cB$, we determine whether  $\Ccurve_{\tau} := \trop(m)(\tau) \cap \sigma^{\circ}$ is non-empty by checking if  the baricenter of $\trop(m)(\tau) \cap \sigma$ lies in $\sigma^{\circ}$. This yields a total of 66 valid cones $\tau$, involving 53 rays of the Bergman fan $\cB$. The computation for $\operatorname{(aa2a3b)}$ gives the same number of cones, but only 52 rays.

   For each such $\tau$ we wish to compute $\Ccurve_{\tau}$. To this end, we determine which positive linear combinations of the five rays  of $\tau$ map to $\sigma^{\circ}$. Since $\sigma$ is simplicial, each ray $\rho$ yields a unique  expression 
   \begin{equation}\label{eq:imagerho}
     \trop(m)(\rho) = r_1^{\rho} \operatorname{a} + r_2^{\rho} \operatorname{a_2} + r_3^{\rho}\operatorname{a_3} + r_4^{\rho} \operatorname{a_4}.
   \end{equation}
   Notice that $\trop(m)(\rho)$ lies in the vector spanned by $\sigma$ but need not be in the cone $\sigma$. Thus, some scalars $r_i^{\rho}$ might be negative.

   The preimage of $\sigma^{\circ}$ under $\trop(m)$ restricted to $\tau$ is characterized as those linear combinations $\sum_{j=1}^5z_j\rho_j$ with non-negative scalars $z_1,\ldots, z_5\in \R$ subject to the following constraints:
   \begin{equation}\label{eq:constraintFibers}
     \sum_{j =1}^5 r_i^{\rho_j}z_j >0 \qquad \text{ for all } i=1,\ldots, 4.
   \end{equation}
   The closure of the space of solutions to~\eqref{eq:constraintFibers} is a polyhedron in $\R^5$. Using \sage~we determine its spanning rays. In turn, this data generates the extremal rays of the the closure of the cone $\tau \cap \trop(m)^{-1}(\sigma^{\circ})$ in $\R^{36}/\rspanone$. Their number varies between five and seven. The same calculation certifies that   $\Ccurve_{\tau}\!=\!\sigma^{\circ}$ for all 66 possible cones $\tau$. Furthermore, the  fiber over $\sigma^{\circ}$ always meets $\tau^{\circ}$. 
 \end{proof}
 \begin{remark}\label{rm:sampleTausForCr37} We carry out the computations in the proof of~\autoref{pr:fibersOfYoshida} for two of the 66 cones associated to the fiber over $\operatorname{(aa_2a_3a_4)}^{\circ}$, which we call $\tau_0$ and $\tau_1$. The output of these calculations will be used in the proof of~\autoref{lm:Cr37}.   We start by listing the rays spanning the closure of each cone, following our notation for the rays in the Bergman fan:
  \begin{equation}\label{eq:taus}
    \tau_0 = \R_{\geq 0}\langle \rho_{0}, \rho_{1}, \rho_{4}, \rho_{204}, \rho_{540}\rangle, \qquad \tau_0 = \R_{\geq 0}\langle \rho_{0}, \rho_{1}, \rho_{4}, \rho_{204}, \rho_{543}\rangle.
  \end{equation}
  Here, $\rho_i = e_i$ for $i=0,1,4$, whereas $\rho_{204}  :=e_1 + e_3+e_4+e_7+e_{10}+e_{13}$ and 
  \begin{equation*}\label{eq:raysTaus}
    \begin{aligned}
            \rho_{540} &:= e_1 + e_3 +e_4 + e_5 + e_7 + e_8 +e_{10} + e_{13} + e_{14} +e_{17}\,    , \\ \rho_{543} &:=   e_{1} +  e_{3} +  e_{4} +  e_{7} +  e_{10} +  e_{13} +  e_{32} +  e_{33} +  e_{34} +  e_{35}.
    \end{aligned}
  \end{equation*}
In the notation of~\autoref{fig:subsystems}, the rays $\rho_0, \rho_1$ and $\rho_4$ correspond to the flat $\cF_{13}$, whereas the ray $\rho_{204}$ comes from the flat $\cF_4$. The remaining two rays  are associated to the flat $\cF_8$.

Following~\eqref{eq:imagerho}, we write the image of each ray under $\trop(m)$ in the basis $\{\operatorname{a}, \operatorname{a_2}, \operatorname{a_3}, \operatorname{a_4}\}$:
\begin{equation*}\label{eq:images}
  \begin{aligned}
    \trop(m)(\rho_0) & = (1,0,0,0), \;\qquad \trop(m)(\rho_1) = (-1,1,0,0),\quad  \trop(m)(\rho_4) = (0,-1,1,0),\\
      \trop(m)(\rho_{204}) & = (1,0,-1,1), \!\quad \trop(m)(\rho_{540}) = \trop(m)(\rho_{543}) = (2,0,0,0).
  \end{aligned}
  \end{equation*}
The inequalities in the variables $z_1,\ldots, z_5$ listed in~\eqref{eq:constraintFibers} yield a polyhedron  in $\R^5$ with seven rays:
\begin{equation}\label{eq:raysR5}
  (0,1,1,1,0), (1,0,0,0,0), (1,1,0,0,0), (1,1,1,0,0), (0,2,2,0,1), (0,0,0,0,1) \,\text{and}\, (0,2,0,0,1).
\end{equation}
The seven extremal rays of the closure of  $\tau_i':=\tau_i\cap \trop(m)^{-1}(\operatorname{(aa_2a_3a_4)}^{\circ})$ in $\R^{36}/\rspanone$ are obtained by multiplying these seven vectors by the five spanning rays of each $\tau_i$. We write them in the same order as those in~\eqref{eq:raysR5}. To ensure the vector lies in the cone  $\tau_i'$ the scalars $p_0,\ldots, p_6$ associated to a point in the linear span of $\tau_i'$ must satisfy:
\begin{equation}\label{eq:ineqstausprimes}
p_1 + 2p_5>0, \quad p_2 + 2\,p_6 >0, \quad p_2+ 3\,p_4>0,\quad  p_0> 0 \;\; \text{ and } \; \; p_i\geq 0 \text{ for all } i=1,\ldots, 6.
\end{equation}
 \end{remark}

\section{Tropical convexity and collinearity  in $\TPr^{n-1}$}\label{sec:tropical-lines-TPn}

In this section we describe the convex structure on tropical lines in tropical projective space. Our main result characterizes collinearity of points in $\TPr^{n-1}$ in terms of vanishing of tropical $3\times 3$ minors. This result extends previous work of Develin-Santos-Sturmfels~\cite{dev.san.stu:05} from the tropical projective torus to its compactification. Furthermore, it yields an algorithm for reconstructing a tropical line from its points at infinity. These techniques will be central to proving~\autoref{thm:anticanonicalNoLines} and the last claim in ~\autoref{thm:Naruki}, describing the metric structure of the tropical lines in the boundary of each tropical cubic surface $\Trop X\subset \TPr^{44}$, as we traverse the Naruki fan $\Naruki$.

We start by recalling the definition of a tropical line in $\TP^{n-1}$. They are all obtained as tropicalizations of classical lines in $\P^{n-1}$ (see~\cite[Theorem 3.8]{spe.stu:04}.) Equivalently, they arise from (realizable) rank-two valuated matroids on $n$-elements.
 By working with the toric structure of $\P^{n-1}$ it suffices to only consider tropical lines meeting the interior of $\TP^{n-1}$. As usual, for each $i=0,1,\ldots, n-1$,  $e_i$ denotes the $i$th.\ canonical basis vector in $\R^{n}$.
\begin{definition}\label{def:tropLines} A \emph{tropical line} meeting the interior of $\TPr^{n-1}$ is a balanced metric tree with at most $n$ leaves attached to unbounded edges or legs of prescribed directions determined by a partition of $\{0,\ldots, n-1\}$. More precisely, if the tree has $m$ leaves, the legs have directions $e_{B_j}\!:=\sum_{i\in B_j} e_i$ for $j=1,\ldots, m$, where each $B_j$ is non-empty and the sets $B_1,\ldots, B_m$ partition $\{0,\ldots, n-1\}$.
  A tropical line will be \emph{generic} if it has exactly $n$ leaves.
\end{definition}
\noindent

The collections $B_1,\ldots, B_m$ record the coordinates of each leaf having value $\infty$. Thus, all leaves lie in the relative interior of distinct cells in $\TPr^{n-1}$.   \autoref{fig:tropLine} gives an example of two tropical lines in $\TPr^3$ meeting its interior. Tropical lines in the boundary of $\TPr^{n-1}$ will be viewed as meeting the interior of a suitable boundary cell $\TPr^{s-1}\subset \TPr^{n-1}$.

\begin{figure}[htb]
  \centering
  \begin{minipage}[c]{0.3\linewidth}
    \includegraphics[scale=0.33]{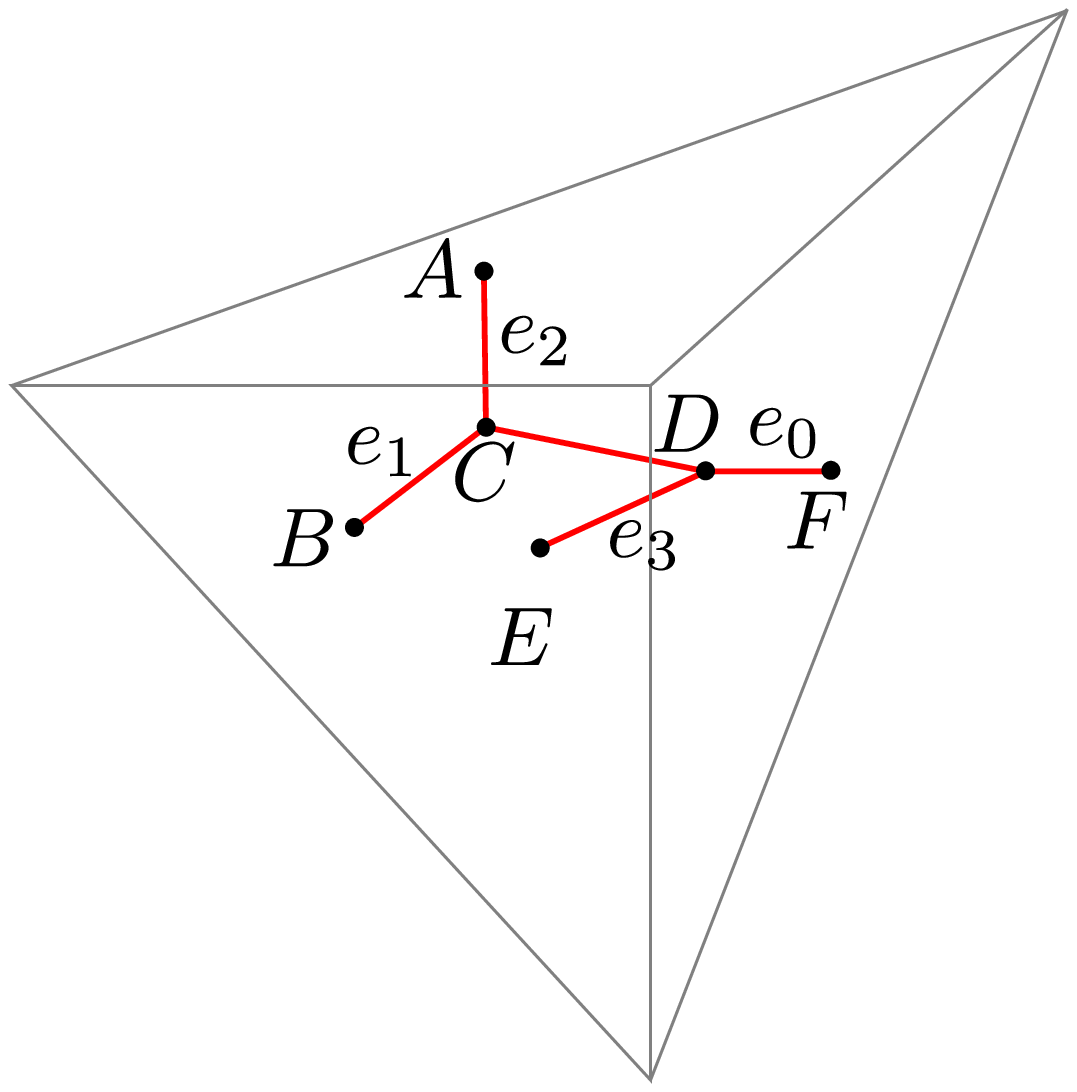} \qquad 
  \end{minipage}
  \quad
  \begin{minipage}[c]{0.15\linewidth}
   \small{    \[ \begin{aligned}
      A & =(0,1,\infty, 0)\\
      B &=(0,\infty, 2, 0)\\
      C &=(0,1,2,0)\\
      E &=(0,0,1,\infty)\\
      F &=(\infty, 0, 1, 0)\\
      D &= (0,0,1,0)\\
      D'& =(\infty, 1, 2, \infty)
    \end{aligned}
    \]}\normalsize
\end{minipage}
  \qquad
  \begin{minipage}[c]{0.3\linewidth}
    \includegraphics[scale=0.33]{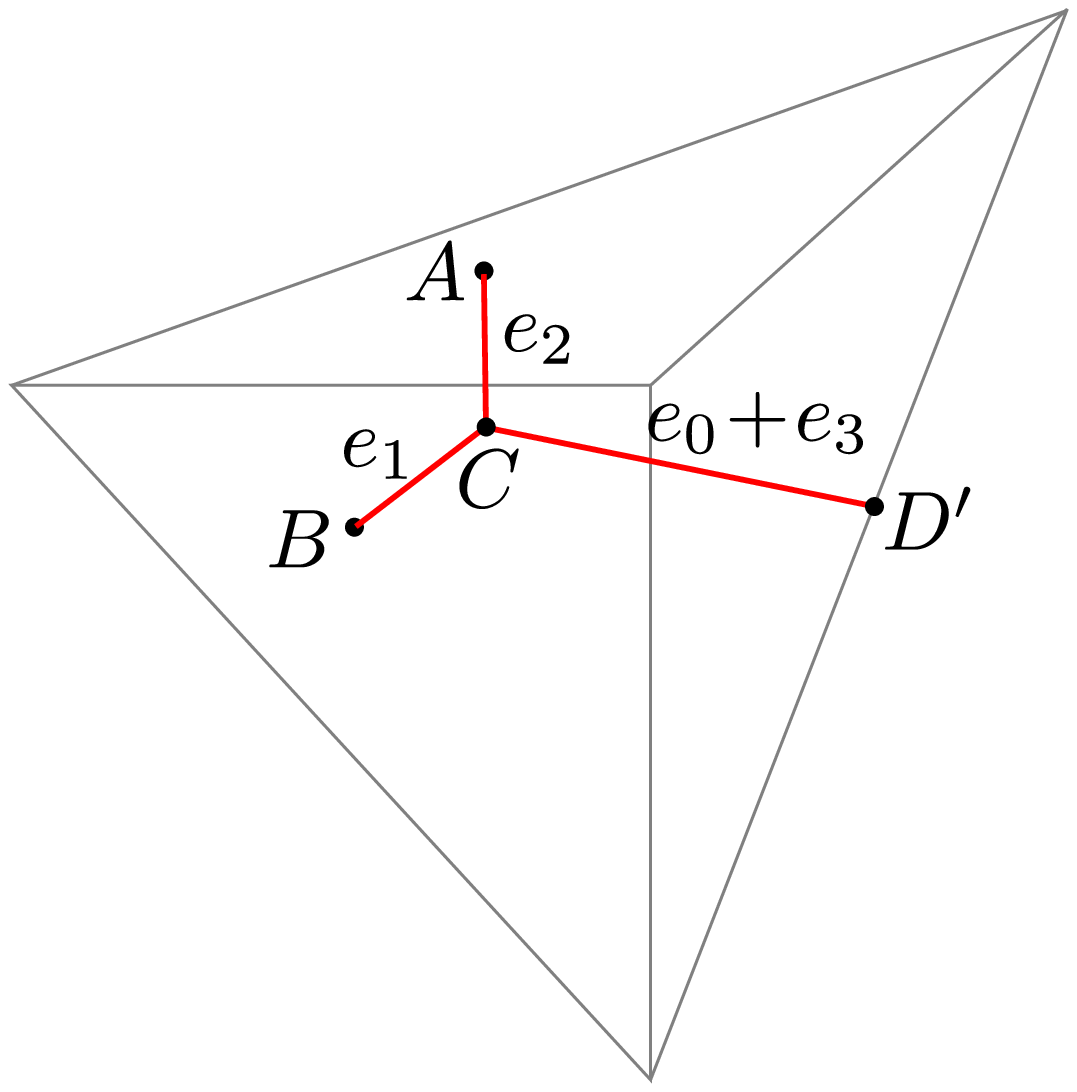} \qquad 
  \end{minipage}
  \caption{A generic tropical line in $\TPr^3$ and a non-generic one associated to the partition $ \{0,3\}\sqcup\{1\}\sqcup \{2\}$ of $\{0,1,2,3\}$. 
  \label{fig:tropLine}  }
\end{figure}

Classically, collinearity of $r\geq 3$ distinct points in $\P^{n-1}$ has a simple characterization: the associated $r\times n$-matrix must have rank two. Equivalently,  all its $3\times 3$ minors vanish. Foundational work on tropical linear algebra~\cite{dev.san.stu:05} yields the analogous statement for deciding tropical collinearity in the tropical projective torus. The determinant of each minor is replaced by its tropical permanent: 

\begin{definition}\label{def:tropPermanent}
  The \emph{tropical permanent} of a matrix $S\in \Rbar^{d\times d}$ is defined by
  \begin{equation}\label{eq:troPerm} \tperm(S) = \trop(det)(S) = \min_{\sigma \in \Sn{d}}\{s_{1\sigma(1)} +\ldots + s_{d\sigma(d) }\},
    \end{equation}
where $\Sn{d}$ denotes the set of permutations of $[d]$. The matrix $S$ (or its permanent) is  \emph{tropically singular} if the minimum in~\eqref{eq:troPerm} is achieved  twice. Otherwise,  we say it is \emph{tropically non-singular}. 
\end{definition}
Our next result extends the above characterization of tropical collinearity  from  the tropical projective torus to the compact setting.

\begin{proposition}\label{pr:tropicalLinesbyMinors} Fix a collection $\Ccurve$ of $r$ distinct points in $\TPr^{n-1}$ with pairwise disjoint $\infty$-entries. 
Then, the collection $\Ccurve$ is tropically collinear if and only if all $3\times 3$-minors of the associated $r\times n$-matrix with entries in $\Rbar$ are tropically singular.
\end{proposition}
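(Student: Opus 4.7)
The plan is to extend the characterization of Develin-Santos-Sturmfels for matrices with entries in $\R$ \cite[Theorem 5.5]{dev.san.stu:05}, which asserts that the rows of $A \in \R^{r \times n}$ are tropically collinear in $\R^{n-1}$ if and only if every $3 \times 3$ tropical minor of $A$ is tropically singular, to the compactified setting of $\TPr^{n-1}$. The pairwise disjointness hypothesis on $\infty$-entries is the key structural input: it guarantees that each of the $n$ columns of $M$ contains at most one $\infty$-entry, so that any $3 \times 3$ submatrix of $M$ has between zero and three $\infty$-entries, each lying in a distinct column.

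For the forward direction, suppose $\Ccurve$ lies on a tropical line $L \subset \TPr^{n-1}$. By realizability of tropical lines \cite[Theorem 3.8]{spe.stu:04}, $L = \trop(\ell)$ for some classical line $\ell \subset \P^{n-1}_{\KK}$, and each point of $\Ccurve$ lifts to a point of $\ell$ (possibly in the toric boundary) so that $\infty$-entries lift to $0 \in \KK$. Assembling these lifts yields an $r \times n$ matrix of rank two over $\KK$ whose classical $3 \times 3$ minors all vanish. Expanding each such minor as a signed sum over $\Sn{3}$ and discarding the two or more terms killed by $0$-entries, the resulting sum in $\KK$ equals zero; applying Kapranov's theorem shows the corresponding tropical permanent attains its minimum at least twice among the permutations avoiding $\infty$-entries. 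When no such permutation exists, the tropical permanent is automatically $\infty$ and hence singular. Either way, every $3 \times 3$ minor of $M$ is tropically singular.

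For the backward direction, the strategy is a perturbation followed by a limit. Replace every $\infty$-entry of $M$ by a large real parameter $T$ to produce a finite matrix $M_T \in \R^{r \times n}$, whose rows I denote $p_i(T)$. Using disjointness, I will verify that for $T$ sufficiently large, tropical singularity of every $3 \times 3$ minor of $M$ forces tropical singularity of every $3 \times 3$ minor of $M_T$: within each submatrix the $T$-entries occupy distinct columns, so the six permutation values stratify by the number of $T$-terms they contain, and for $T \gg 0$ a tie in the minimum of $M_T$ must occur within a single stratum, matching a tie already encoded in $M$. Applying \cite[Theorem 5.5]{dev.san.stu:05} to $M_T$ produces a tropical line $L_T \subset \R^{n-1}$ through $p_1(T), \dots, p_r(T)$, and the limit $L = \lim_{T \to \infty} L_T$ is the desired tropical line in $\TPr^{n-1}$, with leaves in the boundary cells indexed by the $\infty$-patterns of the rows of $M$.

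I expect the main obstacle to lie in the perturbation step of the backward direction, since tropical singularity of a minor of $M$ whose tropical permanent equals $\infty$ in $\Rbar$ is vacuous in $\Rbar$ but imposes a genuine tie condition on $M_T$, requiring at least two permutations of $M_T$ to share both the minimal number of $T$-entries and the minimal finite residual. Handling this case will require either a direct construction of $L$ from the finite-entry data together with the $\infty$-pattern, or a refinement of the perturbation that assigns distinct parameters $T_i$ to the $\infty$-entries and tunes them against the finite data. The disjointness hypothesis is what makes such a refinement tractable, by bounding the combinatorial complexity of any $3 \times 3$ submatrix and ensuring that each $\infty$-pattern of a row of $M$ corresponds to a distinct leaf direction of the limiting tropical line.
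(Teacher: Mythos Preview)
Your forward direction differs from the paper's but is essentially sound: you lift to a classical rank-two matrix over $\KK$ and use the ultrametric inequality on the vanishing $3\times 3$ minors, whereas the paper approximates each boundary leaf $p_i$ by a real point $p_i'$ on the adjacent leg of $\Trop\ell$, applies Develin--Santos--Sturmfels to the resulting real matrix, and passes to the limit by continuity of the tropical permanent. Your version needs a word on why lifts of the $p_i$ to points of a fixed classical $\ell$ exist, but this is routine.

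The backward direction has a genuine gap, which you correctly flag but do not close. Disjointness bounds the $\infty$-entries per \emph{column}, not per row, so a $3\times 3$ submatrix of $M$ can have an entire row equal to $\infty$; its tropical permanent is then $\infty$, singularity is vacuous for $M$, and nothing in your argument forces the corresponding minor of $M_T$ to be singular. Neither of your suggested fixes (a direct construction, or distinct parameters $T_i$) is what actually works. The paper's resolution is a preliminary coordinate projection: Lemma~\ref{lm:chooseProjection} uses the $3\times 3$ singularity hypothesis itself to show that any two columns of $M$ with an $\infty$ in the same row already coincide as points of $\TPr^{r-1}$, so a suitable projection $\pi\colon\TPr^{n-1}\dashrightarrow\TPr^{m-1}$ leaves each row with at most one $\infty$-entry; Lemma~\ref{lm:recoverLineFromProjection} then lifts any tropical line through $\pi(\Ccurve)$ back to one through $\Ccurve$. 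After this reduction, the $\infty$-entries of every $3\times 3$ submatrix form a partial permutation pattern (at most one per row \emph{and} per column), so at least two permutations---the derangements in the extremal case of three $\infty$'s---avoid all of them, the permanent is finite, and your perturbation $\infty\mapsto T$ goes through cleanly. The paper also handles the passage to the limit more carefully than ``$L=\lim_{T\to\infty}L_T$'': it augments the perturbed configuration $\Ccurve'_N$ with extra points $p_i'' = p_i' + \mu e_i$ pushed far along the relevant legs, checks via the same minor criterion that the enlarged set remains collinear, and concludes that for large $\mu$ these points lie on distinct legs, forcing the line to stabilize and to contain the original leaves $p_i$.
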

\begin{proof} We let $M$ be the tropical $r\times n$-matrix in the statement.
 If $\Ccurve$ lie in the tropical projective torus $\R^n/\rspanone$, the matrix $M$ has entries in $\R$, and the statement follows from~\cite[Corollary 3.8 and Theorem 6.5]{dev.san.stu:05}.
 If we allow some of the points to lie in the boundary of $\TPr^{n-1}$  the argument needs to be slightly modified.

 We fix  $\Ccurve:=\{p_1,\ldots, p_r\}$.   Our hypothesis on the $\infty$-entries of each $p_i$ ensures that each coordinate hyperplane contains at most one point of $\Ccurve$. In particular, any tropical line containing them must meet the interior of $\TPr^{n-1}$.

Fist, we suppose the collection $\Ccurve$ is tropically collinear, and let $\Trop \ell$ be the tropical line through its points. In order to use the collinearity criterion over $\R^n/\rspanone$,  we replace $\Ccurve$  by a new collection $\Ccurve':=\{p_1',\ldots, p_r'\}$ of $r$ points with no $\infty$-coordinates. By construction, every point $p_i$ in the boundary of $\TPr^{n-1}$ will be a leaf of  $\Trop \ell$.  We replace each leaf $p_i$ by a point $p_i'$ in the leg adjacent to $p_i$. If a point  $p_i$ has only  real coefficients  we set $p_i'=p_i$.  The collection  $\Ccurve'$ is contained in $\Trop \ell$ and the corresponding tropical matrix $M'$ has only real entries. Therefore, collinearity in $\R^n/\rspanone$ ensures that all $3\times 3$-minors of $M'$ are tropically singular. As the points in $\Ccurve'$ approach those in $\Ccurve$, continuity  ensures that the corresponding tropical permanents of $M$ are also tropically singular.

  For the converse, assume that all $3\times 3$-minors of $M$ are singular. As before, we will approximate our collection $\Ccurve$  by a collection $\Ccurve'$ in $\R^n/\rspanone$ whose matrix $M'$ has the same property. We use~\autoref{lm:chooseProjection} below to build a coordinate projection $\TPr^{n-1} \to \TPr^{m-1}$ to ensure that each $p_i$ has at most one $\infty$ coordinate. \autoref{lm:recoverLineFromProjection} will allow us to lift any line through $\pi(\Ccurve)$ in $\TPr^{m-1}$ to a line in $\TPr^{n-1}$ through $\Ccurve$. Thus, we may assume $m=n$ and $\pi$ is the identity map.

  To produce a collection of points in $\R^{n}/\rspanone$ from $\Ccurve$, we fix a positive integer $N$ and we let $M'$ be  the matrix  obtained by replacing every $\infty$-entry of $M$ by $N$. We let $\Ccurve'_N:=\{p_i'\colon 1\leq i\leq r\}$ be the collection of  rows of $M'$. By construction, the non-boundary points of $\Ccurve$ are also in $\Ccurve'_N$. Assume they are the last $s$ points of each collection.

  If $N$ is large enough, the distribution of $\infty$ entries in $M$ ensures that every real term of any $3\times 3$-minor of $M$ has the same value as the corresponding minor on $M'$. Thus, all $3\times 3$-minors of $M'$ are tropically singular, so $\Ccurve'_N$ is tropically collinear. We let $\cT \ell_N$ be a tropical line through $\Ccurve'_N$.

  We claim that whenever $N$ is large enough we can pick $\cT \ell_N$ so that it contains $\Ccurve$ as well. Indeed,  given any $\mu\geq 0$,  we let  $\Ccurve_{\mu,N}$  be the configuration of $2r-s$ points in $\TPr^{n-1}$ obtained by adding  to $\Ccurve'_N$ all $r-s$ points ${p_i}{''}:=p_i' + \mu e_i$ for $i=0,\ldots, r-s-1$.   For $N$ large enough, we conclude  that all the $3\times 3$-minors of the $(2r-s)\times n$ matrix associated to $\Ccurve_{\mu,N}$ are tropically singular.  Hence, the expanded configuration will remain collinear. We let $\cT \ell_{N,\mu}$ be any tropical line through  $\Ccurve_{\mu,N}$.

  For $\mu$ large enough, it follows that all points ${p_i}''$  lie on (distinct) legs of $\cT \ell_{N,\mu}$.  Thus, the sequence $\{\cT \ell_{N,\mu}\}_{\mu}$ can be taken to be ultimately constant. Set this limiting value to be the tropical line $\cT \ell$.
  Since $\Ccurve'_{N+\mu}\subset \Ccurve_{N,\mu}$, it follows that we can set $\cT \ell_{N+\mu}$ equal to $\cT \ell$ as well. Since the points ${p_i}''$ converge to $p_i$ in $\Ccurve$ we conclude that $\Ccurve$ lies in $\cT \ell$. This concludes our proof. 
\end{proof}

\begin{lemma}\label{lm:chooseProjection}
  Let $\Ccurve$ be a collection of $r$ points in $\TPr^{n-1}$ with pairwise disjoint $\infty$-entries, and let $M$ be its associated $r\times n$ matrix. Assume that $\Ccurve$ has $k$ points in the boundary of $\TPr^{n-1}$. Then:
  \begin{enumerate}[(i)]
  \item Two columns of $M$ with a common $\infty$ entry represent the same point in $\TPr^{r-1}$.
    \item The coordinate projection $\pi\colon \TPr^{n-1}\dashrightarrow \TPr^{m-1}$ where $m$ is the number of distinct columns of $M$ viewed in $\TPr^{r-1}$ is well-defined and injective on $\Ccurve$.
    \end{enumerate}
  \end{lemma}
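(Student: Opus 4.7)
The plan is to establish part (i) using the tropical singularity of $3\times 3$ minors of $M$---the hypothesis inherited from the context of~\autoref{pr:tropicalLinesbyMinors} in which this Lemma is invoked---and then to derive part (ii) as a largely combinatorial consequence.

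For (i), I would fix columns $c, c'$ with $M_{i,c}=M_{i,c'}=\infty$. By the pairwise disjointness of $\infty$-entries, each column of $M$ contains at most one $\infty$, so $M_{k,c}, M_{k,c'}\in \R$ for every $k\neq i$. Since $p_i$ has at least one finite coordinate, some column $c''$ satisfies $M_{i,c''}\in \R$. For any two rows $j, k\neq i$, I would inspect the $3\times 3$ submatrix on rows $\{i,j,k\}$ and columns $\{c,c',c''\}$: only the two permutations that pair $c''$ with row $i$ contribute finite terms to its tropical permanent, namely $M_{i,c''}+M_{j,c}+M_{k,c'}$ and $M_{i,c''}+M_{j,c'}+M_{k,c}$. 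Tropical singularity then forces these to coincide, giving $M_{j,c}-M_{j,c'}=M_{k,c}-M_{k,c'}$; letting $j$ and $k$ vary shows this common difference is independent of the row. Thus $c$ and $c'$ differ by a tropical scalar while sharing their single $\infty$-entry, so they represent the same point in $\TPr^{r-1}$.

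For (ii), combining (i) with the pairwise disjointness hypothesis yields a clean description of the column equivalence classes in $\TPr^{r-1}$: for each boundary point $p_i$, the columns where $p_i$ takes the value $\infty$ form a single class $C_i$; the $C_i$ are pairwise disjoint, and the remaining columns (with all entries finite) are grouped by the usual equivalence of differing by a constant in $\R^r/\rspanone$. Let $m$ denote the number of classes and let $\pi\colon \TPr^{n-1}\dashrightarrow\TPr^{m-1}$ be the coordinate projection retaining one representative per class. The representative of $C_i$ contributes the only $\infty$-coordinate of $\pi(p_i)$, while representatives of the other classes have finite value at $p_i$, so $\pi$ is well-defined on $\Ccurve$ with $\pi(p_i)$ having at most one $\infty$-entry. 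For injectivity, if the $\infty$-patterns of $\pi(p_i)$ and $\pi(p_j)$ differ---which happens whenever at least one of $p_i, p_j$ is a boundary point, since the $C_i$ are pairwise disjoint---the two projected points are automatically distinct; otherwise both lie in the interior of $\TPr^{m-1}$, and $\pi(p_i)=\pi(p_j)$ would force $p_i-p_j$ to be constant on the selected coordinates, which the within-class column constancy from (i) propagates to all remaining coordinates, forcing $p_i=p_j$ in $\TPr^{n-1}$.

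The main technical obstacle is the column-difference identity in (i); once the $3\times 3$ permanent analysis is in place, the rest of (i) and all of (ii) reduce to careful bookkeeping about the $\infty$-stratification of $\TPr^{n-1}$ and the behavior of coordinate projections thereon.
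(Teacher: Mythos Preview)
Your proposal is correct and follows essentially the same approach as the paper: both use the (implicit, context-inherited) hypothesis that all $3\times3$ minors of $M$ are tropically singular to force columns sharing an $\infty$-entry to differ by a scalar, and then build the projection by selecting one representative per column class. Your treatment of injectivity in (ii) is more explicit than the paper's one-line ``Injectivity follows from (i),'' and you correctly merge equivalent all-finite columns as well, whereas the paper's construction keeps all of them; but these are differences in level of detail rather than in strategy.
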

    \begin{proof}
After permutation, we may assume that $M$ has the form:
  \[ M=
  \left(  \begin{array}{c|c|c|c|c}
  \begin{array}{ccc}
    \infty & \ldots & \infty
  \end{array} &
  \begin{array}{lcc}\!\ast & \;\ldots & \;\ast
  \end{array} & \ldots &   \begin{array}{lcc}\!\ast & \;\ldots & \;\ast
  \end{array}  &   \begin{array}{ccc}\ast & \ldots & \ast
  \end{array}
\\\hhline{-|-|~~}
\begin{array}{lcc}\!\ast & \;\ldots & \;\ast
  \end{array} &
  \begin{array}{ccc}
    \infty & \ldots & \infty
 \end{array} &   \begin{array}{ccc}\ast & \ldots & \ast
  \end{array}  &   \begin{array}{lcc}\!\ast & \;\ldots & \;\ast
  \end{array} &   \begin{array}{ccc}\ast & \ldots & \ast
  \end{array}\\\hhline{~|-|~~~}
  \begin{array}{lcc}\;\ast & \;\ldots & \ldots
  \end{array} &
  \begin{array}{ccc} & \ldots & 
  \end{array}  &   \begin{array}{ccc} & \ddots & 
  \end{array} &   \begin{array}
    {lcc}\!\ast & \;\ldots & \;\ast
 \end{array} &  \begin{array}{ccc}\ast & \ldots & \ast
  \end{array}\\\hhline{~~~|-|~}
  \begin{array}{lcc}\;\ast & \;\ldots & \ldots 
  \end{array} &
  \begin{array}{ccc} & \ldots & 
  \end{array}  &   \begin{array}{ccc}\ast & \ldots & \ast
  \end{array} &   \begin{array}{ccc}
    \infty & \ldots & \infty
 \end{array} &  \begin{array}{ccc}\ast & \ldots & \ast
  \end{array}\\\hhline{---|-|-}
   \begin{array}{lcc}\!\ast & \;\ldots & \;\ast
  \end{array} &
 \begin{array}{lcc}\!\ast & \;\ldots & \;\ast
      \end{array}
  &   \begin{array}{lcc}\ast & \ldots & \ast
      \end{array} &
    \begin{array}{lcc}\!\ast & \;\ldots & \;\ast
    \end{array} &\begin{array}{ccc}\ast & \ldots & \ast
    \end{array}
    \\
    \begin{array}{ccc}\vdots\quad & 
       & \quad \vdots
    \end{array} &
        \begin{array}{ccc}\vdots\quad & 
       & \quad \vdots
        \end{array} &
            \begin{array}{ccc}\vdots & 
       &\quad \;\;\vdots
            \end{array} &
                \begin{array}{ccc}\vdots\quad & 
       & \quad \vdots
                \end{array} &                
                \begin{array}{ccc}\vdots & 
       & \quad \;\;\vdots
                \end{array}  \\
   \begin{array}{lcc}\!\ast & \;\ldots & \;\ast
  \end{array} &
 \begin{array}{lcc}\!\ast & \;\ldots & \;\ast
      \end{array}
  &   \begin{array}{lcc}\ast & \ldots & \ast
      \end{array} &
    \begin{array}{lcc}\!\ast & \;\ldots & \;\ast
    \end{array} &\begin{array}{ccc}\ast & \ldots & \ast
    \end{array}
  \end{array}
  \right),
 \]   
  where the $\ast$ indicate the corresponding entry has a real value.   For each $i=0,\ldots, r-1$, we let $B_i$ be the columns corresponding to $\infty$-entries on the $i$-th row of $M$. In particular, if the  bottom right block matrix has size $s\times k$, with  $s,k\geq 0$, it follows that $B_{r-k} = \ldots = B_{r-1} = \emptyset$.
  Furthermore,  since every row lies in  $\TPr^{n-1}$, we can find an $s_i$ in $\{0,\ldots, r-k-1\}$ with $M_{is_i}\in \R$.

  First, assume $|B_i|\geq 2$. Working with the $3\times 3$-tropical permanents involving 2 columns of $B_i$ and the column $s_i$, we conclude that  the $(r-1)\times |B_i|$-submatrix of $M$ with rows in $[n]\smallsetminus \{i\}$ and columns in $B_i$ has tropical rank 2, that is, all its $2\times 2$-minors are tropically singular. In particular, the difference of any two columns in this submatrix is a multiple of the all-ones vector so all columns in $B_i$ represent the same point in $\TPr^{r-1}$. This proves the first statement.

  The coordinate projection $\pi$ is determined by a choice of $m$ columns from $M$. We pick the first column of each non-empty $B_i$ and take the remaining columns in $[n]\smallsetminus \bigsqcup_{i=0}^{r-k-1} B_i$. By construction, the columns in the latter set lie in $\R^r/\rspanone$, so  $\pi$ is well defined on $\Ccurve$. Injectivity follows from (i). 
    \end{proof}

Tropical linear spaces correspond to valuated matroids~\cite{spe:08}. In this language, the set of leaves of a tropical line corresponds precisely to the \emph{cocircuits} of the underlying rank-two (loopless) valuated matroid on $n$-elements. In turn, the sets $B_1,\ldots, B_m$ encode the parallel elements of this matroid. This interpretation and the proof of~\autoref{pr:tropicalLinesbyMinors} both have the following consequence:

\begin{corollary}\label{cor:LineFromBoundary}
  Any tropical line in $\TPr^{n-1}$ is uniquely determined by its set of leaves.
  \end{corollary}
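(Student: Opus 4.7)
The plan is to invoke the matroid-theoretic dictionary recalled immediately before the statement. A tropical line in $\TPr^{n-1}$ meeting the interior corresponds bijectively (up to a global tropical shift) to a rank-two loopless valuated matroid on the ground set $\{0,\ldots,n-1\}$, and under this dictionary the leaves of the tree are precisely the cocircuit vectors: the $\infty$-entries of a leaf $p_i$ single out the parallelism class $B_i$, while its finite entries record the valuation on the complementary ground-set elements.

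I would carry out the reconstruction in two steps. First, from the $\infty$-patterns of the leaves one reads off the partition $B_1,\ldots,B_m$, which pins down the underlying (unvaluated) matroid and identifies its rank-one flats. Second, since a rank-two valuated matroid is determined by its cocircuit vectors up to a global tropical shift, the finite coordinates of $p_1,\ldots,p_m$ determine the valuation uniquely. This recovers the valuated matroid, and hence the associated tropical line, from the leaf data alone.

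A more hands-on alternative bypasses the matroid language and reads the statement off from the constructive proof of Proposition~\ref{pr:tropicalLinesbyMinors}: first apply Lemma~\ref{lm:chooseProjection} to the set of leaves to produce a coordinate projection $\pi\colon \TPr^{n-1}\dashrightarrow \TPr^{m-1}$ that is injective on them; each projected leaf then has a single $\infty$-coordinate, and the image of the tropical line is a generic tropical line in $\TPr^{m-1}$ whose tree structure is manifestly determined by its leaves (each leaf supplies one leg in a distinct coordinate direction, and the internal bounded subtree is assembled from the finite coordinate data via pairwise tropical meets). The expansion-and-limit argument already used in the converse direction of Proposition~\ref{pr:tropicalLinesbyMinors} then recovers the original line in $\TPr^{n-1}$ from its projection together with the partition $B_1,\ldots,B_m$. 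The only genuinely delicate point in either approach is to match the leaf data in $\TPr^{n-1}$ with the cocircuit vectors of the valuated matroid; once this identification is in place, uniqueness is automatic since rank-two valuated matroids are determined by their cocircuits.
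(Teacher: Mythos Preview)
Your proposal is correct and matches the paper's own justification almost exactly: the paper does not spell out a proof but simply remarks that the corollary follows both from the valuated-matroid interpretation (leaves equal cocircuits of a rank-two valuated matroid) and from the constructive argument in the proof of Proposition~\ref{pr:tropicalLinesbyMinors}. You have faithfully elaborated both of these routes.
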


Let $\cT\ell$ be a tropical line in $\TPr^{n-1}$ with $m$ leaves associated to the partition $\bigsqcup_{i=1}^m B_i$ of $[n]$. Consider any subset $J$ of $[n]$ of size $m$ containing exactly one element of each set $B_i$. We define the canonical projection:
\begin{equation}\label{eq:projLJ}
 \pi_{J}\colon \TPr^{n-1}\dashrightarrow \TPr^{|J|-1}.
\end{equation}

\begin{lemma}\label{lm:recoverLineFromProjection}  The projection  $\pi_{J}$ from~\eqref{eq:projLJ}  is well defined on $\cT\ell$ and its image is a generic tropical line in $\TPr^{m-1}$. Furthermore, $\cT\ell$ can be uniquely recovered from its image together with the data of $J$ and all its leaves.
    \end{lemma}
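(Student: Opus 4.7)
The plan is to show that $\pi_J$ restricts to a bijection between $\cT\ell$ and its image, with the coordinates that are killed by the projection recoverable as constants read off from the leaves. First, I would verify well-definedness and describe the image; then I would identify the invariants destroyed by $\pi_J$ and extract them from the data; finally, I would assemble the reconstruction, whose uniqueness is already guaranteed by \autoref{cor:LineFromBoundary}.

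For well-definedness, interior points of $\cT\ell$ have only real entries, and each leaf $p_i$ has $\infty$-entries exactly on the block $B_i$. Since $|J\cap B_i|=1$, the $J$-coordinates of $p_i$ contain a single $\infty$-entry at the unique index $j_i := J\cap B_i$, so $\pi_J(p_i)$ is a well-defined boundary point of $\TPr^{m-1}$. Moreover, $\pi_J$ sends the leg direction $e_{B_i}$ to $e_{j_i}$; since $\{j_1,\dots,j_m\}$ enumerates $J$ exactly once, the image is a balanced metric tree with $m$ legs in the distinct singleton directions $e_{j_1},\dots,e_{j_m}$, i.e.\ a generic tropical line in $\TPr^{m-1}$.

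The key observation for reconstruction is that any bounded displacement within $\cT\ell$ is an $\R$-linear combination of the leg directions $e_{B_i}$. Hence for $k,k'\in B_i$ the difference $q[k]-q[k']$ is the same at every interior point $q$, and so for each $k\in B_i\setminus\{j_i\}$ there is a single constant $c_{k}:=q[k]-q[j_i]$ independent of the interior point $q$. To extract $c_k$ from the given leaves I would pick any $p_{i'}$ with $i'\neq i$: since $k,j_i\in B_i$ and the blocks $B_i,B_{i'}$ are disjoint, both $p_{i'}[k]$ and $p_{i'}[j_i]$ are finite, and they coincide with the corresponding values at any interior point along the unbounded leg to $p_{i'}$ (the only coordinates that change along that leg lie in $B_{i'}$). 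Hence $c_k = p_{i'}[k] - p_{i'}[j_i]$, a value read off directly from the data, and independent of the auxiliary choice of $i'$.

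Given $\pi_J(\cT\ell)$, the subset $J$, and the leaves of $\cT\ell$, I would then reconstruct $\cT\ell$ by lifting every interior point $q'\in\pi_J(\cT\ell)$ to the point $q\in\TPr^{n-1}$ defined by $q[j]=q'[j]$ for $j\in J$ and $q[k]=q'[j_i]+c_{k}$ for $k\in B_i\setminus\{j_i\}$, and by replacing each leg of the projected tree (of direction $e_{j_i}$) by the leg of direction $e_{B_i}$ ending at the given $p_i$. Uniqueness of $\cT\ell$ is immediate from \autoref{cor:LineFromBoundary}, since the leaves already determine it. The main obstacle I anticipate is the clean verification that the lifted metric tree is balanced at each internal vertex and agrees with $\cT\ell$ as a subset of $\TPr^{n-1}$; this should follow from the invariance of the $c_k$ together with the fact that $\pi_J$ restricts to a bijection between interiors, but the balancing bookkeeping needs some care.
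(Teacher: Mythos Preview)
Your proposal is correct and follows essentially the same approach as the paper: both identify, for each $k\in B_i\setminus\{j_i\}$, a constant offset $c_k = p_{i'}[k]-p_{i'}[j_i]$ (the paper calls it $\lambda_j$ and picks the representative $b_i\in B_i\cap J$) that is independent of the auxiliary leaf $i'\neq i$, and then lift each point of the projected line by adding these offsets blockwise. The only minor difference is in how independence of $c_k$ from the choice of $i'$ is justified: the paper invokes \autoref{lm:chooseProjection}(i) directly on the matrix of leaves, whereas you argue via the structure of displacements along $\cT\ell$; your balancing concern at the end is not addressed in the paper either, which simply observes that the lift lands back in $\cT\ell$.
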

\begin{proof} By~\autoref{lm:chooseProjection}, the projection $\pi_{J}$ is well-defined on the leaves of $\cT\ell$. Since all remaining points of $\cT \ell$ have all real coordinates, it is well-defined on the whole tropical line. By construction, the set $\pi_J(\cT \ell)$ is a balanced metric tree with $m$ leaves. All its legs have directions $e_0,\ldots, e_{m-1}$, so the result is the tropical generic line in $\TPr^{m-1}$.

  To reconstruct $\cT\ell$ from its leaves and its image under $\pi_J$ we must determine the missing $n-m$ coordinates of each point in the target line. We let $p_0,\ldots,p_{m-1}$ be the leaves of $\ell$, and we let $B_i$ be the set of $\infty$-coordinates of $p_i$. After applying a permutation in $\Sn{n}$, we may assume all $B_i$'s are intervals.
  Given $i=0, \ldots, m-1$ we let $b_i$ be the single element of $B_i\cap J$. For each  $j \in B_i$ we set
  \[
  \lambda_j := (p_s)_{j}-(p_s)_{b_i} \text{ for any} s\neq i.
  \]
  By~\autoref{lm:chooseProjection}, the left-hand expression is independent of our choice of $s$. It follows that any point $q\in \pi_J(\ell)\subset \TP^{m-1}$ can be lifted uniquely to a point $\overline{q}$ in $\ell$ setting  $\overline{q}_j := \lambda_j + q_i$ for each $j\in B_i$.  
    \end{proof}

The proof of~\autoref{pr:tropicalLinesbyMinors} yields an algorithm to reconstruct any tropical line from its set of leaves. This will play  a central role in~\autoref{sec:comb-types-tree}. By~\autoref{lm:recoverLineFromProjection}, it suffices to only consider the case  when the tropical line is generic and meets the interior of $\TPr^{n-1}$. This is the content of~\autoref{alg:treesFromLeaves}.

\begin{remark}\label{rm:NotationEdges} Since all primitive edge directions of metric trees in $\TPr^{n-1}$ are of the form $-e_{I}:=-\sum_{i \in I} e_i$, we write $(v,I)$ to indicate a vertex or leaf $v$ of a given tree  and a directed edge with direction $-e_I$ adjacent $v$.  
\end{remark}

\begin{algorithm}[htb]   \KwIn{A list $\Ccurve:=\{p_0,\ldots, p_n\}$ of tropically collinear points in $\TPr^{n}$.}
  \textbf{Assumption:} Each $p_i$ has exactly one $\infty$ coordinate, namely $(p_i)_i = \infty$ for all $i$.\\
         \KwOut{The vertices and edges of the unique tropical line in $\TPr^{n}$ through $\Ccurve$.}
         \texttt{Vertices} $\leftarrow \{(p_i,\{i\})\}$ ; 
         \texttt{Edges} $\leftarrow \emptyset$ ;\ifhmode\\\fi
         \texttt{StopTest} $\leftarrow \{((v,I), (v',I'))\in  \texttt{Vertices}^2 \colon \, I\cup I' = \{1,\ldots, n\}\}$;\ifhmode\\\fi 
         \While{$\emph{\texttt{StopTest}} = \emptyset$}
               {\texttt{PairsVertices} $\leftarrow \{(v,I), (v', I') \in \texttt{Vertices}^2 \colon v\neq v' \text{ in }\TP^n 
                 \}$\\
                 \While{$\emph{\texttt{PairsVertices}} \neq \emptyset$}
                       {$((v,I),(v',I')) \leftarrow \text{last element from }\texttt{PairsVertices}$\;  \texttt{PairsVertices} $\leftarrow \texttt{PairsVertices} \smallsetminus \{((v,I),(v',I'))\}$ \;
                         \If{the system~\eqref{eq:vertexadjacency} admits a solution $(\lambda, \lambda', \mu)$ giving a vertex $v''$}{
                           $(v'', I'') \leftarrow (v'', I\cup I')$\;
                           \texttt{Vertices} $\leftarrow \texttt{Vertices}  \cup \{(v'', I'')\}$;     $\texttt{Edges} \leftarrow$ $\texttt{Edges}\cup \{(v, v''), (v', v'')\}$\;
                         }}
                   Merge pairs $(v,I)$ in \texttt{Vertices} giving the same point in $\TPr^n$ and adjust \texttt{Edges} as follows:\ifhmode\\\fi
                                          1. $\texttt{AllVertices} \leftarrow \{v \colon \exists I \text{ with } (v,I) \in \texttt{Vertices}\}$ viewed in $\TPr^n$\;
                                            2. \For{$v \in \emph{\texttt{AllVertices}}$}{
                             $\texttt{AllDirections} \leftarrow  \{I' \colon I' \subset [n], \text{ and } \exists  (v',I') \in \texttt{Vertices} \text{ with } v' = v \text{ in }\TPr^n\}$\;
                                              $I_v \leftarrow \bigcup_{I' \in \texttt{AllDirections}} I'$                             \;                 }
                                            3. $\texttt{Vertices}\leftarrow \{(v,I_v) \colon v \in \texttt{AllVertices}\}$\;
                 4. $\texttt{Edges} \leftarrow \{(v,v') \colon v, v'\in \texttt{Vertices},\  \exists (\ww, \ww') \in \texttt{Edges} \text{ with } \ww = v \text{ and } \ww' = v' \in \TPr^n\}$\;
                 $\texttt{StopTest} \leftarrow
\{((v,I), (v',I'))\in  \texttt{Vertices}^2 \colon \, I\cup I' = \{1,\ldots, n\}\}$
}
               $\texttt{Edges} \leftarrow \texttt{Edges} \cup \{(v,v') \colon ((v,I), (v',I')) \in \texttt{StopTest}\}$;\ifhmode\\\fi
                             Merge pairs in \texttt{Vertices} giving the same points in $\TPr^n$ and adjust \texttt{Edges} (as in 1--4 above);\ifhmode\\\fi
               \KwRet{\emph{(\texttt{Vertices}, \texttt{Edges})}.}

  \caption{Reconstructing a generic tropical line in $\TPr^{n}$ from its set of $n+1$ leaves.\label{alg:treesFromLeaves}}
\end{algorithm}

The next  technical lemma is be at the hearth of the construction. It determines when a given pair of vertices of a tropical line are connected through a third vertex with prescribed directions for the pair of  adjacent edges or legs.

\begin{lemma}\label{lm:connectVertices} Let $(v,I)$ and $(v',I')$ be a pair of vertices of a generic tropical line in $\TPr^{n}$ together with an edge (or leg) adjacent to each. Then, we can find a vertex $v''$ adjacent to both $v$ and $v'$ via the prescribed edges if and only if the following system 
  \begin{equation}\label{eq:vertexadjacency}
     v_k - \lambda (e_I)_k =       v'_k - \lambda (e_{I'})_k  + \mu \qquad \text{ for all } k \in \{j\in \{1,\ldots,n\}  \text{ with } v_j,v'_j \neq \infty\}
   \end{equation}
has a solution $(\lambda, \lambda',\mu)$ with $\lambda,\lambda' \geq 0$ and  $\mu \in \R$.
  Furthermore, we can recover the vertex $v''$ in $\TPr^{n}$  as $v''_k = v_k -\lambda\;e_I $ if $v_k\neq \infty$, whereas $v''_k = v'_k -\lambda'\,e_{I'} + \mu \,\mathbf{1}$ if $v_k =  \infty$. 
  \end{lemma}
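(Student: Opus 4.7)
The lemma is essentially a statement about the projective intersection, in $\TPr^n$, of two tropical rays emanating from $v$ and $v'$ in the prescribed primitive directions. My plan is to translate each adjacency condition into the existence of a shift $\mu\,\mathbf{1}$ reconciling two representatives of the prospective vertex $v''$, and then to read off the equation coordinatewise on the common finite-value locus $K:=\{j\in\{1,\ldots,n\}\colon v_j,v'_j\neq \infty\}$.

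For the forward direction, assume such a $v''$ exists. Then in $\Rbar^{n+1}$ the point $v''$ admits two representatives $w_1 = v - \lambda e_I$ and $w_2 = v' - \lambda' e_{I'}$, with $\lambda,\lambda'\geq 0$ parameterizing the traversal along each edge outward from $v$ and $v'$, respectively. Since $w_1$ and $w_2$ project to the same point in $\TPr^n$, they must differ by a real multiple of $\mathbf{1}$; setting $w_1 - w_2 = \mu\,\mathbf{1}$ and then restricting this identity to the coordinates $k \in K$ (where all terms are finite and no infinity ambiguity appears) yields exactly~\eqref{eq:vertexadjacency}.

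For the converse, given a solution $(\lambda,\lambda',\mu)$ with $\lambda,\lambda'\geq 0$, I would simply \emph{define} $v''$ via the two-case recovery formula in the statement, and then verify that it represents both $v - \lambda e_I$ and $v' - \lambda' e_{I'} + \mu\,\mathbf{1}$ in $\TPr^n$. The first identity is immediate on the finite coordinates of $v$ by construction; on the coordinates where $v_k=\infty$, both expressions carry $\infty$ at precisely the same slots, so they define the same projective point once the shift $\mu\,\mathbf{1}$ is absorbed. A symmetric check, now using~\eqref{eq:vertexadjacency} on the overlap locus $K$ to reconcile the two cases of the recovery formula, delivers the second identity. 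The non-negativity hypothesis $\lambda,\lambda'\geq 0$ guarantees that $v''$ is reached by traversing the edges in the prescribed outgoing directions from $v$ and $v'$, rather than their reverses.

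\textbf{Anticipated obstacle.} The one delicate point lies at coordinates where exactly one of $v_k,v'_k$ is $\infty$: only one of the two cases in the recovery formula yields a finite value there, and one must confirm that the resulting $v''$ still projectively equals both $v-\lambda e_I$ and $v'-\lambda' e_{I'}$ in $\TPr^n$. This reduces to careful $\infty$-pattern bookkeeping under the $\rspanone$ quotient, using that adding a finite scalar multiple of $\mathbf{1}$ preserves which coordinates are infinite. All remaining checks are coordinatewise verifications involving finite real arithmetic, so no combinatorial input beyond~\eqref{eq:vertexadjacency} itself is needed.
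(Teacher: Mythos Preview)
Your approach is correct and is exactly the ``simple linear algebra computation'' the paper's proof alludes to; the paper's own proof is a two-sentence sketch that does not spell out the forward/converse split or the recovery formula as you do. The one point worth making explicit is that the paper invokes the genericity hypothesis up front to conclude that $v$ and $v'$ have \emph{disjoint} sets of $\infty$-coordinates, which is precisely what resolves your ``anticipated obstacle'' cleanly: at a coordinate where $v_k=\infty$ one necessarily has $v'_k\neq\infty$, so the second branch of the recovery formula always produces a finite value there, and the $\infty$-pattern bookkeeping becomes trivial.
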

\begin{proof} The genericity assumption ensures that $v$ and $v'$ have disjoint sets of $\infty$-coordinates. The result follows by a simple linear algebra computation. 
  \end{proof}

\begin{proof}[Proof of~\autoref{alg:treesFromLeaves}] The algorithm constructs a generic tropical line in $\TPr^n$ by generating new vertices and edges from old ones. The production starts from the leaves and produces non-leaf vertices in a level structured fashion, moving towards the center of the tree. The set $\texttt{Vertices}$ will collect all pairs $(v,I)$ where $v \in \TPr^n$ is a leaf or vertex of the tree and $I$ indicates the  inward direction $-e_I$, i.e.\ the unique edge adjacent to $v$ pointing towards the center of the tree (see~\autoref{rm:NotationEdges}.)
  The set \texttt{Edges} records all pairs of adjacent vertices.
  Our stopping criterion is given by the variable \texttt{StopTest} that checks if two vertices have complementary inward directions.
  
  Each iteration will produce new vertices from pairs $(v,I)$, $(v',I')$ of  oldones with prescribed inward directions by analyzing the solvability of the corresponding system~\eqref{eq:vertexadjacency}. After running through all such pairs, and producing new vertices, we must ``merge'' distinct elements in $\texttt{Vertices}$ to record only the  vertex representatives in $\TPr^n$ together with their true inward directions.

  We do so as follows. Assume  $(v, I)$ is a vertex produced at a given iteration, but we had already constructed $v$ from a different pair of vertices. In this case, we must modify the set $I$. To this end, we collect all sets $I'$ where $(v',I') \in \texttt{Vertices}$ and  $v' = v \in \TPr^n$ in the variable \texttt{AllDirections}. By the balancing condition, the new inward direction emanating from $v$ will be encoded by the set $I_v$ obtained as the union of all elements in \texttt{AllDirections}.
  Once the set \texttt{Vertices} is adjusted, we produce the set \texttt{Edges} by replacing an unordered pair $(\ww,\ww')$ in the old collection by the pair of the representatives. 

  Once the stopping criterion \texttt{StopTest} is reached, we conclude that all vertices of the generic tropical line in $\TPr^n$ with leaves $\Ccurve$ have been covered: we have a path from each leaf to one of the two vertices in each pair $((v,I),(v',I'))$ in \texttt{StopTest}. It follows by construction that $(v,v')$ is an edge of the tree, so we must add all such pairs to our set \texttt{Edges}, if they were not recorded earlier.

  Finally, we perform a merging step of all vertices, together with the corresponding adjustment of the set of edges, and then output the pair $(\texttt{Vertices}, \texttt{Edges})$.
\end{proof}

\autoref{alg:treesFromLeaves} outputs each edge of a generic tropical line as its pair of adjacent vertices. Since each vertex comes with the information of its adjacent edge's direction pointing towards the center of the line, we can easily determine all edge lengths from this data, as we now explain:

\begin{lemma}\label{lm:treemetric}
  Consider a pair $(v,I)$ and $(v',I')$ of adjacent vertices of a generic tropical line in $\TP^n$. Then, either $I\sqcup I' = [n]$, $I\subset I'$ or $I'\subset I$. In the first two cases, the lattice length of the edge joining $v$ and $v'$ equals $|\lambda|$, where $(\lambda, \mu)$ is the unique solution to the system
  \begin{equation}\label{eq:systemMetric} v' - v = \lambda\,\sum_{i\in I} e_i  + \mu \,\mathbf{1}\quad  \text{ in }\R^{n+1}.
  \end{equation}
  In the third situation, the length is obtained by exchanging the roles of $(v,I)$ and $(v',I')$.
\end{lemma}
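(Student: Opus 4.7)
The plan is to interpret the label $I$ at each vertex $v$ as specifying the adjacent edge with outward primitive direction $-e_I$ (the inward edge of \autoref{alg:treesFromLeaves} pointing toward the portion of the tree yet to be processed when $v$ is finalized). By the balancing condition at $v$, the set $I$ then coincides with the set of leaf indices on $v$'s side of this inward edge. The proof of the trichotomy will proceed by comparing the inward edges at $v$ and $v'$ with the actual tree edge $vv'$.

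Let $S \sqcup S^{c} = [n]$ be the leaf bipartition induced by $vv'$, with $S$ on $v$'s side. If the inward edge at $v$ is $vv'$ itself then $I = S$; otherwise it is one of the other edges at $v$, and since every subtree of $v$ not through $vv'$ avoids $v'$ entirely, its $v$-side leaf set must contain all of $S^{c}$, giving $I \supseteq S^{c}$. An identical dichotomy holds at $v'$. The fourth combination, in which neither inward edge is $vv'$, is ruled out by an algorithmic invariant discussed in the final paragraph. The three surviving combinations yield, respectively, $I \sqcup I' = S \sqcup S^{c} = [n]$, $I = S \subsetneq I'$, and $I' = S^{c} \subsetneq I$, establishing the trichotomy.

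For the length formula in Cases 1 and 2, the edge $vv'$ has outward primitive direction $-e_I$ at $v$, so any choice of representatives yields $v' = v - L\,e_I + c\,\mathbf{1}$ in $\R^{n+1}$, where $L > 0$ is the lattice length and $c \in \R$. Substituting into \eqref{eq:systemMetric} gives $\lambda = -L$ and $\mu = c$, whence $|\lambda| = L$. Uniqueness of $(\lambda, \mu)$ follows from the linear independence of $e_I$ and $\mathbf{1}$ in $\R^{n+1}$, which holds whenever $\emptyset \neq I \neq [n]$; this condition is met in both cases since $I$ is a proper subset of $[n]$. Case 3 reduces to Case 2 by interchanging $(v, I)$ and $(v', I')$, so that the length is obtained from the analogous system with $e_{I'}$ in place of $e_I$.

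The main obstacle is to rigorously prove the algorithmic invariant excluding the fourth combination. My approach is an induction on the iteration count of \autoref{alg:treesFromLeaves}: at each merging step, the new label $I \cup I'$ is a disjoint union because the two paired vertices correspond to disjoint subtrees of the new vertex, and by balancing at the newly trivalent vertex this disjoint union equals the leaf set on the new vertex's side of its unique remaining outward edge. Once this invariant is in place, for every tree edge at least one of its endpoints has its inward edge equal to that edge, excluding the fourth combination and completing the proof.
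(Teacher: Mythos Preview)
Your proof is correct, and its core coincides with the paper's: the trichotomy ultimately rests on how \autoref{alg:treesFromLeaves} creates edges. The paper's argument, however, is a direct three-line observation: every edge in the output arises either from the \texttt{StopTest} criterion (forcing $I \cup I' = [n]$ with $I$, $I'$ disjoint) or from the \texttt{While} loop, where one endpoint was constructed from the other and hence carries the smaller label (forcing $I \subset I'$ or $I' \subset I$); the edge direction $-e_I$ then follows immediately.

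You instead add a geometric layer---reinterpreting the label $I$ as the leaf set on $v$'s side of its inward edge, introducing the bipartition $S \sqcup S^c$ of $vv'$, and running a four-case analysis. This framing is pleasant and makes the length formula transparent, but the exclusion of your fourth case (neither inward edge equals $vv'$) is exactly the algorithmic observation above, deferred to your final paragraph as an ``invariant''. So the detour buys intuition but not economy: the paper reaches the same endpoint without the bipartition machinery. One small wrinkle in your presentation is that the opening claim ``by the balancing condition $I$ coincides with the leaf set on $v$'s side'' already presupposes the invariant you postpone to the end; balancing alone gives the direction of the inward edge as $-e_J$ for the correct leaf set $J$, but identifying the algorithm's label $I$ with $J$ is the inductive statement. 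This is not a gap, just a circularity in exposition that the paper's direct reading avoids.
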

\begin{proof} Since $v$ and $v'$ are adjacent, we know that the direction of the edge joining them is one of the two inward directions. \autoref{alg:treesFromLeaves} provides two possible scenarios: either $I$ and $I'$ are disjoint (since the stopping criterion  was reached), or one of the vertices was obtained before the end of the \texttt{While} cycle. In the latter case, either $I\subset I'$ or viceversa. In both situations, up to a scalar multiple of $\mathbf{1}$, the primitive direction of the edge joining $v$ and $v'$ is one of the two inward directions: the smaller among $I$ and $I'$, or any of them if they are disjoint. The equation~\eqref{eq:systemMetric} follows.
  \end{proof}
\section{Combinatorics of extra tropical lines on tropical cubic del Pezzo surfaces}\label{sec:comb-extra-trop}

In this section, we turn our attention to the central topic of this paper, namely, the number  of tropical lines on anticanonically embedded tropical cubic surfaces.  Our main technique to rule out extra lines on tropical cubic surfaces exploits the rigid structure of the boundary of the tropical surface. In this section, we take the first step and build candidate boundary points in any  potential tropical lines meeting the interior of the tropical surface.

As in the previous sections, we let $X$ be a smooth cubic del Pezzo surface without Eckardt points, embedded in $\P^{44}$ via the anticanonical map, and we consider its induced tropicalization  $\Trop X \subset \TPr^{44}$. Our discussion in~\textcolor{blue}{Sections}~\ref{sec:27-lines-universal} and \ref{sec:bound-antic-trop} reveals a key property of this embedding: the boundary of the surface $X$ determined by  the coordinate hyperplanes is supported on the 27 lines on $X$. Since the boundary of $\Trop X$ is the tropicalization of the boundary of $X$, we conclude that any tropical line in the boundary of $\Trop X$  must be supported on the arrangement of trees determined by the tropicalizations of the 27 classical lines.

  Recall that the 10-regular Schl\"afli graph on 27 vertices  is the  intersection complex of the boundary divisor of $X\subset \P^{44}$ in the absence of Eckardt points.   The following is the main result of this section. It shows that any potential tropical line meeting the interior of $\Trop X$ has exactly five boundary points. Furthermore, they correspond to the intersection points of each of the five pairs of boundary tropical lines meeting a common exceptional curve. We refer to them as \emph{nodal points}.

\begin{theorem}\label{thm:candidateTropLines} Let $X$ be a smooth cubic del Pezzo surface without Eckardt points. Consider its tropicalization with respect to the anticanonical embedding. Then, $\Trop X$ contains at most 27 extra tropical lines meeting its interior. Each such line is indexed by a given exceptional curve in $X$, and has precisely five boundary points arising from the tropicalization of the five nodes associated to the link of the indexing curve in the Schl\"afli graph.
\end{theorem}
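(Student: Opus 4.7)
Let $\ell \subset \Trop X$ be a tropical line meeting the interior. By \autoref{lm:boundaryTX}, its leaves lie on the boundary of $\Trop X$, which equals the union of the 27 tropicalized exceptional curves. By \autoref{cor:BoundaryPoints}, each leaf has either exactly five infinite coordinates (generic point of one boundary tree) or exactly nine (nodal intersection of two such trees). Writing $B_1, \ldots, B_m$ for the $\infty$-supports of the leaves, the leg directions $e_{B_j}$ are cocircuits of the rank-two matroid of $\ell$ (\autoref{def:tropLines}), so the $B_j$ partition $\{0, \ldots, 44\}$. Since $\sum_j |B_j| = 45$ and $|B_j| \in \{5, 9\}$, the only integer solutions to $5 m_5 + 9 m_9 = 45$ are $(m_5, m_9) = (9, 0)$ and $(0, 5)$.

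The case $(9, 0)$ is incompatible with the classical geometry of the 27 lines: nine generic leaves would lie on nine distinct exceptional curves whose five-element triangle-sets partition the 45 anticanonical triangles. Since two exceptional curves share an anticanonical triangle exactly when they meet, the nine curves would be pairwise disjoint, contradicting the well-known bound of six pairwise skew lines on a smooth cubic surface (the two halves of any double-six).

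Turn to $(0, 5)$ and write $p_i = L_i \cap L_i'$, $S_i = \{L_i, L_i'\}$, and $T(p_i) = \{T \in \ted : L_i \in T \text{ or } L_i' \in T\}$. The partition $\bigsqcup_i T(p_i) = \ted$ forces the $S_i$ to be pairwise disjoint (otherwise $T(p_i) \cap T(p_j)$ would contain the five triangles through a common line), so $\Lambda := \bigsqcup_i S_i$ has ten distinct elements; moreover, any two lines from distinct pairs $S_i, S_j$ must be disjoint (else the unique triangle containing them would lie in both $T(p_i)$ and $T(p_j)$). Hence the induced subgraph of $\Lambda$ in the Schl\"afli intersection graph is a perfect matching with edge set $\{S_1, \ldots, S_5\}$, and each of the 45 anticanonical triangles is either type-(b), containing both lines of some $S_i$ (five triangles), or type-(a), containing exactly one line of $\Lambda$ (forty triangles).

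The main and most delicate step is to identify $\Lambda$ with the set $N(E)$ of the ten neighbors of a unique exceptional curve $E$. Each $S_i$ determines a unique anticanonical triangle $T_i = \{L_i, L_i', N_i\}$, and the third line $N_i$ is forced out of $\Lambda$ by the matching property (otherwise $N_i$ would be adjacent in $\Lambda$ to both $L_i$ and $L_i'$). Setting $b(M) = |\{i : N_i = M\}|$ for $M \in \Lambda^c$, so that $\sum_{M \in \Lambda^c} b(M) = 5$, a direct bookkeeping of type-(a) and type-(b) triangles through each $M$ gives $d(M) := |N(M) \cap \Lambda| = b(M) + 5$. On the other hand, the Schl\"afli intersection graph is the strongly regular graph $\mathrm{SRG}(27,10,1,5)$, so adjacent $\Lambda$-pairs (the 5 edges $S_i$) contribute one common neighbor each and disjoint $\Lambda$-pairs (the remaining 40 pairs) contribute five. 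This yields $\sum_M \binom{d(M)}{2} = 5 + 200 = 205$; combined with $\sum_M d(M) = 10 \cdot 9 = 90$, we obtain $\sum_M d(M)^2 = 500$ and hence $\sum_M b(M)^2 = 25$. Since $\sum_M b(M) = 5$ with non-negative integer entries, the only distribution realizing $\sum b^2 = 25$ is the concentrated one $b(E) = 5$, $b(M) = 0$ for $M \neq E$. Then $N_1 = \cdots = N_5 = E$, $d(E) = 10$, so $\Lambda = N(E)$ and the $S_i$ are the five partner pairs of $E$ from \autoref{sec:except-curv-antic}. By \autoref{cor:LineFromBoundary}, $\ell$ is uniquely determined by its five leaves, so there is at most one extra tropical line per exceptional curve, proving the bound of 27.
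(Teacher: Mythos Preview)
Your proof is correct and takes a genuinely different route from the paper's. The paper proceeds by a sequence of tropical case analyses: it first shows (via \autoref{lm:only1} through \autoref{lm:no1}) that a tropical line meeting the interior cannot touch any boundary tree at a non-nodal point, arguing in each case that the required intersections with specific coordinate hyperplanes (condition (i) of \autoref{rm:Conditionsi-ii}) cannot be achieved; it then uses \autoref{lm:lineWith5BoundaryPoints} to pin down the remaining four nodes once one is known. Your argument bypasses this tropical casework entirely. You exploit the partition $\sum |B_j|=45$ with $|B_j|\in\{5,9\}$ to reduce to the two configurations $(m_5,m_9)=(9,0)$ or $(0,5)$, dispatch $(9,0)$ via the classical bound of six pairwise skew lines, and then for $(0,5)$ run a second-moment computation using the strongly regular parameters $\mathrm{SRG}(27,10,1,5)$ of the Schl\"afli graph to force $\Lambda=N(E)$. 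What your approach buys is a clean, purely combinatorial identification of the five nodes without ever inspecting individual anticanonical triangles; what the paper's approach buys is independence from the SRG parameters and a more self-contained argument inside the tropical setup. Both ultimately rely on \autoref{cor:LineFromBoundary} for uniqueness.
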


\begin{remark}\label{rm:Conditionsi-ii} Since every tropical line in $\TPr^{44}$ is realizable, we will always write any potential tropical line as $\Trop \ell$  for some line  $\ell$  in $\P^{44}$. Since any extra line on $\Trop X$ meets its interior,  our earlier discussion ensures that $\ell$ meets the dense torus.
  The tropical line $\Trop \ell$ satisfies two key properties:
  \begin{enumerate} [(i)]
\item $\Trop \ell$ meets all 45 boundary hyperplanes in $\TPr^{44}$, each one of them indexed by an anticanonical triangle. Each such intersection consists of one point.
  \item $\Trop \ell$ intersects each boundary tropical line in at most one point.
  \end{enumerate}
  Since $\Trop \ell$ is a tree with at most 45 leaves and its legs have directions with disjoint support, there is at most one point in the intersection between $\Trop \ell$ and any given boundary hyperplane. Equality holds by construction.
\end{remark}

\begin{proof}[Proof of~\autoref{thm:candidateTropLines}] Following~\autoref{rm:Conditionsi-ii}, we write our potential extra tropical line as $\Trop \ell$ for some  line $\ell$ meeting the dense torus of $\P^{44}$. 
    Our first key observation is that the combinatorics of the boundary of $\Trop X$, make it impossible to simultaneously satisfy conditions (i) and (ii) in~\autoref{rm:Conditionsi-ii}. A careful case by case analysis, which is described by~\textcolor{blue}{Lemmas}~\ref{lm:no3}, \ref{lm:no2} and~\ref{lm:no1}, shows that $\Trop \ell$ meets every boundary tropical line of $\Trop X$ in at most one point, which must be nodal.

    By~\autoref{cor:BoundaryPoints} we know that each node lies in exactly nine boundary hyperplanes. Since  all boundary points of $\Trop \ell$ must have disjoint sets of $\infty$-entries by (i),  we conclude that $\Trop \ell$ has at most five boundary points. By the action of $\Wgp$ we may assume that one of them equals $\Trop \E{1}\cap \Trop \F{12}$. \autoref{lm:lineWith5BoundaryPoints} implies that the remaining four  boundary points of $\Trop \ell$  are $\Trop \E{i}\cap \Trop \F{2i}$ for  $i\neq 1,2$. Notice that these five points come from the pairwise intersections of the ten classical lines meeting $\Gc{2}$, so  we use $\Gc{2}$ to index $\Trop \ell$. By~\autoref{cor:LineFromBoundary}, $\Trop \ell$ is uniquely determined by its boundary points.   There are 27 such labels and they are all $\Wgp$-conjugate. This concludes our proof. 
\end{proof}

\begin{lemma}\label{lm:only1} If $\Trop \ell$ meets $\Trop E$ for some exceptional curve $E\in \sed$ at a non-nodal point, then $\Trop \ell$ avoids all ten lines $\Trop E'$ with $E'\in \sed$ and $E'\cap E \neq \emptyset$.
\end{lemma}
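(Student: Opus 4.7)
The plan is a short proof by contradiction, built on the partition structure of the $\infty$-coordinate sets of the legs of a tropical line together with the explicit count of $\infty$-coordinates of points on boundary lines of $\Trop X$.

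First, I would suppose that $\Trop \ell$ meets $\Trop E$ at a non-nodal point $p$ and additionally meets $\Trop E'$ at some point $q$ for some curve $E' \in \sed$ with $E \cap E' \neq \emptyset$. Since $p$ is non-nodal, $p \notin \Trop E'$, hence $p \neq q$. I then invoke~\autoref{cor:BoundaryPoints} to pin down the $\infty$-coordinates of both points: the set of $\infty$-coordinates of $p$ is exactly the 5-element subset $T_E \subset \ted$ indexing the five anticanonical triangles through $E$, while the set of $\infty$-coordinates of $q$ contains the analogous 5-element set $T_{E'}$ (with equality if $q$ is non-nodal on $\Trop E'$, and with four additional indices if $q$ is nodal).

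The crucial combinatorial input is that since $E$ and $E'$ meet, they lie together in a unique anticanonical triangle $\{E,E',E''\}$, with the third curve $E''$ determined by $E + E' + E'' \equiv -K_X$. Hence $T_E \cap T_{E'}$ consists of precisely the index of this common triangle and is in particular nonempty.

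Finally, by condition (i) of~\autoref{rm:Conditionsi-ii}, both $p$ and $q$ are boundary points of $\Trop \ell$; as such, they are the endpoints of legs of the tree $\Trop \ell$. By~\autoref{def:tropLines}, the $\infty$-coordinate sets $B_p$ and $B_q$ of these two legs are disjoint parts of a partition of $\{0,\ldots,44\}$. Since $B_p = T_E$ and $B_q \supseteq T_{E'}$, this disjointness fails, producing the desired contradiction. The only minor subtlety to articulate is that distinct boundary points of $\Trop\ell$ correspond to distinct legs with disjoint $\infty$-supports, but this is immediate from the partition structure of~\autoref{def:tropLines}: each index $t\in\{0,\dots,44\}$ lies in a unique part $B_j$, which in turn picks out the unique leaf of $\Trop \ell$ lying on the hyperplane $\{x_t = \infty\}$. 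No case analysis on whether $q$ is nodal or not is required, since we only used the inclusion $B_q\supseteq T_{E'}$.
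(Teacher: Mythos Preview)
Your proof is correct and follows essentially the same approach as the paper: both arguments hinge on the fact that $E$ and $E'$ share a unique anticanonical triangle, so $\Trop\ell$ can meet the corresponding boundary hyperplane only once. The paper reduces by $\Wgp$-symmetry to $E=\E{1}$ and invokes condition~(i) of \autoref{rm:Conditionsi-ii} directly, whereas you unpack the same condition into the partition structure of the leaves from \autoref{def:tropLines} together with \autoref{cor:BoundaryPoints}; the content is identical.
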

\begin{proof} By the action of the Weyl group $\Wgp$ we may assume that $E=\E{1}$. Then $E' = \F{1j}$ or $\Gc{j}$ for $j=2,\ldots, 6$. Condition (i) from~\autoref{rm:Conditionsi-ii} applied to the boundary hyperplane indexed by the symbol $x_{1j}$ in $\ted$ ensures that $\Trop \ell$ avoids $\Trop E'$. 
\end{proof}

  \begin{lemma}\label{lm:no3} $\Trop \ell$ cannot meet three tropical boundary lines at non-nodal points.    
  \end{lemma}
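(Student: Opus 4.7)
The plan is to argue by contradiction, using a counting of $\infty$-coordinates among the leaves of $\Trop\ell$ combined with the classical bound on pairwise disjoint lines on a cubic surface.

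Suppose that $\Trop\ell$ meets three distinct boundary tropical lines $\Trop E_1$, $\Trop E_2$, $\Trop E_3$ at non-nodal points $P_1, P_2, P_3$. First I would apply~\autoref{lm:only1} at each $P_i$: it forbids $\Trop\ell$ from meeting any boundary tree $\Trop E'$ with $E' \cap E_i \neq \emptyset$. In particular, $E_1, E_2, E_3$ must be pairwise disjoint exceptional curves on $X$, and each $P_i$ is a leaf of $\Trop\ell$ carrying exactly five $\infty$-coordinates (by~\autoref{cor:BoundaryPoints}), namely the indices of the five anticanonical triangles containing $E_i$.

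Next, by condition~(i) of~\autoref{rm:Conditionsi-ii}, each boundary hyperplane of $\TPr^{44}$ is met by $\Trop\ell$ in exactly one point, so the $\infty$-coordinate sets of the leaves of $\Trop\ell$ form a partition of $\{0,\dots,44\}$. Invoking~\autoref{cor:BoundaryPoints} once more, each nodal leaf contributes nine $\infty$-coordinates and each non-nodal leaf contributes five. Letting $a$ and $b$ denote the numbers of nodal and non-nodal leaves of $\Trop\ell$, we obtain the Diophantine equation $9a+5b=45$. Under the hypothesis $b \geq 3$, a direct check shows that the only non-negative integer solution is $(a,b) = (0,9)$.

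Hence $\Trop\ell$ would have exactly nine non-nodal leaves lying on nine distinct boundary trees $\Trop E_1,\dots,\Trop E_9$, whose $\infty$-coordinate sets are pairwise disjoint five-element subsets recording the triangles containing each $E_i$. This prohibits any anticanonical triangle from containing two distinct $E_i$'s. Since on a cubic surface without Eckardt points every pair of intersecting exceptional curves is contained in a unique common anticanonical triangle (each edge of the Schl\"afli graph lies in exactly one $3$-clique), the curves $E_1,\dots,E_9$ would have to be pairwise disjoint. This contradicts the classical fact that the Schl\"afli graph has independence number six. The main point to verify carefully is the bookkeeping of $\infty$-coordinate counts and the passage from disjointness of triangle-index sets to disjointness of the curves; the two classical inputs---uniqueness of the tritangent plane through two meeting lines, and the bound of six pairwise disjoint lines on a smooth cubic surface---are both standard.
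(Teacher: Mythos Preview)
Your argument is correct and is a genuinely different route from the paper's. The paper proceeds by a direct construction: after using the $\Wgp$-action to normalize the three disjoint exceptional curves to $\E1,\E2,\E3$, it observes via \autoref{lm:only1} that $\Trop\ell$ must avoid $\Trop\F{14}$, $\Trop\F{25}$, and $\Trop\F{36}$; since these three form an anticanonical triangle, $\Trop\ell$ misses an entire coordinate hyperplane, contradicting condition~(i). Your approach instead runs a global Diophantine count: every leaf of $\Trop\ell$ lies on some boundary tree (since the boundary of $\Trop X$ is the union of the 27 trees), hence by the structure of those trees carries exactly five or nine $\infty$-coordinates; the partition constraint $9a+5b=45$ then forces $(a,b)=(0,9)$, and nine pairwise disjoint exceptional curves violates the classical bound.

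Two remarks on what each approach buys. First, your Diophantine equation has only the solutions $(5,0)$ and $(0,9)$, so the same argument with the weaker hypothesis $b\geq 1$ already rules out \emph{any} non-nodal boundary point and thus proves \textcolor{blue}{Lemmas}~\ref{lm:no3},~\ref{lm:no2}, and~\ref{lm:no1} simultaneously---and moreover recovers directly that the only viable configuration has exactly five nodal leaves, which is the conclusion of \autoref{thm:candidateTropLines}. Second, the paper's approach is more self-contained: it never leaves the combinatorics of anticanonical triangles and does not invoke the independence number of the Schl\"afli graph. Your appeal to \autoref{cor:BoundaryPoints} for the $5/9$ dichotomy on \emph{tropical} points is justified by the description of each $\Trop E$ as a ten-leaved tree inside the stratum where its five triangle-coordinates are $\infty$ (so non-leaf points of $\Trop E$ have exactly five $\infty$-entries and leaves have exactly nine); it would be worth saying this explicitly rather than citing the classical statement directly.
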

  \begin{proof} We argue by contradiction. \autoref{lm:only1} ensures that the tree lines in the statement come from disjoint exceptional curves.  Without loss of generality, by the action of the Weyl group $\Wgp$ we may assume they are $\Trop \E{1}$, $\Trop \E{2}$ and $\Trop \E{3}$.
    Again, \autoref{lm:only1} implies that $\Trop \ell$ avoids 
the three lines $\Trop \F{14}$, $\Trop \F{25}$ and $\Trop \F{36}$. By~\autoref{lm:boundaryTX} we conclude that $\Trop \ell$ does not intersect the boundary hyperplane indexed by  the symbol $y_{123456}$ in $\ted$, in violation of  condition (i) from~\autoref{rm:Conditionsi-ii}.
  \end{proof}
  
  \begin{lemma}\label{lm:no2}  $\Trop \ell$ cannot meet two boundary tropical lines at non-nodal points.
  \end{lemma}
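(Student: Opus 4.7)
The plan is to argue by contradiction, showing that two non-nodal intersections would ultimately force some anticanonical boundary hyperplane to contain two distinct points of $\Trop \ell$, in violation of condition (i) of \autoref{rm:Conditionsi-ii}. First, by \autoref{lm:only1} the two exceptional curves carrying the non-nodal intersections must themselves be disjoint (otherwise each would forbid the other). Using the transitivity of the $\Wgp$-action on pairs of disjoint exceptional curves, I may assume they are $\E{1}$ and $\E{2}$. Reapplying \autoref{lm:only1} to each then shows that $\Trop \ell$ avoids the fifteen tropical lines $\Trop \F{1j}$, $\Trop \F{2j}$ (for all relevant $j$) and $\Trop \Gc{j}$ for $j=1,\dots,6$.

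Next, I would extract forced nodal meetings on the remaining $\Trop \E{i}$'s. For each $i \in \{3,4,5,6\}$, applying condition (i) to the triangle $x_{i1}=\{\E{i},\F{1i},\Gc{1}\}$ (whose members $\F{1i}$ and $\Gc{1}$ are avoided) forces $\Trop \ell$ to meet $\Trop \E{i}$; \autoref{lm:no3} rules out a non-nodal meeting, and of the ten Schl\"afli-neighbors of $\E{i}$ the only candidates not already in the avoided list are the $\F{ij}$ with $j \in \{3,4,5,6\} \setminus \{i\}$. This produces a fixed-point-free map $\phi \colon \{3,4,5,6\} \to \{3,4,5,6\}$ together with intersection points $P_i := \Trop \E{i} \cap \Trop \F{i\phi(i)}$. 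Condition (ii) of \autoref{rm:Conditionsi-ii} rules out $2$-cycles of $\phi$, since otherwise $\Trop \F{ij}$ would be met at both $P_i$ and $P_j$. Thus the four undirected edges $\{i,\phi(i)\}$ are pairwise distinct edges of the complete graph $K_4$ on $\{3,4,5,6\}$.

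The crux is then a small combinatorial observation: the three perfect matchings of $K_4$ partition its six edges, so any selection of four edges must contain a complete perfect matching $\{\{a,b\},\{c,d\}\}$. The tripartition $\{\{1,2\},\{a,b\},\{c,d\}\}$ labels a $y$-triangle whose boundary hyperplane contains $\Trop \F{12} \cup \Trop \F{ab} \cup \Trop \F{cd}$, and $\Trop \ell$ meets this hyperplane in two distinct points (the appropriate $P_i$ on $\Trop \F{ab}$ and $P_j$ on $\Trop \F{cd}$), contradicting condition (i). I expect the main obstacle to lie not in the final matching argument, which is elementary once $\phi$ is in place, but in cleanly setting up the forced-meeting step, that is, simultaneously using conditions (i) and (ii) together with \autoref{lm:only1} and \autoref{lm:no3} to pin down that the meeting with each $\Trop \E{i}$ for $i \geq 3$ is a node of the prescribed shape.
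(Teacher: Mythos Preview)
Your proof is correct, but it takes a genuinely different route from the paper's.

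The paper's argument runs through the $\F{ij}$'s rather than the $\E{i}$'s. After the same reduction to $\E{1},\E{2}$ and the same avoidance list, the paper uses the $y_{1k2l\ast\ast}$-hyperplanes to force $\Trop\ell$ to meet every $\Trop\F{ij}$ with $i,j\in\{3,4,5,6\}$. Since $\Trop\ell$ avoids $\Trop\F{12}$, condition~(i) applied to the triangle $y_{123456}=\{\F{12},\F{34},\F{56}\}$ pins the meeting with $\Trop\F{34}$ (and with $\Trop\F{56}$) to the single node $\Trop\F{34}\cap\Trop\F{56}$. Separately, the triangle $x_{31}$ forces $\Trop\ell$ to meet $\Trop\E{3}$, and then condition~(i) applied to $x_{34}=\{\E{3},\F{34},\Gc{4}\}$ (with $\Gc{4}$ avoided) pins the meeting with $\Trop\F{34}$ to the node $\Trop\F{34}\cap\Trop\E{3}$. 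These two nodes are distinct, so $\Trop\ell$ meets $\Trop\F{34}$ twice, contradicting condition~(ii) (equivalently, condition~(i) on any hyperplane containing $\F{34}$).

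Your approach instead produces the map $\phi$ and the four $P_i$'s on the $\E{i}$ side, eliminates $2$-cycles, and finishes with a pigeonhole on the three perfect matchings of $K_4$. This is sound and has a pleasant combinatorial flavor, but it is a longer detour: the paper never needs $\phi$ or the matching argument, because one well-chosen pair of triangles ($y_{123456}$ and $x_{34}$) already forces two distinct intersection points on a single boundary line. What your approach buys is that it makes transparent \emph{why} some $y$-hyperplane must eventually trap two points of $\Trop\ell$, without having to guess which one; the paper's approach buys brevity by simply naming that hyperplane at the outset.
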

  \begin{proof} Again, we argue by contradiction. By~\autoref{lm:only1}, we know that the two boundary lines in the statement cannot intersect. Without loss of generality,  we may assume they are $\Trop \E{1}$ and $\Trop \E{2}$. Again,~\autoref{lm:only1} ensures that $\Trop \ell$ avoids all lines $\Trop \Gc{k}$, $\Trop \F{1j}$ and $\Trop \F{2l}$ for $k=1,\ldots, 6$, $j=2,\ldots, 6$ and $l=3,4,5,6$.
However, since $\Trop \ell$ intersects all the hyperplanes indexed by the symbols $y_{1k2l**}$ in $\ted$, we conclude that $\Trop \ell$ meets all tropical lines $\F{ij}$ with $i,j\notin \{1,2\}$. In particular, $\Trop \ell$ avoids $\Trop \F{12}$ but meets both $\F{34}$ and $\F{56}$. ~\autoref{lm:only1} forces this intersection to be the node  $\Trop \F{34}\cap \Trop \F{56}$.

Since $\Trop \ell$ avoids $\Trop \F{13}$ and $\Trop \Gc{1}$ and meets the anticanonical triangle associated to $x_{31}$, it follows from~\autoref{lm:boundaryTX} that $\Trop \ell$ intersects $\Trop\E{3}$. Since $\Trop \F{34}$ also meets $\Trop \ell$, it follows that $\Trop \F{34}\cap \Trop \ell = \Trop \F{34}\cap \Trop \E{3}$. This contradicts condition (i) from~\autoref{rm:Conditionsi-ii} because
 $\Trop \F{34}\cap \Trop    \F{56}$ and $\Trop \E{3}\cap \Trop \F{34}$ are distinct points
    in $\Trop \F{34}$ by~\autoref{lm:IntersectionBoundaryLines}.
    \end{proof}

  \begin{lemma}\label{lm:no1}  $\Trop \ell$ cannot meet a boundary tropical line at a non-nodal point.
  \end{lemma}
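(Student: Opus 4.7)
The plan is to argue by contradiction, paralleling the strategy used in~\autoref{lm:no3} and~\autoref{lm:no2}. Assume that $\Trop \ell$ meets some $\Trop E$ at a non-nodal point. By the transitivity of the $\Wgp$-action on exceptional curves, I may assume $E = \E{1}$. By~\autoref{lm:only1}, $\Trop \ell$ avoids the ten boundary tropical lines meeting $\Trop \E{1}$, namely $\Trop \F{1j}$ and $\Trop \Gc{j}$ for $j = 2, \ldots, 6$. Furthermore,~\autoref{lm:no2} guarantees that every other boundary intersection of $\Trop \ell$ occurs at a nodal point.

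The key step is to examine the five anticanonical triangles $x_{j1} = \{\E{j}, \F{1j}, \Gc{1}\}$ for $j \in \{2,\ldots,6\}$. Condition (i) of~\autoref{rm:Conditionsi-ii}, combined with~\autoref{lm:boundaryTX}, forces the unique intersection point $p_j$ of $\Trop \ell$ with the hyperplane $\{X_{x_{j1}} = \infty\}$ to lie on one of the three boundary trees $\Trop \E{j}$, $\Trop \F{1j}$, or $\Trop \Gc{1}$. Since $\Trop \ell$ avoids $\Trop \F{1j}$ and any non-nodal intersection away from $\Trop \E{1}$ is forbidden by~\autoref{lm:no2}, the point $p_j$ must be one of the three nodes inside $x_{j1}$: $\Trop \E{j} \cap \Trop \F{1j}$, $\Trop \E{j} \cap \Trop \Gc{1}$, or $\Trop \F{1j} \cap \Trop \Gc{1}$. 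Avoidance of $\Trop \F{1j}$ rules out two of these, so I conclude that $p_j = \Trop \E{j} \cap \Trop \Gc{1}$ for every $j \in \{2,\ldots,6\}$.

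The final step is to verify that the five points $p_2, \ldots, p_6$ are pairwise distinct. If $p_j = p_k$ for some $j \neq k$, the common point would lie on both $\Trop \E{j}$ and $\Trop \E{k}$, so $\Trop \E{j} \cap \Trop \E{k} \neq \emptyset$. But the classical exceptional divisors $\E{j}$ and $\E{k}$ are disjoint, so~\autoref{lm:IntersectionBoundaryLines} forces their tropicalizations to be disjoint as well, a contradiction. Hence $\Trop \ell \cap \Trop \Gc{1}$ contains five distinct points, violating condition (ii) of~\autoref{rm:Conditionsi-ii}, which asserts that $\Trop \ell$ meets each boundary tropical line in at most one point. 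The main technical hurdle lies in the node classification in the second step, but once the avoidance set from~\autoref{lm:only1} is applied, the combinatorics of the triangle $x_{j1}$ reduces to a single possibility, producing the desired contradiction.
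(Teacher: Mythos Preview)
There is a genuine gap in your second step. You correctly deduce that the intersection point $p_j$ of $\Trop \ell$ with the hyperplane indexed by $x_{j1}$ lies on $\Trop \E{j}\cup\Trop \F{1j}\cup\Trop \Gc{1}$, and that avoidance of $\Trop \F{1j}$ together with \autoref{lm:no2} forces $p_j$ to be a nodal point on $\Trop \E{j}$ or on $\Trop \Gc{1}$. But the assertion that $p_j$ must then be one of the \emph{three} nodes of the triangle $x_{j1}$ is unjustified. The tree $\Trop \E{j}$ has ten nodal points (its leaves), and after discarding those that lie on a forbidden line you are still left with $\Trop \E{j}\cap\Trop \F{jk}$ for $k\neq 1,j$ in addition to $\Trop \E{j}\cap\Trop \Gc{1}$. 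Likewise, on $\Trop \Gc{1}$ the admissible nodes are $\Trop \Gc{1}\cap\Trop \E{k}$ for \emph{any} $k\in\{2,\dots,6\}$, not only $k=j$. Thus your conclusion $p_j=\Trop \E{j}\cap\Trop \Gc{1}$ does not follow, and the argument that $\Trop \ell$ meets $\Trop \Gc{1}$ in five distinct points collapses.

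The paper circumvents this by a different route: it first proves (Claim~1) that $\Trop \ell$ meets every $\Trop \E{i}$ for $i=2,\dots,6$, via a nested contradiction that tracks which hyperplanes $\Trop \ell$ must hit; then (Claim~2) it uses this to show $\Trop \ell$ avoids all $\Trop \F{ij}$ with $1<i<j$, because any such intersection would be forced to coincide with two distinct nodes of $\Trop \F{ij}$; finally it observes that $\Trop \ell$ then misses the hyperplane indexed by $y_{123456}$. Your strategy of pinning down $p_j$ directly is appealing, but the combinatorics of which nodes are actually available requires precisely the kind of case analysis that Claims~1 and~2 carry out.
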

  \begin{proof} As before, we argue by contradiction. Without loss of generality, we assume the line in the statement is $\Trop \E{1}$. \autoref{lm:only1} implies that
    \begin{equation}\label{eq:noF1i}
      \Trop \ell\cap \Trop \F{1i} = \emptyset  \qquad \text{ and } \qquad \Trop \ell \cap \Trop \Gc{i} =\emptyset \quad \text{ for all }i\neq 1.
    \end{equation}

    \begin{claim}\label{cl:1}
$\Trop \ell$  intersects $\Trop \E{i}$ for all $i=2,\ldots, 6$.
    \end{claim}
    On the contrary, assume $\Trop \ell$ avoids one of these lines, say $\Trop \E{6}$.     By considering the symbol $x_{61}$ in $\ted$,  condition (ii) and~\eqref{eq:noF1i} ensure  that $\Trop \ell\cap \Trop \Gc{1}\neq \emptyset$. Since $\Trop \ell$ meets $\Trop \E{1}$ at a non-nodal point by hypothesis, \autoref{lm:no2} forces $\Trop \ell \cap \Trop \Gc{1}$ to be a nodal point of $\Trop \Gc{1}$. Again,~\eqref{eq:noF1i} and the combinatorics of the anticanonical triangles restrict the nature of this node: it must lie in  $\Trop \E{j}$ for some $j=2,\ldots, 5$, which we may take as $\Trop \E{2}$.

    Since $\Trop \ell\cap \Trop \Gc{1}\cap \Trop \E{2}\neq \emptyset$, conditions (i) and (ii) from~\autoref{rm:Conditionsi-ii} and the absence of Eckardt points implies that $\Trop \ell$ avoids  $\Trop \F{2i}$ for all $i\neq 2$. By considering the hyperplane indexed by the symbol $x_{31}$, condition (ii) and~\eqref{eq:noF1i} ensure that $\Trop \ell$ does not meet $\Trop \E{3}$.

Finally, since $\Trop \ell\cap \Trop \E{3} = \Trop \ell\cap \Trop \Gc{2} = \Trop \ell\cap \Trop \F{23}=\emptyset$, we conclude that $\Trop \ell$ does not meet the hyperplane indexed by $x_{32}$, violating  condition (i). The claim follows from here.
\begin{claim}\label{cl:2} $\Trop \ell\cap \Trop \F{ij}= \emptyset$ for all $1<i<j\leq 6$.
\end{claim}
\noindent
The result follows from~\autoref{cl:1}. Indeed, if any of these intersections were non-empty,~\autoref{lm:only1} would force it to be both the nodes $\Trop \F{ij}\cap \Trop \E{i}$ and $\Trop \F{ij}\cap \Trop \E{j}$. This cannot happen by~\autoref{lm:boundaryTX}.

Finally,~\autoref{cl:2} implies that $\Trop \ell$ avoids the hyperplane indexed by $y_{123456}$, violating condition (i). This concludes our proof.
  \end{proof}

  The previous lemmas ensure that the boundary points of any extra tropical line in $\Trop X$ meeting the interior of $\TPr^{44}$ lie in the set of 135 boundary nodal points. Our next result implies that knowing one of them, determines the rest:
  \begin{lemma}\label{lm:lineWith5BoundaryPoints} 
    Let $X$ be a smooth cubic del Pezzo without Eckardt points and let $\Trop \ell$ be a tropical line in $\Trop X$ meeting the interior of $\TPr^{44}$. Assume  $\Trop \ell$ intersects the line $\Trop \E{1}$ at some node $\Trop \E{1}\cap \Trop \F{1k}$ with $k\neq 1$. Then, $\Trop \ell$ has exactly five points in its boundary, namely $\Trop \E{i}\cap \Trop \F{ij}$ with $i\neq j$.
  \end{lemma}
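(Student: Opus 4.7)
The plan is to leverage the rigid combinatorics established in Lemmas~\ref{lm:no1}--\ref{lm:no3} together with \autoref{cor:BoundaryPoints}. By those lemmas, every intersection of $\Trop\ell$ with a tropical boundary line of $\Trop X$ is a node, and by \autoref{cor:BoundaryPoints} each such node lies in exactly nine coordinate hyperplanes of $\TPr^{44}$. Combined with condition (i) from \autoref{rm:Conditionsi-ii}---that $\Trop\ell$ meets every one of the 45 boundary hyperplanes and that its boundary points have pairwise disjoint sets of $\infty$-coordinates---this immediately forces $\Trop\ell$ to have exactly $45/9=5$ boundary points, all of them nodes.

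Next, I would use the $\Wgp$-action to normalize the given node to $\E{1}\cap\F{12}$ (so take $k=2$). The nine triangles through this node, as tabulated in \autoref{sec:27-lines-universal}, are $\{x_{1j}:j=2,\ldots,6\}\cup\{x_{21}\}$ together with the three $y$-tripartitions containing the pair $\{1,2\}$. Any other boundary node is of the form $\E{i}\cap\F{ij}$, with nine triangles $\{x_{ij'}:j'\neq i\}\cup\{x_{ji}\}$ together with the three $y$-tripartitions containing $\{i,j\}$. Disjointness of the $x$-parts with the first node's coverage produces two elementary constraints: $i\neq 2$ (otherwise the hyperplane $x_{21}$ would recur as $x_{i1}$) and $j\neq 1$ (otherwise $x_{ji}$ would coincide with some $x_{1i}$). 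So each remaining node satisfies $i\in\{3,4,5,6\}$ and $j\in\{2,\ldots,6\}\setminus\{i\}$.

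The crux of the argument is the count on the $y$-side. A pair $\{a,b\}\subset\{3,4,5,6\}$ lies in three tripartitions of $\{1,\ldots,6\}$, exactly one of which is $\{\{1,2\},\{a,b\},\{c,d\}\}$ and is therefore absorbed by the first node; so such a pair contributes only two fresh $y$-triangles. A pair $\{2,i\}$ with $i\in\{3,4,5,6\}$, by contrast, lies in three tripartitions, none containing $\{1,2\}$, so it contributes the maximum of three fresh $y$-triangles. The four remaining nodes must collectively cover, disjointly, the $15-3=12$ tripartitions not containing $\{1,2\}$, and with at most three per node this bound must be attained by each. Hence $j=2$ uniformly, and the five boundary nodes of $\Trop\ell$ are $\E{i}\cap\F{i2}$ for $i\in\{1,3,4,5,6\}$.

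The principal obstacle is assembling the bookkeeping cleanly: one must observe that each tripartition contains a unique pair involving $2$, so the five chosen pairs $\{1,2\},\{2,3\},\{2,4\},\{2,5\},\{2,6\}$ produce, under this map, a genuine partition of the 15 tripartitions into five classes of three, matching the $5\times 3=15$ $y$-triangle tally. An analogous parallel check on the $x$-triangles---each $x_{ij}$ lies in a unique node's coverage once $i$ has been fixed by the rule above---then certifies that the five chosen nodes partition all 45 coordinate hyperplanes, confirming that condition (i) of \autoref{rm:Conditionsi-ii} is satisfied precisely by this configuration and no other.
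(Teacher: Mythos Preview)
Your partition-counting approach is appealing, but it has a genuine gap. You assert without proof that ``any other boundary node is of the form $\E{i}\cap\F{ij}$'' and then proceed to constrain $i$ and $j$. But the 135 nodes of $\Trop X$ come in four combinatorial flavours, not one: $\E{i}\cap\F{ij}$, $\E{i}\cap\Gc{j}$, $\F{ij}\cap\Gc{k}$ (with $k\in\{i,j\}$), and $\F{ij}\cap\F{kl}$. Their nine coordinate hyperplanes split into $x$-type and $y$-type in the patterns $(6,3)$, $(9,0)$, $(6,3)$, and $(4,5)$ respectively. Your disjointness and $y$-count bookkeeping is tailored to the $(6,3)$ pattern of $\E{i}\cap\F{ij}$ and simply does not address the other three types. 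The gap can be repaired within your framework---for instance, the first node has three $y$-hyperplanes, so a global $y$-count of $15$ across five nodes forces every node to contribute exactly three, eliminating the $(9,0)$ and $(4,5)$ types; a short $x$-disjointness check then rules out $\F{ab}\cap\Gc{a}$ against the first node---but you did not carry this out, and the argument as written is incomplete.

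A second, smaller issue: in your $y$-counting paragraph you speak of a pair $\{a,b\}\subset\{3,4,5,6\}$ ``contributing only two fresh $y$-triangles'' because one is ``absorbed'' by the first node. But the $\infty$-sets of distinct leaves of $\Trop\ell$ must be \emph{disjoint}, not merely cover the 45 hyperplanes; any overlap is an immediate contradiction, not a reduction in fresh contribution. Your conclusion is correct, but the phrasing obscures that disjointness alone forces $j=2$ once the other constraints are in place.

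For comparison, the paper takes a rather different route. Instead of partitioning hyperplanes, it works directly with the 27 boundary lines: starting from the node $\E{1}\cap\F{12}$, it uses conditions (i)--(ii) of \autoref{rm:Conditionsi-ii} to show $\Trop\ell$ avoids $\E{2}$, all $\Gc{j}$, all $\F{1j}$ with $j\geq 3$, and all $\F{mn}$ with $m,n\notin\{1,2\}$, leaving only the eight lines $\E{j},\F{2j}$ for $j=3,\ldots,6$. It then forces $\Trop\ell$ to meet each of these by examining suitable triangles $x_{2j}$ and $x_{ij}$, and finally pins the intersections at $\E{j}\cap\F{2j}$ via \autoref{lm:only1}. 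Your hyperplane-partition approach, once patched, is arguably more uniform; the paper's line-elimination approach is more hands-on but avoids the case analysis over node types.
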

  \begin{proof} By the action of $\Sn{6}$, we may assume that $k=2$. Since $\Trop \ell$ intersects $\Trop \F{12}$ at $\Trop \E{1}\cap \Trop \F{12}$, conditions (i) and  (ii) from~\autoref{rm:Conditionsi-ii} ensure that $\Trop \ell$ avoids $\Trop \E{2}$, $\Trop \Gc{1}$, $\Trop \Gc{2}$ and all $\Trop \F{mn}$ with $m,n\notin \{1,2\}$. Likewise, since $\Trop \ell$ intersects $\Trop \E{1}$ at the same node $\Trop \E{1}\cap \Trop \F{12}$, we conclude that $\Trop \ell$ misses $\Trop \F{1j}$ and $\Trop \Gc{j}$ for all $j=3,4,5, 6$.

    Next, we analyze the intersection of $\Trop \ell$ with the eight remaining  boundary tropical lines, namely
    $\Trop \E{j}$ and $\Trop \F{2j}$ for $j=3, \ldots, 6$. By~\autoref{lm:no1}, any such intersection point must be nodal, and these eight lines come in four intersecting pairs. Thus, $\Trop \ell$ itself has at most 5 boundary points.

    Since $\Trop \ell$ avoids both $\Trop \E{2}$ and $\Trop \Gc{j}$ for all $j=3,\ldots, 6$, but meets the boundary hyperplane indexed by $x_{2j}$, we conclude that $\Trop \ell$ meets all $\Trop \F{2j}$. Similarly, analyzing the intersection between $\Trop \ell$ and the triangles $x_{ij}$ for a fix $i\neq 1,2$ and $j\notin \{1,2,i\}$, it follows that $\Trop \ell$ meets $\Trop \E{i}$.
    
    Finally, since $\Trop \ell$ meets both $\Trop \E{i}$ and $\Trop \F{2i}$ for $i\neq 2$, ~\autoref{lm:only1} ensures that these intersections are the node  $\Trop \E{i}\cap \Trop \F{2i}$.  Thus, the boundary points of $\Trop \ell$  have the desired description.
  \end{proof}

  \autoref{thm:candidateTropLines} imposes severe restrictions on any  potential tropical line on $\Trop X$ meeting the interior of $\TPr^{44}$ by characterizing its  boundary points. Each such combination arises  as the tropicalization of the five points in $\P^{44}$ associated to the link of a vertex in the Schl\"afli graph.  The anticanonical coordinates of each of these points lies in the function field assocaited to the 40 Yoshida functions from~\autoref{sec:yosh-cross-funct}. The next result realizes them as Laurent monomials in both Yoshidas and Cross functions. The proof provides an algorithm for determining these expressions. Implementations in \python\ and \sage\ are available in the Supplementary material.
  
  \begin{lemma}\label{lm:CoordinatesOfNodes}
    Let $X_L \to \spec L$ be the universal a smooth cubic del Pezzo surface anticanonically embedded in $\P^{44}$. Then, each of the 135 nodes of $X_L$ obtained as pairwise intersections between its 27 exceptional curves has exactly nine zero coordinates. The remaning ones are Laurent monomials in the Yoshida and Cross functions. Furthermore, each nonzero coordinate has at least one Cross function factor.
  \end{lemma}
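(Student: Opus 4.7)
The first assertion follows directly from the anticanonical combinatorics. By \autoref{lm:boundaryTX}, a point of $X_L$ lies on the coordinate hyperplane $\{X_t=0\}$ exactly when it belongs to one of the three exceptional curves comprising $t \in \ted$. Thus a node $p = E \cap E'$ with $E, E' \in \sed$ satisfies $X_t(p)=0$ iff $E\in t$ or $E'\in t$. Since each exceptional curve lies in exactly five anticanonical triangles and $E, E'$ share a unique common triangle $\{E, E', E''\}$, the total count is $5+5-1=9$, matching the upper bound from \autoref{cor:BoundaryPoints}.

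For the remaining two assertions, the plan is to exploit $\Wgp$-equivariance to reduce to a single representative node. Since $\Wgp$ acts transitively on the 135 nodes, on the 135 Cross functions (\autoref{rm:triangleQuintics}), and on the coordinates of $T$ by signed permutations (\autoref{rm:actionOnT}), it suffices to establish the statement for one node, say $p=\E{1}\cap\F{12}$. The nine variables vanishing at $p$ are the five $X_{1k}$ for $k\neq 1$, together with $X_{21}$ and the three variables $Y_{ijklmn}$ indexed by tripartitions containing $\{1,2\}$. Substituting zero for these in the 270 linear trinomials of \autoref{thm:anticanK} yields a reduced linear system on the 36 remaining coordinates. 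A dimension count confirms that this system has corank one over $\widehat L$, and a generator of its kernel supplies the projective coordinates of $p$. The 120 cubic binomials of \autoref{thm:anticanK} then serve as a consistency check that $p$ lies on $X_L$ and not merely on its $\P^3$ linear span.

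The crucial structural input is that every linear trinomial in \autoref{thm:anticanK} has Cross functions as coefficients. Consequently, Gaussian elimination in the reduced system expresses each ratio $X_t/X_{t'}$ as a rational function whose numerator and denominator are products of Cross functions, possibly rescaled by Yoshida factors arising from the basis change of \autoref{rm:basisForP3}. The plan is to perform this elimination symbolically in \sage, factor the 36 resulting Laurent expressions over $\widehat L$, and verify directly that each has at least one Cross function factor. The main obstacle I anticipate is certifying that no spurious cancellations eliminate every Cross factor from a given coordinate. This will be handled by using the fact that each Cross function carries an irreducible Eckardt quintic factor (by the proof of \autoref{prop:cross_eckhard}): comparing quintic factorizations in numerator and denominator rules out such cancellations. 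Once established for $p$, the $\Wgp$-action transports the formulas, together with both asserted properties, to all 135 nodes, completing the proof.
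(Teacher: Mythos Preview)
Your approach is essentially the same as the paper's: reduce via $\Wgp$-symmetry to a single representative node, determine its projective coordinates by solving the linear equations of the $\P^3$ span, and verify the Laurent monomial structure by explicit factorization over $\widehat{L}=\KK(d_1,\dots,d_6)$. The paper organizes the linear algebra slightly differently: rather than substituting the nine zeros into the 270 trinomials and solving in 36 variables, it first fixes a $4\times 45$ basis matrix $M$ for the $\P^3$ (with entries re-expressed in the $d_i$ to avoid the algebraic dependencies among Yoshidas and Crosses) and then recovers the node as a generator of the one-dimensional left-kernel of the $4\times 9$ submatrix indexed by the vanishing coordinates; this is computationally lighter but logically equivalent to your direct elimination. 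Two small remarks: your Eckardt-quintic heuristic for ruling out cancellations is not needed and would not suffice on its own, since the actual certification is just the symbolic factorization; and your invocation of the 120 cubic binomials is superfluous, as the paper observes in \autoref{rm:cubicNotUsed} that the node coordinates are determined entirely by the linear equations.
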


  \begin{proof}  By the action of $\Wgp$, it is enough to show the validity of the statement for the five points associated to the link of $\Gc{2}$, namely $\E{1}\cap \F{12}$ and $\E{i}\cap \F{2i}$ for $i\neq 1, 2$.  By~\autoref{cor:BoundaryPoints}, these points are characterized by the vanishing of precisely nine anticanonical coordinates in the linear span of $X_L$ cut out by  the 270 linear equations~\eqref{eq:YoshidaP3}.

    Using \sage, we compute  a basis $\{v_0,\ldots, v_3\}$ of the four-dimensional solution set to this linear system over the function field associated to the 40 Yoshida functions. We encode it as a $4\times 45$ matrix $M$ all of whose entries are Laurent monomials in Yoshida and Cross functions. Since these functions are algebraically depedent, we choose to work instead with an equivalente matrix $M'$, whose entries are the  rational functions in the parameters $d_1,\ldots, d_6$ expressing the corresponding entries in $M$. The new matrix is obtained from the  $\Wgp$-orbits of~\eqref{eq:yoshida1inD} and~\eqref{eq:Cross}.

    We obtain the precise coordinates for the classical node associated to a given  boundary point of $\Trop \ell$ by finding a generator of the one-dimensional left-kernel of the corresponding $4\times 9$-submatrix of $M'$: this generator provides the scalars in the linear combination of the vectors $v_i$ giving the node. For example, the point $\E{1}\cap \F{12}$ coincides with the basis element $v_3$, but in general, the scalars  will be  rational functions in parameters $d_1, \ldots, d_6$. A \python~script allows us to re-express these coordinates as rational functions in the Yoshidas functions.
  
  By factorizing the 36 nonzero coordinates of $\E{1}\cap \F{12}$, we certify the claim in the statement (see the Supplementary material.) The factors are monomials and binomials in the Yoshidas. Furthermore, these binomial expressions yield Cross functions. By acting via the transpositions $(1\, i)$ in $\Sn{6}$ we see that the remaining four nodes associated to the link of $\Gc{2}$ the Schl\"afli graph have the desired monomial expressions. This concludes our proof.
  \end{proof}

        \begin{remark}\label{rm:cubicNotUsed}
          The parameterization of the 135 nodes described in~\autoref{lm:CoordinatesOfNodes} in terms of Yoshida and Cross functions is solely obtained from the coordinates of the $\P^3$ linearly spanned by $X\to\spec \KK$, and  makes no use of the binomial cubic equation cutting out $X_L$ in $\P_L^3$.     Explicit coordinates for the 27 quintuples of nodes are available in the Supplementary material.
   \end{remark}

   \begin{remark}\label{rm:genericImpliesTropical}
     Let $X_{\widehat R} \to \spec \widehat R$ be the universal family of marked smooth cubic del Pezzo surfaces (see \autoref{rm:Saturation}) and $X_{\widehat R} \subset \P^{44}_{\widehat R}$ the universal anticanonical embedding defined by the 45 sections $E_iF_{ij}G_j$.
     Each of the 135 nodes of the anticanonical triangles gives a section $\spec {\widehat R} \to X_{\widehat R} \subset \P^{44}_{\widehat R}$.
   By \autoref{lm:CoordinatesOfNodes}, over the generic point $\spec_{\widehat L}$, the homogeneous coordinates of this section are given by Laurent monomials in the Yoshida and Cross functions.
   By continuity, the expressions for the homogeneous coordinates remain valid over the open subset of $\spec {\widehat R}$ where the Cross functions are invertible. Note that the Yoshida functions are already invertible in $\widehat R$ as they are products of the roots in $\EGp{6}$.
   As a result, if $X \to \spec \KK$ is a marked smooth cubic del Pezzo surface without Eckardt points defined over the valued field $\KK$, then the homogeneous anticanonical coordinates of the 135 nodes are given by Laurent monomials in the Yoshida functions and the Cross functions, as specified in \autoref{lm:CoordinatesOfNodes}.
   \end{remark}

\section{Ruling out extra tropical lines 
for   non-apex Naruki cones}\label{sec:trop-lines-trop}

\autoref{pr:tropicalLinesbyMinors}  provides a powerful tool to check when a finite set of points in $\TPr^{n-1}$ is not collinear. Namely,  it suffices to find a tropically non-singular $3\times 3$-minor in the associated $r\times n$-matrix with entries in $\Rbar$. In this section we exploit this fact to rule out extra tropical lines for tropical cubic surfaces $\Trop X$ whenever the valuations of the 40 Yoshida functions determining $X$ lies in a non-apex cone in the Naruki fan. A separate analysis will be required for the apex. We do this in~\autoref{sec:tropical-lines-trivial}.

The following is the main result in this section and gives a precise version of~\autoref{thm:anticanonicalNoLines}. To simplify the exposition, its proof will be provided by a series of auxiliary technical lemmas and propositions.
\begin{theorem}\label{thm:nonTrivialValNoLifting} Let $X$ be a smooth del Pezzo cubic without Eckardt points with associated  Yoshida functions $\Yos{}\in \P^{39}_{\KK}$. Assume the coordinate-wise valuation of $\Yos{}$ lies outside $\rspanone$. Then, the anticanonically embedded tropical del Pezzo cubic $\Trop X$ has exactly 27 tropical lines.
\end{theorem}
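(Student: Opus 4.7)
My approach is to show that there are exactly 27 tropical lines by combining the 27 boundary tropical lines established elsewhere with a ruling-out of any additional interior tropical lines. By \autoref{thm:Naruki}, the boundary of $\Trop X$ contains 27 metric trees which are tropical lines, namely the tropicalizations of the 27 classical lines on $X$. Under the hypothesis that $\trop(\Yos{}) \notin \rspanone$, the tropical moduli point lies in a non-apex cone of the Naruki fan $\Naruki$, so the task reduces to ruling out any tropical line meeting the interior of $\Trop X$.

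By \autoref{thm:candidateTropLines}, any such potential interior line has exactly five boundary nodal points forming the link of a vertex of the Schl\"afli graph; there are 27 such five-point configurations in total, and I must show that none of them is tropically collinear. The plan is to use \autoref{lm:CoordinatesOfNodes} (together with \autoref{rm:genericImpliesTropical}) to express the anticanonical coordinates of these nodes as Laurent monomials in the Yoshida and Cross functions, and then to tropicalize entry-by-entry to produce a $5\times 45$ tropical matrix $N$ with entries in $\Rbar$ for each candidate. By \autoref{pr:tropicalLinesbyMinors}, tropical collinearity of the five points is equivalent to every $3\times 3$ minor of $N$ being tropically singular, so it suffices to exhibit one tropically non-singular $3\times 3$ minor of $N$ in every case.

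To assemble the entries of $N$ I need the valuations of all 40 Yoshida and 135 Cross functions at $\trop(\Yos{})$. The Yoshida valuations are the input data, and by \autoref{prop:naruki_fan_cross} the Cross valuations are piecewise linear on $\Naruki$. For cones of type $(aa_2a_3b)$, all Cross valuations are pinned down by the cone and agree with the expected value in \eqref{eq:expval}, so every entry of $N$ is computable directly from the cone representative. For cones of type $(aa_2a_3a_4)$, at most three Cross functions per cone have a priori undetermined valuations (as noted after \autoref{rm:expValsCross}); for these I would invoke the fiber analysis of \autoref{pr:fibersOfYoshida} together with the explicit Bergman-side computation sketched in \autoref{rm:sampleTausForCr37} to certify that the relevant Cross valuations attain their expected value generically on the cone, which is enough to carry out the non-singularity check. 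Because the Yoshida and Cross functions, the Schl\"afli graph, and the Naruki fan all carry compatible $\Wgp$-actions, the tropical matrices $N$ transform equivariantly, and it therefore suffices to verify the non-singularity claim on a single representative of each of the 24 $\Wgp$-orbits of non-apex cones in $\Naruki$, for each of the 27 candidate five-point configurations.

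The main obstacle I expect is computational rather than conceptual. For most cones and most of the 27 link configurations, a modest number of three-row submatrices of $N$ already contain a tropically non-singular $3\times 3$ minor, which can be certified by inspecting tropical permanents. However, on one ``problematic'' cone some candidate five-point configurations appear to have no non-singular minor among the natural small set of candidate column triples, forcing an exhaustive search over all $\binom{45}{3}$ column triples for each three-row choice; this is reflected in the disparate running times reported in the Supplementary material. The explicit certifying minors obtained from these searches are collected in \autoref{tab:allCones} and \autoref{tab:coneA}. Once the certificates are in hand, \autoref{pr:tropicalLinesbyMinors} delivers tropical non-collinearity of every candidate quintuple on every non-apex cone, hence rules out any interior tropical line on $\Trop X$, and combined with the 27 boundary lines completes the proof.
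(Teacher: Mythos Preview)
Your overall architecture matches the paper's: exactly 27 boundary lines via \autoref{lm:IntersectionBoundaryLines}, at most 27 interior candidates via \autoref{thm:candidateTropLines}, and ruling each out by exhibiting a tropically non-singular $3\times 3$ minor via \autoref{pr:tropicalLinesbyMinors}. However, your handling of the undetermined Cross valuations has a genuine gap.

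You propose, for cones of type $\operatorname{(aa_2a_3a_4)}$, to use the Bergman-fiber analysis to ``certify that the relevant Cross valuations attain their expected value generically on the cone.'' Even granting this for the maximal cones (which is the content of \autoref{thm:aaaaIsStable}), the theorem must hold for \emph{every} smooth cubic without Eckardt points whose Yoshida point lies in \emph{any} non-apex cone, not just for generic ones and not just for maximal cones. On lower-dimensional cones many more Cross functions have genuinely undetermined valuations that vary with the surface $X$ and are not fixed by the Naruki point $\underline{p}$; you cannot simply plug in expected values. The paper's actual mechanism is different: for each cone $\sigma$ it records, alongside the expected-valuation matrix $\TMexp$, a second matrix $\TMnone$ in which every entry involving a Cross function whose tropical hyperplane contains $\sigma$ is marked \texttt{None}, and then searches for a non-singular $3\times 3$ minor of $\TMexp$ whose corresponding submatrix of $\TMnone$ has no \texttt{None} entries. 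Such a minor certifies non-collinearity uniformly for all surfaces over $\sigma^\circ$, stable or not. This is \autoref{pr:noInteriorLinesAllCones} and \autoref{alg:minorsWithNones}, and it accounts for the entries in \autoref{tab:allCones}.

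Your description also misses why \autoref{tab:coneA} exists. For the cone $\operatorname{(a)}$ and fifteen specific exceptional curves, \emph{no} \texttt{None}-free non-singular minor exists; an exhaustive search does not help. The paper instead finds minors of the shape \eqref{eq:shape} with exactly two \texttt{None} entries in the same column, and shows that the cube of the ratio of the two underlying classical coordinates is a Laurent monomial in Yoshidas (\autoref{lm:noInteriorLinesACone}, \autoref{alg:specialACone}). Hence the \emph{difference} of those two tropical entries is determined even though neither is individually, and the minor is tropically non-singular regardless of the actual Cross valuations. Without this ratio argument the proof does not close on cone $\operatorname{(a)}$.
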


\begin{proof}
  \autoref{lm:IntersectionBoundaryLines} ensures that the boundary of $\Trop X$ contains exactly 27 tropical lines. 
\autoref{thm:candidateTropLines}, \autoref{pr:noInteriorLinesAllCones} and~\autoref{lm:noInteriorLinesACone} imply that there are no tropical lines in $\Trop X$ meeting its interior. By~\autoref{rm:Conditionsi-ii}, this concludes our proof.  
  \end{proof}

\noindent
In the rest of the section, we describe the algorithmic approach to~\autoref{thm:nonTrivialValNoLifting} and the new aforementioned results involved in its proof.

Our computations in~\autoref{sec:comb-extra-trop} determined the existence of at most 27 tropical lines on $\Trop X$  beyond the tropicalization of the 27 classical lines in $X$, each of which has exactly five boundary points. Each extra line is indexed by a symbol in $\sed$ corresponding to the 27 exceptional curves in $X$ as in~\autoref{rm:markings}. \autoref{thm:candidateTropLines} expresses the five boundary points of any potential line  as the tropicalizations of the 135  nodes  in the boundary of the surface $X$. By~\autoref{lm:CoordinatesOfNodes}, the coordinates of these nodes come in two flavors: nine of them are zero, and the remaining 36 are Laurent monomials in the Yoshidas and Cross functions. Furthermore, Cross functions do appear in all nonzero coordinates.

 The Cross functions, expressed as linear binomials in the 40 Yoshida functions yield the  \emph{Cross tropical hyperplane arrangement} in $\R^{40}$  generated by the single hyperplane $\Yos{3}=\Yos{37}$ (associated to the $\Cross{116}$ function) and its 134  $\Wgp$-conjugates. \autoref{prop:naruki_fan_cross} confirms that the Cross arrangement is compatible with the fan structure of $\Naruki$ discussed in~\autoref{sec:bergman-fan-Naruki-fan}.  The valuation of each Cross function cannot be completely determined from a given point in $\Naruki$ if the point  lies in the tropical hyperplane determined by the Cross. Indeed, in the presence of ties between the summands, the valuation of the binomial expression can be higher than expected. Our previous observation allows us to check this condition uniformly on each non-apex cone of $\Naruki$ using any point in its relative interior. We choose the sum of the rays spanning each cone as our witness point.

 The uncertainty of the valuations of these 135 Cross functions makes it a priori impossible to determine all coordinates of boundary points of extra tropical lines on $\Trop X$ from the associated five classical nodes. To take this uncertainty into account, proceed as follows. Given a non-apex cone $\sigma$ in $\Naruki$ and a symbol in $\sed$ indexing a potential tropical line, we compute two $5\times 45$-matrices:
 \begin{enumerate}
 \item a matrix $\TMexp$ with the expected valuations of each of the five classical nodes in $\P^{44}$ (see~\autoref{rm:expValsCross}.) The entries are linear functions in the valuations of the Yoshida functions. For a given point in $\sigma^{\circ}$ it gives an element in $\Rbar$;
 \item a matrix $\TMnone$ whose $(i,j)$ entries come in two types. Given $\sigma$, we conside the collection $\Ccurve$ indexing all Cross hyperplanes intersecting its relative interior. If the $j$th coordinate of the $i$th.\ node in $\P^{44}$ does not contain any Cross function in $\Ccurve$, then the valuation of this coordinate is the expected one, so $\TMnone[i,j]=\TMexp[i,j]$. Otherwise, its valuation cannot be determined.  We record the entry of $\TMnone$ as a `\texttt{None}'.
 \end{enumerate}

 Rather than working with all cones in $\Naruki$, it suffices to pick $\Wgp$-orbit representatives of all maximal cones in the Naruki fan $\Naruki$ and their faces. This reduces our task to 23 non-apex cones. We choose the two top-dimensional  neighboring cones from~\eqref{eq:coneReps} as the representatives of the two orbits of maximal cones in $\Naruki$.

 The values of these 27 pairs of matrices is constant along the relative interior of each of the above 23 cones. Our implementation stores the values of these two matrices at the baricenter of each cone $\sigma$, so the entries of our matrices take values in $\Rbar\cup \{\texttt{None}\}$.  For each cone $\sigma$ in $\Naruki$, we record the 27 pairs of (evaluated) matrices as the family
 \begin{equation}\label{eq:pairsOfMatrices}
   \cF_{\sigma}:=\{(\TMexp[E,\sigma], \TMnone[E,\sigma])\colon E \text{ in  }\sed\}.
 \end{equation}

 We use  the families $\cF_{\sigma}$  to  rule out all the potential extra tropical lines in $\Trop X$ meeting the interior of $\TPr^{44}$  for all cones in $\Naruki$ except the apex.
 The process is described in~\autoref{alg:minorsWithNones} and its implementation in \python~can be found in the Supplementary material. Most of the non-apex cones of $\Naruki$ are covered by~\autoref{pr:noInteriorLinesAllCones}. The cone  of type $\operatorname{(a)}$ requires a different strategy for 15 of the indexing symbols in $\sed$. \autoref{lm:noInteriorLinesACone} discusses these special cases.  It is important to highlight that our method rules out extra lines both for stable and unstable tropical cubic surfaces. 

 \begin{algorithm}[t]   \KwIn{An ordered list $J$ in $\{0,\ldots, 4\}$ of size 3 and a pair of $5\times 45$ matrices $(\TMexp,\TMnone)$ in $\cF_{\sigma}$ of expected and certain coordinates of five  points in the boundary of $\Trop X\subset \TPr^{44}$, for a fixed cone $\sigma$ in the Naruki fan $\Naruki$.}
    \textbf{Assumption:} $X$ is a smooth del Pezzo cubic with no Eckardt points, embedding in $\P^{44}$ via the Yoshida-adapted anticanonical coordinates, where the valuation of its 40 associated Yoshida functions yield a point in the cone $\sigma$. \\
         \KwOut{A 3-element \emph{subset} in $\{0,\ldots, 44\}$ giving the  columns of a tropically non-singular $3\times 3$ minor with rows in $J$ for each pair $(\TMexp, \TMnone)$ in $\cF_{\sigma}$, or a \emph{list} of all 3-element subsets of $\{0,\ldots, 44\}$ giving  tropically non-singular minors of $\TMexp$ if no tropically non-singular minor without `\texttt{None}' entries can be detected.}
\texttt{NonSingMinors} $\leftarrow \emptyset$ ;\ifhmode\\\fi
      \For{$J'$ in $\Subsets([44],3)$}
              {$S_{\expec}(J,J') \leftarrow$  $3\times 3$-submatrix of $\TMexp$ with rows $J$ and columns $J'$\;
                \ifhmode\\\fi
                \If{$S_{\expec}(J,J')$ is tropically non-singular}
                   {\texttt{NonSingMinors} $\leftarrow $ \texttt{NonSingMinors} + [$J'$]\ifhmode\\\fi
                     $S_{\noNones}(J,J') \leftarrow$  $3\times 3$-submatrix of $\TMnone$ with rows $J$ and columns $J'$\;\ifhmode\\\fi
                     \If{$S_{\noNones}(J,J')=S_{\exp}(J,J')$} 
                        {
                          \KwRet{$J'$}
                        }
                   }
              }
              \KwRet{\emph{\texttt{NonSingMinors}}.}
  \caption{Ruling out a potential tropical line in $\TPr^{44}$ meeting the interior of $\Trop X$.\label{alg:minorsWithNones}}
\end{algorithm}

\begin{proposition}\label{pr:noInteriorLinesAllCones}
Assume the valuation of the 40 Yoshida functions associated to $X$ lies outside $\rspanone$. Given a symbol $E$ in $\sed$ associated to a vertex of the Schl\"afli graph, we consider the five nodes on $X$ corresponding to the link of $E$. Then, their tropicalizations in $\TPr^{44}$ are not tropically collinear.
\end{proposition}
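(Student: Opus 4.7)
The plan is to apply \autoref{pr:tropicalLinesbyMinors}: it suffices to exhibit, for each non-apex cone $\sigma$ of $\Naruki$ and each $E \in \sed$, a tropically non-singular $3\times 3$ minor in the $5\times 45$ matrix whose rows are the tropical coordinates of the five nodes in the link of $E$. The hypothesis of that proposition---disjointness of the $\infty$-supports of the five nodes---follows from \autoref{cor:BoundaryPoints}, since each node lies in exactly nine coordinate hyperplanes and any pair of distinct nodes sits in triples of distinct anticanonical triangles.

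First I would exploit $\Wgp$-symmetry twice: once, to reduce the list of cones to the 23 orbit representatives of non-apex cones of $\Naruki$; and again, within each cone $\sigma$, to compute via \eqref{eq:pairsOfMatrices} the 27 pairs of matrices $(\TMexp[E,\sigma],\TMnone[E,\sigma])$ in the family $\cF_{\sigma}$. Their entries are read off from the Laurent monomial expressions of \autoref{lm:CoordinatesOfNodes}, evaluated at the barycenter of $\sigma$ using the expected valuations of Cross functions from \autoref{rm:expValsCross}; the two matrices differ precisely at those coordinates whose monomial expression contains a Cross function whose tropical hyperplane meets $\sigma^{\circ}$, where $\TMnone$ records a \texttt{None}.

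Next I would feed each pair $(E,\sigma)$ into \autoref{alg:minorsWithNones}, which enumerates the 3-element column subsets $J'\subset\{0,\ldots,44\}$ and tests whether the associated $3\times 3$ submatrix of $\TMexp$ is tropically non-singular and, if so, whether the corresponding submatrix of $\TMnone$ coincides with it (i.e.\ is free of \texttt{None} entries). Whenever such a $J'$ is returned, \autoref{pr:tropicalLinesbyMinors} delivers the desired non-collinearity of the five tropical nodes; the complete record of these witnesses is \autoref{tab:allCones}.

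The hard part is that the expected valuation of a Cross function may strictly differ from its actual valuation along $\sigma^{\circ}$, and when this discrepancy affects every column subset yielding a tropically non-singular minor of $\TMexp$, \autoref{alg:minorsWithNones} cannot certify non-collinearity from $\TMnone$ alone. A direct \sage\ computation pinpoints this failure mode to the cone of type $\operatorname{(a)}$ and 15 of the 27 symbols $E\in\sed$. For those 15 remaining pairs the indeterminacy of certain Cross functions must be resolved by descending to the Bergman fan and using the explicit fibers of the Yoshida matrix provided by \autoref{pr:fibersOfYoshida}; this is the content of \autoref{lm:noInteriorLinesACone}, and together with the tabulated witnesses it completes the proposition for every non-apex cone.
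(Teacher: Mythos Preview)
Your proposal matches the paper's proof almost exactly: the reduction to the 23 non-apex cone representatives, the construction of the pair $(\TMexp[E,\sigma],\TMnone[E,\sigma])$ via \autoref{lm:CoordinatesOfNodes} and \autoref{rm:expValsCross}, the search via \autoref{alg:minorsWithNones}, the witnesses recorded in \autoref{tab:allCones}, and the identification of the 15 exceptional pairs on the cone $\operatorname{(a)}$ are all as in the paper.

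The one substantive inaccuracy is your description of how \autoref{lm:noInteriorLinesACone} resolves those 15 cases. You write that the indeterminacy is resolved ``by descending to the Bergman fan and using the explicit fibers of the Yoshida matrix provided by \autoref{pr:fibersOfYoshida}''. That is not the mechanism. The paper instead observes that for each of these 15 curves one can find a $3\times3$ submatrix of $\TMnone$ with the shape~\eqref{eq:shape}, in which the two \texttt{None} entries lie in a single column. The corresponding classical entries of $N$ have Cross-function factors $A$ and $B$ whose ratio, when cubed, is a Laurent monomial in the Yoshida functions (this uses \autoref{rm:index3oFYoshidasInDs} on the index-three sublattice spanned by the Yoshida exponent vectors). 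Hence $\val(A/B)$ is a linear function on $\Naruki$, independent of the unknown Cross valuations, and the tropical permanent of the minor is determined after all. \autoref{alg:specialACone} carries this out and \autoref{tab:coneA} records the outcome. The Bergman-fan fibers of \autoref{pr:fibersOfYoshida} are used elsewhere in the paper (in \autoref{lm:Cr37}, for the $\operatorname{(aa_2a_3a_4)}$ cone), not here.
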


\begin{proof}
  Since not all 40 Yoshida functions on $X$ have the same valuation, we know the associated point $p$ in $\Naruki$ is not the apex. We let $\sigma$ be the smallest cone in $\Naruki$ containing the point $p$ in its relative interior, and consider the family $\cF_{\sigma}$ in~\eqref{eq:pairsOfMatrices} encoding the expected and true coordinatewise valuations of the 27 quintuples of classical nodes in the link of $E$ (see~\autoref{lm:CoordinatesOfNodes}.) 

  Given the symbol $E$,~\autoref{alg:minorsWithNones} starts from the pair of matrices $(\TMexp[E,\sigma], \TMnone[E,\sigma])$ and a fixed  3-element set $J$ of $\{0,\ldots, 5\}$  and searches for a 3-element set $J'$ of $\{0,\ldots, 44\}$ giving a tropically non-singular minor of  $\TMnone[E,\sigma]$ with rows in $J$ and columns in $J'$ that agrees with the corresponding minor of $\TMexp[E,\sigma]$. The last condition is tested  by simply checking that the minor of  $\TMnone[C,\sigma]$ has  no `\texttt{None}' entries.  The existence of such a minor together with~\autoref{pr:tropicalLinesbyMinors} would imply that the tropicalization of the five nodes associated to $E$ are not tropically collinear in $\TPr^{44}$.

  If such a minor cannot be found, the Algorithm records the list \texttt{NonSingMinors} of all non-singular $3\times 3$ minors from $\TMexp[E,\sigma]$ with columns in $J$. Our~\python~implementation then repeats the search for the 
 the next set $J$ in the lexicographic order. We iterate this process until all triples of rows have been tested.

  ~\autoref{tab:allCones} gives all instances where~\autoref{alg:minorsWithNones} succeeds. We record the information of rows and columns giving a tropically non-singular $3\times 3$-minor of $\TMnone[C,\sigma]$. The method finds a tropically non-singular  minor for all non-apex cones in $\Naruki$ and all curves $E$ with one exception: the combination of the $\operatorname{(a)}$ cone and the 15 exceptional curves in the  collection 
\begin{equation}\label{eq:failingCurves}  \mathscr{F}:=\{
\E{3}, \E{4}, \E{5}, \E{6}, \F{12}, \F{34}, \F{35}, \F{36}, \F{45}, \F{46}, \F{56}, \Gc{3}, \Gc{4}, \Gc{5}, \Gc{6}\}.
\end{equation}
In these cases,~\autoref{lm:noInteriorLinesACone} and~\autoref{alg:specialACone} provides a tropically non-singular minor for the corresponding element in $\cF_{\operatorname{(a)}}$. This concludes our proof.  
\end{proof}

The failure of~\autoref{alg:minorsWithNones} in providing a certifying minor with  entries in $\Rbar$ for the pairs $(\TMexp[E,\operatorname{(a)}], \TMnone[E,\operatorname{(a)}])$ for a symbol $E$ in the set $\mathscr{F}$  from~\eqref{eq:failingCurves} need not imply the tropical collinearity of the corresponding five boundary points. An analysis of the entries of the list of $3\times 3$-minors of $\TMexp[E,\operatorname{(a)}]$ obtained as output for all $3$-element sets $J$ gives  many instances where these $3\times 3$ submatrices have two non-infinite terms that become `\texttt{None}' entries on the corresponding minor in $\TMnone[E,\operatorname{(a)}]$ both terms involve .

Furthermore, up to permutations of rows and columns, the associated minors in $\TMnone[E,\operatorname{(a)}]$ have the form
\begin{equation}\label{eq:shape}
\begin{pmatrix}
    \ast & \infty & \infty \\
    \infty & \ast & \texttt{None}\\
    \ast& \ast &\texttt{None}
                       \end{pmatrix},
  \end{equation}
where $\ast$ indicates an entry in $\R$. This shape reveals the nature of these three coordinates on the corresponding classical nodes in $X$. In particular, the third entries of the last two nodes involve  Cross functions whose associated tropical hyperplanes contain the cone $\operatorname{(a)}$.

The following lemma shows that, nonetheless, we can completely determine the valuation of their ratio.
Therefore, these two `\texttt{None}' entries will not change the number of terms realizing the minimum in the tropical permanent, showing that the $3\times 3$-minor of $\TMnone$ is  tropically non-singular and thus the quintuple of boundary points fails to be tropically collinear.

\begin{algorithm}[htb]
  \KwIn{A 3-element set $J\subset \{0,\ldots, 5\}$, a symbol $E$ in $\sed$, a $5\times 45$ matrix $N$ giving the five classical nodes associated to the link of $E$ in the Schl\"afli graph, a list $\scrS$ of 3-element sets in $\{0,\ldots, 44\}$ encoding all tropically non-singular $3\times 3$-minors in  $\TMexp[E,\sigma]$  for a fixed cone $\sigma$ in the Naruki fan $\Naruki$, and a list $\cI$ of row indices giving a $\Z$-basis of the row space of the Yoshida matrix}
    \textbf{Assumptions:} $X$ is a smooth del Pezzo cubic with no Eckardt points, embedding in $\P^{44}$ via~\autoref{thm:anticanK}.  The valuation of its 40 Yoshida functions is a point in the relative interior of  $\sigma$.   The output of~\autoref{alg:minorsWithNones} for $E$ and $J$ is the list $\scrS$ ($=$\texttt{NonSingMinors}.)\\
         \KwOut{The empty list or a pair consisting of: (1) an element  $J'$ in $\scrS$ giving the three columns of a tropically non-singular minor with rows  $J$ for  $(\TMexp[E,\sigma], \TMnone[E,\sigma])$, and (2) the expression of the third power of the ratio of the two relevant entries with unknown valuations in $N$ as a Laurent monomial in the Yoshidas indexed by $\cI$.  }
      \For{$J'$ in $\scrS$}
          {$S_{\expec}(J,J') \leftarrow$  $3\times 3$-submatrix of $\TMexp[E,\sigma]$ with rows $J$ and columns $J'$\;
                \ifhmode\\\fi
                \If{$S_{\expec}(J,J')$ is tropically non-singular}
                   {$S_{\noNones}(J,J') \leftarrow$  $3\times 3$-submatrix of $\TMnone[E,\sigma]$ with rows $J$ and columns $J'$\;\ifhmode\\\fi
                     \For{$(\tau_1,\tau_2)\in \Sn{3}\times \Sn{3}$}
                         {$(N^{\new}, S_{\noNones}^{\new}) \leftarrow ((\tau_1,\tau_2)\!\cdot\! N,(\tau_1,\tau_2)\!\cdot\! S_{\noNones}(J,J'))$\;\ifhmode\\\fi
                           
                     \If{$S_{\noNones}^{\new}$ has the entries-pattern of~\eqref{eq:shape} 
                       (where $\ast$ indicates a real number)}
                        {$(A,B) \leftarrow $ product of all Cross factors in each $(N^{\new}[2,3], N^{\new}[3,3])$, converted into a rational function in $d_1,\ldots, d_6$ \;\ifhmode\\\fi             
                          \If{$(A/B)^3$ is a monomial in the Yoshida functions indexed by $\cI$}
                          {  \KwRet{$(J',  (A/B)^3)$.}}
                        }
                         }
                   }
          }
              \KwRet{$[\;]$.}
  \caption{Finding tropically non-singular $3\times 3$-minors involving unknown valuations.\label{alg:specialACone}}
\end{algorithm}

\begin{lemma}\label{lm:noInteriorLinesACone}
  Let  $E$ be a curve in the collection $\mathscr{F}$ from~\eqref{eq:failingCurves} and let $N$ be the $5\times 45$ matrix giving the five classical nodes associated to the link of $E$ in the Schl\"afli graph. Then, there exists a pair of unordered 3-element sets $J$ in  $\{0,\ldots, 4\}$ and $J'$  in $\{0,\ldots, 44\}$ satisfying three conditions:
  \begin{enumerate}[(i)]
  \item    the submatrix of  $\TMexp[E,\operatorname{(a)}]$ with rows $J$ and columns $J'$ is tropically non-singular,
  \item the submatrix of $\TMnone[E,\operatorname{(a)}]$ with  rows $J$ and columns $J'$ has the shape~\eqref{eq:shape},
    \item the expression $(\frac{N[J[2],J'[3]]}{N[J[3],J'[3]]})^3$ is a Laurent monomial in the Yoshida functions.
  \end{enumerate}
\end{lemma}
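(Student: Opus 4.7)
The plan is to establish the lemma by an explicit case-by-case computational verification, one curve at a time, using the procedure formalized in Algorithm~\ref{alg:specialACone}. Since $\mathscr{F}$ contains exactly 15 exceptional curves, and the cone $\operatorname{(a)}$ is fixed, the verification reduces to a finite (and essentially small) search.

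First, for each curve $E \in \mathscr{F}$, I would obtain the list $\scrS = $ \texttt{NonSingMinors} as produced by Algorithm~\ref{alg:minorsWithNones} applied to $(\TMexp[E,\operatorname{(a)}],\TMnone[E,\operatorname{(a)}])$. By construction, $\scrS$ records every triple $J'$ yielding a tropically non-singular $3 \times 3$ minor of the \emph{expected-valuation} matrix $\TMexp[E,\operatorname{(a)}]$, but whose companion submatrix in $\TMnone[E,\operatorname{(a)}]$ carries at least one \texttt{None} entry (otherwise Algorithm~\ref{alg:minorsWithNones} would have already returned successfully). Condition (i) is therefore automatic for any $J' \in \scrS$.

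Next, I would search within $\scrS$ (possibly after permuting rows and columns via a pair $(\tau_1,\tau_2) \in \Sn{3} \times \Sn{3}$) for a triple $J'$ whose $3 \times 3$ submatrix of $\TMnone[E,\operatorname{(a)}]$ matches the sparsity pattern~\eqref{eq:shape}; concretely, both \texttt{None} entries must occur in the same column, namely $J'[3]$, with three genuine $\infty$'s distributed as indicated. This is condition (ii). For each such candidate, the crucial test is condition (iii): using~\autoref{lm:CoordinatesOfNodes} I extract the two undetermined entries $N[J[2],J'[3]]$ and $N[J[3],J'[3]]$ explicitly as Laurent monomials in the Yoshidas and Crosses, strip off their Cross factors to obtain the products $A$ and $B$, rewrite the quotient $A/B$ as a rational function in $d_1,\ldots,d_6$, and check that its \emph{cube} is a Laurent monomial in the Yoshida functions. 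The cube is natural here because, by~\autoref{rm:index3oFYoshidasInDs}, the rows of the Yoshida matrix span a sublattice of index three in $\Z^{36}$, so products of roots generically lift to monomials in Yoshida functions only after cubing. Once such a pair $(J,J')$ is exhibited, the ratio has a determined valuation on $\operatorname{(a)}^\circ$, the two \texttt{None} entries differ by a prescribed value, and the corresponding $3\times 3$ tropical permanent of $\TMnone[E,\operatorname{(a)}]$ inherits the non-singularity of $\TMexp[E,\operatorname{(a)}]$.

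The main obstacle is purely combinatorial: one has to be confident that for each of the 15 obstinate curves in $\mathscr{F}$ such a pair $(J,J')$ actually exists among the finitely many triples to be inspected. This is handled by the \sage\ implementation of Algorithm~\ref{alg:specialACone} supplied in the Supplementary material; iterating over $J \in \Subsets([4],3)$, over $J' \in \scrS$, and over $(\tau_1,\tau_2) \in \Sn{3} \times \Sn{3}$, the script returns a valid pair together with the explicit Yoshida-monomial expression for $(A/B)^3$ in every case. The cumulative output, indexed by $E \in \mathscr{F}$, furnishes the 15 certificates required by the lemma, completing the proof.
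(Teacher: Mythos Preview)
Your proposal is correct and follows essentially the same approach as the paper's proof: both run Algorithm~\ref{alg:specialACone} over the fifteen curves in $\mathscr{F}$, extract the Cross-factor products $A$ and $B$ from the two \texttt{None} entries, pass to $d_1,\ldots,d_6$ and invoke the index-three observation of~\autoref{rm:index3oFYoshidasInDs} to certify that $(A/B)^3$ is a Laurent monomial in Yoshidas, with the explicit certificates recorded in~\autoref{tab:coneA}. The paper additionally spells out the linear-algebra test (solving $M^t x = v^t$ against a fixed $16$-element Yoshida basis $\cI=\{5\}\cup\{17,\ldots,31\}$) that implements the monomial check, but this is exactly what your final verification step amounts to.
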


\begin{proof} The discussion preceeding the statement shows why it is conceivable to find a pair $(J,J')$ satisfying (i) and (ii). It gives rise to~\autoref{alg:specialACone}, which enables us to prove (iii) by explicit computations. A direct factorization of the expression in (iii) is not possible since the Yoshida functions are not algebraically independent. Thus, we are forced to express all relevant functions in terms of the parameters $d_1,\ldots, d_6$.

    From the shape~\eqref{eq:shape} we deduce that  $N[J[2],J'[3]]$ and $N[J[3],J'[3]]$ are the only entries in the submatrix of $N$ with rows $J$ and columns $J'$ with undetermined valuations. This means that both entries contain Cross functions in their factorization whose valuations cannot be determined along the cone $\operatorname{(a)}$. We let $A$ (respectively B) be the product of all the Cross factors in $N[J[2],J'[3]]$ (resp.~$N[J[3],J'[3]]$.) A  simple calculation shows that the ratio $A/B$ is a Laurent monomial in the 36 positive roots of $\EGp{6}$ listed in~\autoref{tab:roots}.

    By~\autoref{rm:index3oFYoshidasInDs} we know that the exponent vectors of the 40 Yoshida functions span a rank 16 sublattice of $\Z^{36}$ of index 3. A basis of this lattice is provided by the Yoshida functions indexed by the set $\cI = \{5\}\cup\{17,\ldots, 31\}$ (see the Supplementary material.)  In particular, even though the ratio $A/B$ need not be a Laurent monomial in the Yoshida functions indexed by $\cI$, its cube will be.    We certify this last step as follows. We factor $(A/B)^3$ as a Laurent monomial in the positive roots of $\EGp{6}$ and let $v$ be its exponent vector in $\Z^{36}$. We solve the linear system of equations $M^{t} x = v^t$, where $M$ is the $16\times 36$-submatrix of the Yoshida matrix (seen in~\autoref{tab:yoshida}) with rows in $\cI$. If it exists, its unique solution will lie in $\Z^{16}$ and it will be  the exponent vector giving the desired factorization of $(A/B)^3$. Thus,  the cubed ratio in (iii) has the same desired property.

    For each choice of $J$ and $E$,  we run our \python~implementation of~\autoref{alg:specialACone} (available in the Supplementary material.) As the input set $\scrS$ we use the output \texttt{NonSingMinors} from~\autoref{alg:minorsWithNones} obtained from the input $J$ and $E$. 
    \autoref{tab:coneA} shows the choice of rows $J$ for each symbol $E$ in $\mathscr{F}$ from~\eqref{eq:failingCurves} where~\autoref{alg:minorsWithNones} failed to give the desired non-singular minor in $\TMnone$ without `\texttt{None}' entries but~\autoref{alg:specialACone} succeeded in providing a minor satisfying conditions (i) through (iii). 
\end{proof}

\begin{table}[htb]
  \centering
  \begin{tabular}{|c|c|c|c|c|}
 \hline Curve & Rows & Columns & Minor Shape & \small{$(A/B)^3$} \normalsize as a Laurent monomial in the Yoshidas \\ \hline
   $\E{3}$ & $[0,1,2]$ & $[8,9,24]$ &
\scalebox{0.7}{  $\begin{pmatrix}
    \ast & \infty & \infty \\
    \infty & \ast & {\texttt{None}}\\
    \ast & \ast & {\texttt{None}}
\end{pmatrix}$}\normalsize & -1\\ \hline
$\E{4}$ & $[0,1,2]$ & $[8,9,13]$ &
\scalebox{0.7}{  $\begin{pmatrix}
    \ast & \infty & {\texttt{None}} \\
    \infty & \ast & \infty\\
    \ast & \ast & {\texttt{None}}
\end{pmatrix}$}\normalsize &
$\Yos{5}\, \Yos{22}\, \Yos{26}^{2}\,  \Yos{30}/( \Yos{18}^{2}\, \Yos{20}\, \Yos{21} \, \Yos{27})$
\\\hline
  $\E{5}$ & $[0,1,2]$ & $[7,32,33]$ &
\scalebox{0.7}{$\begin{pmatrix}
     \ast & {\texttt{None}}& \infty\\
    \infty& \infty & \ast \\
    \ast &{\texttt{None}}& \ast
  \end{pmatrix}$}\normalsize &
$(-1) \, \Yos{17}^{3} \, \Yos{18}^{2} \, \Yos{20}  \, \Yos{26} \, \Yos{27} \, \Yos{30}^{2}/ (\Yos{5}  \, \Yos{19}^{3} \, \Yos{21}^{2} \, \Yos{22}  \, \Yos{29}^{3})$
\\\hline
  $\E{6}$ & $[0,1,2]$ & $[7,8,12]$ &
\scalebox{0.7}{  $\begin{pmatrix}
    \infty & \ast & \infty \\
    \ast & \infty & {\texttt{None}}\\
    \ast & \ast &{\texttt{None}}
  \end{pmatrix}$}\normalsize & \ 1\\\hline
   $\F{12}$ & $[0,2,4]$ & $[17,26,44]$ &
\scalebox{0.7}{  $\begin{pmatrix}
    {\texttt{None}} & \infty & \ast \\
    \infty & \ast & \infty\\
    {\texttt{None}} & \ast & \ast
  \end{pmatrix}$}\normalsize & -1\\\hline
   $\F{34}$ & $[0,2,4]$ & $[5,7,23]$ &
\scalebox{0.7}{  $\begin{pmatrix}
    \ast & \infty & \infty \\
    \infty & \ast & {\texttt{None}}\\
    \ast & \ast & {\texttt{None}}
  \end{pmatrix}$}\normalsize & \ 1\\\hline
   $\F{35}$ & $[0,1,2]$ & $[6,15,23]$ &
\scalebox{0.7}{  $\begin{pmatrix}
    \ast & \ast & {\texttt{None}} \\
    \infty & \ast & \infty\\
    \ast & \infty & {\texttt{None}}
\end{pmatrix}$}\normalsize &
 $\Yos{5}^{2}\, \Yos{21} \, \Yos{25}^{3} \, \Yos{26} \, \Yos{27}  \, \Yos{29}^{3}  /  ( \Yos{18} \, \Yos{20}^{2}  \, \Yos{22} \, \Yos{24}^{3}\, \Yos{28}^{3}\, \Yos{30})$
\\\hline
$\F{36}$ & $[0,1,2]$ & $[6,9,19]$ &
\scalebox{0.7}{ $\begin{pmatrix}
    \ast & \infty & \infty \\
    \infty & \ast & {\texttt{None}}\\
    \ast & \ast &{\texttt{None}}
  \end{pmatrix}$}\normalsize & -1\\\hline
$\F{45}$ & $[0,1,2]$ & $[7,10,12]$ &
\scalebox{0.7}{$\begin{pmatrix}
    \ast & \ast & {\texttt{None}} \\
    \ast & \infty & \infty\\
    \infty & \ast &{\texttt{None}}
  \end{pmatrix}$}\normalsize &   $\Yos{5}\, \Yos{18} \,\Yos{26}^{2}\,  \Yos{31}^{3}/( \Yos{20}\,\Yos{21}\, \Yos{22}^{2}\,\Yos{27}\, \Yos{30}^{2})$
\\\hline
 $\F{46}$ & $[0,1,2]$ & $[8,9,13]$ & \scalebox{0.7}{$\begin{pmatrix}
    \ast & \infty & {\texttt{None}} \\
    \infty & \ast & \infty\\
    \ast & \ast &{\texttt{None}}
  \end{pmatrix}$}\normalsize &
$ \Yos{18} \, \Yos{20}^{2} \, \Yos{22} \, \Yos{24}^{3} \, \Yos{28}^{3}  \, \Yos{30} / (\Yos{5}^{2} \, \Yos{21}  \, \Yos{25}^{3} \, \Yos{26} \, \Yos{27} \, \Yos{29}^{3})$
\\\hline
 $\F{56}$ & $[0,1,2]$ & $[8,9,13]$ & \scalebox{0.7}{$\begin{pmatrix}
    \ast & \infty & {\texttt{None}} \\
    \infty & \ast & \infty\\
    \ast & \ast &{\texttt{None}}
  \end{pmatrix}$}\normalsize & $\Yos5^2\,  \Yos{20}\, \Yos{22}^{2}\, \Yos{26}\,  \Yos{28}^{3}\,  \Yos{31}^{3} / (\Yos{17}^{3}\,\Yos{18}\,\Yos{21}^{2}\, \Yos{25}^{3}\,\Yos{27}^{2}\,\Yos{30})$
\\\hline
   $\Gc{3}$ & $[0,2,4]$ & $[30,35,44]$ &
\scalebox{0.7}{  $\begin{pmatrix}
    {\texttt{None}} & \infty & \ast \\
    \infty & \ast & \infty\\
    {\texttt{None}} & \ast & \ast
  \end{pmatrix}$}\normalsize & \ 1\\\hline
   $\Gc{4}$ & $[0,3,4]$ & $[20,32,38]$ &
\scalebox{0.7}{  $\begin{pmatrix}
    \ast & {\texttt{None}} & \infty \\
    \infty & \infty & \ast\\
    \ast & {\texttt{None}} & \ast
\end{pmatrix}$}\normalsize &
$(-1)\, \Yos{18} \, \Yos{20}^{2} \, \Yos{22} \, \Yos{30} /(  \Yos{5}^{2} \, \Yos{21}  \, \Yos{26} \, \Yos{27})$
\\\hline
   $\Gc{5}$ & $[0,2,4]$ & $[32,34,42]$ &
\scalebox{0.7}{  $\begin{pmatrix}
    {\texttt{None}} & \ast & \ast \\
    \infty & \ast & \infty\\
    {\texttt{None}} & \ast & \ast
  \end{pmatrix}$}\normalsize & \ 1\\\hline
   $\Gc{6}$ & $[0,1,3]$ & $[11,32,42]$ &
\scalebox{0.7}{  $\begin{pmatrix}
    \ast & {\texttt{None}} & \infty \\
    \infty & \infty & \ast\\
    \ast &  {\texttt{None}} & \ast
  \end{pmatrix}$}\normalsize & \ 1\\\hline
  \end{tabular}
  \caption{Ruling out the remaining 15 potential interior tropical lines for tropical surfaces with associated Naruki cone $\operatorname{\mathbf{(a)}}$  that are not covered by~\autoref{tab:allCones}. The entries $A$ and $B$ are products of all Cross factors in the coordinates of two classical nodes responsible for the `\texttt{None}' entries in the minor of $\TMnone$ (from top to bottom.)\label{tab:coneA}}
\end{table}

The proof of~\autoref{thm:anticanonicalNoLines} is now complete.

\afterpage{
  \clearpage
\thispagestyle{empty}
  \begin{landscape}
   \begin{table}[htb]
     \hspace{-15ex} \;\tiny{       \begin{tabular}{|c|c||c|c|c|c|c|c||c|c|c|c|c|c|c|c|c|c|c|c|c|c|c|}
         \hline
         \scriptsize{Cone} & \scriptsize{Rows} & \scriptsize{\E{1}}&\scriptsize{\E{2}}&\scriptsize{\E{3}}&\scriptsize{\E{4}}&\scriptsize{\E{5}}&\scriptsize{\E{6}}&\scriptsize{\F{12}}&\scriptsize{\F{13}}&\scriptsize{\F{14}}&\scriptsize{\F{15}}&\scriptsize{\F{16}}&\scriptsize{\F{23}}&\scriptsize{\F{24}}&\scriptsize{\F{25}}&\scriptsize{\F{26}}&\scriptsize{\F{34}}&\scriptsize{\F{35}}&\scriptsize{\F{36}}&\scriptsize{\F{45}}&\scriptsize{\F{46}}&\scriptsize{\F{56}}          \\\hline\hline
                  \tiny{\!\!\!\textbf{(aa2a3a4)}}\!\!\!& \!\!\scriptsize{012}\!\!\!& 
                   \!\!0,1,2\!\!&
           \!\!0,2,12\!\!&
\!\!0,1,2\!\!&
           \!\!0,1,4\!\!&
             \!\!0,1,4\!\!&
         \!\!0,1,15\!\!&
         \!\!0,8,36\!\!&
         \!\!0,2,5\!\!&
         \!\!0,1,6\!\!&
         \!\!0,1,5\!\!&
         \!\!0,1,5\!\!&
         \!\!0,1,2\!\!&
         \!\!0,1,2\!\!&
         \!\!0,1,3\!\!&
         \!\!0,1,2\!\!&
         \!\!0,2,12\!\!&
         \!\!0,1,2\!\!&
         \!\!0,1,3\!\!&
         \!\!0,1,7\!\!&
         \!\!0,2,3\!\!&
         \!\!0,2,3\!\!\\
\hline
                  \tiny{\!\!\!\textbf{(aa2a3b)}}\!\!\!& \!\!\scriptsize{012}\!\!\!& 
                   \!\!0,2,9\!\!&
           \!\!0,2,18\!\!&
\!\!1,2,7\!\!&
           \!\!0,1,7\!\!&
             \!\!0,1,7\!\!&
         \!\!0,2,7\!\!&
         \!\!0,2,7\!\!&
         \!\!0,5,11\!\!&
         \!\!0,1,6\!\!&
         \!\!0,5,10\!\!&
         \!\!0,1,7\!\!&
         \!\!0,1,2\!\!&
         \!\!0,1,2\!\!&
         \!\!0,1,2\!\!&
         \!\!0,1,10\!\!&
         \!\!0,2,18\!\!&
         \!\!0,1,2\!\!&
         \!\!0,1,2\!\!&
         \!\!0,1,7\!\!&
         \!\!0,2,4\!\!&
         \!\!0,2,4\!\!\\
\hline\hline
                  \scriptsize{\!\!\!\textbf{(aa2a3)}}\!\!\!& \!\!\scriptsize{012}\!\!\!& 
                   \!\!0,2,12\!\!&
           \!\!0,2,18\!\!&
\!\!1,2,5\!\!&
           \!\!0,1,7\!\!&
             \!\!0,1,7\!\!&
         \!\!0,1,15\!\!&
         \!\!0,8,36\!\!&
         \!\!0,5,11\!\!&
         \!\!0,1,6\!\!&
         \!\!0,5,10\!\!&
         \!\!0,1,7\!\!&
         \!\!0,1,2\!\!&
         \!\!0,1,2\!\!&
         \!\!0,1,3\!\!&
         \!\!0,1,2\!\!&
         \!\!0,2,18\!\!&
         \!\!0,1,2\!\!&
         \!\!0,1,4\!\!&
         \!\!0,1,7\!\!&
         \!\!0,2,4\!\!&
         \!\!0,2,4\!\!\\
\hline

                  \scriptsize{\!\!\!\textbf{(aa2a4)}}\!\!\!& \!\!\scriptsize{012}\!\!\!& 
                   \!\!0,1,2\!\!&
           \!\!0,2,12\!\!&
\!\!0,2,4\!\!&
           \!\!0,1,4\!\!&
             \!\!0,1,4\!\!&
         \!\!0,1,15\!\!&
         \!\!0,8,36\!\!&
         \!\!0,2,5\!\!&
         \!\!0,1,6\!\!&
         \!\!0,1,5\!\!&
         \!\!0,1,5\!\!&
         \!\!0,1,2\!\!&
         \!\!0,1,2\!\!&
         \!\!0,1,3\!\!&
         \!\!0,1,2\!\!&
         \!\!0,2,12\!\!&
         \!\!0,1,2\!\!&
         \!\!0,1,3\!\!&
         \!\!0,1,7\!\!&
         \!\!0,2,3\!\!&
         \!\!0,2,3\!\!\\
\hline

                  \scriptsize{\!\!\!\textbf{(aa3a4)}}\!\!\!& \!\!\scriptsize{012}\!\!\!& 
                   \!\!0,1,2\!\!&
           \!\!0,2,12\!\!&
\!\!0,1,2\!\!&
           \!\!0,1,4\!\!&
             \!\!0,1,4\!\!&
         \!\!0,1,15\!\!&
         \!\!0,8,36\!\!&
         \!\!0,2,5\!\!&
         \!\!0,1,6\!\!&
         \!\!0,1,5\!\!&
         \!\!0,1,5\!\!&
         \!\!0,1,2\!\!&
         \!\!0,1,2\!\!&
         \!\!0,1,3\!\!&
         \!\!0,1,2\!\!&
         \!\!0,2,12\!\!&
         \!\!0,1,2\!\!&
         \!\!0,1,3\!\!&
         \!\!0,1,20\!\!&
         \!\!0,2,3\!\!&
         \!\!0,2,3\!\!\\
\hline
                  \scriptsize{\!\!\!\textbf{(a2a3a4)}}\!\!\!& \!\!\scriptsize{012}\!\!\!& 
                   \!\!0,2,9\!\!&
           \!\!0,2,12\!\!&
\!\!0,2,4\!\!&
           \!\!0,2,4\!\!&
             \!\!0,2,4\!\!&
         \!\!0,2,12\!\!&
         \!\!0,8,36\!\!&
         \!\!0,2,5\!\!&
         \!\!0,2,8\!\!&
         \!\!0,2,5\!\!&
         \!\!0,2,5\!\!&
         \!\!0,2,3\!\!&
         \!\!0,2,4\!\!&
         \!\!0,2,3\!\!&
         \!\!0,2,5\!\!&
         \!\!0,2,12\!\!&
         \!\!0,2,5\!\!&
         \!\!0,2,3\!\!&
         \!\!0,2,7\!\!&
         \!\!0,2,3\!\!&
         \!\!0,2,3\!\!\\
\hline

                  \scriptsize{\!\!\!\textbf{(aa2b)}}\!\!\!& \!\!\scriptsize{012}\!\!\!& 
                   \!\!0,2,13\!\!&
           \!\!0,2,23\!\!&
\!\!1,3,7\!\!&
           \!\!0,2,8\!\!&
             \!\!0,2,7\!\!&
         \!\!0,2,7\!\!&
         \!\!0,2,7\!\!&
         \!\!0,5,11\!\!&
         \!\!0,1,6\!\!&
         \!\!0,5,10\!\!&
         \!\!0,1,7\!\!&
         \!\!0,1,2\!\!&
         \!\!0,1,2\!\!&
         \!\!0,1,2\!\!&
         \!\!0,1,10\!\!&
         \!\!0,2,25\!\!&
         \!\!0,1,2\!\!&
         \!\!0,1,2\!\!&
         \!\!0,1,7\!\!&
         \!\!0,2,7\!\!&
         \!\!0,2,8\!\!\\
\hline

                  \scriptsize{\!\!\!\textbf{(aa3b)}}\!\!\!& \!\!\scriptsize{012}\!\!\!& 
                   \!\!0,2,9\!\!&
           \!\!0,2,18\!\!&
\!\!1,2,7\!\!&
           \!\!0,1,7\!\!&
             \!\!0,1,7\!\!&
         \!\!0,2,7\!\!&
         \!\!0,2,7\!\!&
         \!\!0,5,11\!\!&
         \!\!0,1,6\!\!&
         \!\!0,5,10\!\!&
         \!\!0,1,7\!\!&
         \!\!0,1,2\!\!&
         \!\!0,1,2\!\!&
         \!\!0,1,2\!\!&
         \!\!0,1,10\!\!&
         \!\!0,2,18\!\!&
         \!\!0,1,2\!\!&
         \!\!0,1,2\!\!&
         \!\!0,1,33\!\!&
         \!\!0,2,4\!\!&
         \!\!0,2,4\!\!\\
\hline

                  \scriptsize{\!\!\!\textbf{(a2a3b)}}\!\!\!& \!\!\scriptsize{012}\!\!\!& 
                   \!\!0,2,9\!\!&
           \!\!0,2,18\!\!&
\!\!2,3,7\!\!&
           \!\!0,2,4\!\!&
             \!\!0,2,4\!\!&
         \!\!0,2,7\!\!&
         \!\!0,2,7\!\!&
         \!\!0,5,12\!\!&
         \!\!0,2,8\!\!&
         \!\!0,5,10\!\!&
         \!\!0,2,7\!\!&
         \!\!0,2,7\!\!&
         \!\!0,2,4\!\!&
         \!\!0,2,7\!\!&
         \!\!0,2,4\!\!&
         \!\!0,2,18\!\!&
         \!\!0,2,3\!\!&
         \!\!0,2,10\!\!&
         \!\!0,2,7\!\!&
         \!\!0,2,4\!\!&
         \!\!0,2,4\!\!\\
\hline
\hline

                 \scriptsize{\!\!\!\textbf{(aa2)}}\!\!\!& \!\!\scriptsize{012}\!\!\!& 
                   \!\!1,8,13\!\!&
           \!\!0,2,23\!\!&
\!\!1,3,5\!\!&
           \!\!0,6,8\!\!&
             \!\!0,6,7\!\!&
         \!\!0,6,7\!\!&
         \!\!\!{2,18,33}\!\!\!&
         \!\!0,5,11\!\!&
         \!\!0,1,6\!\!&
         \!\!0,5,10\!\!&
         \!\!0,1,7\!\!&
         \!\!0,1,2\!\!&
         \!\!0,1,2\!\!&
         \!\!0,1,3\!\!&
         \!\!0,1,2\!\!&
         \!\!0,2,25\!\!&
         \!\!0,1,2\!\!&
         \!\!0,1,4\!\!&
         \!\!0,1,7\!\!&
         \!\!0,2,7\!\!&
         \!\!0,3,8\!\!\\
\hline

                 \scriptsize{\!\!\!\textbf{(aa3)}}\!\!\!& \!\!\scriptsize{012}\!\!\!& 
                   \!\!0,2,12\!\!&
           \!\!0,2,18\!\!&
\!\!1,2,5\!\!&
           \!\!0,1,7\!\!&
             \!\!0,1,7\!\!&
         \!\!0,2,13\!\!&
         \!\!0,8,36\!\!&
         \!\!0,5,11\!\!&
         \!\!0,1,6\!\!&
         \!\!0,5,10\!\!&
         \!\!0,1,7\!\!&
         \!\!0,1,2\!\!&
         \!\!0,1,2\!\!&
         \!\!0,1,3\!\!&
         \!\!0,1,2\!\!&
         \!\!0,2,18\!\!&
         \!\!0,1,2\!\!&
         \!\!0,1,4\!\!&
         \!\!0,1,33\!\!&
         \!\!0,2,4\!\!&
         \!\!0,2,4\!\!\\
\hline

                 \scriptsize{\!\!\!\textbf{(aa4)}}\!\!\!& \!\!\scriptsize{012}\!\!\!& 
                   \!\!0,2,9\!\!&
           \!\!0,2,12\!\!&
\!\!0,2,4\!\!&
           \!\!0,2,4\!\!&
             \!\!0,2,4\!\!&
         \!\!0,2,15\!\!&
         \!\!0,8,36\!\!&
         \!\!0,2,5\!\!&
         \!\!0,2,8\!\!&
         \!\!0,2,5\!\!&
         \!\!0,2,5\!\!&
         \!\!0,2,3\!\!&
         \!\!0,2,4\!\!&
         \!\!0,2,3\!\!&
         \!\!0,2,7\!\!&
         \!\!0,2,12\!\!&
         \!\!0,2,5\!\!&
         \!\!0,2,3\!\!&
         \!\!0,2,7\!\!&
         \!\!0,2,3\!\!&
         \!\!0,2,3\!\!\\
\hline

                 \scriptsize{\!\!\!\textbf{(a2a3)}}\!\!\!& \!\!\scriptsize{012}\!\!\!& 
                   \!\!0,2,12\!\!&
           \!\!0,2,19\!\!&
\!\!2,3,5\!\!&
           \!\!0,2,4\!\!&
             \!\!0,2,4\!\!&
         \!\!0,2,13\!\!&
         \!\!0,8,36\!\!&
         \!\!0,5,14\!\!&
         \!\!0,2,8\!\!&
         \!\!0,5,10\!\!&
         \!\!0,2,7\!\!&
         \!\!0,2,7\!\!&
         \!\!0,2,10\!\!&
         \!\!0,2,7\!\!&
         \!\!0,2,7\!\!&
         \!\!0,2,18\!\!&
         \!\!0,2,5\!\!&
         \!\!0,2,16\!\!&
         \!\!0,2,7\!\!&
         \!\!0,2,4\!\!&
         \!\!0,2,4\!\!\\
\hline

                 \scriptsize{\!\!\!\textbf{(a2a4)}}\!\!\!& \!\!\scriptsize{012}\!\!\!& 
                   \!\!0,2,9\!\!&
           \!\!0,2,12\!\!&
\!\!0,2,4\!\!&
           \!\!0,2,4\!\!&
             \!\!0,2,4\!\!&
         \!\!0,2,12\!\!&
         \!\!0,8,36\!\!&
         \!\!0,2,5\!\!&
         \!\!0,2,8\!\!&
         \!\!0,2,5\!\!&
         \!\!0,2,5\!\!&
         \!\!0,2,3\!\!&
         \!\!0,2,4\!\!&
         \!\!0,2,3\!\!&
         \!\!0,2,5\!\!&
         \!\!0,2,12\!\!&
         \!\!0,2,5\!\!&
         \!\!0,2,3\!\!&
         \!\!0,2,7\!\!&
         \!\!0,2,3\!\!&
         \!\!0,2,3\!\!\\
\hline

                 \scriptsize{\!\!\!\textbf{(a3a4)}}\!\!\!& \!\!\scriptsize{012}\!\!\!& 
                   \!\!0,2,9\!\!&
           \!\!0,2,12\!\!&
\!\!0,2,4\!\!&
           \!\!0,2,4\!\!&
             \!\!0,2,4\!\!&
         \!\!0,2,12\!\!&
         \!\!0,8,36\!\!&
         \!\!0,2,5\!\!&
         \!\!0,2,8\!\!&
         \!\!0,2,5\!\!&
         \!\!0,2,5\!\!&
         \!\!0,2,3\!\!&
         \!\!0,2,4\!\!&
         \!\!0,2,3\!\!&
         \!\!0,2,5\!\!&
         \!\!0,2,12\!\!&
         \!\!0,2,5\!\!&
         \!\!0,2,3\!\!&
         \!\!0,2,7\!\!&
         \!\!0,2,3\!\!&
         \!\!0,2,3\!\!\\
\hline

\hline

                 \scriptsize{\!\!\!\textbf{(ab)}}\!\!\!& \!\!\scriptsize{012}\!\!\!& 
                    \!\!0,2,20\!\!&
           \!\!---\!\!&
\!\!1,3,12\!\!&
           \!\!1,2,8\!\!&
             \!\!1,2,7\!\!&
         \!\!1,2,7\!\!&
         \!\!0,2,7\!\!&
         \!\!0,5,11\!\!&
         \!\!0,5,16\!\!&
         \!\!0,5,11\!\!&
         \!\!0,5,10\!\!&
         \!\!0,1,2\!\!&
         \!\!0,1,2\!\!&
         \!\!0,2,7\!\!&
         \!\!0,1,10\!\!&
         \!\!---\!\!&
         \!\!0,1,2\!\!&
         \!\!0,1,3\!\!&
         \!\!0,1,37\!\!&
         \!\!0,2,7\!\!&
         \!\!0,2,8\!\! \\
& \!\!\scriptsize{013}\!\!\!& 
                    \!\!\!\!&
           \!\!\!2,3,20\!\!\!&
\!\!\!\!&
           \!\!\!\!&
             \!\!\!\!&
         \!\!\!\!&
         \!\!\!\!&
         \!\!\!\!&
         \!\!\!\!&
         \!\!\!\!&
         \!\!\!\!&
         \!\!\!\!&
         \!\!\!\!&
         \!\!\!\!&
         \!\!\!\!&
         \!\!0,2,3\!\!&
         \!\!\!\!&
         \!\!\!\!&
         \!\!\!\!&
         \!\!\!\!&
         \!\!\!\! \\
\hline

                 \scriptsize{\!\!\!\textbf{(a2b)}}\!\!\!& \!\!\scriptsize{012}\!\!\!& 
                   \!\!0,2,13\!\!&
           \!\!0,2,23\!\!&
\!\!2,3,7\!\!&
           \!\!0,2,8\!\!&
             \!\!0,2,7\!\!&
         \!\!0,2,7\!\!&
         \!\!0,2,7\!\!&
         \!\!0,5,12\!\!&
         \!\!0,2,8\!\!&
         \!\!0,5,10\!\!&
         \!\!0,2,7\!\!&
         \!\!0,2,7\!\!&
         \!\!0,2,10\!\!&
         \!\!0,2,7\!\!&
         \!\!0,2,4\!\!&
         \!\!0,2,25\!\!&
         \!\!0,2,3\!\!&
         \!\!0,2,10\!\!&
         \!\!0,2,7\!\!&
         \!\!0,2,7\!\!&
         \!\!0,2,8\!\! \\
\hline

                 \scriptsize{\!\!\!\textbf{(a3b)}}\!\!\!& \!\!\scriptsize{012}\!\!\!& 
                   \!\!0,2,9\!\!&
           \!\!0,2,18\!\!&
\!\!2,3,7\!\!&
           \!\!0,2,4\!\!&
             \!\!0,2,4\!\!&
         \!\!0,2,7\!\!&
         \!\!0,2,7\!\!&
         \!\!0,5,12\!\!&
         \!\!0,2,8\!\!&
         \!\!0,5,10\!\!&
         \!\!0,2,7\!\!&
         \!\!0,2,7\!\!&
         \!\!0,2,4\!\!&
         \!\!0,2,9\!\!&
         \!\!0,2,4\!\!&
         \!\!0,2,18\!\!&
         \!\!0,2,3\!\!&
         \!\!0,2,10\!\!&
         \!\!0,2,7\!\!&
         \!\!0,2,4\!\!&
         \!\!0,2,4\!\! \\
\hline \hline
        \scriptsize{\!\!\!\textbf{(a)}}\!\!\!& \!\!\scriptsize{012}\!\!\!& 
                   \!\!2,8,20\!\!&
           \!\!---\!\!&
\!\!---\!\!&
           \!\!---\!\!&
             \!\!---\!\!&
         \!\!---\!\!&
         \!\!---\!\!&
         \!\!0,5,11\!\!&
         \!\!0,5,16\!\!&
         \!\!0,5,11\!\!&
         \!\!0,5,10\!\!&
         \!\!0,1,10\!\!&
         \!\!0,1,6\!\!&
         \!\!0,2,7\!\!&
         \!\!0,1,10\!\!&
         \!\!---\!\!&
         \!\!---\!\!&
         \!\!---\!\!&
         \!\!---\!\!&
         \!\!---\!\!&
         \!\!---\!\! \\

& \!\!\scriptsize{013}\!\!\!& 
                   \!\!\!\!&
           \!\!\!3,4,20\!\!\!&
\!\!---\!\!&
           \!\!---\!\!&
             \!\!---\!\!&
         \!\!---\!\!&
         \!\!---\!\!&
         \!\!\!\!&
         \!\!\!\!&
         \!\!\!\!&
         \!\!\!\!&
         \!\!\!\!&
         \!\!\!\!&
         \!\!\!\!&
         \!\!\!\!&
         \!\!---\!\!&
         \!\!---\!\!&
         \!\!---\!\!&
         \!\!---\!\!&
         \!\!---\!\!&
         \!\!---\!\! \\
& \!\!\scriptsize{rest}\!\!\!& 
                   \!\!\!\!&
           \!\!\!\!&
\!\!---\!\!&
           \!\!---\!\!&
             \!\!---\!\!&
         \!\!---\!\!&
         \!\!---\!\!&
         \!\!\!\!&
         \!\!\!\!&
         \!\!\!\!&
         \!\!\!\!&
         \!\!\!\!&
         \!\!\!\!&
         \!\!\!\!&
         \!\!\!\!&
         \!\!---\!\!&
         \!\!---\!\!&
         \!\!---\!\!&
         \!\!---\!\!&
         \!\!---\!\!&
         \!\!---\!\!\\

\hline

                 \scriptsize{\!\!\!\textbf{(a2)}}\!\!\!& \!\!\scriptsize{012}\!\!\!& 
                   \!\!2,8,13\!\!&
           \!\!0,2,33\!\!&
\!\!2,3,5\!\!&
           \!\!0,8,13\!\!&
             \!\!0,7,12\!\!&
         \!\!0,7,13\!\!&
         \!\!2,18,36\!\!&
         \!\!0,5,14\!\!&
         \!\!0,3,8\!\!&
         \!\!0,5,10\!\!&
         \!\!0,2,7\!\!&
         \!\!0,2,7\!\!&
         \!\!0,2,10\!\!&
         \!\!0,2,7\!\!&
         \!\!0,2,7\!\!&
         \!\!0,2,25\!\!&
         \!\!0,2,15\!\!&
         \!\!0,2,26\!\!&
         \!\!0,2,7\!\!&
         \!\!0,2,7\!\!&
         \!\!0,3,8\!\!\\
\hline

                 \scriptsize{\!\!\!\textbf{(a3)}}\!\!\!& \!\!\scriptsize{012}\!\!\!& 
                   \!\!0,2,12\!\!&
           \!\!0,2,19\!\!&
\!\!2,3,7\!\!&
           \!\!0,2,4\!\!&
             \!\!0,2,4\!\!&
         \!\!0,2,13\!\!&
         \!\!0,8,36\!\!&
         \!\!0,5,14\!\!&
         \!\!0,2,8\!\!&
         \!\!0,5,10\!\!&
         \!\!0,2,7\!\!&
         \!\!0,2,7\!\!&
         \!\!0,2,10\!\!&
         \!\!0,3,9\!\!&
         \!\!0,2,7\!\!&
         \!\!0,2,18\!\!&
         \!\!0,2,5\!\!&
         \!\!0,2,16\!\!&
         \!\!0,2,7\!\!&
         \!\!0,2,4\!\!&
         \!\!0,2,4\!\!\\
\hline

                 \scriptsize{\!\!\!\textbf{(a4)}}\!\!\!& \!\!\scriptsize{012}\!\!\!& 
                   \!\!0,2,9\!\!&
           \!\!0,2,12\!\!&
\!\!0,2,4\!\!&
           \!\!0,2,4\!\!&
             \!\!0,2,4\!\!&
         \!\!0,2,15\!\!&
         \!\!0,8,36\!\!&
         \!\!0,2,5\!\!&
         \!\!0,2,8\!\!&
         \!\!0,2,5\!\!&
         \!\!0,2,5\!\!&
         \!\!0,2,3\!\!&
         \!\!0,2,4\!\!&
         \!\!0,2,3\!\!&
         \!\!0,2,7\!\!&
         \!\!0,2,12\!\!&
         \!\!0,2,5\!\!&
         \!\!0,2,3\!\!&
         \!\!0,2,7\!\!&
         \!\!0,2,3\!\!&
         \!\!0,2,3\!\!\\
\hline

                 \scriptsize{\!\!\!\textbf{(b)}}\!\!\!& \!\!\scriptsize{012}\!\!\!& 
                   \!\!2,8,21\!\!&
           \!\!---\!\!&
\!\!---\!\!&
           \!\!2,7,8\!\!&
             \!\!2,7,8\!\!&
         \!\!3,7,8\!\!&
         \!\!2,4,7\!\!&
         \!\!2,4,7\!\!&
         \!\!---\!\!&
         \!\!\!{2,15,16}\!\!\!&
         \!\!---\!\!&
         \!\!2,3,7\!\!&
         \!\!---\!\!&
         \!\!\!{2,15,16}\!\!\!&
         \!\!---\!\!&
         \!\!---\!\!&
         \!\!\!{2,15,16}\!\!\!&
         \!\!---\!\!&
         \!\!2,3,8\!\!&
         \!\!2,7,8\!\!&
         \!\!2,7,8\!\!\\

        & \!\!\scriptsize{013}\!\!\!& 
                   \!\!\!\!&
           \!\!\!2,3,20\!\!\!\!&
\!\!\!2,3,20\!\!\!\!&
           \!\!\!\!&
             \!\!\!\!&
         \!\!\!\!&
         \!\!\!\!&
         \!\!\!\!&
         \!\!\!2,\!16,21\!\!\!\!&
         \!\!\!\!&
         \!\!2,8,15\!\!&
         \!\!\!\!&
         \!\!2,3,15\!\!&
         \!\!\!\!&
         \!\!2,3,16\!\!&
         \!\!2,3,15\!\!&
         \!\!\!\!&
         \!\!2,3,16\!\!&
         \!\!\!\!&
         \!\!\!\!&
         \!\!\!\!\\
\hline
     \end{tabular}}
     
\begin{minipage}[l]{0.49\textwidth}
 \;\tiny{
         \begin{tabular}{|c|c||c|c|c|c|c|c|}
         \hline
         \scriptsize{Cone} & \scriptsize{Rows} &          \scriptsize{\Gc{1}}&\scriptsize{\Gc{2}}&\scriptsize{\Gc{3}}&\scriptsize{\Gc{4}}&\scriptsize{\Gc{5}}&\scriptsize{\Gc{6}}
         \\\hline\hline
                  \tiny{\!\!\!\textbf{(aa2a3a4)}}\!\!\!& \!\!\scriptsize{012}\!\!\!& 
         \!\!0,1,2\!\!&
         \!\!0,2,7\!\!&
         \!\!0,2,5\!\!&
         \!\!0,2,5\!\!&
         \!\!0,1,5\!\!&
         \!\!0,2,5\!\!\\
\hline
                  \tiny{\!\!\!\textbf{(aa2a3b)}}\!\!\!& \!\!\scriptsize{012}\!\!\!& 
         \!\!0,1,2\!\!&
         \!\!0,2,7\!\!&
         \!\!0,5,7\!\!&
         \!\!0,2,5\!\!&
         \!\!0,5,10\!\!&
         \!\!0,1,5\!\!\\
\hline\hline
                  \scriptsize{\!\!\!\textbf{(aa2a3)}}\!\!\!& \!\!\scriptsize{012}\!\!\!& 
         \!\!0,1,2\!\!&
         \!\!0,2,7\!\!&
         \!\!0,5,7\!\!&
         \!\!0,2,5\!\!&
         \!\!0,12,14\!\!&
         \!\!0,2,5\!\!\\
\hline

                  \scriptsize{\!\!\!\textbf{(aa2a4)}}\!\!\!& \!\!\scriptsize{012}\!\!\!& 
         \!\!0,1,2\!\!&
         \!\!0,2,7\!\!&
         \!\!0,5,20\!\!&
         \!\!0,2,5\!\!&
         \!\!0,1,5\!\!&
         \!\!0,2,5\!\!\\
\hline

                  \scriptsize{\!\!\!\textbf{(aa3a4)}}\!\!\!& \!\!\scriptsize{012}\!\!\!& 
         \!\!0,1,2\!\!&
         \!\!0,2,7\!\!&
         \!\!0,2,5\!\!&
         \!\!0,2,5\!\!&
         \!\!0,1,5\!\!&
         \!\!0,2,5\!\!\\
\hline
                  \scriptsize{\!\!\!\textbf{(a2a3a4)}}\!\!\!& \!\!\scriptsize{012}\!\!\!& 
         \!\!0,2,7\!\!&
         \!\!0,2,7\!\!&
         \!\!0,2,5\!\!&
         \!\!0,2,5\!\!&
         \!\!0,2,5\!\!&
         \!\!0,2,5\!\!\\
\hline

                  \scriptsize{\!\!\!\textbf{(aa2b)}}\!\!\!& \!\!\scriptsize{012}\!\!\!& 
         \!\!0,1,2\!\!&
         \!\!0,2,7\!\!&
         \!\!0,5,7\!\!&
         \!\!0,2,5\!\!&
         \!\!0,5,10\!\!&
         \!\!0,1,5\!\!\\
\hline

                  \scriptsize{\!\!\!\textbf{(aa3b)}}\!\!\!& \!\!\scriptsize{012}\!\!\!& 
         \!\!0,1,2\!\!&
         \!\!0,2,7\!\!&
         \!\!0,5,7\!\!&
         \!\!0,2,5\!\!&
         \!\!0,5,10\!\!&
         \!\!0,1,5\!\!\\
\hline

                  \scriptsize{\!\!\!\textbf{(a2a3b)}}\!\!\!& \!\!\scriptsize{012}\!\!\!& 
         \!\!0,10,12\!\!&
         \!\!0,2,7\!\!&
         \!\!0,5,7\!\!&
         \!\!0,2,5\!\!&
         \!\!0,5,10\!\!&
         \!\!0,2,5\!\!\\
\hline
\hline

                 \scriptsize{\!\!\!\textbf{(aa2)}}\!\!\!& \!\!\scriptsize{012}\!\!\!& 
         \!\!0,1,2\!\!&
         \!\!0,2,7\!\!&
         \!\!0,5,21\!\!&
         \!\!0,2,5\!\!&
         \!\!0,12,14\!\!&
         \!\!0,2,5\!\!\\
\hline

                 \scriptsize{\!\!\!\textbf{(aa3)}}\!\!\!& \!\!\scriptsize{012}\!\!\!& 
         \!\!0,1,2\!\!&
         \!\!0,2,7\!\!&
         \!\!0,5,7\!\!&
         \!\!0,2,5\!\!&
         \!\!0,12,14\!\!&
         \!\!0,2,5\!\!\\
\hline

                 \scriptsize{\!\!\!\textbf{(aa4)}}\!\!\!& \!\!\scriptsize{012}\!\!\!& 
         \!\!0,2,7\!\!&
         \!\!0,2,7\!\!&
         \!\!0,5,20\!\!&
         \!\!0,2,5\!\!&
         \!\!0,2,5\!\!&
         \!\!0,2,5\!\!\\
\hline

                 \scriptsize{\!\!\!\textbf{(a2a3)}}\!\!\!& \!\!\scriptsize{012}\!\!\!& 
         \!\!0,10,22\!\!&
         \!\!0,2,7\!\!&
         \!\!0,5,7\!\!&
         \!\!0,2,5\!\!&
         \!\!0,12,14\!\!&
         \!\!0,2,5\!\!\\
\hline

                 \scriptsize{\!\!\!\textbf{(a2a4)}}\!\!\!& \!\!\scriptsize{012}\!\!\!& 
         \!\!0,2,7\!\!&
         \!\!0,2,7\!\!&
         \!\!0,5,20\!\!&
         \!\!0,2,5\!\!&
         \!\!0,2,5\!\!&
         \!\!0,2,5\!\!\\
\hline

                 \scriptsize{\!\!\!\textbf{(a3a4)}}\!\!\!& \!\!\scriptsize{012}\!\!\!& 
         \!\!0,2,7\!\!&
         \!\!0,2,7\!\!&
         \!\!0,2,5\!\!&
         \!\!0,2,5\!\!&
         \!\!0,2,5\!\!&
         \!\!0,2,5\!\!\\
\hline
\hline

                 \scriptsize{\!\!\!\textbf{(ab)}}\!\!\!& \!\!\scriptsize{012}\!\!\!& 
         \!\!0,1,2\!\!&
         \!\!0,2,7\!\!&
         \!\!1,2,7\!\!&
         \!\!1,2,6\!\!&
         \!\!1,5,7\!\!&
         \!\!0,1,7\!\!\\
\hline

                 \scriptsize{\!\!\!\textbf{(a2b)}}\!\!\!& \!\!\scriptsize{012}\!\!\!& 
         \!\!0,10,12\!\!&
         \!\!0,2,7\!\!&
         \!\!0,5,7\!\!&
         \!\!0,2,5\!\!&
         \!\!0,5,10\!\!&
         \!\!0,2,5\!\!\\
\hline

                 \scriptsize{\!\!\!\textbf{(a3b)}}\!\!\!& \!\!\scriptsize{012}\!\!\!& 
         \!\!0,10,12\!\!&
         \!\!0,2,7\!\!&
         \!\!0,5,7\!\!&
         \!\!0,2,5\!\!&
         \!\!0,5,10\!\!&
         \!\!0,2,5\!\!\\
\hline \hline
\scriptsize{\!\!\!\textbf{(a)}}\!\!\!& \!\!\scriptsize{012}\!\!\!& 
         \!\!0,1,11\!\!&
         \!\!0,2,7\!\!&
         \!\!---\!\!&
         \!\!---\!\!&
         \!\!---\!\!&
         \!\!---\!\!\\

& \!\!\scriptsize{013}\!\!\!& 
         \!\!\!\!&
         \!\!\!\!&
         \!\!---\!\!&
         \!\!---\!\!&
         \!\!---\!\!&
         \!\!---\!\!\\
& \!\!\scriptsize{rest}\!\!\!& 
         \!\!\!\!&
         \!\!\!\!&
         \!\!---\!\!&
         \!\!---\!\!&
         \!\!---\!\!&
         \!\!---\!\!\\

\hline

                 \scriptsize{\!\!\!\textbf{(a2)}}\!\!\!& \!\!\scriptsize{012}\!\!\!& 
         \!\!0,10,12\!\!&
         \!\!0,2,7\!\!&
         \!\!0,5,21\!\!&
         \!\!0,2,5\!\!&
         \!\!0,12,14\!\!&
         \!\!0,10,36\!\!\\
\hline

                 \scriptsize{\!\!\!\textbf{(a3)}}\!\!\!& \!\!\scriptsize{012}\!\!\!& 
         \!\!0,11,12\!\!&
         \!\!0,2,7\!\!&
         \!\!0,5,7\!\!&
         \!\!0,2,5\!\!&
         \!\!0,12,14\!\!&
         \!\!0,2,5\!\!\\
\hline

                 \scriptsize{\!\!\!\textbf{(a4)}}\!\!\!& \!\!\scriptsize{012}\!\!\!& 
         \!\!0,2,7\!\!&
         \!\!0,2,7\!\!&
         \!\!0,5,20\!\!&
         \!\!0,2,5\!\!&
         \!\!0,2,5\!\!&
         \!\!0,2,5\!\!\\
\hline

                 \scriptsize{\!\!\!\textbf{(b)}}\!\!\!& \!\!\scriptsize{012}\!\!\!& 
         \!\!2,3,7\!\!&
         \!\!2,4,7\!\!&
         \!\!2,4,7\!\!&
         \!\!---\!\!&
         \!\!7,12,17\!\!&
         \!\!--- \!\!\\

        & \!\!\scriptsize{013}\!\!\!& 
         \!\!\!\!&
         \!\!\!\!&
         \!\!\!\!&
         \!\!3,8,20\!\!&
         \!\!\!\!&
         \!\!2,7,20\!\!\\
\hline
 \end{tabular}}
\end{minipage}\begin{minipage}[r]{0.69\textwidth}
  \caption{Ruling out all 27 potential non-boundary tropical lines for all non-apex cones in the Naruki fan. Each entry gives the three columns of a tropically non-singular $3\times 3$-minor of the pair of matrices $(\TMexp[E,\sigma],\TMnone[E,\sigma])$ in the family $\cF_{\sigma}$ from~\eqref{eq:pairsOfMatrices} for each cone $\sigma$.\\
    An absence of a triple for an extremal ray is indicated by `---' and it should be interpreted as~\autoref{alg:minorsWithNones} failing to find a non-singular minor with the prescribed rows. Whenever a choice of three rows does not rule out all extremal curves, we move on to the next $3$-element set of rows (in the lexicographic order) and only check the remaining extremal curves. \\
    All 27 extremal curves are covered by a suitable choice of rows with the exception of the  cell $\mathbf{(a)}$, for which the method only rules out 12 potential lines.  The remaining 15 cases are treated in~\autoref{tab:coneA}.\label{tab:allCones}}
\end{minipage}
   \end{table}\clearpage
\end{landscape}}
 \normalsize

\section{Finding Cross function representatives for Eckardt triangles} 
\label{sec:find-repr-cross}

The need to employ ratios of matrix entries in \autoref{lm:noInteriorLinesACone} to rule out extra tropical lines on the $\operatorname{(a)}$ cone is rooted in the redundancy of Cross functions for tropicalization purposes. Indeed, by~\autoref{rm:triangleQuintics} each Eckardt quintic is associated to a triple of Cross functions. \autoref{pr:ratiosOfCross} shows that the valuation of the ratio of any two of them  is a linear function on the valuations of all 40 Yoshida functions. Thus, the valuation of all 135 Cross functions can be explicitly computed from the Naruki fan and the valuation of suitable 45 Cross functions, each associated to a distinct Eckardt quintic. Our aim in this section is to find these 45 representative Cross functions. This choice will be crucial to determine the metric structure on the 27 boundary trees on $\Trop(X_L)$ and prove~\autoref{thm:Naruki}.

Following~\autoref{rm:markings}, we let $\ted$ be the collection of symbols encoding the 45 anticanonical triangles on $X$. The same set is used to index the 45 Eckardt quintics.
\begin{proposition}\label{pr:ratiosOfCross} For each symbol in $\ted$, consider the triple  $\{\Cross{i_1}, \Cross{i_2},\Cross{i_3}\}$ of Cross functions associated to the corresponding Eckardt quintic. Then, the  ratios  $(\Cross{i_j}/\Cross{i_k})^3$ can be expressed as Laurent monomials in Yoshidas functions. In particular, their valuations are linear function on the Naruki fan.

\end{proposition}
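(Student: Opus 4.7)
The plan is to combine the Cross factorization from~\eqref{eq:Cross} with the lattice observation in~\autoref{rm:index3oFYoshidasInDs}. Fix an Eckardt quintic $Q$ and let $\{\Cross{i_1}, \Cross{i_2}, \Cross{i_3}\}$ be its associated triple.  By~\autoref{rm:triangleQuintics} and \eqref{eq:Cross}, each $\Cross{i_r}$ factors as
\[
  \Cross{i_r} = \alpha^{(r)}\, \alpha_1^{(r)}\, \alpha_2^{(r)}\, \alpha_3^{(r)} \cdot Q \qquad (r=1,2,3),
\]
where the four roots come from the corresponding $A_1^{\oplus 4}$ subsystem.  Taking the ratio of any two such expressions cancels $Q$, so
\[
  \frac{\Cross{i_j}}{\Cross{i_k}} \;=\;
  \frac{\alpha^{(j)}\alpha_1^{(j)}\alpha_2^{(j)}\alpha_3^{(j)}}{\alpha^{(k)}\alpha_1^{(k)}\alpha_2^{(k)}\alpha_3^{(k)}}
\]
is a degree-zero Laurent monomial in the 36 positive roots of $\EGp{6}$ with exponent vector $v \in \Z^{36}$.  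The whole problem is thus reduced to showing that $3v$ lies in the row-span $L$ of the Yoshida matrix.

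To see that $v$ lies in the saturation $L_{\mathrm{sat}}$ of $L$, I would use the modular interpretation.  Both $\Cross{i_j}$ and $\Cross{i_k}$ are Coble covariants, i.e.\ sections of $\O_{M_{m,3}^*}(1)$, so their ratio descends to a rational function on $M_{m,3}^*$.  Since the Yoshida functions give the closed embedding $M_{m,3}^*\hookrightarrow \P^{39}$ of~\eqref{eq:naruki_embedding}, any rational function on $M_{m,3}^*$ is a ratio of Yoshida polynomials of the same degree; pulling back to $\P(\fh_6)$ yields an expression of $\Cross{i_j}/\Cross{i_k}$ as a rational function in the Yoshidas, and hence $v \in L\otimes \Q$.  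Combined with $v \in \Z^{36}$, this places $v$ in $L_{\mathrm{sat}}$, and~\autoref{rm:index3oFYoshidasInDs} then gives $3v \in L$, exhibiting $(\Cross{i_j}/\Cross{i_k})^3$ as a Laurent monomial in the 40 Yoshida functions.

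Once the Yoshida monomial expression is in hand, the second assertion is automatic: by construction (see~\eqref{eq:narukiFan} and the discussion following it), the valuations of the Yoshidas are the linear coordinates on $\Naruki$, so the valuation of any Laurent monomial in them is a linear function on the Naruki fan, and that of its cube is $3$ times a linear function.  The main obstacle in the argument is the factor of $3$: it is forced by the index-$3$ discrepancy between $L$ and $L_{\mathrm{sat}}$, and cannot be removed, which is exactly why the statement must cube the ratio.  A direct verification on a single $\Wgp$-orbit representative, propagated to the remaining 44 Eckardt triangles via~\autoref{tab:action}, provides a fully explicit and independent confirmation; this is the route pursued in the Supplementary material and yields concrete monomial formulas for all 45 triples.
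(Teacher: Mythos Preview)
Your proof has the right skeleton---the factorization~\eqref{eq:Cross} cancels the common Eckardt quintic and leaves a Laurent monomial in the positive roots, and the index-$3$ observation from~\autoref{rm:index3oFYoshidasInDs} is exactly what forces the cube---but the sentence ``and hence $v\in L\otimes\Q$'' is where the argument breaks. Knowing that $\Cross{i_j}/\Cross{i_k}$ descends to a rational function on $M_{m,3}^*$ and is therefore a ratio $P(\Yos{})/Q(\Yos{})$ of homogeneous polynomials in the Yoshidas only says that two rational functions on $\fh_6^{\reg}$ coincide. It does \emph{not} force the exponent vector of a root monomial to lie in the $\Q$-span of the Yoshida exponent lattice: a monomial can equal a non-monomial rational expression without any lattice constraint on its exponents. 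What you actually need is a statement about \emph{units}---for instance, that every unit on $M_{m,3}^\circ$ is, up to a scalar, a Laurent monomial in the Yoshidas---and that is a nontrivial structural fact about $M_{m,3}^\circ$ that you neither prove nor cite. Nor is there an easy weight argument available: the one-dimensional fibers of $\P(\fh_6^{\reg})\to M_{m,3}^\circ$ are not orbits of a torus acting linearly on $\fh_6$, so one cannot simply read off a cocharacter annihilated by $v$.

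The paper's proof sidesteps this difficulty entirely. It uses the transitive $\Wgp$-action on the 45 anticanonical triangles to reduce to a single triple, then---having already fixed a $\Z$-basis $\{\Yos{5},\Yos{17},\dots,\Yos{31}\}$ for the row lattice of the Yoshida matrix---solves the linear system exhibiting $(\Cross{31}/\Cross{30})^3$ and $(\Cross{32}/\Cross{30})^3$ explicitly as Yoshida Laurent monomials. You allude to this computational route in your last paragraph as an ``independent confirmation,'' but as things stand it is the only complete argument: your conceptual step does not close without an additional input about the unit group of the moduli space.
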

\begin{proof}
 Since the action of $\Wgp$ is transitive on all 135 Cross functions, it suffices to prove the result for the triple associated to a single Eckardt quintic. We choose that of~\autoref{ex:cross}, corresponding to the symbol $y_{152346}$ in $\ted$. In the notation of~\autoref{tab:crosses}, the triple becomes $\{\Cross{30}, \Cross{31}, \Cross{32}\}$.

 As in the proof of~\autoref{lm:noInteriorLinesACone}, we know that the exponent vectors of the 16 Yoshida functions $\{\Yos{5}, \Yos{17},\ldots, \Yos{31}\}$ span the index-three sublattice of $\Z^{36}$ associated to the rows of the Yoshida matrix.  A direct computation available in the Supplementary material reveals that
 \begin{equation}\label{eq:crRatios}
   \Big(\frac{\Cross{31}}{\Cross{30}}\Big)^3\!\!= \frac{\Yos{17}^6\,\Yos{18}^2\,\Yos{21}^4\,\Yos{23}^3\,\Yos{25}^3\,\Yos{26}\,\Yos{27}\,\Yos{30}^5}{\Yos{5}\,\Yos{19}^3\,\Yos{20}^2\,\Yos{22}^7\,\Yos{24}^3\,\Yos{28}^6\,\Yos{31}^3}
\; \text{ and}\; \Big(\frac{\Cross{32}}{\Cross{30}}\Big)^3 \!\!=  \frac{\Yos{17}^3\,\Yos{18}\,\Yos{21}^2\,\Yos{27}^2\,\Yos{30}^4}{\Yos{5}^2\,\Yos{20}\,\Yos{22}^2\,\Yos{26}\,\Yos{28}^3\,\Yos{31}^3}.\qedhere
 \end{equation}
   \end{proof}
In the remainder of this section we explain how to choose the 45 representing Cross functions, one per triple. Our criteria is based on the difference between the expected and actual valuations of Cross functions on the baricenter of each of the 24 orbit representatives of Naruki cones. Precise formulas for the expected valuation of a given Cross function in the relative interior of each cone were provided in~\autoref{rm:expValsCross}. The valuation of a given $\Cross{i}$, will only be undertermined  when each of the four expressions $\Cross{i} = \pm(\Yos{k} - \Yos{j})$ arising from~\autoref{rm:a14tocross} has $\val(\Yos{k}) = \val(\Yos{j})$.

~\autoref{pr:ratiosOfCross} can be used to give a better formula for the expected valuation of all 135 Cross functions based on a single representing member of each triple of Cross functions. Our next objective is to determine whether these new expected valuations are achieved generically by finding  suitable $d_1,\ldots, d_6$ in $\KK$ in the fibers of the Yoshida map.

\smallskip

Before doing this, we fix some standard notation. We let $\Gamma:=\val(\KK^*)$ be the value group of $\KK$. Our assumptions on the valued field $\KK$ ensure that $\Gamma$ is divisible and dense in $\R$. Furthermore, the valuation admits a splitting $\gamma\mapsto t^{\gamma}$, where we set $t^0 := 1$. After appropriate rescaling if necessary, we may assume $1\in \Gamma$, and thus $\Q\subset \Gamma$.

Given  a smooth cubic surface $X$ over $\KK$ with no Eckardt points, we let $\Yos{}$ be the point in $\P^{39}_{\KK}$ recording the value of the 40 Yoshida functions on $X$. We set $\underline{p} := \trop(\Yos{}) \in \Naruki$. The following definition will be relevant:
\begin{definition}\label{def:surfaceTypes}
  If the point $\underline{p}$ lies  in the relative interior of an $\operatorname{(aa_2a_3a_4)}$ cone in the Naruki fan, we say $\Trop X \subset \TP^{44}$ is an $\operatorname{(aa_2a_3a_4)}$ surface. We define $\operatorname{(aa_2a_3b)}$ surfaces analogously.
\end{definition}
\smallskip

The next two lemmas provide  uniform formulas to produce parameters $d_1,\ldots, d_6$ on the cones $\operatorname{(aa_2a_3a_4)}$ and $\operatorname{(aa_2a_3b)}$ from~\eqref{eq:coneReps} and all their faces. Both choices match on their overlap.
In what follows, we let $r_1,r_2,r_3, r_4$ be the scalars used to write points in the first cone, and $r_1,r_2,r_3, r'_4$ be those used for the second cone. For each $I\subset\{1,\ldots, 3\}$ we let $r_I := \sum_{i\in I} r_i$.

\begin{lemma}\label{lm:genExamplesaaaa} The following choice of parameters $d_1,\ldots, d_6\in \KK^*$ produces an $\operatorname{(aa_2a_3a_4)}$ tropical cubic surfaces in $\TPr^{44}$ associated to scalars  $r_1,\ldots,r_4 \in \Gamma_{\geq 0}$:
  \begin{equation*}\label{eq:aaaa_ds}
    \begin{minipage}[l]{0.5\linewidth}
    \[        \begin{aligned}
      d_1 &= (-{u_2}\,t^{r_{123} + 2\,{r_4}} + {u_1}\,t^{r_{23} +2\, r_4} \;- {u_3})/3,\\
    d_2 &=  (2\,{u_2}\,t^{r_{123} + 2\,{r_4}} + {u_1}\,t^{r_{23} +2\,r_4} \;- {u_3})/{3},\\
    d_3&= (-{u_2}\,t^{r_{123} + 2\,{r_4}} + {u_1}\,t^{r_{23} +2\,r_4} +2 {u_3})/{3},
    \end{aligned}
  \]
    \end{minipage}
    \begin{minipage}[l]{0.4\linewidth}
    \[\begin{aligned}
      d_4 &= (u_4 + u_5\,t^{r_3}) + d_1,\\
    d_5 &= (u_4 + u_6\,t^{r_3} + u_7\,t^{2\,r_{123}+2\, r_{4}}) +d_1,\\
    d_6&=  u_8\,t^{r_4} +d_1,
    \end{aligned}
    \]
    \end{minipage}
      \end{equation*}
    where $u_1,\ldots, u_8\in \KK^*$ satisfy $\val(u_i) = 0$ for all $i$ and the residue classes are generic with respect to the linear conditions~\eqref{eq:gen_aaaa}. If $r_1,\ldots,r_4>0$, these reduce to  $\val(u_5-u_6) = \val(u_4-u_3)=\val(2u_4-u_3) = 0$.
  \end{lemma}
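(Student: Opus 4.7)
The plan is to verify that the tropicalization of the given family of parameters $d_1,\dots,d_6$ produces a point on the Naruki fan $\Naruki$ whose coordinates equal $r_1\operatorname{a}+r_2\operatorname{a_2}+r_3\operatorname{a_3}+r_4\operatorname{a_4}$ in $\R^{40}/\rspanone$, where the primitive rays are those fixed in~\autoref{rm:BergmanReps}. Since the Naruki map~\eqref{eq:lm} factors as the linear map $\ell$ (evaluating the 36 positive roots) followed by the monomial map $m$ (the Yoshida matrix), tropicalization gives $\trop(m)\circ\trop(\ell)$. Hence the argument reduces to (i) computing $\val(\alpha)$ for each $\alpha\in\Phi^+$ evaluated at the prescribed $d_i$'s, and (ii) applying the Yoshida matrix (\autoref{tab:yoshida}) to the resulting vector and checking the outcome against $r_1\operatorname{a}+r_2\operatorname{a_2}+r_3\operatorname{a_3}+r_4\operatorname{a_4}$.

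For step (i), I would compute the leading $t$-order of each root by direct substitution. A few representative cases illustrate the pattern: $d_2-d_1=u_2\,t^{r_{123}+2r_4}$, $d_3-d_1=u_3$, $d_3-d_2=u_3-u_2\,t^{r_{123}+2r_4}$, $d_4-d_1=u_4+u_5\,t^{r_3}$, and $d_6-d_1=u_8\,t^{r_4}$. For the three-term roots $d_i+d_j+d_k$, cancellations within the expressions for $d_1,d_2,d_3$ annihilate the $u_3$ contributions in specific combinations (this is exactly why the $d_i$'s are written in the $(-,-,-)/3$ and $(2,2,-)/3$ form). For instance, $d_1+d_2+d_3=u_1\,t^{r_{23}+2r_4}$ since the $u_2$ and $u_3$ terms cancel. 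The longer three-term sums involving $d_4,d_5,d_6$ similarly reduce to explicit $t$-adic leading orders. The result is a vector in $\Gamma^{36}/\rspanone$ whose entries are $\Z_{\geq 0}$-linear combinations of $r_1,r_2,r_3,r_4$ (plus constants absorbed by the $\rspanone$ quotient).

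For step (ii), I would multiply this root-valuation vector by the Yoshida matrix and compare with the four reference vectors in~\autoref{rm:BergmanReps}. By linearity, it suffices to verify the equality at each ``coordinate direction'' separately: setting three of the $r_i$'s to zero and letting the fourth vary must recover precisely one of $\operatorname{a},\operatorname{a_2},\operatorname{a_3},\operatorname{a_4}$ up to $\rspanone$. This is a finite verification and can be automated in \sage~using the Supplementary material. The form of the $d_i$'s was in fact \emph{reverse-engineered} from this requirement, so matching is guaranteed provided no unexpected cancellations occur among leading terms.

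Preventing such cancellations is where the genericity hypothesis enters, and I expect this to be the main technical obstacle. The concern is that in expressions such as $d_4-d_3$, $d_5-d_4$, or the three-term sums involving both $d_3$ and $d_4,d_5$, the leading $t$-power could vanish if the residue classes of the $u_i$'s satisfy certain linear relations, forcing $\val$ to jump to a higher value and thereby drop the image point from the relative interior of the $\operatorname{(aa_2a_3a_4)}$ cone onto a proper face. The role of conditions~\eqref{eq:gen_aaaa} is exactly to exclude this finite list of ``bad'' linear relations among $u_1,\dots,u_8$, which one reads off from the leading-term computations in step (i). In the strictly positive case $r_1,\dots,r_4>0$, most of these conditions become vacuous and the remaining nontrivial relations collapse to the three listed ones, namely $\val(u_5-u_6)=\val(u_4-u_3)=\val(2u_4-u_3)=0$; I would verify this reduction by inspecting which terms compete for the minimum valuation in each root once $r_1,\dots,r_4>0$.
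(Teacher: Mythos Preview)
Your proposal is correct and follows essentially the same route as the paper. The only organizational difference is that the paper checks the root valuations directly against the specific Bergman-fan point $v = r_1 v_{\operatorname{a}} + r_2 v_{\operatorname{a_2}} + r_3 v_{\operatorname{a_3}} + r_4 v_{\operatorname{a_4}}$ from \autoref{rm:BergmanReps}, so your step~(ii) becomes automatic (those $v_{\bullet}$ were chosen as preimages of the Naruki rays under $\trop(m)$); otherwise the computation of the root valuations and the extraction of the genericity conditions \eqref{eq:gen_aaaa} from the competing leading terms is exactly what the paper does.
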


\begin{lemma}\label{lm:genExamplesaaab}
  The following choice of parameters $d_1,\ldots, d_6\in \KK^*$ produces an $\operatorname{(aa_2a_3b)}$ tropical cubic surfaces in $\TPr^{44}$ associated to scalars   $r_1,\ldots,r_4' \in \Gamma_{\geq 0}$:
\begin{equation*}\label{eq:aaab_ds}
  \begin{minipage}[l]{0.5\linewidth}
    \[\begin{aligned}
      d_1 &= (-{u_2}\,t^{r_{123} + {r_4'}} + {u_1}\,t^{r_{23} }\; - {u_3}\, t^{r_4'})/3,\\
    d_2 &=  (2{u_2}\,t^{r_{123} + {r_4'}}\; + {u_1}\,t^{r_{23} } \;- {u_3}\, t^{r_4'})/{3},\\
    d_3&= (-{u_2}\,t^{r_{123} + {r_4'}} + {u_1}\,t^{r_{23} } +2 {u_3} t^{r_4'})/{3},
    \end{aligned}
    \]
    \end{minipage}
    \begin{minipage}[l]{0.4\linewidth}
    \[\begin{aligned}
      d_4 &= (u_4 + u_5\,t^{r_3}) + d_1,\\
    d_5 &= (u_4 + u_6\,t^{r_3} + u_7\,t^{2\,r_{123}+2 r_{4}'}) +d_1,\\
    d_6&=  u_8 +d_1,
    \end{aligned}
    \]
    \end{minipage}
\end{equation*}
    where $u_1,\ldots, u_8\in \KK^*$ have $\val(u_i) = 0$ for all $i$ and are generic relative to the constraints in~\eqref{eq:gen_aaaa} and \eqref{eq:gen_aaab}. If $r_1,\ldots,r_4'>0$, we require  $\val(u_5\!-\!u_6)\!=\!\val(u_8\!-\!u_4)\!=\!\val(u_8\!+\!u_4)\!=\!\val(2u_4\!+\!u_8)\!=\!0$.
  \end{lemma}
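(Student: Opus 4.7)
The plan is to verify by direct computation that the proposed substitutions $d_1,\ldots,d_6$ yield a cubic surface whose associated point in the tropical Naruki space lies in the relative interior of the chosen cone $\operatorname{(aa_2a_3b)}$, or, if some $r_i$ vanishes, in the appropriate face. The argument follows the same template that \autoref{lm:genExamplesaaaa} would use on its cone. First, I would substitute the formulas for $d_i$ into each of the 36 positive roots of $\EGp{6}$, expressing each root as an element of $\KK$ of the form $\sum_\nu c_\nu(\underline u)\, t^{\gamma_\nu}$ with $c_\nu$ a $\Q$-linear form in the $u_i$ and $\gamma_\nu \in \Q_{\geq 0}\langle r_1,r_2,r_3,r_4'\rangle$. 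Reading off the smallest exponent with $\val(c_\nu)=0$ then assigns to every root a valuation that is a non-negative $\Q$-linear form in $r_1,r_2,r_3,r_4'$, producing a vector $v \in \R^{36}$.

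The second step is to apply the Yoshida matrix to $v$. By the factorization~\eqref{eq:lm} and the identification~\eqref{eq:narukiFan}, the image $\trop(m)(v)$ is the point of $\Naruki$ attached to this specialization. A symbolic \sage\ computation, entirely parallel to the one that must be done for the $\operatorname{(aa_2a_3a_4)}$ representative, should verify the identity
\begin{equation*}
\trop(m)(v) \;=\; r_1\,\operatorname{a} + r_2\,\operatorname{a_2} + r_3\,\operatorname{a_3} + r_4'\,\operatorname{b} \pmod{\rspanone}
\end{equation*}
using the primitive ray representatives of \autoref{rm:BergmanReps}. Together with the remark after \autoref{prop:naruki_fan_cross} that all Cross function valuations are determined on the relative interior of $\operatorname{(aa_2a_3b)}$ cones, this places the point $\underline p = \trop(\Yos{})$ in the desired cone, so $\Trop X$ is an $\operatorname{(aa_2a_3b)}$ surface in the sense of \autoref{def:surfaceTypes}.

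The main obstacle is identifying the precise genericity conditions recorded in~\eqref{eq:gen_aaab} beyond those already in~\eqref{eq:gen_aaaa}. The key structural difference from \autoref{lm:genExamplesaaaa} is that $d_6 - d_1 = u_8$ now has valuation zero, rather than $u_8\,t^{r_4}$ with positive exponent. As a result, several positive roots that involve $d_6$---in particular the triple-sums $d_i + d_j + d_6$ and the differences $d_6 - d_i$ for $i = 4,5$---pick up new low-order terms that can cancel with terms already present. This is precisely what forces the extra conditions $\val(u_8 \pm u_4) = 0$ and $\val(2u_4 + u_8) = 0$ in the positive $r_i$ regime. The bookkeeping task is to enumerate all roots whose leading term depends linearly on $u_4, u_5, u_6, u_8$ after this collision, extract the corresponding non-vanishing conditions on $\Q$-linear forms in the $u_i$, and check that the union of these residue-level open conditions defines a Zariski-open subset of $(\resK^*)^8$. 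Once this is verified for a single generic choice of $(u_1,\ldots,u_8)$, the computation of $\trop(m)(v)$ proceeds as above and the full statement follows by homogeneity of the cone coordinates in the $r_i$.
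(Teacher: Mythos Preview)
Your approach is correct and matches the paper's. The paper's proof (which defers to the strategy of \autoref{lm:genExamplesaaaa}) computes the valuations of all 36 positive roots after substitution and checks them against the target Bergman-fan point $r_1 v_{\operatorname a} + r_2 v_{\operatorname{a_2}} + r_3 v_{\operatorname{a_3}} + r_4' v_{\operatorname b}$ from \autoref{rm:BergmanReps}, rather than pushing forward via the Yoshida matrix; since those $v$'s are by construction preimages of the Naruki rays, the two verifications are equivalent. The paper notes that when all $r_i>0$ exactly five roots ($d_2-d_1$, $d_3-d_1$, $d_3-d_2$, $d_5-d_4$, $d_1+d_2+d_3$) carry nonzero valuation, and records the four extra linear forms $2u_4+u_5+u_6+u_8$, $2u_4+u_1+u_5+u_6+u_8$, $2u_4+u_8$, $2u_1+2u_4+u_5+u_6+u_8$ as the content of~\eqref{eq:gen_aaab}; your diagnosis that the new collisions come from $d_6-d_1=u_8$ now having valuation zero is exactly right.
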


\begin{proof}[Proof of~\autoref{lm:genExamplesaaaa}]
    It suffices to show that our choice of parameters yields positive roots in $\Wgp$ with valuations given by the point $v := r_1 v_a + r_2 v_{a2} + r_3 v_{a3} + r_4 v_{a4} \in \R^{36}_{\geq 0}$, where $v_{a},\ldots, v_{a4}$ are as in~\autoref{rm:BergmanReps}. If all scalars are positive, $v$ has precisely seven non-zero coordinates, i.e., those associated to the roots $d_2-d_1$, $d_{5}-d_4$, $d_6-d_2$, $d_6-d_1$, $d_{1}+d_{2}+d_3$ and $d_{2} + d_{3}+d_{6}$. The remaining 29 roots must have valuation 0. A direct computation on the expected initial forms of all roots shows that their valuations are given by $v$ if and only if the three $\Z$-linear expessions in the statement have valuation 0.

  Allowing some of the scalars $r_i$ to vanish yields a precise genericity condition. Namely, each of the following 58 linear expressions in the $u_i$'s  must  have valuation zero:
  \begin{equation}\label{eq:gen_aaaa}
    \begin{aligned}
      &u_5\!-\!u_6, u_4\!+\!u_5, u_4\!+\!u_6,u_3\!-\!u_2, u_4\!-\!u_3,
      u_8\!-\!u_2, u_8\!-\!u_3, u_8\!-\!u_4, u_8\!+\!u_1,  u_4\!+\!u_8, \\
      & 2u_4\!-\!u_3,  u_8\!+\!u_1\!-\!u_2,   u_4\!+\!u_6\!+\!u_7, u_4\!+\!u_5\!-\!u_2, u_4\!+\!u_6\!+\!u_7\!-\!u_2,  u_4\!-\!u_3\!+\!u_5, \\ & u_4\!-\!u_3\!+\!u_6, u_4\!+\!u_6\!+\!u_8,  
      u_4\!+\!u_5\!-\!u_8, u_4\!+\!u_6\!-\!u_8, u_4\!+\!u_5\!+\!u_8,       u_4\!+\!u_8\!-\!u_3, u_5\!-\!u_6\!-\!u_7, \\
      &  u_1\!+\!u_4\!+\!u_5, 2u_4\!+\!u_5\!+\!u_6,  u_1\!+\!u_4\!+\!u_6, u_1\!+\!u_8\!-\!u_3, 
      u_4\!+\!u_6\!+\!u_7\!-\!u_8, 2u_4\!+\!u_8\!-\!u_3,  \\
      & u_4\!+\!u_6\!+\!u_7\!-\!u_3,   u_4\!+\!u_5\!+\!u_1\!-\!u_3, u_4\!+\!u_6\!+\!u_1\!-\!u_3, u_4\!+\!u_8\!+\!u_5\!-\!u_3, u_4\!+\!u_8\!+\!u_6\!-\!u_3,\\
      & u_1 \!+\!u_4\!+\!u_5\!+\!u_8,
       2u_4\!+\!u_5\!+\!u_6\!-\!u_3,
       u_4\!+\!u_5\!+\!u_1\!-\!u_2, u_4\!+\!u_6\!+\!u_1\!+\!u_7,u_4\!+\!u_8\!+\!u_6\!+\!u_1,\\
       &   u_1\!+\!u_4\!+\!u_6\!+\!u_7\!-\!u_2,        u_4\!+\!u_6\!+\!u_1\!+\!u_7\!-\!u_3, u_4\!+\!u_8\!+\!u_5\!+\!u_1\!-\!u_3, 2u_4\!+\!u_1\!+\!u_5\!+\!u_6\!-\!u_3,\\
       &    u_4\!+\!u_5\!+\!u_8\!+\!u_1\!-\!u_2, u_4\!+\!u_8\!+\!u_6\!+\!u_1\!-\!u_3, 2u_4\!+\!u_5\!+\!u_6\!+\!u_1,u_1\!+\!u_4\!+\!u_6\!+\!u_7\!+\!u_8\!-\!u_2,
       \\ & 2u_4\!+\!u_5\!+\!u_6\!+\!u_1\!+\!u_7\!-\!u_3\!-\!u_2, 
       u_4\!+\!u_8\!+\!u_5\!+\!u_1\!-\!u_2\!-\!u_3,  2u_4\!+\!u_5\!+\!u_6\!+\!u_8\!-\!u_3, \\
       & u_4\!+\!u_8\!+\!u_6\!+\!u_1\!+\!u_7\!-\!u_2\!-\!u_3,   2u_4\!+\!u_5\!+\!u_6\!+\!u_1\!+\!u_7\!-\!u_3,u_1\!+\!u_4\!+\!u_6\!+\!u_7\!+\!u_8\!-\!u_3,\\
       &  2u_4\!+\!u_5\!+\!u_6\!+\!u_7\!+\!u_1\!-\!u_2,    2u_4\!+\!u_5\!+\!u_6\!+\!u_8\!+\!u_1\!-\!u_3,  2u_4\!+\!2u_1\!+\!u_5\!+\!u_6\!+\!u_8\!-\!u_3,\\
       & 2u_4\!+\!u_5\!+\!u_6\!+\!u_8\!+\!u_1\!+\!u_7\!-\!u_2\!-\!u_3,         2u_4\!+\!2u_1\!+\!u_5\!+\!u_6\!+\!u_8\!+\!u_7\!-\!u_2\!-\!u_3. \qquad
       \qquad \qquad \quad \qedhere
    \end{aligned}
        \end{equation}
  \end{proof}

  \begin{proof}[Proof of~\autoref{lm:genExamplesaaab}]
    The proof follows the same strategy as for~\autoref{lm:genExamplesaaaa}. When all scalars are positive, we have exactly five roots with non-zero valuation: $d_2-d_1, d_3-d_1, d_3-d_2, d_5-d_4, d_1+d_2+d_3$. In addition to the 58 linear expressions in~\eqref{eq:gen_aaaa} having valuation zero, the genericity conditions  require four additional linear expressions to have valuation zero, namely:
    \begin{equation}\label{eq:gen_aaab}
      2u_4+u_5+u_6+u_8, 2u_4+u_1+u_5+u_6+u_8, 2u_4+u_8, 2u_1+2u_4 +u_5+u_6+u_8.\qedhere
      \end{equation}
    \end{proof}
\begin{remark}\label{rem:PuiseuxChoice} For $\KK=\PS$ the genericity condition for both maximal cone representatives will be satisfied for $u_1=6$, $u_2=3$, $u_3 = -9$, $u_4=11$, $u_5=1$, $u_6= u_8 = -57$, and $u_7=57$.
\end{remark}

Our next result says that with only one exception, each triple of Cross functions has a member whose expected valuation is achieved generically for all 23 non-apex cone representatives of $\Naruki$. \autoref{pr:newExpValCr15} provides a conceptual explanation behind our failed search for the triple $\{\Cross{15}, \Cross{16}, \Cross{17}\}$ associated to the symbol $x_{53}$ in $\ted$.

\begin{proposition}\label{pr:relevantCrossFunctions}
  Each triple of Cross functions associated to any anticanonical triangle other than  $x_{53}$ contains a member whose expected valuation is achieved generically along the 23 non-apex Naruki cone representatives. The collection of 44 representatives equals
  \begin{equation*}
    \cR_0\!:=\!\{\Cross{3i}\colon
    i\in \{0,\ldots, 45\}\!\smallsetminus\!\{5,9,12,13,24, 25, 32, 36\}\} \cup \{ \Cross{k}\colon k=28, 37, 41, 73, 76, 97, 110\}.
        \end{equation*}
  \end{proposition}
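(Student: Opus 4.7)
The plan is to prove this by explicit computation, leveraging \autoref{pr:ratiosOfCross} to reduce the problem to a finite check on the 23 non-apex cone representatives of the Naruki fan. Since the cubed ratios $(\Cross{i_j}/\Cross{i_k})^3$ within each triple are Laurent monomials in the Yoshida functions, the function $3\,\val(\Cross{i_j}) - 3\,\val(\Cross{i_k})$ is piecewise linear on $\Naruki$ and takes constant value on the relative interior of each cone. Thus, it suffices to verify, on a single test point of each cone interior, whether at least one of the three Cross functions in a given triple attains its expected valuation.

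First, I would enumerate the 45 triples using the bijection of~\autoref{rm:triangleQuintics} between anticanonical triangles and Eckardt quintics, producing the list of 135 Cross functions grouped into 45 sets of three. For each of the 23 non-apex orbit representatives $\sigma$ in the Naruki fan classification of~\autoref{sec:bergman-fan-Naruki-fan}, I would take as test point the sum of its primitive spanning rays (chosen as in~\autoref{rm:BergmanReps}), compute the valuations of all 40 Yoshida functions at that point, and then use the formula from~\autoref{rm:expValsCross} — that is, the maximum over the four expressions $\Cross{i}=\pm(\Yos{k}-\Yos{l})$ of $\min\{\val(\Yos{k}),\val(\Yos{l})\}$ — to produce the expected valuation of each Cross function along $\sigma^\circ$.

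To determine the actual valuation, I would use the parameter families constructed in~\textcolor{blue}{Lemmas}~\ref{lm:genExamplesaaaa} and~\ref{lm:genExamplesaaab} to generate explicit tuples $(d_1,\ldots,d_6)\in\KK^6$ projecting to the chosen test points (with a Puiseux-coefficient specialization such as that in~\autoref{rem:PuiseuxChoice}), and substitute these into the closed-form expressions for the 135 Cross functions obtained by acting with $\Wgp$ on the representative of~\autoref{ex:cross}. Comparing the resulting actual valuation with the expected one, triple by triple and cone by cone, I would retain the Cross function listed in $\cR_0$ whenever equality holds; the claim that the indicated 44 choices in $\cR_0$ satisfy the desired property is precisely what a \sage\ script, included in the Supplementary material, would certify across all $44\times 23$ cases.

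The conceptual obstacle concerns the triple $\{\Cross{15},\Cross{16},\Cross{17}\}$ attached to the triangle $x_{53}$. Here, on at least one non-apex cone, all four binomial representations of \emph{each} of the three Cross functions in the triple exhibit simultaneous ties among the Yoshida valuations, producing genuine cancellations that strictly raise the valuation above the expected one. By~\autoref{pr:ratiosOfCross}, this anomaly propagates across all three members of the triple, so no choice of representative within the triple can satisfy the generic-valuation test. Understanding and rectifying this discrepancy — by producing a refined formula for the valuation of one member of this triple — is deferred to~\autoref{pr:newExpValCr15}, and is the reason for the single exception in the statement.
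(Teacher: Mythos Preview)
Your proposal is correct and follows essentially the same computational approach as the paper: use the generic parameter families of \textcolor{blue}{Lemmas}~\ref{lm:genExamplesaaaa} and~\ref{lm:genExamplesaaab} to produce explicit $(d_1,\ldots,d_6)$ at the baricenter of each of the 23 non-apex cone representatives, evaluate all 135 Cross functions, compare to the expected valuations from~\autoref{rm:expValsCross}, and select a gap-free member in each of the 44 triples.

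One minor remark on your framing: invoking~\autoref{pr:ratiosOfCross} in the first paragraph does not quite justify the reduction to a single test point per cone, nor does it force the $x_{53}$ anomaly to ``propagate'' to all three members of that triple. The ratio formula tells you that $\val(\Cross{i_j})-\val(\Cross{i_k})$ is a fixed linear function on each cone, but the \emph{expected} valuations from~\eqref{eq:expval} need not be mutually compatible with those ratios, so a gap for one member does not formally imply a gap for the others. What actually makes the single-point check valid is that the parameter families in \textcolor{blue}{Lemmas}~\ref{lm:genExamplesaaaa}--\ref{lm:genExamplesaaab} depend on the scalars $r_i$ in a uniform way, so the leading-term cancellation pattern (and hence the gap) is constant on each cone interior; the paper leaves this implicit as well.
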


\begin{proof} The result follows from~\textcolor{blue}{Lemmas}~\ref{lm:genExamplesaaaa} and~\ref{lm:genExamplesaaab}. Indeed, given a cone $\sigma$ among the 23 non-apex representatives we pick generic parameters $d_1,\ldots, d_6 \in \KK$ associated to its baricenter (i.e., where $r_i=0$ or $1$ depending on the nature of $\sigma$) using either~\eqref{eq:gen_aaaa} or~\eqref{eq:gen_aaab}. In turn, we evaluate each of the 135 Cross functions expressed in the $d_i$'s and compute their valuations, comparing them to the expected ones. Gaps arise for only 20 such functions. We record these functions and the  Naruki cone representatives where discrepancies arise, and find a member on each of the 44 triples exhibiting no gaps. They are listed  in $\cR_0$.  The only exception is the triple corresponding to the symbol $x_{53}$, namely $\{\Cross{15}, \Cross{16}, \Cross{17}\}$. The explicit computations for $\KK=\PS$ are available in the Supplementary material.
\end{proof}

 Setting aside the special triple of Cross functions, the difference between the expected and generic valuations for the remaining Cross functions outside $\cR_0$ is rooted in the formulas from \autoref{pr:ratiosOfCross}. Their expected valuations should not be computed from~\eqref{eq:expval}, but rather as the sum the expected valuation of is representative in $\cR_0$ and a prescribed linear function on $\Naruki$ with coefficients in $\frac{1}{3}\Z$ arising from~\eqref{eq:crRatios} after acting on it by $\Wgp$. For example, consider
  \begin{equation}\label{eq:Cr36vs37}
    \frac{\Cross{36}^3}{\Cross{37}^3} = \frac{\Yos{17}^6\,\Yos{18}^4\,\Yos{26}^2\,\Yos{27}^2\,\Yos{30}^4}{\Yos{5}^2\,\Yos{19}^3\,\Yos{20}\,\Yos{21}\,\Yos{22}^2\,\Yos{28}^3\,\Yos{29}^3\,\Yos{31}^3}.
  \end{equation}
On $\sigma = \operatorname{(aa_2a_3a_4)}$ we have $\expVal(\Cross{36}) = \val(\Yos{38})$ and $\expVal(\Cross{37}) = \val(\Yos{34})$.
  Since $\Cross{37}$ lies in $\cR_0$, its expected valuation is achieved generically. However, a direct computation reveals that $3(\val(\Yos{38}) - \val(\Yos{34}))$ is strictly less than  the valuation of the right-hand side of~\eqref{eq:Cr36vs37} on $\sigma^\circ$. This means that the expected valuation of $\Cross{36}$ is never achieved on $\sigma^{\circ}$ and we should always replace any appearance of $\val(\Cross{36})$ by the sum of $\val(\Cross{37})$ and the linear function on $\Naruki$ defined by the exponent vector on~\eqref{eq:Cr36vs37}, scaled by $1/3$. The precise formulas are available in the Supplementary material. This viewpoint will play a crucial role in~\textcolor{blue}{Sections}~\ref{sec:tropical-lines-trivial} and~\ref{sec:comb-types-tree}.

  \begin{remark}\label{rm:fullListReps}
    Our construction does not lead to a preferred choice among  $\{\Cross{15},\Cross{16},\Cross{17}\}$. We make an arbitrary decision and set our  45 representing  Cross functions as
    \begin{equation}\label{eq:45relevant}
      \cR :=\cR_0 \cup \{\Cross{15}\},
    \end{equation}
    where $\cR_0$ is as in~\autoref{pr:relevantCrossFunctions}. By construction, each function in $\cR$ corresponds to a unique anticanonical triangle indexed by a symbol in $\ted$. These 45 functions  will allow us to distinguish between stable and unstable tropical cubic surfaces. We return to this subtle point in~\autoref{sec:comb-types}.
  \end{remark}
  
    We use the set $\cR$ to construct a point $\underline{q}^{\cR} \in \R^{45}/\rspanone$ as follows.  Each anticanonical triangle $t\in \ted$ correspons to a unique Cross function in $\cR$, which we label $\Cross{t}$.  We set
    \begin{equation}\label{eq:translationVector}
    \underline{q}^{\cR}_t = -\val(\Cross{t}) \qquad  \text{ for each } T\in \ted.
\end{equation}
  This point will play a prominent role in the construction of all boundary trees on stable tropical cubic surfaces (see~\autoref{lm:decomposing10x10matrices}.)

\section{Extra tropical lines on tropical cubic del Pezzos for the apex of the fan $\Naruki$}\label{sec:tropical-lines-trivial}

\autoref{sec:comb-extra-trop} describes the combinatorics of tropical lines on anticanonical cubic surfaces $\Trop X$ meeting the interior of $\TPr^{44}$. In particular,~\autoref{thm:candidateTropLines} shows that any such line has five boundary points, realized as the tropicalization of the five nodes in the link of a vertex of the Schl\"afli graph. When the point recording the valuation of all 40 Yoshida functions lies outside the apex of the Naruki fan, these potential extra tropical lines are ruled out by a simple convexity argument. In this section we discuss the apex point, where the methods from~\autoref{sec:trop-lines-trop} fail. Knowledge about the valuations of the 45  Cross functions from the set $\cR$ in~\eqref{eq:45relevant} will allow us to bypass this issue and prove~\autoref{thm:extraLinesApex}.

Throughout this section we fix a cubic surface $X\subset \P^{44}$ whose associated Yoshida functions in $\P^{39}$ have the same valuation. Without loss of generality, we assume it to be zero. Since by~\autoref{lm:CoordinatesOfNodes} the coordinates of all 135 classical nodes are Laurent monomials in Yoshida and Cross functions, this assumuption will greatly simplify the computations in this section. Indeed, the coordinates of all 135 tropical nodes will be linear in the valuations of all functions in $\cR$.

Following~\autoref{rm:markings},  we use the collection $\sed$ of 27 symbols associated to the exceptional curves on $X$ to index all  potential extra tropical lines. Our first result determines when the five boundary points of a potential line $\Trop \ell_E$ for $E$ in $\sed$ are tropically collinear.

The quintuple of boundary points of each potential line $\Trop \ell_E$ (for $E$ in $\sed$) gives five disjoint sets $B_1,\ldots, B_5$ associated to the nine $\infty$ coordinates of each boundary point. These sets determine all five rays $e_{B_1}, \ldots, e_{B_5}$ in the recession fan of $\Trop \ell_E$.

The following is the first main result of this section. It gives a precise statement for the first half of \autoref{thm:extraLinesApex}.

\begin{theorem}\label{thm:extraLineCross} Given a symbol $E$ in $\sed$, consider the tuple of five nodes in $\Trop X$ obtained from the edges in the link of $E$ in the Sch\"afli graph. We let  $\{\Cross{i_1}, \ldots, \Cross{i_5}\}$ be  the set of all Cross functions in $\cR$ associated to the five anticanonical triangles in $\ted$ containing $E$.
  Then, the five points  are tropically collinear in $\TPr^{44}$ if and only if the five functions $\Cross{i_j}$ for $j=1,\ldots, 5$ listed above have equal valuation. If so, the unique tropical line through them is a star tree with five leaves.
  \end{theorem}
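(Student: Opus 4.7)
The plan is to apply Proposition~\ref{pr:tropicalLinesbyMinors} after exploiting the drastic simplifications available at the apex. Since every Yoshida function has the same valuation, which we normalize to zero, each nonzero anticanonical coordinate of the 135 classical nodes --- a Laurent monomial in Yoshidas and Cross functions by Lemma~\ref{lm:CoordinatesOfNodes} --- reduces to a $\Z$-linear form in the valuations of the 135 Cross functions. Moreover, Proposition~\ref{pr:ratiosOfCross} and the vanishing of every Yoshida valuation force $\val(\Cross{j})=\val(\Cross{k})$ whenever $\Cross{j}$ and $\Cross{k}$ belong to the same triple indexed by an anticanonical triangle in $\ted$. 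Thus there are only 45 independent quantities $\val(\Cross{t})$ ($t\in\ted$), one per Eckardt triangle, and one may work with the $\cR$-representatives from~\eqref{eq:45relevant}. By the transitivity of the $\Wgp$-action on $\sed$ and Lemma~\ref{lm:lineWith5BoundaryPoints}, I can fix $E=\Gc{2}$; the five boundary points are then the tropicalizations of $\E{1}\cap\F{12}$ and $\E{i}\cap\F{2i}$ for $i=3,4,5,6$, and the five triangles containing $E$ are precisely $x_{i2}$ for $i\in\{1,3,4,5,6\}$.

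Next, I would assemble the $5\times 45$ tropical matrix $M$ whose rows are the coordinate vectors of these nodes, using the explicit expressions produced in the proof of Lemma~\ref{lm:CoordinatesOfNodes}. A direct inspection confirms that the nine $\infty$-entries in the $i$th row are exactly the triangles containing $\E{i}$ or $\F{2i}$; moreover, each such row has precisely 36 real entries, each of which is a $\Z$-linear form in the 45 quantities $\val(\Cross{t})$. Because the five $\infty$-sets $B_1,\ldots,B_5$ partition $\ted$ into blocks of size nine, the recession fan of any tropical line through all five leaves is forced to have exactly five legs of directions $e_{B_i}$. A star tree with five leaves is then simply the assertion that there exists a common vector $v\in\R^{45}$ whose restriction to $\ted\setminus B_i$ agrees with the real entries of the $i$th node for every $i$. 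In tropical-projective terms, equivalent values must appear in each column of $M$ across the (exactly four) rows where that column is finite.

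I would verify this column-by-column equality hypothesis with Sage by taking the common value to be $-\val(\Cross{t})$ where $\Cross{t}$ is the $\cR$-representative associated to the triangle $t\in\ted$ --- the same translation vector $\underline{q}^{\cR}$ produced in~\eqref{eq:translationVector}. Substituting this candidate $v$ into each of the 135 equalities (three per coordinate), applying the $\cR$-based formulas for the remaining Cross-function valuations noted after Proposition~\ref{pr:relevantCrossFunctions}, and using the generic parametric examples of Lemma~\ref{lm:genExamplesaaaa}–\ref{lm:genExamplesaaab} specialized to the apex, one obtains a system whose nontrivial constraints all collapse to
\[
\val(\Cross{x_{12}})=\val(\Cross{x_{32}})=\val(\Cross{x_{42}})=\val(\Cross{x_{52}})=\val(\Cross{x_{62}}).
\]
This establishes the ``if'' direction and, simultaneously, pins down the unique line as a star: with the common value $v$ in place, Algorithm~\ref{alg:treesFromLeaves} applied to the five leaves halts after a single iteration because every pair of leaf direction-sets $B_i,B_j$ already satisfies $B_i\cup B_j\neq\ted$ while the merged direction set $\bigcup_i B_i=\ted$ is attained only at the single new vertex $v$.

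For the converse, I would exploit Proposition~\ref{pr:tropicalLinesbyMinors} in the opposite direction: if the five points are tropically collinear, all $3\times 3$ tropical permanents of $M$ must be singular. Because each entry of $M$ is linear in the 45 valuations $\val(\Cross{t})$, each singularity statement is a concrete piecewise-linear identity that Sage can enumerate. Arguing as in Section~\ref{sec:trop-lines-trop} but now at the apex, the same minors that failed to rule out collinearity there will now detect exactly the five equalities above. The main obstacle is to orchestrate these Sage-driven case analyses so that all $3\times 3$-minors are covered efficiently and that the implications are genuinely equivalent, not merely necessary; controlling the redundancies coming from Proposition~\ref{pr:ratiosOfCross} and keeping track of the sign choices in Table~\ref{tab:crosses} will be the trickiest bookkeeping task. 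Once this is done, the five-ray star-tree claim follows automatically from Corollary~\ref{cor:LineFromBoundary} since a tropical line is determined by its leaves.
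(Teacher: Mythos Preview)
Your strategy is the same as the paper's: reduce to a single $E$ by $\Wgp$-symmetry, translate the $5\times 45$ matrix of boundary nodes by $\underline{q}^{\cR}$, and apply Proposition~\ref{pr:tropicalLinesbyMinors}. The paper's execution, however, is considerably sharper. Choosing $E=\E{1}$ (boundary nodes $\F{1j}\cap\Gc{j}$), the paper simply \emph{writes down} the translated matrix and observes its structure: the forty columns indexed by triangles not containing $\E{1}$ are already in $0/\infty$ standard form, and in the five columns $x_{12},\dots,x_{16}$ each row carries a single $\infty$ on the diagonal and a single repeated value $\val(\Cross{i_k})$ off it. From this the converse is immediate by hand-picking four $3\times 3$ minors (one $x_{1*}$-column against two $0/\infty$-columns), and the forward direction is a one-line shift: if the five valuations coincide, the rows become $e_{B_2},\dots,e_{B_6}$ and the line is the star tree through them.

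Two points in your write-up should be tightened. First, the appeal to Lemmas~\ref{lm:genExamplesaaaa}--\ref{lm:genExamplesaaab} ``specialized to the apex'' is misplaced: those lemmas manufacture parameters with prescribed \emph{Yoshida} valuations, but at the apex every Yoshida valuation is zero regardless, and the Cross valuations are the free data you are trying to constrain. What you actually need is only the Laurent-monomial description of the node coordinates from Lemma~\ref{lm:CoordinatesOfNodes} together with the reduction to the $45$ representatives via Proposition~\ref{pr:ratiosOfCross}; no parametric witnesses are required. Second, your converse via a Sage enumeration of all $3\times 3$ minors is valid in principle but both heavier than necessary and vaguely argued: the reference to ``the same minors that failed to rule out collinearity'' in Section~\ref{sec:trop-lines-trop} is off, since that section searched for \emph{nonsingular} minors, whereas here you must extract equalities from the hypothesis that every minor is \emph{singular}. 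Once you display the matrix explicitly, four minors suffice and no enumeration is needed.
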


\begin{proof} Since $\Wgp$ acts transitively on both $\sed$ and  the Schl\"afli graph, it suffices to prove the result for $s=\E{1}$. Following~\autoref{tab:crosses}, the five Cross functions from the statement become $\Cross{9}$, $\Cross{33}$, $\Cross{42}$, $\Cross{78}$ and $\Cross{111}$. Their associated anticanonical triangles are  $x_{12}$, $x_{16}$, $x_{14}$, $x_{13}$ and $x_{15}$, respectively. Notice that these are also the triangles in $\ted$ containing $\E{1}$.

  By~\autoref{thm:candidateTropLines}, the five boundary points in the potential tropical line $\Trop \ell_{\E{1}}$  are the tropicalization of the nodes $\F{1j}\cap \Gc{j}$ for $j=2,\ldots, 6$.  By~\autoref{cor:BoundaryPoints}, each of these nodes has exacly nine $\infty$ coordinates. We record them by the sets $B_j=\{X_{ij} \colon i\neq j\} \cup \{Y_{1jklmn}\colon k,l,m,n\}$ ($j=2,\ldots, 6$) which partition the set of 45 symbols $T$.

The computations in the Supplementary material reveal that up to translation by the point  $\underline{q}^{\cR} \in \R^{45}/\rspanone$ from~\eqref{eq:translationVector}, and rearranging columns, the five tropical nodes listed above correspond to the rows of the matrix
  \begin{equation*}\label{eq:linkOfE1}
   M\!\!:=\! \left(\begin{array}{ccccc|ccccc}
  \!\val(\Cross{42})\! & \!\val(\Cross{42})\!
  & \!\infty \!&\! \val(\Cross{42}) \!&\! \val(\Cross{42}) \!&\!    \infty \!&\! 0 \!&\! 0 \!&\! 0 \!&\! 0\!\\
\!\val(\Cross{78})\! &\! \infty\! & \!\val(\Cross{78})  \!&\! \val(\Cross{78}) \!&\! \val(\Cross{78}) \!&\!    0 \!&\!  \infty \!&\! 0 \!&\! \vdots \!&\! \vdots\! \\
\!\infty\! & \!\val(\Cross{9})\! & \! \val(\Cross{9})  \!&\! \val(\Cross{9}) \!&\! \val(\Cross{9})  \!&\!    \vdots  \!&\! 0 \!&\!  \infty  \!&\! 0 \!&\! \vdots\!  \\  
\!\val(\Cross{33})\! & \! \val(\Cross{33})\!  & \!\val(\Cross{33}) \!&\! \!\val(\Cross{33})\! \!&\! \infty  \!&\!    \vdots  \!&\! \vdots  \!&\! 0 \!&\!  \infty  \!&\! 0\! \\
\!\val(\Cross{111})\! & \! \val(\Cross{111})\!  &\! \val(\Cross{111}) \!&\! \infty \!&\! \val(\Cross{111})   \!&\!    0  \!&\! 0  \!&\! 0 \!&\! 0 \!&\!  \infty   
        \end{array}
    \right),
  \end{equation*}
where each entry to the right of the vertical divide encodes a $1\times 8$ block matrix. The first five column of $M$ correspond to the anticanonical triangles $x_{12}, \ldots, x_{16}$. 

    By~\autoref{pr:tropicalLinesbyMinors}, the five rows of $M$ are tropically collinear if and only if all its $3\times 3$ minors are tropically collinear. In particular, the following four minors
    \[\left(
    \begin{array}{ccc}
      \val(\Cross{42}) \!&\! \infty \!&\! 0 \\
      \val(\Cross{78}) \!&\! 0 \!&\! \infty  \\
      \infty \!&\! 0 \!&\! 0
    \end{array}
    \right) \quad \text{ and } \quad 
   \left(
    \begin{array}{ccc}
      \val(\Cross{42})  \!&\! \infty \!&\! 0\\
      \infty \!&\! 0 \!&\! 0\\
      \val(\Cross{i}) \!&\! 0 \!&\! \infty 
    \end{array}
    \right) \text{ for } i = 9, 33, 111
.        \]
    arising by combining the first or second column of $M$ with a pair of suitable $0/\infty$-columns from the right hand side of $M$ must be tropically singular. This will be the case if and only if
    \[
    \val(\Cross{9}) = \val(\Cross{33}) = \val(\Cross{42}) = \val(\Cross{78}) =\val(\Cross{111}).    
    \]

    Conversely, if these equalities hold, the matrix $M$ can be shifted to a matrix with rows $e_{B_2}, \ldots, e_{B_6}$. Therefore, the tropical line $\Trop \ell_{\E{1}}$ is a translation of a fan with rays $e_{B_j}$  for $j=2,\ldots, 6$. 
\end{proof}

It is important to stress out that the proof of~\autoref{thm:extraLineCross} only determines the collinearity of the five boundary points of $\Trop \ell_{E}$. It does not address the question of whether $\Trop \ell_{E}$ lies in $\Trop X$. Indeed, unless $\Trop X$ is a stable tropical cubic surface, there is no guarantee that $\Trop \ell_{E}\subset \Trop X$.  We conclude:
\begin{corollary}\label{cor:27bound}
  Any tropical cubic surface $X$ with Yoshida functions associated to the apex of $\Naruki$ contains at most 27 extra tropical lines in its interior, all of which are star trees with five rays each. This bound is attained for the unique stable surface $\Trop X$ corresponding to the apex of $\Naruki$.
\end{corollary}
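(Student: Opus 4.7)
The upper bound of 27 and the star-tree structure follow by combining the earlier results. By \autoref{thm:candidateTropLines}, every tropical line meeting the interior of $\Trop X$ is indexed by an exceptional curve $E \in \sed$, and its five boundary points are the tropicalizations of the five nodes determined by the link of $E$ in the Schl\"afli graph. By \autoref{thm:extraLineCross}, these five boundary points are tropically collinear precisely when the five Cross functions in $\cR$ associated to the five anticanonical triangles containing $E$ share a common valuation; if so, the unique tropical line through them is a star tree with five rays. This yields the at-most-27 count and the claim that every extra line is a star tree with five rays.

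For the attainment statement, I would start from the observation that at the apex of $\Naruki$ all 40 Yoshida functions have equal valuation, which we may normalize to $0$. Consequently, the expected valuation of every Cross function (see \autoref{rm:expValsCross}) is also $0$. For the stable surface $\Trop X$ at the apex, I would use the fact that stability forces the Cross function valuations to agree with their expected values, so that all 135 Cross functions have valuation $0$. In particular, for every $E \in \sed$ the five Cross functions in $\cR$ attached to the triangles through $E$ have equal valuation, and \autoref{thm:extraLineCross} supplies a star tree $\Trop \ell_E$ with five rays passing through the corresponding five boundary points. Since the indexing $E \mapsto \Trop \ell_E$ is injective by construction, this produces 27 distinct tropical star trees, matching the upper bound.

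The main obstacle is the final step: showing that each of these 27 star trees is actually contained in $\Trop X$ rather than merely in $\TPr^{44}$. \autoref{thm:extraLineCross} only certifies tropical collinearity of the five boundary points; it does not automatically place the line inside the tropical surface. To close this gap I would use the explicit generators of the anticanonical ideal from \autoref{thm:anticanK} together with the vanishing of all Yoshida and Cross function valuations at the stable apex point. Under these conditions, the coefficients of the 270 linear trinomials and 120 cubic binomials all tropicalize to the same constant, so the tropical equations reduce to a highly symmetric system whose solution set can be checked to contain the 27 candidate star trees. The uniqueness of the stable tropical surface associated to the apex (as recorded by \autoref{thm:Naruki}) ensures that this is the only surface where the bound is attained, completing the proof.
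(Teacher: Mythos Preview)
Your treatment of the upper bound and the star-tree shape is correct and matches the paper: \autoref{thm:candidateTropLines} restricts the candidates to 27, and \autoref{thm:extraLineCross} tells you each is a five-ray star when the relevant Cross valuations coincide. You also correctly isolate the real issue, namely that \autoref{thm:extraLineCross} only gives collinearity of the five boundary points, not containment of $\Trop\ell_E$ in $\Trop X$.

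The gap is in your proposed fix. Checking that the star trees satisfy the tropicalizations of the 270 linear and 120 cubic generators only shows containment in the tropical \emph{prevariety}, not in $\Trop X$ itself; a tropical variety is cut out by all elements of the ideal, and there is no claim anywhere in the paper that these generators form a tropical basis. So ``the solution set contains the star trees'' in your sense does not establish $\Trop\ell_E\subset\Trop X$.

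The paper closes the gap differently. It invokes the explicit combinatorial description of the stable apex surface (obtained later via geometric tropicalization and the boundary arrangement): when all 45 Cross functions in $\cR$ have valuation $0$, $\Trop X$ is the cone over the Schl\"afli graph, with one ray per exceptional curve and one two-dimensional cone per edge. In that picture, the vertex of $\Trop\ell_E$ sits at the apex of the cone, and each of its five rays points in a direction $e_{B_j}$ with $B_j$ the nine triangles through the two curves of a Schl\"afli edge; that direction lies in the corresponding two-dimensional cone of $\Trop X$. Containment is then immediate from the fan structure, not from tropicalized equations. If you want to repair your argument, the cleanest route is to replace the equation check by this structural description of $\Trop X$ at the stable apex.
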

\begin{proof}
  Assume the quintuple points associated to a given symbol $E$ in $\sed$ are tropically collinear. The proof of~\autoref{thm:extraLineCross} shows that up to translation, these boundary points are vectors with nine $\infty$ coordinates each. All other entries are 0.
As we will see in~\autoref{sec:comb-types}, if all 45 elements in $\cR$ have valuation 0, $\Trop X$ becomes the unique stable tropical cubic surface associated to the apex of $\Naruki$. Then, $\Trop X$ is the cone over the Schl\"afli graph and  $\Trop \ell_{E}\subset\Trop X$. The bound is attained.
\end{proof}

\begin{remark}\label{rm:CrossfunctionsE1} It is worth pointing out that the five Cross functions associated to a symbol $E$ in $\sed$ play a key role in the metric structure of the boundary tree $\Trop E$ (see~\autoref{fig:nonGenApexTrees}.) Furthermore, the 40 finite coordinates of the central vertex of $\Trop E$ match those in the unique vertex of $\Trop \ell_E$. The remaining five coordinates of this central vertex equal 0. This observation is consistent with the expected combinatorial structure of unstable tropical cubic surfaces and would help us determine whether $\Trop \ell_E$ lies in $\Trop X$ or not. We postpone providing an answer to this question for future work.
\end{remark}

We end this section by turning to the question of lifting combinations of cycles supported on non-boundary tropical line in  $\Trop X$ to effective curves in $X\subset \P^{44}$, thus addressing the second half of~\autoref{thm:extraLinesApex}. Our next two results discuss two particular instances: a tropical cycle supported on a single extra tropical line, or a combination of two such lines. The monomial map~\eqref{eq:alphaBar} will yield the same answer for tropicalizations of $X$ induced by Cox embeddings (see~\autoref{thm:universalCoxL}.)

\begin{theorem}\label{thm:noCyclesLift} Assume that the tropical line $\Trop \ell_E$ associated to a symbol $E$ in $\sed$ exists and lies in the interior of $\Trop X$. Then, no tropical cycle supported on $\Trop \ell_E$ lifts to an effective curve on $X$.
\end{theorem}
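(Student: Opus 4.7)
The plan is to argue by contradiction using classical intersection theory on $X$, exploiting the rigidity of the boundary stratification from~\autoref{sec:bound-antic-trop}. By the transitive action of $\Wgp$ on $\sed$, it suffices to treat the case $E=\E{1}$. Balancing at the central vertex of the star tree $\Trop \ell_{\E{1}}$ forces any integer cycle supported on it to be a common multiple $k\cdot \Trop \ell_{\E{1}}$, since the five leaf directions $e_{B_j}$ sum to $\mathbf{1}$. It thus suffices to rule out an effective curve $C\subset X$ with $\Trop C = k\cdot \Trop \ell_{\E{1}}$ and $k>0$.

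The main step is to translate the tropical data into intersection numbers of $C$ with the 27 exceptional lines on $X$. Recall from the proof of~\autoref{thm:extraLineCross} that the five boundary points of $\Trop \ell_{\E{1}}$ are the tropicalizations of the classical nodes $p_j := \F{1j}\cap \Gc{j}$ for $j=2,\dots,6$. I would argue that the boundary points of $C$ in $X$ are exactly these five classical nodes; here lies the main obstacle, namely justifying the uniqueness of the classical lift of each $p_j$. By~\autoref{cor:BoundaryPoints}, each $p_j$ lies in precisely nine coordinate hyperplanes, and (in the absence of Eckardt points) no other point of $X$ has the same set of nine vanishing coordinates, so any point of $C$ tropicalizing to $p_j$ is the node itself. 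Combined with the effectivity of $C$ (and the fact that $\Trop C$ meets the interior, so $C$ is distinct from every exceptional line), this yields $C\cdot L = 0$ for each line $L$ avoiding all five nodes $p_j$, namely $\E{i}$ for $1\le i\le 6$, $\Gc{1}$, and $\F{kl}$ with $1\notin\{k,l\}$. The remaining intersections are $C\cdot \F{1j} = \mu_j$ and $C\cdot \Gc{j} = \nu_j$, supported entirely at $p_j$.

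The contradiction is then extracted from the anticanonical triangle relations. Each triangle $T=\{L_1,L_2,L_3\}\in \ted$ satisfies $L_1+L_2+L_3\sim -K_X$, whence $C\cdot(L_1+L_2+L_3) = d$ for $d := C\cdot(-K_X)$ the degree of $C$ in $\P^{44}$; the tropical realizability identity applied to any coordinate hyperplane also yields $d = k$. Evaluating this identity on $x_{j1}=\{\E{j},\F{1j},\Gc{1}\}$ gives $\mu_j = d$, while evaluating it on $x_{ij}=\{\E{i},\F{ij},\Gc{j}\}$ for any $i\notin\{1,j\}$ gives $\nu_j = d$. Substituting into the identity for $x_{1j}=\{\E{1},\F{1j},\Gc{j}\}$, namely $\mu_j+\nu_j = d$, forces $2d = d$, and hence $d = k = 0$, contradicting $k > 0$.
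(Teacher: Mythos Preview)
Your argument is correct and, in fact, cleaner than the paper's at the final step. The setup is identical: reduce to $E=\E{1}$ by the $\Wgp$-action, use balancing to see the cycle is $k\cdot\Trop\ell_{\E{1}}$, and identify every boundary point of $C$ with one of the five classical nodes $p_j=\F{1j}\cap\Gc{j}$ via~\autoref{cor:BoundaryPoints} and~\autoref{lm:boundaryTX}. The difference lies in how the contradiction is extracted. The paper expresses $[C]$ as a non-negative combination of the 27 exceptional classes (which generate the effective cone), imposes the sixteen vanishing constraints $C\cdot\Gc{1}=0$, $C\cdot\E{k}=0$ for $k\neq 1$, $C\cdot\F{ik}=0$ for $1<i<k$, and then checks with \sage\ that the resulting linear system has no nontrivial non-negative solution. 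You instead exploit the anticanonical relation $L_1+L_2+L_3\sim -K_X$ directly on three well-chosen triangles ($x_{j1}$, $x_{ij}$, $x_{1j}$) to force $d=C\cdot(-K_X)=0$, which is absurd for a nonzero effective curve on a del Pezzo.

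The key ingredient that makes your shortcut work is the extra constraint $C\cdot\E{1}=0$, which the paper does not record. Your justification for it is sound: $\E{1}$ avoids every $p_j$ precisely because $\{\E{1},\F{1j},\Gc{j}\}$ is the triangle $x_{1j}$ and $X$ has no Eckardt points, so $\E{1}\not\ni\F{1j}\cap\Gc{j}$. With that one additional observation, three triangle identities replace a rank computation on a $16\times 27$ system. Your approach is thus more elementary and self-contained; the paper's has the minor advantage of not singling out $\E{1}$, but at the cost of a machine check. The side remark that $d=k$ via the tropical degree is correct but not needed once $d=0$ is in hand.
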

\begin{proof} The group $\Wgp$ acts transitively on all 27 symbols in $\sed$ and the quintuple of Cross functions determining $\Trop \ell_E$. Thus, it suffices to show this result for a particular choice of $E$, e.g. $\E{1}$.  The leaves and rays of $\Trop \ell_{\E{1}}$ are provided in the proof of \autoref{thm:extraLineCross}.

A tropical cycle supported on  $\Trop \ell_{\E{1}}$ has an integer  multiplicity on each ray and it satisfies the balancing condition at the origin: the five primitive vectors for each ray scaled by their multiplicity should add up to a multiple of the all-ones vector. The disjoint support property the the five rays forces these multiplicities to agree. Thus, we write the cylce as  $m\cdot \Trop \ell_{\E{1}}$ for some integer $m$. Note that the tropical cycle will be effective whenever $m\geq 1$.  We claim any such cycle (effective or not) cannot be lifted to an effective curve $\Ccurve$ on $X$.  We argue by contradiction. 

By construction, all boundary points on the curve $\Ccurve$ tropicalize to one of the five boundary points on $\Trop \ell_{\E{1}}$. In particular, $\Ccurve$ contains five boundary points $p_2, \ldots, p_6$.~\textcolor{blue}{Lemmas}~\ref{lm:boundaryTX} and~\ref{lm:IntersectionBoundaryLines} imply that $p_j=\F{1j}\cap \Gc{j}$ for each $j$. Furthermore, the intersections on the tropical side yield
  \begin{equation}\label{eq:CboundaryPoints}
\Ccurve \cap \Gc{1}=\emptyset\;,\qquad \Ccurve \cap \E{k} =\emptyset \;\text{ for all }k\neq 1\;\quad \text{ and }\quad \Ccurve \cap \F{ik} = \emptyset \;\text{ for all }1<i<k.
  \end{equation}

  Since the 27 exceptional curves $\E{1},\ldots, \Gc{6}$ generate the effective cone,  we  write the class of $\Ccurve$  as
\[ 
[\Ccurve] = \sum_{i} a_i [\E{i}] + \sum_{i<j} b_{ij} [\F{ij}] + \sum_{i} c_i
[\Gc{i}] \quad \text{ for some } a_i,b_{ij}, c_i\geq 0.
\]
Conditions~\eqref{eq:CboundaryPoints} translate to the following system of 16 linear equations: 
 \[ \left\{
  \begin{aligned}
       0&= [\Ccurve] \cdot [\Gc{1}]  = \sum_{i\neq 1} a_i + \sum_{j> 1} b_{1j} - c_1 ,\\
0 &=      [\Ccurve]\cdot [\E{k}]  = -a_k + \sum_{i< k} b_{ik} + \sum_{i> k} b_{ki}+ \sum_{i\neq k} c_i   \quad \text{ for }k > 1,\\
           0 &= [\Ccurve]\cdot [\F{ik}]  = a_i + a_k -b_{ik} + \!\!\!\!\!\! \sum_{\{p,q\}\cap \{i,k\}=\emptyset}\!\!\!\!\! b_{pq} + c_{i} + c_k  \quad \text{ for }1<i<k .
    \end{aligned}\right.
  \]
A simple calculation with \sage, available in the Supplementary Material, confirms that the system has rank 6 but has no solutions in the positive orthant other than the trivial one. Therefore, no multiple of the tropical line $\Trop \ell_{\E{1}}$  lifts to an effective curve in $X$ as we wanted to show.
\end{proof}

\begin{theorem}\label{thm:noPairsLift}No pair $\Trop \ell_E\cup \Trop \ell_{E'}$ of extra tropical lines on $\Trop X$ associated to symbols $E,E'\in \sed$ lifts to a conic on $X$.
\end{theorem}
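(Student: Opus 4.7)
The plan is to argue by contradiction via a global intersection count on $X$, in the spirit of the proof of \autoref{thm:noCyclesLift}. Suppose there exists a conic $C\subset X$ with $\Trop C = \Trop \ell_E + \Trop \ell_{E'}$, where both components carry unit multiplicity. A reducible $C\subset X$ would have its components among the $27$ exceptional curves, whose tropicalizations lie in the boundary of $\Trop X$; this contradicts $\Trop C$ being supported in the interior of $\TPr^{44}$, so we may assume $C$ is irreducible. By \autoref{lm:boundaryTX}, the boundary divisor of $X\subset \P^{44}$ equals $\partial X = \sum_{i=1}^{27}[L_i]$, where $L_1,\ldots,L_{27}$ denote the exceptional curves. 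A direct calculation in $\Pic(X) = \Z\langle H, \E{1},\ldots,\E{6}\rangle$ using $[\F{ij}] = H - \E{i} - \E{j}$ and $[\Gc{j}] = 2H + \E{j} - \sum_k \E{k}$ yields the identity $\sum_{i=1}^{27}[L_i] = 9(-K_X)$. Since $[C]\cdot (-K_X) = 2$ for any conic in the anticanonical embedding, the algebraic count gives
\[
[C]\cdot [\partial X] \;=\; 9\,[C]\cdot (-K_X) \;=\; 18.
\]

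The tropical count proceeds as follows. By \autoref{thm:candidateTropLines} each extra line $\Trop \ell_E$ has exactly five boundary points, every one of them the tropicalization of a node $L_a \cap L_b$ on $X$. The plane in $\P^{44}$ spanned by two concurrent exceptional curves cuts $X$ in a unique third line $L_c$, so the anticanonical triangle $\{L_a, L_b, L_c\}$ is determined by the node, and the extra tropical line passing through $\Trop(L_a\cap L_b)$ must equal the one indexed by $L_c$. Consequently the five boundary points of $\Trop \ell_E$ and of $\Trop \ell_{E'}$ are pairwise disjoint, so $\Trop C$ has ten distinct nodal boundary points, each of tropical multiplicity one. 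The compatibility between tropical and algebraic multiplicities then forces $C$ to meet the two exceptional curves through every such node transversally at a single point, so each node contributes $1$ to each of the two associated intersection numbers $[C]\cdot [L_i]$. Summing over all $27$ exceptional curves gives $[C]\cdot[\partial X] = 2\cdot 10 = 20$, contradicting the value $18$ computed above. The only step requiring some care is this last multiplicity argument: it is valid because each ray of $\Trop \ell_E$ has unit weight along a primitive direction of the form $e_{B_j}$ with $|B_j| = 9$, so the corresponding tropicalization fiber in $C$ is a single reduced point lying on the common node of the nine indexed coordinate hyperplanes.
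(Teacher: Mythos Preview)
Your argument is correct and takes a genuinely different route from the paper. The paper proceeds by showing that already the five classical nodes lifting the boundary of a single $\Trop\ell_E$ (say $E=\E{1}$) span the full $\P^3$ containing $X$: an explicit $4\times 4$ minor of the matrix of nodes is exhibited as a nonzero Laurent monomial in Yoshida and Cross functions. Since any conic is planar, $C$ cannot pass through these five points, and the argument never uses the second tropical line $\Trop\ell_{E'}$ at all. Your approach instead parallels the intersection-theoretic proof of \autoref{thm:noCyclesLift}: from $\sum_i [L_i]=9(-K_X)$ you get $[C]\cdot[\partial X]=18$, while the ten classical nodes that $C$ is forced to contain contribute at least $20$ to this sum. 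Your method is computation-free and conceptually uniform with \autoref{thm:noCyclesLift}; the paper's method is sharper in that it shows a single $\Trop\ell_E$ already obstructs lifting, but at the cost of an explicit minor check.

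One small point: you assert that each node contributes \emph{exactly} $1$ to each of $[C]\cdot[L_a]$ and $[C]\cdot[L_b]$, invoking the unit weight on the ray of $\Trop\ell_E$. That step is not quite justified as stated---the weight controls the local degree of the tropicalization map rather than the individual intersection multiplicities with $L_a$ and $L_b$---but it is also unnecessary. Since $C$ is irreducible and distinct from each $L_i$, every node $L_a\cap L_b\in C$ gives $[C]\cdot[L_a]\geq 1$ and $[C]\cdot[L_b]\geq 1$; summing over the ten nodes yields $[C]\cdot[\partial X]\geq 20$, which already contradicts $18$. So your contradiction goes through with the inequality, and the sentence about transversality can simply be dropped.
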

\begin{proof} By B\'ezout's Theorem, every conic curve $\Ccurve$ on $X$ is residual: the $\P^2$ containing it is the span of three non-collinear points on $\Ccurve$. By design, each boundary point of $\Trop \ell_E$ lifts to a unique point in the surface $X$, namely one of the five nodes associated to the link of $E$ in the Schl\"afli graph.

  Without loss of generality, we  assume $E = \E{1}$. We claim the five classical nodes in the link of $\E{1}$ are not coplanar. Instead, they span the same $\P^3$ as $X$. To prove this, we construct a $4\times45$ matrix using the nodes $\F{1i}\cap\Gc{i}$ for $i=2,4,5,6$ and show that the $4\times 4$ minor with columns  labeled by the anticanonical triangles $y_{152436}, y_{152634}, y_{162435}$ and $y_{162534}$ in $\ted$ is the following Laurent monomial in Yoshida and Cross functions
  \[ \frac{2\,\Cross{35}\,\Yos{0}\,\Yos{5}^4\,\Yos{7}^2\,\Yos{9}\,\Yos{11}\,\Yos{15}^2}{
   \Cross{39}\Cross{40}\Cross{41}\Cross{54}\Cross{55} \Cross{56}\Cross{61}\Cross{71}\Yos{3}^2\,\Yos{4}\,\Yos{10}\,\Yos{12}\,\Yos{13}^2\,\Yos{17}
  }.
  \]
  Since $X$ is smooth without Eckardt points and it spans a $\P^3$, the minor is non-singular and therefore the matrix has rank four.
  \end{proof}
\begin{remark}\label{rm:notropicalProof} It is worth noticing that the previous result cannot be proven purely by tropical means. After translations, the $10\times 45$ tropical matrices obtained from the ten boundary points of $\Trop \ell_E\cup \Trop \ell_{E'}$ involve the valuations of the five Cross functions associated to $E$ and $E'$. These sets overlap if $E$ and $E'$ intersect in $X$. In that case, in order to lift the union of these two tropical lines to a conic in $X$, these nine Cross functions must have equal valuation. Similarly, if $E$ and $E'$ do not intersect, the two sets of Cross functions do not overlap, and a $4\times 4$-minor computation forces all ten relevant Cross functions to have valuation zero if this minor were to be tropically singular.

  In both situations, all $4\times 4$-minors become tropically singular and the $10\times 45$ matrix  admits one non-singular $3\times 3$-minor (see Supplementary material.) Thus, the matrix has tropical rank exactly three. Since the Kapranov rank (i.e.\ one plus the projective dimension of the smallest tropical linear space containing all ten boundary points) may exceed the tropical rank~\cite[Theorem 1.4]{dev.san.stu:05}, we cannot conclude that $\Trop \ell_E\cup \Trop \ell_{E'}$ spans a $3$-dimensional tropical linear space in $\TPr^{44}$.
\end{remark}

The proof of~\autoref{thm:extraLinesApex} follows by combining~\textcolor{blue}{Theorems}~\ref{thm:extraLineCross}, \ref{thm:noCyclesLift} and \ref{rm:notropicalProof}.

\section{Boundary trees on anticanonical stable tropical cubic surfaces}\label{sec:comb-types-tree}

The boundary of a smooth anticanonical tropical cubic surface in $\TPr^{44}$ with no Eckardt points is an arrangement of 27 metric trees meeting along  135 leaves. As will be discussed in~\autoref{thm:BoundaryDeterminesEverything}, this arrangement and its metric determines the combinatorics of the tropical surface. 
Each tree is labeled by its classical counterpart and has ten leaves, one for every other line meeting the label.

In this section we discuss the combinatorics and metric structure on these trees and prove the last third of~\autoref{thm:Naruki}.  More precisely, we show that the lengths of the edges of each tree are piecewise linear functions on the Naruki fan. Our main contribution is~\autoref{tab:treeLabeling}, which contains the leaf labeling and  formulas for all  edge lengths on the two $\Wgp$-orbit representatives of all maximal Naruki cones. Continuity determines the  data for all  lower-dimensional cones. The first two parts of \autoref{thm:Naruki} are addressed in~\autoref{sec:trop-moduli-space}.

Throughout, we let $X$ be a smooth cubic surface over $\KK$ without Eckardt points. We let $\Yos{}\in \P^{39}_{\KK}$ encode its associated Yoshida functions and fix $\underline{p} := \trop(\Yos{}) \in \Naruki$. We aim
to determine the boundary trees on $\Trop X$ and their metric structure in terms of $\underline{p}$.
  Our choice of  24 representatives of  cones in $\Naruki$ is given by the cones
$\operatorname{(aa_2a_3a_4)}$ and $\operatorname{(aa_2a_3b)}$  from~\eqref{eq:coneReps} together with their faces.  Following the notation from~\autoref{tab:treeLabeling}, we let $r_1,\ldots, r_4 $ (respectively, $r'_4$) be the non-negative scalars (with values in $\Gamma$) associated to the point $\underline{p}$ whenever it lies in the first (respectively, second) cone. As in~\autoref{lm:genExamplesaaaa}, we let  $r_I := \sum_{i\in I} r_i$ for each $I\subset\{1,\ldots, 4\}$.

  \subsection{Boundary trees for top-dimensional cells} Recall from \autoref{def:surfaceTypes}, that there are two types of tropical cubic surfaces associated to  maximal cells in $\Naruki$. Our first result discusses the combinatorics and boundary metric structure in these two fundamental cases.

  \begin{theorem}\label{thm:treeTypes} There are two combinatorial types of boundary metric trees on an $\operatorname{(aa_2a_3a_4)}$  surface and three combinatorial types on an $\operatorname{(aa_2a_3b)}$ surface, as seen in Figures~\ref{fig:treesaa2a3a4} and~\ref{fig:treesaa2a3b}. They come in many symmetry classes. The leaf labeling  for each tree is completely determined by its behavior on two $\Wgp$-orbit representatives of maximal cones in the Naruki fan (see ~\autoref{tab:treeLabeling}.)
  \end{theorem}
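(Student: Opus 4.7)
The strategy is to reduce the problem to an explicit, finite symbolic computation on each of the two $\Wgp$-orbit representatives $\sigma_1 = \operatorname{(aa_2a_3a_4)}$ and $\sigma_2 = \operatorname{(aa_2a_3b)}$ of maximal cones chosen in~\eqref{eq:coneReps}, then to transport the answer by the $\Wgp$-action. By~\autoref{lm:boundaryTX} and~\autoref{lm:IntersectionBoundaryLines}, for each exceptional curve $E \in \sed$ the tree $\Trop E$ is a metric tree with exactly ten leaves, corresponding to the ten nodes of the Schl\"afli graph adjacent to the vertex $E$. By~\autoref{lm:CoordinatesOfNodes} (see also~\autoref{rm:genericImpliesTropical}), the homogeneous anticanonical coordinates of each such node are Laurent monomials in the Yoshida and Cross functions. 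Once we fix a point $\underline{p}$ in the relative interior of $\sigma_i$ parameterized by $(r_1,r_2,r_3,r_4)$ (respectively $(r_1,r_2,r_3,r_4')$), the coordinates of the ten leaves of $\Trop E$ become explicit $\Z$-linear functions of the $r_i$'s.

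To carry this out in practice, the plan is to first produce explicit formulas for $\val(\Cross{i})$ on $\sigma_i^{\circ}$. By~\autoref{prop:naruki_fan_cross} each $\val(\Cross{i})$ is a piecewise linear function on $\Naruki$, and by~\autoref{pr:ratiosOfCross} together with the set of 45 representatives $\cR$ from~\eqref{eq:45relevant} and~\autoref{pr:relevantCrossFunctions}, the values of all 135 Cross functions on a generic point of $\sigma_i^{\circ}$ can be recovered from the values of the 44 Cross functions in $\cR_0$ plus the single function $\Cross{15}$ (whose behavior is treated by~\autoref{pr:newExpValCr15}). Plugging these formulas into~\autoref{lm:CoordinatesOfNodes} produces, for every $E \in \sed$, an explicit $10 \times 45$ matrix over $\Rbar$ whose rows are the leaves of $\Trop E$.

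With the leaves in hand, I would feed each such matrix through the projection step of~\autoref{lm:chooseProjection} to get ten points in some $\TPr^{m-1}$ whose columns have pairwise distinct $\infty$-patterns, then invoke~\autoref{alg:treesFromLeaves} to reconstruct the underlying generic tropical line, with edge lengths read off by~\autoref{lm:treemetric}. Lifting back via~\autoref{lm:recoverLineFromProjection} yields $\Trop E$ together with its leaf labeling and its metric structure as a piecewise linear function in $(r_1,\dots,r_4)$ on $\sigma_i$. Since $\Wgp$ acts transitively on $\sed$, running this procedure for a single representative $E$ per cone suffices in principle; however, $\Wgp$ does \emph{not} act transitively on the pairs $(\sigma_i, E)$ (the stabilizer of $\sigma_i$ in $\Wgp$ has several orbits on $\sed$), and the decomposition of $\sed$ into stabilizer-orbits is precisely what produces the two (respectively three) combinatorial types and their symmetry classes asserted in the statement. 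The leaf labelings recorded in~\autoref{tab:treeLabeling} are the output of this procedure for one representative per stabilizer-orbit in each of the two maximal cones, and the general case is obtained by the $\Wgp$-action, which is the content of the last sentence of the theorem.

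The main obstacle is bookkeeping rather than conceptual: one must verify that every $10 \times 45$ matrix produced really does satisfy the tropical $3\times 3$-minor collinearity criterion of~\autoref{pr:tropicalLinesbyMinors} (so that~\autoref{alg:treesFromLeaves} applies), identify which cone-and-curve pairs yield isomorphic combinatorial types, and confirm that in the $\operatorname{(aa_2a_3b)}$ case the ray of type $\operatorname{b}$ introduces exactly one new topological type beyond the two appearing for $\operatorname{(aa_2a_3a_4)}$. The delicate point is the triple $\{\Cross{15},\Cross{16},\Cross{17}\}$ excluded from $\cR_0$ in~\autoref{pr:relevantCrossFunctions}: here the expected valuation is never achieved generically, and one must substitute the corrected expected valuation from~\autoref{pr:newExpValCr15} before the leaf coordinates can be trusted. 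Extension from the relative interior of a maximal cone to its faces is then automatic by continuity of the piecewise linear formulas; the combinatorial types on lower-dimensional cones arise as degenerations (edge-contractions) of those on adjacent maximal cones.
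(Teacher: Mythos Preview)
Your approach is essentially the paper's: compute the ten leaf coordinates of each $\Trop E$ from~\autoref{lm:CoordinatesOfNodes}, project to $\TPr^9$, run~\autoref{alg:treesFromLeaves}, and read off the metric via~\autoref{lm:treemetric}. Two differences are worth flagging. First, the paper organizes the bookkeeping through~\autoref{lm:decomposing10x10matrices}, which splits the $10\times 10$ leaf matrix as $M^s + M^{Cr} + M^{\Yos{}}$; the translation $M^s$ is discarded, the sparse symmetric piece $M^{Cr}$ isolates exactly the five Cross valuations relevant to a given $E$, and the remaining entries live in $\Q(r_1,\dots,r_4,r_4')$, so~\autoref{alg:treesFromLeaves} runs symbolically over that field rather than pointwise. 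Second, your appeal to~\autoref{pr:newExpValCr15} for $\Cross{15}$ is misplaced for this theorem: the correction there applies only on the lower-dimensional cones $\operatorname{(aa_2a_4)}$, $\operatorname{(aa_4)}$, $\operatorname{(a_2a_4)}$, $\operatorname{(a_4)}$, whereas on the two maximal cones the na\"ive expected valuation of $\Cross{15}$ is already attained. More importantly, the paper replaces your genericity route through~\autoref{pr:relevantCrossFunctions} by a forward reference to~\autoref{thm:aaaaIsStable}, which shows that on the relative interior of \emph{every} maximal cone the actual valuation of each Cross function in $\cR$ equals the expected one. This upgrades your conclusion from ``generic $\operatorname{(aa_2a_3a_4)}$ surfaces'' to ``all $\operatorname{(aa_2a_3a_4)}$ surfaces,'' which is what the theorem asserts.
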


The proof of this statement is computational in nature. \autoref{alg:treesFromLeaves} plays a central role. All required scripts, input and output files are available in the Supplementary material. The remainder of this section describe the steps involved in the process. Rather than restricting our attention to the maximal cells, we describe the general procedure to treat all 24 cone representatives at once. The outcome for lower-dimensional cones is discussed in~\autoref{sec:boundary-trees-lower}.

As a starting point, we provide the framework for carrying out all tropical convex hull computations. In order to use~\autoref{alg:treesFromLeaves} we must determine the ten leaves of each boundary tree in $\TPr^{44}$ and choose a suitable projection to $\TPr^{9}$ to see each of them as a generic tropical line.

To achieve the first task, we turn to~\autoref{lm:IntersectionBoundaryLines}. Each of the 135 leaves on our collection of 27 boundary trees on $\Trop X$ is the tropicalization of a unique classical node on the surface $X$.  Explicit formulas for  135 classical nodes on  $X$ involving Laurent monomials in the 40 Yoshidas and the 135 Cross functions were given in~\autoref{lm:CoordinatesOfNodes}. We use~\autoref{pr:ratiosOfCross} to write the tropicalization of these monomials as linear functions in the valuations of all Yoshidas but only the 45 relevant Cross functions from the set $\cR$ given in~\eqref{eq:45relevant}.

The main obstacle to construct the 135 tropical nodes was faced already in~\autoref{sec:trop-lines-trop}. Indeed, depending on the nature of $\underline{p}$, we might not be able to determine all 45 coordinates of each tropical node. Note that the point $\underline{p}$ in $\TPr^{39}$ will only prescribe the \emph{expected} valuations of the 45 Cross functions from $\cR$, as seen in \autoref{rm:expValsCross}. Their actual valuations will depend on the values of the Yoshida functions encoded in $\Yos{}$. In particular, they can vary among the different surfaces $X$ associated to $\underline{p} \in \Naruki$. \autoref{thm:aaaaIsStable} will show that this does not happen if $\underline{p}$ lies in  the relative interior of a maximal cone of $\Naruki$.

  \begin{figure}[htb]
  \includegraphics[scale=0.5]{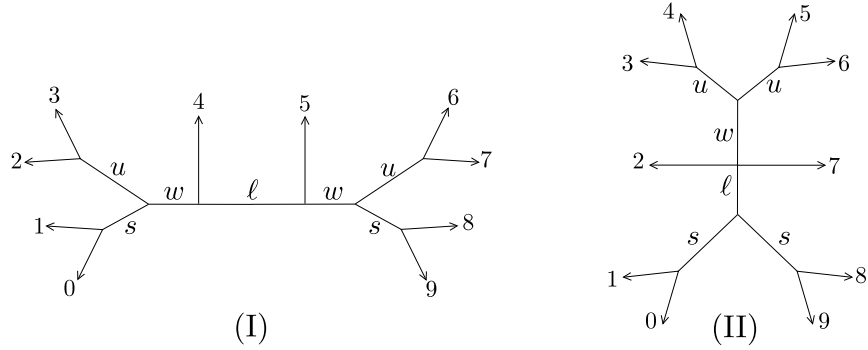}
  \caption{The two combinatorial types of trees for the Naruki cone \textbf{(aa2a3a4)}. The four edge lengths and the labeling of all ten leaves are provided in Table~\ref{tab:treeLabeling}. The type (I) metric tree comes in (4,4,4,4,4,4) symmetry classes, whereas the type (II) one has (1,1,1) symmetry classes.\label{fig:treesaa2a3a4}}
\end{figure}

\begin{figure}[htb]
    \includegraphics[scale=0.5]{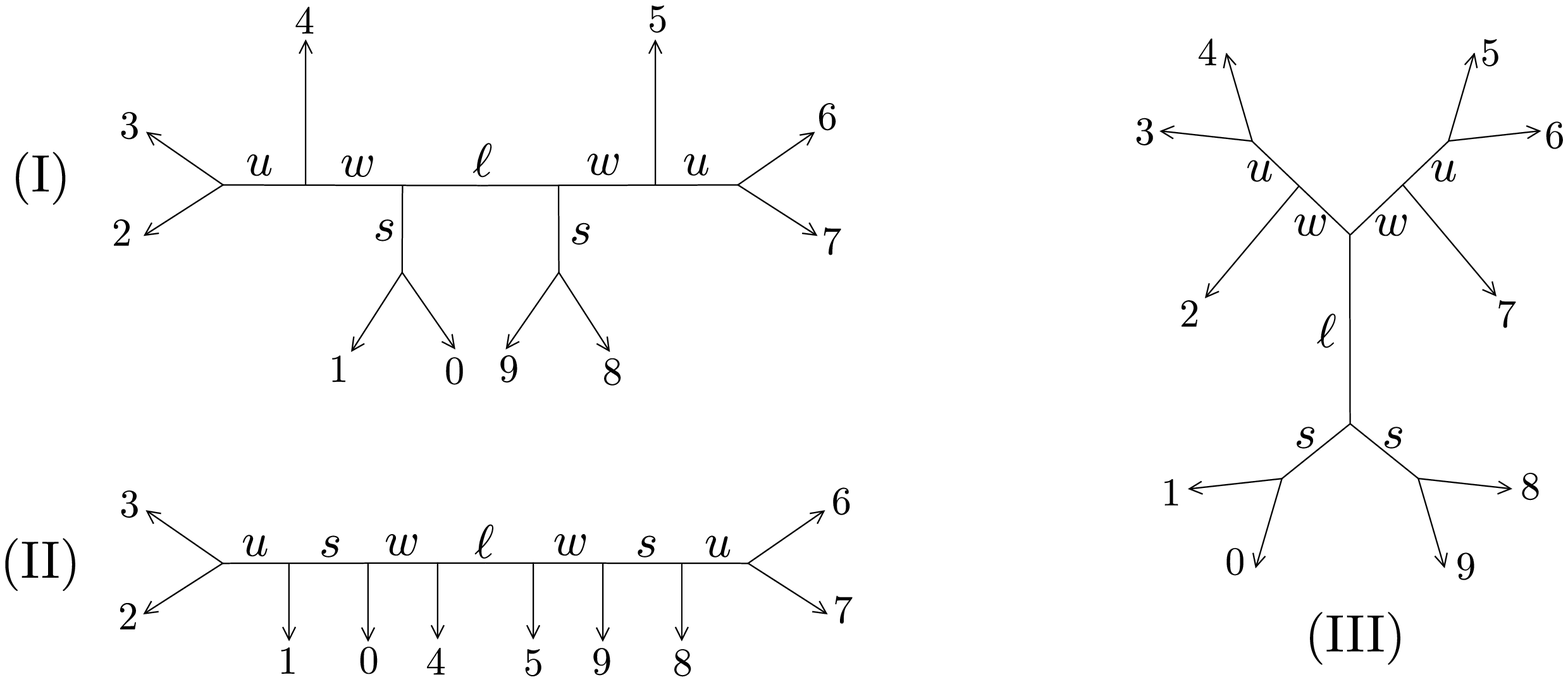}
  \caption{The three combinatorial types of trees for the Naruki cone \textbf{(aa2a3b)}. The four edge lengths and the labeling of all ten leaves are given in Table~\ref{tab:treeLabeling}. The type (I) metric tree comes in (2,2,2,2,2,2) symmetry classes,  the type (II) has (4,4,4) symmetry classes, whereas the type (III) has (1,1,1) symmetry classes.\label{fig:treesaa2a3b}}
\end{figure}

\begin{remark}\label{rm:byproduct}
  As we saw in~\autoref{pr:relevantCrossFunctions},  the triple of Cross functions associated to $x_{53}$ exhibit positive gaps between the expected an true valuations on certain cones in $\Naruki$ even subject to genericity conditions. This suggests that the formula proposed in \eqref{eq:expval} for the expected valuations on this triple is too na\"ive and needs to be adjusted when considering these troubling cones. The correct formula emerges by tropical convexity considerations (see~\autoref{pr:newExpValCr15}.)
\end{remark}

By construction, each boundary tree is indexed by some exceptional curve $E$ in the set $\sed$ from~\autoref{rm:markings}. Our next task is to determine  the appropriate projection
\begin{equation}\label{eq:pi_E}
  \pi_E\colon \TPr^{44} \dashrightarrow \TPr^{9}
\end{equation}
that turns $\Trop E$ into a generic tree in $\TPr^{9}$. ~\autoref{lm:recoverLineFromProjection} allows us to recover $\Trop E \subset \TPr^{44}$ from this generic tree and the ten leaves in $\TPr^{44}$, independently of the chosen projection.

By~\autoref{cor:BoundaryPoints}, $\Trop E$ lies in the boundary strata with values $\infty$ along the five coordinates indexed by anticanonical triangles in $\ted$ containing $E$. Furthermore, the remaning 40 are partititioned into ten blocks of four coordinates each. Each block will give the $\infty$ coordinates on a fixed leaf of $\Trop E$. If the leaf is indexed by a symbol $E'$ in $\sed$, these four coordinates are precisely the anticanonical triangles in $\ted$ containing $E'$ but not $E$. The map $\pi_E$ from~\eqref{eq:pi_E} is determined by choosing the lexicographically minimal coordinate on each of the ten blocks. By construction, the $10\times 10$ matrix recording the leaves of $\Trop E$ has $\infty$-valued entries only along its diagonal.

\begin{example}\label{ex:coordsE1}
  When $E=\E{1}$, the exceptional curves meeting $E$  are given by  the set
  \[
  \mathcal{L}:=\{\Gc{3},\Gc{6},\F{16},\F{12}, \F{13},\Gc{4},\F{15}, \Gc{5},\Gc{2},\F{14}\}.\]
  This induces the choice of (ordered) coordinates for $\pi_E$
\begin{equation}\label{eq:ECoords}
  \{x_{23},x_{26},x_{61},x_{21},x_{31},x_{24}, x_{51},   x_{25}, x_{32},  x_{41} \}.
\end{equation}
The set $\mathcal{L}$ will determine how to label a given leaf: simply choose the exceptional curve in $\mathcal{L}$ associated to the unique $\infty$-coordinate on the leaf in $\TPr^9$.
\end{example}

To avoid dealing with undefined entries on the tropical nodes, we express each entry of the $10\times 10$ matrix of leaves for each boundary tree as a linear function of the valuation of the 40 Yoshidas and the 45 relevant Cross valuations.  The following result will allow us to read off this information in a way that is best suited for determining the variation of a fixed boundary tree as we move along $\Naruki$.

\begin{lemma}\label{lm:decomposing10x10matrices} Each $10\times 10$ matrix recording the ten leaves of the tropical line $\Trop E$ in $\TPr^{9}$ associated to a symbol in $\sed$ can be decomposed as a usual sum of three matrices $M^s$, $M^{Cr}$ and $M^{\Yos{}}$, where:
  \begin{enumerate}[(i)]
  \item $M^s$ is a $10\times 10$ matrix obtained by multiplying the column vector $\mathbf{1}$ of size $10\times 1$ by the projection of the row vector $\underline{q}^{\cR}$ from~\eqref{eq:translationVector} to $\TPr^{9}$.  We refer to it as the shifting matrix.
    
  \item $M^{Cr}$ is a symmetric matrix with only ten non-zero entries. Each pair of symmetric entries is associated to the valuation of a Cross function in $\cR$ indexed by one of the five anticanonical triangles containing $E$. For each triangle $t$, the rows involving $\Cross{t}$ are indexed by the two additional exceptional curves belonging to $t$. We call $M^{Cr}$ the matrix of Cross components.
    \item  $M^{\Yos{}}$ is a matrix with $\infty$ entries on its diagonal. The rest of the entries are linear functions in the valuations of all 40 Yoshida functions. We call it the matrix of Yoshida components.
  \end{enumerate}
  \end{lemma}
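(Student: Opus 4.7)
The plan is to establish the decomposition by carefully analyzing the factorization of each node coordinate as a Laurent monomial in Yoshida and Cross functions, and then tracking where the Cross contributions concentrate after tropicalization.

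First, by the transitivity of the $\Wgp$-action on $\sed$, it suffices to verify the statement for a single representative, and we take $E = \E{1}$ as in Example~\ref{ex:coordsE1}. The ten leaves of $\Trop \E{1}$ are the tropicalizations of the ten nodes $E' \cap E''$ for $E', E''$ in the link of $\E{1}$ in the Schl\"afli graph. By~\autoref{lm:CoordinatesOfNodes}, each of the 10 selected anticanonical coordinates (indexed by the set~\eqref{eq:ECoords}) of each such node is either zero or a Laurent monomial in Yoshida and Cross functions containing at least one Cross factor. After applying $\val$ and using~\autoref{pr:ratiosOfCross} to rewrite every Cross valuation in terms of the 45 representatives in $\cR$ (modulo Laurent monomials in the Yoshidas), each finite entry becomes a $\Z$-linear combination of the 40 values $\val(\Yos{i})$ and the 45 values $\val(\Cross{t})$, $t\in\ted$.

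Next, I would define $M^s$ as the outer product of the all-ones column vector with the appropriate 10-dimensional projection of the row vector $\underline{q}^{\cR}$ from~\eqref{eq:translationVector}. Subtracting $M^s$ replaces $-\val(\Cross{t})$ by $0$ in every entry of the column indexed by the triangle $t$, which is precisely the amount expected from a single "background" Cross factor. The key structural claim to verify is then: for the leaf associated to a pair $(E',E'')$ and the column indexed by a triangle $t\in\ted$, the Cross function $\Cross{t}$ appears with multiplicity exactly $1$ in the corresponding node coordinate \emph{except} when $t$ is one of the five triangles containing $E$. In the exceptional case, $\Cross{t}$ does not appear unless $t$ also contains $E'$ or $E''$, in which event it appears with multiplicity $2$, contributing exactly one extra $\val(\Cross{t})$ to that entry. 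Collecting these extra contributions produces a symmetric matrix $M^{Cr}$ whose only nonzero entries sit in the ten positions $(E',E'')$ and $(E'',E')$ where $E, E', E''$ form an anticanonical triangle, matching the description in part~(ii).

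Once both $M^s$ and $M^{Cr}$ have been subtracted, every remaining finite entry is a $\Z$-linear combination of the 40 Yoshida valuations and the diagonal entries are $\infty$ by~\autoref{cor:BoundaryPoints}, giving $M^{\Yos{}}$. The $\Wgp$-equivariance of the construction ensures that the decomposition transports to all other $E \in \sed$ without separate verification.

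The main obstacle is the explicit bookkeeping of Cross exponents across all 100 entries of the matrix: while the assertion is clean, ruling out hidden Cross contributions in the ostensibly Yoshida part requires one to factor each monomial expression appearing in~\autoref{lm:CoordinatesOfNodes} in the Yoshida--Cross basis and check that no additional Cross factor survives after the $M^s$ and $M^{Cr}$ subtractions. This is a finite but nontrivial verification, which is carried out using \sage\ and recorded in the Supplementary material; the remaining combinatorial content, namely the position and symmetry of the ten nonzero entries of $M^{Cr}$, then follows from the fact that a triangle $t$ containing $E$ cuts $\Trop X$ precisely in the three boundary trees indexed by the curves in $t$ (see~\autoref{lm:boundaryTX}).
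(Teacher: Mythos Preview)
Your overall strategy matches the paper's: reduce to $E=\E{1}$ via $\Wgp$, use the Laurent monomial factorizations of the node coordinates, define $M^s$ as the outer product with $\underline{q}^{\cR}$, and verify the residual sparsity by explicit computation. The paper additionally works first with the $10\times 45$ matrix in $\TPr^{44}$ and only projects at the end, which makes the structure of $M^{Cr}$ more transparent.

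However, there are two genuine errors in your setup. First, the ten leaves of $\Trop \E{1}$ are the tropicalizations of the nodes $\E{1}\cap E'$ for $E'$ ranging over the link of $\E{1}$, not of nodes $E'\cap E''$ with both $E',E''$ in the link; the latter description gives only five points (the pairs $\{\F{1j},\Gc{j}\}$), not ten. Second, and more substantively, your ``key structural claim'' about Cross multiplicities is not correct. The columns of the $10\times 10$ matrix are indexed by ten triangles $s$ \emph{not} containing $E$, so the case ``$t$ contains $E$'' never arises as a column. What actually happens (in the paper's $10\times 45$ formulation) is this: at the entry in row $E'$ and column $s$ (with $s\not\ni E$), the Cross contribution is $-\val(\Cross{s})$ (cancelled by $M^s$) together with $+\val(\Cross{t})$ where $t=\{E,E',E''\}$ is the unique triangle containing $E$ and $E'$, and the latter term appears precisely when $s$ is one of the four triangles containing $E''$ but not $E$. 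Thus the surviving Cross value in $M^{Cr}$ is associated to a triangle $t\ni E$ \emph{different} from the column triangle $s$; it is not an ``extra copy'' of $\Cross{s}$. After projecting via $\pi_E$, each row $E'$ retains exactly one nonzero entry, in the column labelled by $E''$, with value $\val(\Cross{t})$, and symmetry follows because $t$ also contains $E''$. Your conclusion about the location of the ten nonzero entries is correct, but the mechanism you describe for producing them is not.
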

\begin{proof}   To take advantage of the $\Wgp$-action, we work with the $10\times 45$  matrix $M$ of leaves of $\Trop E$ in $\TPr^{44}$. We claim that $M$ admits a decomposition as $M^s + M^{Cr} + M^{\Yos{}}$ where each summand has the analogous properties as their $10\times 10$ counterparts (ignoring the symmetry property for $M^{Cr}$.) If so, the projection $\pi_E$ applied to each of the matrices will yield the result for the $10\times 10$ matrix.

  Our prior discussion in this section shows that each finite entry of $M$ is a linear function of the valuations of $\Yos{}$ and the Cross functions in $\cR$. We let $M^{\Yos{}}$ be the matrix collecting the $\infty$-entries in $M$ (nine per row) and the summands on each of the remaning entries involving Yoshida valuations. To finish, we must decompose $M':=M- M^{\Yos{}}$ as $M^s + M^{Cr}$. It suffices to do so for a single $E$.

  We define $M^s$ as the multiplication of the column vector $\mathbf{1}$ with 45 rows by the row vector $\underline{q}^{\cR}$ from~\eqref{eq:translationVector}. Explicit computations available in the Supplementary material shows that $M^{Cr}:=M'-M^s$ is a sparse  matrix with 40 non-zero entries. Each of its rows is indexed by a symbol $E'$ in $\sed$ meeting $E$. We let $t$ be the unique anticanonical triangle in $\ted$ containing both $E$ and $E'$. We let $E''$ be the third exceptional curve in $t$. The four anticanonical triangles in $\ted$  containing $E''$ but not $E$ will index the columns in $M^{Cr}$ with non-zero values along the row indexed by $E'$. Moreover, the value on these four entries is $\val(\Cross{t})$.

  For example, when $E=\E{1}$ and $E'=\Gc{3}$, it follows that $t =x_{13}$, $\Cross{t} = \Cross{78}$, $E''=\F{13}$ and the four triangles in $\ted$ are $x_{31}, y_{132456}, y_{132546}$ and $y_{132645}$. A direct computation verifies the validing of the statement for the remaining four pairs of lines meeting $\E{1}$.
  After projection to $\TPr^9$, the matrix $M^{Cr}$ becomes symmetric. This concludes our proof.
\end{proof}

\begin{example}\label{ex:EMaxConeCross}   We describe the data from~\autoref{lm:decomposing10x10matrices} for $E = \E{1}$. The coordinates of $\TPr^9$ and the associated set $\mathcal{L}$ of exceptional curves is the one from~\autoref{ex:coordsE1}.
  
  First, the matrix $M^s$ is determined by the row vector $\underline{q} = -(\val(\Cross{i}))_{i\in I} \in \TPr^9$, where $I$ is the ordered set
  $I=\{105, 63, 45, 114, 0, 21, 24, 93, 84, 18\}$.
  This ordering agrees with the assignment of Cross functions to the set of anticanonical triangles in~\eqref{eq:ECoords}.
  Second, the matrix $M^{Cr}$ involves the five  functions $\Cross{i}$ for $i= 9, 33, 42, 78, 111$ associated to the ordered set of anticanonical triangles $\{x_{12},x_{16}, x_{14}, x_{13}, x_{15}\}$ containing $\E{1}$. The labeling of the coordinates in $\TPr^9$ and the set $\mathcal{L}$ imply the following assignment for each symmetric pair of coordinates on $M^{Cr}$:

  \begin{minipage}[l]{0.32\linewidth}
    \[\begin{aligned}
            M^{Cr}_{1,5} & = M^{Cr}_{5,1} = \val(\Cross{78}), \\
            M^{Cr}_{2,3} & = M^{Cr}_{3,2} = \val(\Cross{33}),
    \end{aligned}
    \]
    \end{minipage}
     \begin{minipage}[l]{0.32\linewidth}
    \[\begin{aligned}
      M^{Cr}_{4,9} & = M^{Cr}_{9,4} = \val(\Cross{9}), \\
      M^{Cr}_{6,10} & = M^{Cr}_{10,6} = \val(\Cross{42}),
    \end{aligned}\]
         \end{minipage}
   \begin{minipage}[l]{0.3\linewidth}
    \[\begin{aligned}
    M^{Cr}_{7,8} & = M^{Cr}_{8,7} = \val(\Cross{111}).\\
    & 
  \end{aligned}
    \]   \end{minipage}

   \noindent
   The first pair corresponds to the curves $\Gc{3}$ and $\F{13}$ mentioned in the proof of~\autoref{lm:decomposing10x10matrices}.
\end{example}

\begin{proof}[Proof of~\autoref{thm:treeTypes}] The proof is computational. In what follows we describe the main steps required to determine the input data for~\autoref{alg:treesFromLeaves} for each $E\in \sed$  when $\underline{p}$ is in the relative interior of a maximal cone $\sigma$ from~\eqref{eq:coneReps}. The choice of coordinates for $\TPr^9$ is fixed by the projection $\pi_E$ from~\eqref{eq:pi_E}. The output will be the tree $\Trop E\subset \TPr^9$ recorded as a list of pairs. Each pair records a vertex in $\TPr^9$  and the primitive direction of its   adjacent edge  pointing towards the trees' center.

  The computation of tropical convex hulls commutes with translations. Thus, rather than working with the $10\times 10$ matrix $M$ of leaves for each tropical line $\Trop E$, we replace it with the sum $M^{Cr} + M^{\Yos{}}$ of matrices from~\autoref{lm:decomposing10x10matrices}. By~\autoref{thm:aaaaIsStable} we know that when restricted to the relative interior of $\sigma$, the valuations of all Cross functions in $\cR$ agree with their expected values. By~\eqref{eq:expval}, the latter equals the valuation of a suitable Yoshida function. We conclude that on $\sigma^{\circ}$, each finite entry of $M^{Cr} + M^{\Yos{}}$ is a linear function on $\sigma$. Therefore, we can express it as a $\Q$-linear function in the scalars $r_1,\ldots, r_4$, and $r'_4$. We view each non-diagonal entry in the field $\Q(r_1,\ldots, r_4,r_4')$.

  We use the above matrix as the input for~\autoref{alg:treesFromLeaves} (our implementation allows values in any field.) The output confirms the leaf labelling and combinatorial types of each tree obtained by combining~\autoref{tab:treeLabeling} with Figures~\ref{fig:treesaa2a3a4} and \ref{fig:treesaa2a3b}. The metric formulas  follow from~\autoref{lm:treemetric}. They are responsible for the symmetric classes discussed in the statement.
\end{proof}

\afterpage{
  \clearpage
\thispagestyle{empty}
  \begin{landscape}
\begin{table}[htb]
    \begin{tabular}{|c||c|c|c|c|c|c|c|c|c|c||c|c|c|c|c||c|c|c|c|c|}     \hline\footnotesize{Curve}\normalsize& 0 & 1 & 2 & 3 & 4 & 5 & 6 & 7 & 8 & 9 &\!\footnotesize{\textbf{($\operatorname{\mathbf{aa_2a_3a_4}}$)}}\normalsize\!& $u$ & $s$ & $w$ & $l$ &
     \footnotesize{\textbf{($\operatorname{\mathbf{aa_2a_3b}}$)}} \normalsize
& $u$ & $s$ & $w$ & $l$ 
      \\\hline  \hline
    $E_1$ & $F_{12}$ & $G_6$ & $G_5$ & $G_4$ & $G_3$ & $F_{13}$ & $F_{14}$ & $F_{15}$ & $F_{16}$ & $G_2$ & (I) & $r_{34}$ & $r_4$ & $r_{234}$ & $r_1$ & (II) & $r_3$ & $r_4'$ & $r_{23}$ & $r_1$ \\\hline
    $E_2$  & $F_{12}$ & $G_6$ & $G_5$ & $G_4$ & $G_3$ & $F_{23}$ & $F_{24}$ & $F_{25}$ & $F_{26}$ & $G_1$ & (I) & --- & --- & --- & --- & (II) & --- & --- & --- & ---  \\\hline
    $F_{13}$ & $G_1$ & $F_{26}$ & $F_{25}$ & $F_{24}$ & $E_1$ & $G_3$ & $F_{56}$ & $F_{46}$ & $F_{45}$ & $E_3$  & (I) & --- & --- & --- & --- & (II) & --- & --- & --- & --- \\\hline
    $F_{23}$ & $E_3$ & $F_{45}$ & $F_{56}$ & $F_{46}$ & $G_3$ & $E_2$ & $F_{15}$ & $F_{14}$ & $F_{16}$ & $G_2$ & (I) & --- & --- & --- & --- & (II) & --- & --- & --- & --- \\ \hline \hline
    $E_3$ &$F_{23}$ & $F_{13}$ & $G_5$ & $G_4$ & $G_6$ & $F_{36}$ & $F_{34}$ & $F_{35}$ & $G_1$ & $G_2$ & (I) &  $r_{34}$ &$r_{1234}$ & $r_4$ & $r_{23}$ & (I)  & $r_{3}$& $r_{123}$ & $r_4'$ & $r_{23}$ \\\hline
    $F_{12}$  & $E_1$ & $E_2$ & $F_{46}$ & $F_{56}$ & $F_{45}$ & $F_{36}$ & $F_{34}$ & $F_{35}$ & $G_1$ & $G_2$ & (I) & --- & --- & --- & --- & (I) & --- & --- & --- & --- \\\hline \hline
    $F_{45}$ & $E_5$ & $E_4$ & $F_{26}$ & $F_{16}$ & $F_{36}$ & $F_{12}$ & $F_{23}$ & $F_{13}$ & $G_5$ & $G_4$ & (I) & $r_{1234}$& $r_{34}$   & $r_4$ & $r_{23}$ & (I) & $r_{123}$ & $r_3$ & $r_4'$ & $r_{23}$ \\\hline
    $G_{6}$  & $E_5$ & $E_4$ & $F_{26}$ & $F_{16}$ & $F_{36}$ & $E_3$ & $E_1$ & $E_2$ & $F_{46}$ & $F_{56}$ & (I) & --- & --- & --- & --- & (I) & --- & --- & --- & --- \\\hline \hline
    $E_4$ & $F_{46}$ & $G_5$ & $F_{24}$ & $F_{14}$ & $F_{34}$ & $G_3$ & $G_1$ & $G_2$ &$F_{45}$ & $G_6$ & (I) & $r_{1234}$ & $r_{234}$ & $r_4$ & $r_{3}$ & (I) & $r_{123}$ & $r_{23}$ & $r_4'$ & $r_3$ \\\hline
    $E_{5}$  & $F_{56}$ & $G_4$ & $F_{25}$ & $F_{15}$ & $F_{35}$ & $G_3$ & $G_1$ & $G_2$ & $F_{45}$ & $G_6$ & (I) & --- & --- & --- & --- & (I) & --- & --- & --- & ---\\\hline \hline
    $F_{34}$ & $F_{25}$ & $F_{15}$ & $F_{56}$ & $G_4$ & $E_4$ & $G_3$ & $E_3$ & $F_{12}$ & $F_{26}$ & $F_{16}$ & (I) & $r_{234}$ & $r_{1234}$ & $r_4$ & $r_3$ & (I) & $r_{23}$ & $r_{123}$ & $r_4'$ & $r_3$\\\hline
    $F_{35}$  & $F_{24}$ & $F_{14}$ & $F_{46}$ & $G_5$ & $E_5$ & $G_3$ & $E_3$ & $F_{12}$ & $F_{26}$ & $F_{16}$ & (I) & --- & --- & --- & --- & (I) & --- & --- & --- & ---\\\hline \hline
    $F_{14}$ & $F_{35}$ & $E_4$ & $F_{56}$ & $G_4$ & $F_{36}$ & $F_{25}$ & $E_1$ & $F_{23}$ & $G_1$ & $F_{26}$ & (I) & $r_{234}$ & $r_4$ & $r_{34}$ & $r_{12}$ & (II) & $r_{23}$ & $r_4'$ & $r_3$ & $r_{12}$ \\\hline
    $F_{15}$  & $F_{34}$ & $E_5$ & $F_{46}$ & $G_5$ & $F_{36}$ & $F_{24}$ & $E_1$ & $F_{23}$ & $G_1$ & $F_{26}$ & (I) & --- & --- & --- & --- & (II) & --- & --- & --- & ---\\\hline
    $F_{24}$ & $F_{35}$ & $E_4$ & $F_{56}$ &$G_4$ & $F_{36}$ & $F_{15}$ & $E_2$ & $F_{13}$ & $G_2$ & $F_{16}$ &  (I) & --- & --- & --- & --- & (II) & --- & --- & --- & ---\\\hline
    $F_{25}$ & $F_{34}$ & $E_5$  & $F_{46}$ & $G_5$ & $F_{36}$ & $F_{14}$ & $E_2$ & $F_{13}$ & $G_2$ & $F_{16}$ &  (I) & --- & --- & --- & --- & (II) & --- & --- & --- & ---\\ \hline \hline
    $F_{16}$ & $F_{35}$ & $F_{34}$ & $F_{45}$ & $G_6$ & $E_6$ & $G_1$ & $E_1$ & $F_{23}$ & $F_{25}$ & $F_{24}$ & (I) &  $r_{234}$ & $r_{34}$ & $r_{4}$ & $r_{123}$ &  (I) & $r_{23}$ & $r_3$ & $r_4'$ & $r_{123}$ \\\hline
    $F_{26}$  & $F_{35}$ & $F_{34}$ & $F_{45}$ & $G_6$ & $E_6$ & $G_2$ & $E_2$ & $F_{13}$ & $F_{15}$ & $F_{14}$ &   (I) & --- & --- & --- & --- & (I) & --- & --- & --- & --- \\\hline \hline
    $G_1$ & $E_3$ & $F_{12}$ & $E_5$ & $E_4$ & $E_6$ & $F_{16}$ & $F_{14}$ & $F_{15}$ & $E_2$ & $F_{13}$ & (I)  & $r_{34}$ & $r_{234}$ & $r_4$ & $r_{123}$ & (I)& $r_3$ & $r_{23}$ & $r_4'$ & $r_{123}$ \\\hline
    $G_2$  & $E_3$ & $F_{12}$ & $E_5$ & $E_4$ & $E_6$ & $F_{26}$ & $F_{24}$ & $F_{25}$ & $E_1$ & $F_{23}$ &   (I) & --- & --- & --- & --- & (I) & --- & --- & --- & --- \\\hline \hline
    $F_{46}$ & $E_4$ & $F_{35}$ & $F_{25}$ & $F_{15}$ & $E_6$ & $G_4$ & $F_{23}$ & $F_{13}$ & $F_{12}$ & $G_6$ &  (I) & $r_{1234}$ & $r_4$ & $r_{34}$ & $r_2$ & (II) & $r_{123}$ & $r_4'$ & $r_3$ & $r_2$  \\\hline
    $F_{56}$  & $E_5$ & $F_{34}$ & $F_{24}$ & $F_{14}$ & $E_6$ & $G_5$ & $F_{23}$ & $F_{13}$ & $F_{12}$ & $G_6$ & (I) & --- & --- & --- & --- & (II) & --- & --- & --- & ---  \\\hline
    $G_{4}$ & $E_5$ & $F_{34}$ & $F_{24}$ & $F_{14}$ & $E_6$ & $F_{46}$ & $E_1$ & $E_2$ & $E_3$ & $F_{45}$ &(I) & --- & --- & --- & --- & (II) & --- & --- & --- & ---  \\\hline
    $G_{5}$ & $E_4$ & $F_{35}$ & $F_{25}$ & $F_{15}$ & $E_6$ & $F_{56}$ & $E_1$ & $E_2$ & $E_3$ & $F_{45}$ & (I) & --- & --- & --- & --- & (II) & --- & --- & --- & ---  \\ \hline \hline
    $E_6$ & $F_{46}$ & $G_5$ & $F_{36}$ & $F_{26}$ & $F_{16}$ & $G_1$ & $G_2$ & $G_3$ & $F_{56}$ & $G_4$ & (II) & $r_{123}$ & $r_2$ & $r_4$ & $r_{34}$ & (III) & $r_{123}$ & $r_2$ & $r_4'$ & $r_3$ \\ \hline \hline
    $F_{36}$ & $F_{25}$ & $F_{15}$ & $E_6$ & $F_{45}$ & $G_6$ & $E_3$ & $F_{12}$ & $G_3$ & $F_{24}$ & $F_{14}$ & (II) & $r_{23}$ & $r_{12}$ & $r_4$ &  $r_{34}$ &(III) & $r_{23}$ & $r_{12}$ & $r_4'$ & $r_3$  \\ \hline \hline
    $G_3$ & $F_{23}$ & $F_{13}$ & $E_6$ & $E_5$ & $E_4$ & $F_{34}$ & $F_{35}$ & $F_{36}$ & $E_1$ & $E_2$ & (II) & $r_3$ & $r_1$ & $r_4$ & $r_{234}$ & (III) & $r_3$ & $r_1$ & $r_4'$ & $r_{23}$ \\ \hline 
  \end{tabular}
  \caption{Labeling, combinatorial type and metric structure of all  27 trees for adjacent orbit-representatives of the two maximal Naruki cones \textbf{($\mathbf{aa_2a_3a_4}$)} and \textbf{($\mathbf{aa_2a_3b}$)}, and their faces. The edge lengths are sums of subsets of the scalars $r_1,r_2, r_3, r_4$  and $r_1,r_2, r_3, r_4'$, where $r_1$ is the scalar for the ray $a$, $r_2$, $r_3$ and $r_4$ are the scalar for the rays $a_2$, $a_3$ and $a_4$, and $r_4'$ is the scalar for the ray $b$. For example, $r_{34}:=r_3+r_4$ and $r_{234}:=r_2+r_3+r_4$. This table together with \textcolor{blue}{Figures}~\ref{fig:treesaa2a3a4} and~\ref{fig:treesaa2a3b} describe all 27 metric trees on each cone. The action of $\Wgp$ recovers the arrangements on the remaining cones.\label{tab:treeLabeling}}
\end{table}\clearpage
  \end{landscape}}
 \normalsize

  Next, we illustrate one  instance of \autoref{thm:treeTypes} building on~\autoref{ex:EMaxConeCross}.    In the next subsection, we analyze the behavior of the metric trees for lower dimensional cones.

\begin{example}\label{ex:EMaxCone} We fix $E=E_1$ and $\sigma = \operatorname{(aa_2a_3a_4)}$.  We write down the $10\times 10$ matrix of shifted leaves of $\Trop E_1$ associated to the baricenter $\underline{p}$ of $\sigma$. We let $J$ be the ordered set labeling its columns.
 
  \[ [M^{Cr} + M^{\Yos{}}]|_{\underline{p}}:=
  \begin{blockarray}{rrrrrrrrrr}
    x_{23}& x_{26}& x_{61}& x_{21}& x_{31}& x_{24}& x_{51}& x_{25}& x_{32}& x_{41} \\
  \begin{block}{(rrrrrrrrrr)}
    \infty&  15&  14&  15&  12&  14&  9&  14&  14&  9\; \\
12&  \infty&  7&  12&  5&  10&  2&  10&  7&  2\; \\
15&  11&  \infty&  11&  9&  10&  9&  10&  15&  9\; \\
15&  15&  10&  \infty&  8&  13&  5&  13&  10&  5\; \\
19&  15&  15&  15&  \infty&  14&  10&  14&  15&  10\; \\
4&  3&  -1&  3&  -3&  \infty&  -6&  4&  -1&  -6\; \\
8&  4&  7&  4&  2&  3&  \infty&  3&  7&  4\; \\
12&  11&  7&  11&  5&  12&  2&  \infty&  7&  2\; \\
18&  14&  18&  14&  12&  13&  12&  13&  \infty&  12\; \\
6&  2&  5&  2&  0&  1&  2&  1&  5&  \infty\; \\
  \end{block}
  \end{blockarray}\;.
  \]

  The set $J$ completely determines the labeling on the rows. From top to bottom, we see the leaves associated to  $\mathcal{L}$ from~\autoref{ex:coordsE1}. This data gives pairs $([i], \operatorname{row}_i)$ for $i=0,\ldots, 9$. The rest of the tree $\Trop \E{1}$ is recorded by the coordinates of all vertices together with  lists $I$ encoding the  direction $-\sum_{i\in I}e_i$ of the inward edge adjacent to each vertex.
  
      {
      \begin{minipage}[l]{0.46\linewidth}
\[
\begin{aligned}
& ([1,3],[12, 12, 7, 12, 5, 10, 2, 10, 7, 2]), \\
& ([5,7],[4, 3, -1, 3, -3, 4, -6, 4, -1, -6]), \\
& ([1,3,5,7],[12, 11, 7, 11, 5, 10, 2, 10, 7, 2]), \\
& ([0,1,3,5,7],[19, 15, 14, 15, 12, 14, 9, 14, 14, 9]), \\
    \end{aligned}
      \]\end{minipage}
      \begin{minipage}[l]{0.48\linewidth}
        \[\begin{aligned}
& ([2,8],[15, 11, 15, 11, 9, 10, 9, 10, 15, 9]), \\
& ([6,9],[8, 4, 7, 4, 2, 3, 4, 3, 7, 4]), \\
& ([2,6,8,9],[15, 11, 14, 11, 9, 10, 9, 10, 14, 9]),\\
& ([2,4,6,8,9],[19, 15, 15, 15, 13, 14, 10, 14, 15, 10]).\!\!\!\!\!
    \end{aligned}
    \]
      \end{minipage}
      }\normalsize

      \noindent
  This data yields the type (I) tree from~\autoref{fig:treesaa2a3a4}, with the labeling given by $\mathcal{L}$ and  $r_i=1$ for all~$i$.
  \end{example}

The proof of~\autoref{thm:treeTypes} has the  following important consequence:
  \begin{corollary}\label{cor:PLstructure}
The metric structure on each tree of $\Trop X$ is linear when restricted to the relative interior of each maximal Naruki cone.
  \end{corollary}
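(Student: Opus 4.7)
The plan is to deduce the corollary directly from the constructive content of the proof of \autoref{thm:treeTypes} together with the $\Wgp$-equivariance of the whole setup. First I would fix a maximal cone $\sigma$ of $\Naruki$ and, using that $\Wgp$ acts transitively on each of the two orbits of maximal cones, reduce to the case where $\sigma$ is one of the two representatives $\operatorname{(aa_2a_3a_4)}$ or $\operatorname{(aa_2a_3b)}$ from~\eqref{eq:coneReps}. The equivariance of the anticanonical embedding (\autoref{thm:anticanK}) and of the parameterization of the 135 nodes (\autoref{lm:CoordinatesOfNodes}) ensures that the metric tree arrangement on a cone and on its $\Wgp$-translate differ only by a relabelling of leaves, so linearity on a representative implies linearity on the entire orbit.

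Next I would exploit the decomposition of the $10\times 10$ leaf matrix from \autoref{lm:decomposing10x10matrices}. For a smooth tropical surface corresponding to a point $\underline{p}\in \sigma^\circ$, \autoref{thm:aaaaIsStable} (proved in \autoref{sec:trop-moduli-space}) asserts that the valuations of all 45 Cross functions in $\cR$ coincide with the expected ones along $\sigma^\circ$. Combined with \autoref{pr:ratiosOfCross}, this says that every finite entry of $M^{Cr}+M^{\Yos{}}$ is a $\Q$-linear expression in the scalars $r_1,r_2,r_3,r_4$ (resp.\ $r_4'$) parameterizing $\sigma^\circ$. Since tropical convex hull constructions are translation-invariant, one can safely discard the shifting matrix $M^s$ without changing the tree.

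The central observation is then that \autoref{alg:treesFromLeaves}, when applied to an input matrix whose entries are linear in $(r_1,\ldots,r_4)$ (or $(r_1,\ldots,r_3,r_4')$), produces vertices whose coordinates are linear in these same parameters: at each iteration the test for a new vertex in \autoref{lm:connectVertices} amounts to solving the linear system~\eqref{eq:vertexadjacency}, whose solutions $(\lambda,\lambda',\mu)$ depend linearly on the matrix entries, and the generated vertex inherits linear coordinates. Running the algorithm through all iterations, every vertex of every tree $\Trop E$ has coordinates that are $\Q$-linear in the cone parameters throughout $\sigma^\circ$.

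Finally, I would invoke \autoref{lm:treemetric}: the length of any edge joining two adjacent vertices $(v,I)$ and $(v',I')$ equals $|\lambda|$, where $(\lambda,\mu)$ is the unique solution to~\eqref{eq:systemMetric}, and hence is linear in the coordinates of $v$ and $v'$. Since the combinatorial type of each tree is constant on $\sigma^\circ$ (by \autoref{thm:treeTypes}), the inward direction of each edge, and therefore the sign of $\lambda$, is fixed throughout $\sigma^\circ$; thus $|\lambda|=\pm \lambda$ is a genuine linear function of $(r_1,\ldots,r_4)$ or $(r_1,\ldots,r_3,r_4')$ on the full relative interior. I do not anticipate any serious obstacle, since the explicit formulas listed in \autoref{tab:treeLabeling} already display the asserted linearity on the two orbit representatives; the only subtle point is the appeal to \autoref{thm:aaaaIsStable} to guarantee that no hidden jump in Cross-function valuations occurs inside $\sigma^\circ$, and this is dispatched in \autoref{sec:trop-moduli-space}.
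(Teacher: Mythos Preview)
Your proposal is correct and follows essentially the same route as the paper. In the paper the corollary is stated as an immediate consequence of the proof of \autoref{thm:treeTypes}: there the leaf matrices $M^{Cr}+M^{\Yos{}}$ are shown to have entries that are $\Q$-linear in the cone parameters (via \autoref{thm:aaaaIsStable}), \autoref{alg:treesFromLeaves} is run symbolically over the field $\Q(r_1,\ldots,r_4,r_4')$, and the resulting edge-length formulas recorded in \autoref{tab:treeLabeling} are visibly linear; your write-up simply makes explicit why the algorithm and \autoref{lm:treemetric} preserve this linearity, which the paper leaves implicit in the computation.
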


  \noindent
  The analogous statement for tropicalizations of cubic surfaces induced by Cox embeddings can be found in \cite[Remark 3.5]{ren.sha.stu:16}.

\subsection{Boundary trees for lower dimensional cones}
\label{sec:boundary-trees-lower} The difficulty in extending the proof method of~\autoref{thm:treeTypes} to lower-dimensional cones in $\Naruki$ relies on our ability to determine the value of the Cross component matrices $M^{Cr}$ on these cones. For stable tropical cubics this step requires the knowledge of the generic expected valuations of the 45 relevant Cross functions on them, as we discussed in~\autoref{sec:find-repr-cross}. 
Determining formulas for these expected valuation on the whole Naruki fan is a subtle task. As we show later in this section, cancellations are guaranteed to occur in many cases, thus forcing a jump in the valuation. Besides certifying the remaining cases of~\autoref{thm:Naruki}, a second main result in this subsection is the characterization of the expected valuations of all Cross functions on the 24 orbit representatives of Naruki cones.

By~\textcolor{blue}{Propositions}~\ref{pr:ratiosOfCross} and~\ref{pr:relevantCrossFunctions}, it is enough to determine the expected valuation of a single Cross function associated to $x_{53}$. Following~\autoref{rm:fullListReps} we choose $\Cross{15}$.
Our next result describes its expected valuations on the faces of the two cones from~\eqref{eq:coneReps}.

\begin{proposition}\label{pr:newExpValCr15}
  The na\"ive value $\val(\Yos{32})$ for the expected valuation  of  $\Cross{15}$ is never achieved  along  the relative interior of the Naruki cone representatives $\operatorname({aa_2a_4)}$, $\operatorname{(aa_4)}$, $\operatorname{(a_2a_4)}$ and $\operatorname{(a_4)}$. The correct expected value along these four open cones is provided by $\val(\Yos{32}) + \varepsilon$, where 
  \begin{equation}\label{eq:varepsilonGap}
  \varepsilon =\frac{1}{3}\val\Big (
  \frac{\Yos{5}\,\Yos{18}\,\Yos{19}^3\,\Yos{20}^2\,\Yos{22}^4\,\Yos{24}^3\,\Yos{28}^6\,\Yos{31}^3}{\Yos{17}^3\,\Yos{21}^4\,\Yos{23}^3\,\Yos{25}^6\,\Yos{26}\,\Yos{27}^4\,\Yos{30}^2}\Big ).
  \end{equation}
The new value is realized generically along these four cones. It is only strictly larger than $\Yos{32}$ along non-apex cones having one of the aforementioned cones as a face.
\end{proposition}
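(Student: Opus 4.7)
The strategy is to exploit the triple $\{\Cross{15},\Cross{16},\Cross{17}\}$ of Cross functions sharing the Eckardt quintic associated to $x_{53}$ (see~\autoref{rm:triangleQuintics}), together with the linear relationship of their valuations established in~\autoref{pr:ratiosOfCross}. The obstruction to realizing the na\"ive expected value for $\Cross{15}$ on the four listed cones is intrinsic to $\Cross{15}$ itself: all four binomial representations $\Cross{15}=\pm(\Yos{k}-\Yos{l})$ exhibit ties on these cones, and every choice realizes the same minimum $\val(\Yos{32})$ after cancellation. By contrast, for at least one of the partner Cross functions (I expect $\Cross{17}$, given the shape of $\varepsilon$ and the identification $\Cross{17}\in\cR_0$ suggested by~\autoref{pr:relevantCrossFunctions}) the na\"ive value is attained generically on all four cones, so the valuation of $\Cross{15}$ can be reconstructed from the Laurent-monomial expression for $(\Cross{15}/\Cross{17})^3$ obtained by the $\Wgp$-orbit of~\eqref{eq:crRatios}.

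First, I would use the parametrizations of~\autoref{lm:genExamplesaaaa}, specialized to the baricenters of each of the four cones $\operatorname{(aa_2a_4)}$, $\operatorname{(aa_4)}$, $\operatorname{(a_2a_4)}$, $\operatorname{(a_4)}$ (all of which are faces of $\operatorname{(aa_2a_3a_4)}$, obtained by setting the $r_3$-coordinate to zero and possibly more $r_i=0$). With the choice of $u_1,\dots,u_8$ from~\autoref{rem:PuiseuxChoice}, evaluate the six positive roots $d_i-d_j$, $d_i+d_j+d_k$, $d_1+\dots+d_6$ and substitute into the defining expressions in~\autoref{tab:yoshida} and~\autoref{tab:crosses}. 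This produces explicit Puiseux-series representatives and their valuations. Compare $\val(\Cross{15})$ with $\val(\Yos{32})$ and record the gap. I expect the gap to match the rational number $\varepsilon$ in~\eqref{eq:varepsilonGap} evaluated at the chosen point in the Naruki fan.

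Next, I would promote the pointwise computation to a formula valid on the entire relative interior of each of the four cones. For this, apply~\autoref{pr:ratiosOfCross}: since $(\Cross{15}/\Cross{17})^3$ is a Laurent monomial in Yoshidas (explicitly obtained as a $\Wgp$-translate of~\eqref{eq:crRatios}), its valuation is linear on the Naruki fan. On each of the four cones, verify computationally (as in the Supplementary material) that this Laurent monomial equals exactly $3\varepsilon$, where $\varepsilon$ is the expression in~\eqref{eq:varepsilonGap}. Since $\Cross{17}\in\cR_0$, the na\"ive value $\val(\Cross{17})$ is generically achieved on these cones by~\autoref{pr:relevantCrossFunctions}, and a short direct check establishes $\val(\Cross{17})=\val(\Yos{32})$ along them. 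Combining both identities yields $\val(\Cross{15})=\val(\Yos{32})+\varepsilon$ generically on each of the four cones, and the ``never achieved'' claim follows from verifying $\varepsilon>0$ there via~\autoref{lm:genExamplesaaaa}.

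The main subtlety is the last assertion: $\val(\Cross{15})=\val(\Yos{32})+\varepsilon$ should hold with $\varepsilon=0$ on all other non-apex cones, and $\varepsilon>0$ only on faces of the Naruki fan that contain one of the four cones in their star. This reduces to inspecting the piecewise-linear function $\varepsilon$, given by the formula in~\eqref{eq:varepsilonGap}, cone-by-cone. For each of the 24 $\Wgp$-orbit representatives, I would evaluate $\varepsilon$ on the chosen baricenter using the parametrizations in~\textcolor{blue}{Lemmas}~\ref{lm:genExamplesaaaa} and~\ref{lm:genExamplesaaab} and check that $\varepsilon=0$ precisely when the cone is disjoint from the closed strata listed in the statement. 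The hard part of the argument is not the explicit evaluation but keeping track of the correct signs and $1/3$-scalings so that the correction $\varepsilon$ obtained from $(\Cross{15}/\Cross{17})^3$ is recognized as the exact expression in~\eqref{eq:varepsilonGap}; the Supplementary material scripts (handling the $\Wgp$-action on the monomial in~\eqref{eq:crRatios}) are the natural tool for this bookkeeping.
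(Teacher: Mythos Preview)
Your proposal rests on a premise that the paper explicitly refutes. You assume that one of the partner Cross functions in the triple $\{\Cross{15},\Cross{16},\Cross{17}\}$---you guess $\Cross{17}$---belongs to $\cR_0$ and hence attains its na\"ive expected valuation generically on the four problematic cones, so that the ratio formula from \autoref{pr:ratiosOfCross} transports this to $\Cross{15}$. But \autoref{pr:relevantCrossFunctions} says precisely the opposite: the triple associated to $x_{53}$ is the \emph{unique} triple for which \emph{no} member lies in $\cR_0$, and \autoref{rm:fullListReps} adjoins $\Cross{15}$ to $\cR$ by an arbitrary choice because there is no distinguished representative. The ratio trick, which handles the other $44$ triples, is unavailable here; that is exactly why \autoref{pr:newExpValCr15} needs a separate argument. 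Your first paragraph therefore collapses, and with it the derivation of~\eqref{eq:varepsilonGap}.

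The paper's route is entirely different and does not pass through a well-behaved partner. Since $\Cross{15}$ appears in the Cross-component matrix $M^{Cr}$ (from \autoref{lm:decomposing10x10matrices}) for the boundary trees $\Trop\E{5}$, $\Trop\F{35}$, $\Trop\Gc{3}$, one inserts an unknown gap parameter $g$ into the $10\times10$ leaf matrix for each of these trees. The ten leaves must be tropically collinear by \autoref{pr:tropicalLinesbyMinors}, so every $3\times3$ minor must be tropically singular. For each of the four cones the paper exhibits a specific minor that is non-singular unless $g\geq\varepsilon$, with $\varepsilon$ the expression in~\eqref{eq:varepsilonGap}; this simultaneously \emph{derives} the formula for $\varepsilon$ and proves the universal lower bound behind ``never achieved'' (a statement your generic-point computation via \autoref{lm:genExamplesaaaa} cannot deliver on its own). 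The parametrizations of \autoref{lm:genExamplesaaaa} are then used only to witness that the bound is sharp, and the final clause about which cones see $\varepsilon>0$ is a direct evaluation of the Yoshida monomial on the $24$ representatives.
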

\begin{proof} Formula~\eqref{eq:expval} shows that the expected valuation of $\Cross{15}$ on all cones in \eqref{eq:coneReps} is $\val(\Yos{32})$.
  Using~\textcolor{blue}{Lemmas}~\ref{lm:genExamplesaaaa} and~\ref{lm:genExamplesaaab} we confirm that this expected valuation is achieved generically in the relative interior of  all 24 Naruki cone representatives except for the four cones listed in the statement. We use tropical convexity to determine the correction factor in these four cases. To this end, we compute the symbols $E$ in $\sed$ for which the matrix of Cross components $M^{Cr}$ associated to the leaves of $\Trop E$ features $\Cross{15}$. Since this Cross function is associated to $x_{53}$ in $\ted$, there are precisely three cases to consider: $\E{5}$, $\F{35}$ and $\Gc{3}$.

  En each of these cases, other Cross functions with undetermined valuations can be found in the matrix $M^{Cr}$. We replace each one of them by their expected valuation plus a parameter $g_i$ for $i=0,\ldots, 4$, that accounts for the potential gaps. For $\E{5}$ and $\Gc{3}$, $g_0 = \val(\Cross{15})-\val(\Yos{32})$, whereas for $\F{35}$, the gap becomes $g_1$. Our goal is to determine positive lower bounds for $g_0$ and $g_1$.

  For each cone $\sigma$ in the list of four exceptions, we compute the $10\times 10$ matrices of leaves for $\Trop \E{5}$, $\Trop \F{35}$ and $\Trop \Gc{3}$ using the coordinate projection from~\eqref{eq:pi_E}. We specialize each sum $M^{Cr}+ M^{\Yos{}}$  at the baricenter of $\sigma$ and search for non-singular $3\times3$-tropical minors that would violate the tropical collinearity criterion from~\autoref{pr:tropicalLinesbyMinors}. The list of all these minors is available in the Supplementary material. The matrices for $\Trop \E{5}$ and $\Trop \F{35}$ yield a total of 40 problematic minors, whereas $\Trop \Gc{3}$ has only 34. We are only interested in those minors that involve the gap $g_i$ for $\val(\Cross{15})$. From each list, we pick one of these minors that has a particular shape. We use the same rows and columns for each of the four matrices associated to the same exceptional curve. The explicit matrices are shown in~\autoref{tab:badMinors}.

    \begin{table}[htb]
  \centering
  \begin{tabular}{|c|c|c|c|c|c|c|}
    \hline \!Line\!& Rows & Cols & $\operatorname{(aa_2a_4)}$ & $\operatorname{(aa_4)}$ & $\operatorname{(a_2a_4)}$ & $\operatorname{(a_4)}$ 
    \\
    \hline
       $\E{5}$ & $[1,3,6]$ & $[0,1,3]$ &
\scalebox{0.9}{\!\!$\begin{pmatrix}
    -2 & \infty & -1 \\
    -9 & -9 & \infty \\
    -4 & -4 & g_0\!-\!4
  \end{pmatrix}$}\normalsize\!\! &
\scalebox{0.9}{\!\!$\begin{pmatrix}
    -1 & \infty & 0 \\
    -6 & -6 & \infty \\
    -2 & -2 & g_0\!-\!2
  \end{pmatrix}$}\normalsize \!\!&
    \scalebox{0.9}{\!\!$\begin{pmatrix}
    -2 & \infty & -1 \\
    -8 & -8 & \infty \\
    -4 & -4 & g_0\!-\!4
      \end{pmatrix}$}\normalsize \!\!&

        \scalebox{0.9}{\!\!$\begin{pmatrix}
    -1 & \infty & 0 \\
    -5 & -5 & \infty \\
    -2 & -2 & g_0\!-\!2
          \end{pmatrix}$\!\!}\normalsize
            \\
            \hline

       $\F{35}$ & $[2,4,5]$ & $[0,2,4]$ &
\scalebox{0.9}{\!\!$\begin{pmatrix}
    -2 & \infty & 0 \\
    2 & 3 & \infty \\
    -9 & -8 & g_1\!-\!8
  \end{pmatrix}$}\normalsize\!\! &
\scalebox{0.9}{\!\!$\begin{pmatrix}
    -1 & \infty & 1 \\
    1 & 2 & \infty \\
    -6 & -5 & g_1\!-\!5
  \end{pmatrix}$}\normalsize \!\!&
    \scalebox{0.9}{\!\!$\begin{pmatrix}
    -2 & \infty & 0 \\
    1 & 2 & \infty \\
    -8 & -7 & g_1\!-\!7
      \end{pmatrix}$}\normalsize \!\!&

        \scalebox{0.9}{\!\!$\begin{pmatrix}
    -1 & \infty & 1 \\
    0 & 1 & \infty \\
    -5 & -4 & g_1\!-\!4
          \end{pmatrix}$\!\!}\normalsize
            \\
            \hline

         $\Gc{3}$ & $[2,3,7]$ & $[0,2,3]$ &
\scalebox{0.9}{\!\!$\begin{pmatrix}
    -6 & \infty & -5 \\
    2 & 3 & \infty \\
    -4 & -3 & g_0\!-\!4
  \end{pmatrix}$}\normalsize\!\! &
\scalebox{0.9}{\!\!$\begin{pmatrix}
    -4 & \infty & -3 \\
    1 & 2 & \infty \\
    -2 & -1 & g_0\!-\!2
  \end{pmatrix}$}\normalsize \!\!&
    \scalebox{0.9}{\!\!$\begin{pmatrix}
    -5 & \infty & -4 \\
    1 & 2 & \infty \\
    -4 & -3 & g_0\!-\!4
      \end{pmatrix}$}\normalsize \!\!&

        \scalebox{0.9}{\!\!$\begin{pmatrix}
    -3 & \infty & -2 \\
    0 & 1 & \infty \\
    -2 & -1 & g_0\!-\!2
          \end{pmatrix}$\!\!}\normalsize
            \\
            \hline
  \end{tabular}
  \caption{One minor for each combination of line and cone involving $\Cross{15}$. They become tropically singular when replacing  $g_i$
    with $g_i\!+\!1$. From top to bottom, the labeling triples for all rows and columns  are $([\Gc{4}, \F{35}, \Gc{3}],[x_{12}, x_{14}, x_{35}])$, $([\F{24}, \Gc{3}, \E{5}], [x_{31}, x_{24}, x_{13}])$ and $([\E{4}, \F{35}, \E{5}], [x_{12}, x_{41}, x_{35}])$, respectively.
\label{tab:badMinors}}
  \end{table}

    When viewing the corresponding minors in the original matrices $M^{Cr}+M^{\Yos{}}$ on $\sigma$,  these tropical permanents only involve three finite terms. Each of them is a linear expression in the valuations of all Yoshidas and the parameter $g_i\!:=\val(\Cross{15})-\val(\Yos{32})$. The coefficient of $g_i$ is always 1. We verify that  the terms not featuring $g_i$ agree when restricted to $\sigma$. Since these minors must be tropically singular for convexity reasons, we obtain a lower bound for $g_i$ that is linear in the valuations of the Yoshida functions. Each row of the table gives a different bound, but they all agree on each cone $\sigma$. This function is precisely the value of $\varepsilon$ in~\eqref{eq:varepsilonGap}.

    A direct computation reveals that $\varepsilon$ is strictly positive on all non-apex cones containing one of the four cones from the statement as a face.     In all other cases, $\varepsilon$ vanishes.
    We verify that $\varepsilon$ has value one  on each of the four baricenters, as we expected from the generic choice of parameters from~\autoref{lm:genExamplesaaaa}. A similar computation certifies the analogous result along the relative interior of each of these four cones. We conclude that on these cones, the new expected valuation for $\Cross{15}$ is $\val(\Yos{32}) + \varepsilon$ and it is achieved generically.
\end{proof}

The knowledge of more precise formulas for the expected valuations of each Cross function in $\cR$ allows us to define the appropriate genericity conditions on classical cubic surfaces required to predict the boundary structure of  their tropical counterparts in $\TPr^{44}$.

\begin{definition}\label{def:genericX}
Let $X\subset \P^{44}$ be a smooth cubic surface without Eckardt points. We say $X$ is \emph{generic with respect to the set $\cR$} if the following conditions hold:
\begin{enumerate}[(i)]
  \item the expected valuations of all 44 Cross functions for $X$ lying on the set $\cR_0$ from~\autoref{pr:relevantCrossFunctions} are achieved; and 
  \item the expected valuation for $\Cross{15}$ given by~\autoref{pr:newExpValCr15} is also attained for $X$.
\end{enumerate}
\end{definition}

Below is the main result in this section, restating the last claim in \autoref{thm:Naruki}:
\begin{theorem}\label{thm:PL}  Assume that the cubic surface $X\subset \P^{44}$ is generic with respect to the set $\cR$. The leaf labeling and metric on each boundary tree of $\Trop X$ can be recovered from~\autoref{tab:treeLabeling}. Furthermore, the metric structure on the boundary trees of tropical cubic surfaces in $\TPr^{44}$ is a continuous piecewise linear function on the Naruki fan when restrcted to the above genericity constraints.
  \end{theorem}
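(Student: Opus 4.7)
The plan is to extend the argument of Theorem~\ref{thm:treeTypes} from the two maximal cone representatives to the remaining 22 orbit representatives of lower-dimensional cones in $\Naruki$, and then establish continuity across cone boundaries. The genericity assumption on $X$ with respect to $\cR$ guarantees that the valuations of all 45 representing Cross functions coincide with the expected valuations provided by Proposition~\ref{pr:relevantCrossFunctions} together with the correction formula of Proposition~\ref{pr:newExpValCr15}. Combined with Proposition~\ref{pr:ratiosOfCross}, this gives closed-form linear formulas on each cone for the valuations of all 135 Cross functions, which is exactly what is needed to specialize the matrix $M^{Cr}$ from Lemma~\ref{lm:decomposing10x10matrices}.

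First, for each of the 22 remaining $\Wgp$-orbit representatives $\sigma$ of Naruki cones and each symbol $E\in \sed$, I would build the specialized $10\times 10$ matrix $M^{Cr}+M^{\Yos{}}$ evaluated at a generic point in the relative interior of $\sigma$, with entries in $\Q(r_1, r_2, r_3, r_4, r_4')$, using Lemma~\ref{lm:decomposing10x10matrices} combined with the expected valuations of the Cross functions in $\cR$ on $\sigma$. I would then run Algorithm~\ref{alg:treesFromLeaves} on each resulting matrix and verify that the output matches the leaf labeling and combinatorial type predicted by Table~\ref{tab:treeLabeling} and Figures~\ref{fig:treesaa2a3a4} and~\ref{fig:treesaa2a3b}, with edge lengths computed via Lemma~\ref{lm:treemetric}. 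As in the maximal case, this is a symbolic verification rather than a conceptual obstruction; the action of $\Wgp$ propagates each verified orbit representative to every cone of $\Naruki$.

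To upgrade this pointwise computation to a piecewise linear statement, I would argue as follows. On the relative interior of each cone $\sigma$ of $\Naruki$, the entries of $M^{Cr}+M^{\Yos{}}$ are $\Q$-linear functions of the Naruki coordinates $r_i$, because both the Yoshida valuations and the (corrected) Cross valuations are linear in these coordinates. The combinatorial type of $\Trop E$ produced by Algorithm~\ref{alg:treesFromLeaves} is locally constant on $\sigma^{\circ}$ (this is the content of Corollary~\ref{cor:PLstructure} extended to $\sigma$), hence the metric structure on each tree is linear on $\sigma^{\circ}$. Continuity across adjacent cones is then a consequence of the continuity of the leaf matrices on $\Naruki$ together with the fact that taking tropical convex hulls of collinear configurations commutes with limits, as a straightforward corollary of Proposition~\ref{pr:tropicalLinesbyMinors} and Corollary~\ref{cor:LineFromBoundary}: a converging family of tropically collinear leaf configurations has a unique limiting tropical line, determined by its leaves.

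The main obstacle will be certifying that the expected Cross valuations predicted by Propositions~\ref{pr:relevantCrossFunctions} and~\ref{pr:newExpValCr15} are indeed the generic valuations on \emph{every} lower-dimensional cone, not merely at its baricenter. The subtlety is that as one moves to deeper faces, additional ties among Yoshida summands in the binomial expressions $\pm(\Yos{k}-\Yos{l})$ for Cross functions can force further jumps analogous to the $\varepsilon$-correction for $\Cross{15}$; this must be ruled out uniformly. I would address this by systematically iterating the tropical-convexity argument used in the proof of Proposition~\ref{pr:newExpValCr15}: for each cone $\sigma$ and each Cross function, search for a tropically non-singular $3\times 3$ minor in the matrices $M^{Cr}+M^{\Yos{}}$ that would force tropical non-collinearity unless the relevant Cross valuation attains the predicted value, and show that no further corrections beyond those in $\cR$ arise. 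A secondary bookkeeping obstacle is verifying, cone by cone, that the leaf-label columns of Table~\ref{tab:treeLabeling} remain consistent under the specialization maps between adjacent cones of $\Naruki$ up to $\Wgp$-symmetry; this is routine but voluminous and best delegated to the accompanying \sage{} scripts.
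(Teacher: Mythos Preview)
Your approach is essentially the same as the paper's: specialize $M^{Cr}+M^{\Yos{}}$ on each of the 24 cone representatives using the expected Cross valuations, run Algorithm~\ref{alg:treesFromLeaves} symbolically over $\Q(r_1,\ldots,r_4,r_4')$, and read off linearity of the metric on each open cone. The paper handles continuity more directly than you propose: rather than invoking limits of tropically collinear configurations, it simply inspects the vertex coordinates output by the algorithm, observes they are continuous piecewise linear across the 24 representatives, and notes that the trees on lower-dimensional cones arise from those on the maximal cones by contracting precisely the edges whose length specializes to zero. This is cleaner than your limit argument and sidesteps any need to re-run the algorithm independently on each face.

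Your ``main obstacle'' is a misreading of the hypotheses. The theorem \emph{assumes} $X$ is generic with respect to $\cR$ in the sense of Definition~\ref{def:genericX}, which by definition means the actual valuations of the 45 Cross functions in $\cR$ equal the expected ones from Propositions~\ref{pr:relevantCrossFunctions} and~\ref{pr:newExpValCr15}. There is nothing further to certify: under this hypothesis the entries of $M^{Cr}+M^{\Yos{}}$ are already determined as linear functions on each cone, and the proof proceeds. The question of whether such generic $X$ exist over every point of every cone was already settled by those propositions via the explicit families in Lemmas~\ref{lm:genExamplesaaaa} and~\ref{lm:genExamplesaaab} (whose formulas depend on arbitrary $r_i$, not just barycentric values). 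So your proposed iteration of the tropical-convexity argument from Proposition~\ref{pr:newExpValCr15} is unnecessary here; it would only be relevant if one wanted to \emph{derive} the correction terms rather than assume them.
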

\begin{proof} We follow the proof strategy from~\autoref{thm:treeTypes}, and encode the leaves of each boundary tree by $M^{Cr} + M^{\Yos{}}$. The genericity assumption allows us to write all entries of this $10 \times 10$ matrix as  piecewise linear functions on the 40 Yoshida valuations, with domains of lineality given by the relative interiors of all Naruki cones.

  Working with the 24 cone representatives arising from~\eqref{eq:coneReps} we rewrite each entry in terms of the tuples of scalars $(r_1,\ldots, r_4)$ or $(r_1,\ldots, r_4')$, depending on the nature of the cone containing $\underline{p}$. \autoref{alg:treesFromLeaves} yields formulas for all vertices of each tree in terms of these scalars. They can be found in the Supplementary material.

  By simple inspection we verify that the entries of all vertices of each tree are continuous piecewise linear functions on the 24 cone representatives. The functions are linear along the relative interior of each cone. Thus, the edge lengths are also continuous piecewise linear functions on $\Naruki$. Precise formulas can be obtained by specilization of those in~\autoref{tab:treeLabeling}. The combinatorial type of each tree is determined from the corresponding trees on the two maximal cones by contracting all edges of length zero.   This concludes our proof.
\end{proof}

\begin{remark}It is worth pointing out that these genericity assumptions affect only 223 trees in tropical surfaces associated to non-apex Naruki cones and all 27 trees for surfaces associated to the apex of $\Naruki$. If we wish to obtain the combinatorial type and metric of each tree on $\Trop X$ for non-generic cases, we can do so as follows. Rather than working with the expected valuations of the five Cross functions appearing in $M^{Cr}$, we replace each  $\val(\Cross{i_j})$ with $\expVal(\Cross{i_j}) + g_j$ for $j=0,\ldots, 4$, as we did in the proof of~\autoref{pr:newExpValCr15}.

  The parameters $g_j$ are all non-negative, and their values depend on the Yoshida functions associated to $X$. In most situations (146 out of 250), the values of $\val(\Yos{})$ force all but one parameter $g_i$ to vanish. Only 32 trees will involve all five parameters $g_0,\ldots, g_4$. We let $g_{i_1}, \ldots, g_{i_s}$ be the potential non-vanishing parameters for a fixed $E\in \sed$.

  Running our implementation of~\autoref{alg:treesFromLeaves} on the field $\Q(r_1,\ldots, r_4, r_4', g_{i_1}, \ldots, g_{i_s})$ leads to new combinatorial types for the tree $\Trop E$, where the extra edges appearing have lengths $g_{i_1},\ldots, g_{i_s}$. The results can be found in the Supplementary material. In particular, when the  cone associated to $X$ is the apex, all 27 boundary tree aquire five bounded edges, as seen in \autoref{fig:nonGenApexTrees}. These edges will have length zero whenever $X$ is generic with respect to $\cR$.
\end{remark}
\begin{figure}[htb]
  \includegraphics[scale=0.65]{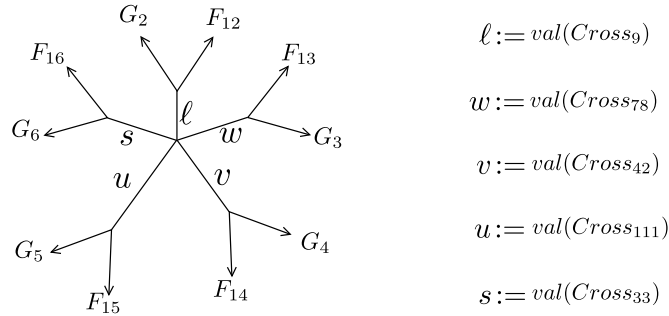}
  \caption{The boundary metric tree $\Trop \E{1}$ in $\Trop X$. In the stable case, all bounded edges have length zero and $\Trop \E{1}$ becomes a star tree with ten leaves.\label{fig:nonGenApexTrees}}
\end{figure}

\section{Boundary tree arrangements from planar configurations}\label{sec:planarConf}
\autoref{sec:comb-types-tree} discusses how to label the leaves of the 27  boundary metric trees on all anticanonical stable tropical cubic surfaces in $\TPr^{44}$.  In this section we present an alternative way to determine this data for $\operatorname{(aa_2a_3a_4)}$ and $\operatorname{(aa_2a_3b)}$ tropical cubic surfaces when we input  a configuration of six tropically generic  points in $\TPr^2$. These methods can be derived from~\cite[Theorem 4.4]{ren.sha.stu:16}.

We start by discussing a central property of the 27 boundary trees. By~\autoref{lm:coverLtoP1}, each line $\ell$ on
a smooth cubic surface admits a 2-to-1 cover $\ell \to \P^1$. Such map induces an involution on $\ell$. The same holds for their
tropicalization.  The
combinatorial structure of each tree $\Trop \ell$ described in~\autoref{thm:treeTypes} allows us the view the involution as a symmetry. Here is the precise statement:

\begin{corollary}\label{cor:invOnTrees}
  Each of the marked 27 trees on an anticanonical tropical cubic surface in $\TPr^{44}$ has an involution. The correspondence is given by the vertical symmetry of each tree that identifies the leaves on~\autoref{tab:treeLabeling} labeled by $i$ and $9-i$, for $i=0,\ldots, 4$.  
\end{corollary}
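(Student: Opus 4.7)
The plan is to derive the tropical involution from the classical $2$-to-$1$ cover of each line, and then match it combinatorially and metrically against the vertical symmetry of the trees recorded in~\autoref{tab:treeLabeling}.

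First, by~\autoref{lm:coverLtoP1}, the $2$-to-$1$ cover $\varphi \colon \ell \to \P^1$ produces a classical involution $\iota_\ell$ on each exceptional line $\ell \subset X$. The five marked points of $\varphi$ correspond to the tritangent planes containing $\ell$, and each fiber over such a marked point is the pair of intersection points $\ell \cap E'$ and $\ell \cap E''$, where $\{\ell,E',E''\}$ is the associated anticanonical triangle. In the absence of Eckardt points, these two points are distinct, and $\iota_\ell$ swaps them. Tropicalizing yields a self-map $\Trop \iota_\ell$ of $\Trop \ell$. On the ten leaves of $\Trop \ell$, which by~\autoref{lm:IntersectionBoundaryLines} are in bijection with the ten nodes of $\ell$ with the other exceptional curves, the map acts by swapping precisely the five pairs of leaves coming from a common tritangent plane through $\ell$.

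Next, I would verify by direct inspection of~\autoref{tab:treeLabeling} that this pairing matches the vertical pairing $(i,9-i)$. For instance, for $\E{1}$, leaves $0$ and $9$ are labelled $\F{12}$ and $\Gc{2}$, both appearing with $\E{1}$ in the triangle $x_{12}$; the remaining pairs $(1,8),(2,7),(3,6),(4,5)$ correspond to the triangles $x_{16},x_{15},x_{14},x_{13}$ respectively. The $\Wgp$-action described in~\autoref{sec:moduli-cubics} transports this verification from $\E{1}$ to all 27 labels in $\sed$, since the Weyl group acts transitively on the exceptional curves and equivariantly on the leaf labelings.

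Finally, by~\autoref{cor:LineFromBoundary}, a tropical line is determined by its set of leaves, so the prescribed involution of the leaves extends uniquely to a combinatorial automorphism of $\Trop \ell$. The piecewise linear edge-length data recorded in~\autoref{tab:treeLabeling} is manifestly symmetric under $i \leftrightarrow 9-i$: in each combinatorial type (I), (II), (III), the pairs of edge lengths $(u,s,w,l)$ are attached symmetrically on the two sides of the tree's axis. Hence the combinatorial involution is an isometric reflection, and by continuity and the $\Wgp$-equivariance of the tree-building procedure it globalizes to the full Naruki fan. The main obstacle I anticipate is precisely this metric compatibility: while the swap of leaves is forced by the classical involution, certifying that the resulting map is a genuine isometry across all 24 cone representatives requires appealing to~\autoref{thm:PL} to propagate the identification from the two maximal-cone representatives to all lower-dimensional strata.
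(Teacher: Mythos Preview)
Your overall strategy---derive the involution from the classical $2$--to--$1$ cover of~\autoref{lm:coverLtoP1}, identify its action on leaves as swapping the two companions in each tritangent plane, and match this against the explicit data of~\autoref{tab:treeLabeling}---is exactly the paper's. The paper treats the statement as an immediate byproduct of the computations behind~\autoref{thm:treeTypes}: those computations produce all 27 rows of the table and the symmetric Figures~\ref{fig:treesaa2a3a4} and~\ref{fig:treesaa2a3b}, from which the $(i,9-i)$ pairing is read off directly. The remark following the corollary makes the tropicalization step precise by realizing the cover as a map $\Trop\ell\to\Trop\ell'\subset\TPr^4$ whose fibers over leaves are the triangle pairs.

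The one genuine gap in your argument is the Weyl-group reduction. You verify the $(i,9-i)$ pairing only for $\E{1}$ and then invoke transitivity of $\Wgp$ on $\sed$. But the column indices $0,\ldots,9$ in~\autoref{tab:treeLabeling} are an artifact of the computation carried out for the \emph{fixed} cone representatives~\eqref{eq:coneReps}; an element $\sigma\in\Wgp$ with $\sigma(\E{1})=E'$ also moves that cone to a different cone, so there is no a priori reason the numerical positions transfer. What would work is restricting to the stabilizer of the cone, but that stabilizer has order $16$ (respectively $8$) and hence cannot act transitively on the 27 lines---the groupings separated by double lines in the table are precisely its orbits. So checking $\E{1}$ alone covers only one of roughly a dozen orbits. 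The fix is either to inspect one representative per stabilizer-orbit (or simply all 27 rows, as the paper's computation implicitly does), or to argue structurally that each combinatorial tree type in Figures~\ref{fig:treesaa2a3a4}--\ref{fig:treesaa2a3b} admits a unique nontrivial isometry, forcing the tropicalized involution to be the visible vertical reflection.
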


\begin{remark}
The analogous result for the universal Cox embedding was established in~\cite[Proposition 2.4]{ren.sha.stu:16}. This involution is closely related to the 2--to--1 cover $\ell\to \P^1$ from~\autoref{lm:coverLtoP1} which we view over the field $\widehat{L}$. Indeed we represent $\P^1$ as a line $\ell'$ on $\P^4$, where we fix the five projective coordinates of $\P^4$ to be the products of pairs in $\sed$ forming one of the five anticanonical triangles containing $\ell$. The line is then determined by the ten trinomials in the ideal over $\widehat{L}$ from~\autoref{thm:universalCoxL} involving these five products. Tropicalization turns $\ell'$ into a metric  tree in $\TPr^4$ with five leaves and the cover becomes the map $\Trop \ell \to \Trop \ell'$. The fiber over each leaf on the target space consists of two leaves labeled by the pair in $\sed$ determining the unique $\infty$ coordinate on the target leaf. These two symbols are exchanged by the involution.
\end{remark}

Since the involution swaps pairs of lines meeting a fixed one, it seams feasible that such involution could be achieved by an element of $\Wgp$. The following statement provides a negative answer:

\begin{proposition}\label{pr:invNotWE6}
  The involutions on the 27 metric trees cannot be realized by elements of $\Wgp$.
\end{proposition}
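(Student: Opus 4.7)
The plan is to reduce the statement to a single tree by $\Wgp$-equivariance and then derive a contradiction from the integrality of lattice automorphisms. Since $\Wgp$ acts transitively on the 27 exceptional classes, and hence on the 27 metric trees of $\Trop X$, it suffices to exhibit no element of the stabilizer of $\E{1}$ in $\Wgp$ that acts on the ten lines meeting $\E{1}$ as the involution $\iota_{\E{1}}$ prescribed by~\autoref{cor:invOnTrees}. Reading the leaf labeling of $\Trop \E{1}$ off from~\autoref{tab:treeLabeling}, one sees that $\iota_{\E{1}}$ is the exchange $\F{1j}\leftrightarrow \Gc{j}$ for $j=2,\dots,6$, i.e.\ the swap of the two non-$\E{1}$ members of each of the five anticanonical triangles $x_{1j}$ containing $\E{1}$.

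Assume for contradiction that such a $w\in\Wgp$ exists. I would work in the basis $\{H,\E{1},\dots,\E{6}\}$ of $\Pic(X)$, writing $w(H)=aH+\sum_{i=1}^{6}b_i\E{i}$ with $a,b_i\in\Z$ since $w$ is a lattice automorphism. Orthogonality of $H$ and $\E{1}$ in the intersection pairing, together with $w(\E{1})=\E{1}$, forces $b_1=0$. Translating the relation $w(\F{1j})=\Gc{j}$ into the class identity $w(H)-\E{1}-w(\E{j})=2H-\sum_{i\neq j}\E{i}$ and solving for $w(\E{j})$ yields the explicit formula
\[
  w(\E{j}) \;=\; (a-2)H \;+\; b_j\,\E{j} \;+\!\!\sum_{i\in\{2,\dots,6\}\setminus\{j\}}\!\! (b_i+1)\,\E{i} \qquad \text{for } j=2,\dots,6.
\]

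The decisive step is to impose $w(K_X)=K_X$, which holds for every $w\in\Wgp$ by the characterization of $\Wgp$ recalled in~\autoref{sec:moduli-cubics}. Substituting the formulas above into $-3\,w(H)+\sum_{i=1}^{6}w(\E{i})=-3H+\sum_{i=1}^{6}\E{i}$, the coefficient of $H$ on the left evaluates to $-3a+5(a-2)=2a-10$, while on the right it is $-3$. This forces $a=7/2$, contradicting $a\in\Z$. As an independent cross-check, comparing the coefficient of $\E{j}$ for $j\geq 2$ likewise gives $2b_j+4=1$, hence $b_j=-3/2\notin\Z$. The main difficulty of the argument is not computational but conceptual: among the many constraints imposed by $w\in\Wgp$, preservation of the anticanonical class alone already obstructs realizability in a single identity, with no appeal to the intersection-form equations required. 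Transferring the contradiction along the $\Wgp$-orbit of $\E{1}$ settles the remaining 26 trees at once.
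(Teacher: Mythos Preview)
Your argument is correct. The reduction to a single tree via transitivity is sound, and reading off the involution on $\Trop\E{1}$ from~\autoref{tab:treeLabeling} as $\F{1j}\leftrightarrow\Gc{j}$ for $j=2,\dots,6$ is accurate. The linear-algebra computation in $\Pic(X)$ is clean: with $b_1=0$ and the expression for $w(\E{j})$ in hand, the $H$-coefficient of $w(K_X)=K_X$ indeed collapses to $2a-10=-3$, which has no integer solution. The cross-check on the $\E{j}$-coefficient is also correct.

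The paper's proof is different in character: it works with the tree $\Trop\Gc{1}$ (whose involution swaps $\E{j}\leftrightarrow\F{1j}$) and appeals to a \sage\ computation over the stabilizer of $\Gc{1}$ in $\Wgp$, finding that at most four of the five pairing constraints can be realized simultaneously, and exhibiting the unique element that achieves four. Your route is more elementary and self-contained: it isolates the single obstruction---preservation of the canonical class---that forces non-integrality, with no search over group elements required. What the paper's approach buys in return is finer information (the explicit ``near-miss'' element and its full action on $\sed$), which your argument does not produce but which the statement does not demand.
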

\begin{proof}
  It suffices to show the statement for a single tree, say $G_1$. We argue by contradiction. Consider an element of $\Wgp$ realizing the involution for the tree $\Trop G_1$. By construction, the involution identifies $E_j$ and $F_{1j}$ for $j=2,\ldots, 6$. A calculation with~\sage~available in the Supplementary material confirms that an element of $\Wgp$ fixing $G_1$ can only satisfy four of these five constraints. Furthermore, such element is unique. For example, if we verify the condition for $j=2,3,4,5$, this unique element of $\Wgp$  fixes $\Gc{1}$, $\E{6}$, $\F{16}$ and identifies the remaining 24 symbols in $\sed$ as follows:
  \[
E_1 \leftrightarrow G_6\;;\;  F_{23}\leftrightarrow F_{45}\,, \; F_{24}\leftrightarrow F_{35}\,, \; F_{25}\leftrightarrow F_{34}\,; \; E_j\leftrightarrow F_{1j} \text{ and } G_j\leftrightarrow F_{j6} \text{ for }j=2,\ldots, 5.\qedhere
  \]
\end{proof}

Recall from~\autoref{sec:moduli-cubics} that smooth cubic del Pezzo surfaces are successive blow-ups of $\P^2$ along six generic points $p_1,\ldots, p_6$. After a Cremona transformation, we set $p_1,p_2$ and $p_3$ to be the torus fixed points $(1:0:0)$, $(0:1:0)$ and $(0:0:1)$. Genericity forces the remaining three points $p_4$, $p_5$ and $p_6$ to lie in the dense torus. We set $P_i := \trop(p_i)\in \TPr^2$ for each $i=1,\ldots, 6$ and  assume this configuration of six distinct  points  in $\TPr^2$ is \emph{tropically generic}. That is, any two points lie in a unique tropical line, any three are not tropically collinear, and any five lie in a unique tropical conic which, furthermore, avoids the  sixth point.

  The choice of $p_1, p_2$ and $p_3$ determines the Newton polytopes and tropicalization of the plane curves associated to all $\F{ij}$ and $\Gc{i}$, obtained by blowing down our six exceptional divisors $\E{1},\ldots, \E{6}$. They are depicted in~\autoref{fig:NPAndTropCurves}. 
By abuse of notation we refer to these tropical lines and conics in $\TPr^2$ as $\F{ij}$ and $\Gc{i}$, respectively. By construction, $\F{ij}$ passes through $P_i$ and $P_j$, and $\Gc{i}$ avoids $P_i$.  

\begin{figure}[htb]
    \includegraphics[scale=0.4]{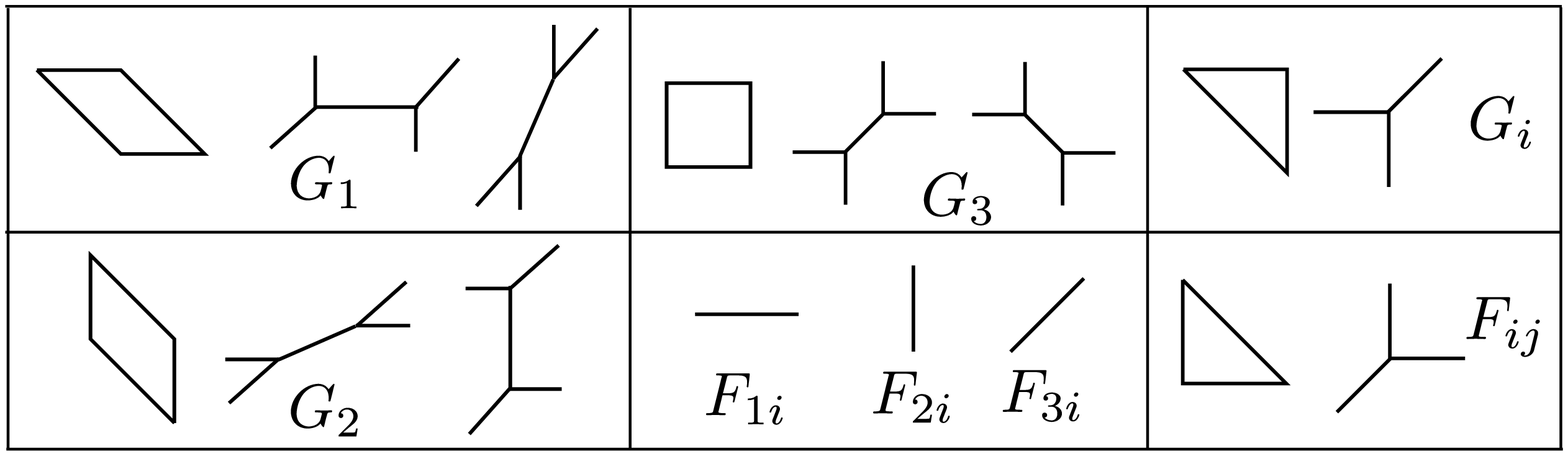}
    \caption{Newton polytopes and tropical curves $\F{1i}, \F{2i},\F{3i}$, $\Gc{i}$ and $\F{ij}$  ($3\leq i<j\leq 6$).\label{fig:NPAndTropCurves}}. 
  \end{figure}

The genericity conditions on $P_1,\ldots, P_6$ ensure that the associated tropical cubic surface $\Trop X$ in $\TPr^{44}$ is either an $\operatorname{(aa_2a_3a_4)}$ or an $\operatorname{(aa_2a_3b)}$ surface.  By analogy we  refer to the configurations yielding each type as $\operatorname{(aa_2a_3a_4)}$ or $\operatorname{(aa_2a_3b)}$ configurations, accordingly. 
Our goal is to determine the  type from location of $P_4,P_5$ and $P_6$. By~\autoref{thm:treeTypes}, we can distinguish the type by the number of combinatorial types of boundary trees.   Our next statement,  closely related to the constructions in~\cite[Section 5]{ren.sha.stu:16},  shows that  we can  characterize all but three of all 27 trees from this data. 

\begin{proposition}\label{pr:planarConf} The metric trees associated to all 27 lines on $X$ except $\E{4},\E{5}, \E{6}$ is completely determined by the position of the six points $P_1,\ldots, P_6$ whenever they are tropically generic.  
    \end{proposition}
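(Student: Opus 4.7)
My plan is to treat the 24 lines in three subfamilies, corresponding to the three rows of exceptional curves described at the start of~\autoref{sec:except-curv-antic}. For each line $\ell$, I will read off the 10 leaves and the combinatorial type of the metric tree $\Trop \ell$ directly from the planar tropical picture in~\autoref{fig:NPAndTropCurves}, then verify that the resulting tree agrees with~\autoref{tab:treeLabeling}.

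First consider the 15 lines $F_{ij}$ with $1 \le i < j \le 6$. Each $F_{ij}$ tropicalizes to the tropical line $\tau_{ij} \subset \TPr^2$ through $P_i$ and $P_j$, with combinatorial type as in~\autoref{fig:NPAndTropCurves}. The 10 leaves of $\Trop F_{ij}$ are in bijection with the 10 other lines on $X$ meeting $F_{ij}$, which split as $E_i, E_j$ (2 lines), $G_i, G_j$ (2 lines), and the six $F_{kl}$ with $\{k,l\} \cap \{i,j\} = \emptyset$ (6 lines). The leaves labeled $E_i, E_j$ are the points $P_i, P_j$ themselves, whereas each remaining leaf is the unique tropical intersection point of $\tau_{ij}$ with the corresponding planar tropical line or conic. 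Tropical genericity of $P_1,\dots,P_6$ forces these 10 points to be pairwise distinct, and the marked tropical line $\tau_{ij}$, viewed as an abstract metric tree with lattice lengths between consecutive leaves, will give the tree $\Trop F_{ij}$.

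Next, the 6 lines $G_j$ are handled analogously: $G_j$ tropicalizes to the tropical conic $\gamma_j \subset \TPr^2$ through the five points $\{P_i : i \ne j\}$. The 10 leaves of $\Trop G_j$ correspond to $\{E_i : i \ne j\}$ (sitting at the five points on $\gamma_j$) and to $\{F_{jk} : k \ne j\}$ (sitting at the five tropical intersection points $\gamma_j \cap \tau_{jk}$). For the three lines $E_i$ with $i \in \{1,2,3\}$, the Cremona normalization places $p_i$ at a torus-fixed point of $\P^2$, so $E_i$ corresponds to the boundary $\TPr^1$ at the $i$-th corner of $\TPr^2$. The 10 lines meeting $E_i$, namely $F_{ij}$ and $G_j$ for $j \ne i$, approach this corner along unbounded rays with specific primitive slopes recorded in~\autoref{fig:NPAndTropCurves}. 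These slopes, read as marked points on the boundary $\TPr^1$, furnish the 10 leaves of $\Trop E_i$ and, together with the slopes themselves, determine a metric tree.

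The remaining verification that these trees agree with the entries of~\autoref{tab:treeLabeling} for both maximal Naruki cone types is the main obstacle, but it is essentially bookkeeping: using the parametrizations of~\textcolor{blue}{Lemmas}~\ref{lm:genExamplesaaaa} and~\ref{lm:genExamplesaaab} to express the $d_i$ (and hence the tropical positions $P_i = \trop [1 : d_i : d_i^3]$) in terms of the cone scalars $r_1,\dots,r_4,r_4'$, one computes the intersection points on $\tau_{ij}$, $\gamma_j$ and the corner $\TPr^1$'s and matches both the leaf labels and the bounded-edge lattice lengths with those in~\autoref{tab:treeLabeling}. The $\Wgp$-action on $\sed$ restricted to the $\Sn{6}$-stabilizer of the planar picture cuts the number of cases to a small handful of representatives. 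Finally, the reason $\Trop E_4, \Trop E_5, \Trop E_6$ are excluded is structural: the base points $P_4, P_5, P_6$ lie in the dense torus of $\TPr^2$, so the associated exceptional divisors are not detected by the planar tropicalization, and their tree structures depend on residual data (the Cross-function valuations of~\autoref{sec:comb-types-tree}) that is not recorded by the six tropical points.
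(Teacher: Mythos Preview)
Your proposal has a genuine gap. You assert that ``tropical genericity of $P_1,\dots,P_6$ forces these 10 points to be pairwise distinct,'' and then read off the tree $\Trop \ell$ as the marked tropical curve with legs sprouting at those points. The paper shows this is false in general: the planar curve associated to $\ell$ typically acquires only six to nine distinct labeled intersection points, together with multi-labeled points or even labeled edges (non-transverse intersections). In fact the paper observes that seeing exactly ten distinct points forces a caterpillar (type~(II)) tree and hence an $\operatorname{(aa_2a_3b)}$ configuration; for $\operatorname{(aa_2a_3a_4)}$ surfaces this never happens, so your argument as written cannot produce the type~(I) trees that dominate \autoref{tab:treeLabeling}.

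The missing idea is the use of the involution on each tree from \autoref{cor:invOnTrees}. When several curves meet the dashed curve $\ell$ at the same point (or along an edge), the planar picture alone does not tell you how the corresponding legs branch. The paper resolves this by exploiting the vertical symmetry $i \leftrightarrow 9-i$ of the tree: once you know which pairs of leaves form cherries on one side of the planar curve, the involution dictates the branching pattern on the undetermined side and pins down the placement of legs along labeled edges. Your bookkeeping step via the parametrizations of \textcolor{blue}{Lemmas}~\ref{lm:genExamplesaaaa} and~\ref{lm:genExamplesaaab} does not substitute for this, since the proposition asserts that the trees are determined by the tropical positions $P_1,\dots,P_6$ themselves, not by a particular choice of lift to the Naruki cone.
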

\begin{proof} Our prior discussion allows us to assume that $P_1$, $P_2$ and $P_3$ are the tropicalization of the three torus fixed points in $\P^2$. 
Up to $\Sn{3}\times \Sn{3}$-symmetry there are six cases to analyze, namely $\E{1}$,  $\F{13}$, $\F{35}$, $\F{45}$, $\Gc{2}$ and $\Gc{5}$.  Each of the reference tropical plane curves is depicted with dashed lines in~\autoref{fig:Confaa2a3b}.
Since the curves $\F{12}$, $\F{13}$, and $\F{23}$ are in the boundary of $\TPr^2$, we draw them in the boundary of our rectangles, between each of the three points representing $E_1$, $E_2$ and $E_3$. Similarly, to each of the exceptional curves $\E{1}, \E{2}$ and $\E{3}$ we associate a horizontal, vertical or diagonal dashed line  passing through one of $(-N:0:0)$, $(0,-N,0)$ or $(N:N:0)$, respectively,  for $N$ large enough. The dashed curve associated to $\E{1}$ can be seen in the top-left of \autoref{fig:Confaa2a3b}.

  Each metric tree $\Trop \ell$ will be constructed from the input dashed tropical curve by analyzing its intersection with the ten tropical plane curves associated to the symbols $E$ in $\sed$ meeting $\ell$ on the cubic surface. Each pairwise intersection consists of either finitely many points counted with multiplicity (by tropical B\'ezout's Theorem) or a combination of points and edges for non-proper intersections. By abuse of notation we use $\ell$ and $E$ to denote their associated planar tropical curves.

  In what follows, we describe a strategy to mark the intersection
  points on each planar dashed curve $\ell$.  First, if the
  intersection between $E$ and $\ell$ is a single point (with
  multiplicity) in $\TPr^2$, we label the point by $E$. This happens,
  for example, for the dashed curve $\F{35}$ and the curve $\F{16}$ in
  the bottom-left of~\autoref{fig:Confaa2a3b}.  Second, assume the
  curve $E$ meets the dashed curve $\ell$ in exactly two points (with
  multiplicity), and one of them is one of the marked points $P_i$ for
  $i=4, 5, 6$.  In this situation, we label the unmarked
  intersection point by $E$, and label the marked point $P_i$ by
  $\E{i}$. This occurs, for example with the dashed curve $\Gc{5}$ and
  the curve associated to $\F{56}$ in the bottom-right picture.

  The intersections of the dashed curve $\ell$ with $\E{1}$, $\E{2}$
  and $\E{3}$ occur as the leaves of $\ell$ at the end of a ray with
  direction $(-1:0:0)$, $(0:-1:0)$ and $(1:1:0)$,
  respectively. Similarly, the intesection between $\ell$ and either
  $\F{12}$, $\F{13}$, or $\F{23}$ are obtained as the endpoints of
  rays with directions $(-1:-1:0)$, $(0:1:0)$ or $(1:0:0)$,
  respectively. This can be seen in the top- and bottom-left pictures
  in~\autoref{fig:Confaa2a3b} corresponding to the dashed curves
  $\F{45}$ and $\Gc{5}$.

Note that it is possible to have the same intersection point between $\ell$ and more than one undashed curve. If so, we label  the point by all such curves. This is the case, for example, for the point in the dashed curve $\F{13}$ labeled by $\F{25}$ and $\F{56}$ in the middle-left picture. Finally, when a curve $E$ meets a dashed curve $\ell$ along an edge or two (as it occurs for the curve $\F{25}$ and the dashed curve $\Gc{2}$ in the middle-right picture), we label the whole non-transverse intersection as $E$.

  \begin{figure}[htb]
  \includegraphics[scale=0.8]{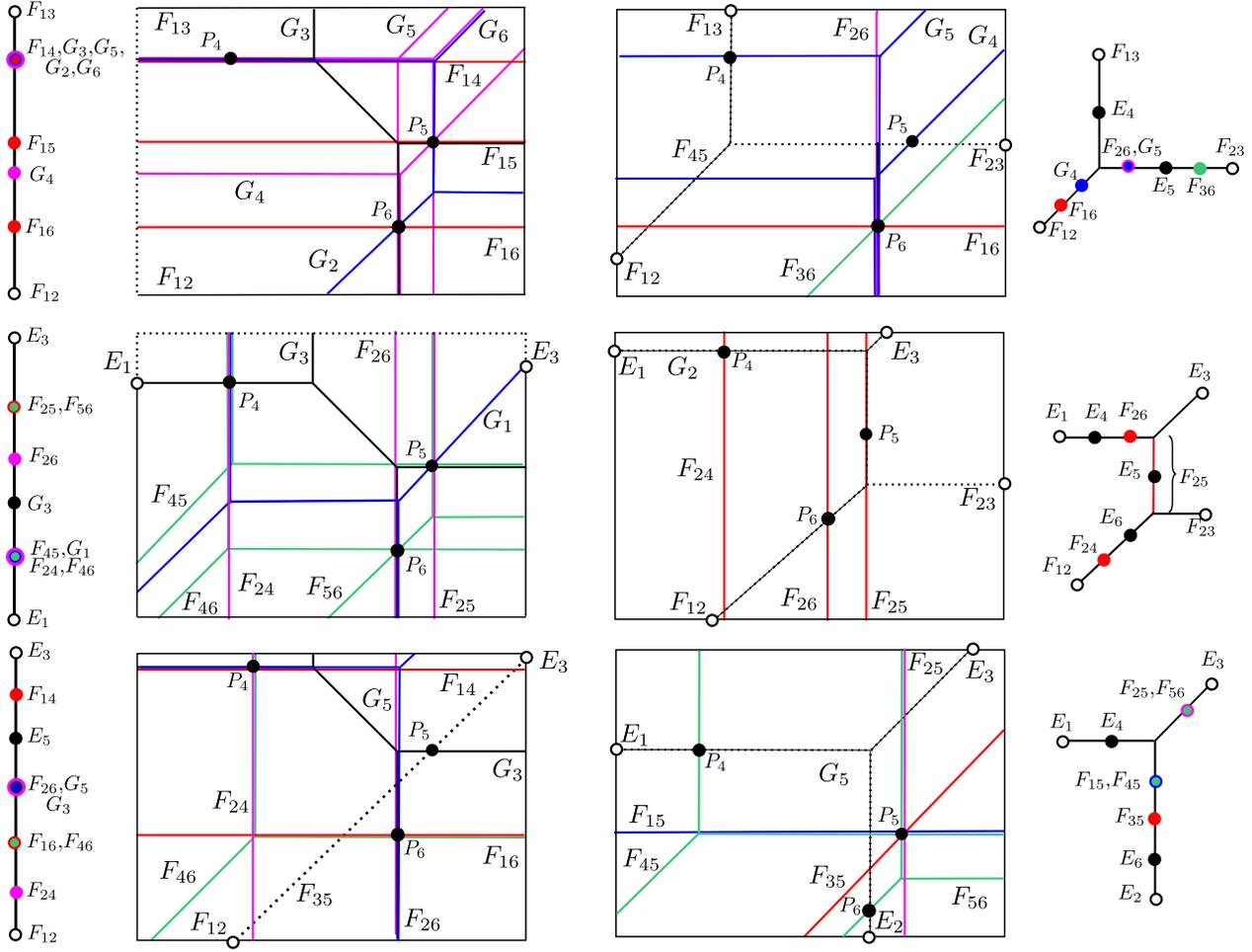}
  \caption{From left to right and top to bottom: the induced labelings for dashed tropical plane curves build from  $\E{1}$, $\F{45}$, $\F{13}$, $\Gc{2}$, $\F{35}$ and $\Gc{5}$ for the  $\operatorname{(aa_2a_3b)}$ configuration $\{(-10:10:0),(2:5:0), (0:0:0) \}$ in $\TPr^2$. \label{fig:Confaa2a3b}}
  \end{figure}

  Whenever we see exactly ten intersection points, we label each of them by the unique symbol in $\sed$  corresponding to the containing tropical curve. Each point will be adjacent to a leg with a labeled leaf and furthermore, $\Trop \ell$ will become a caterpillar tree, i.e.\ a type (II) tree on~\autoref{fig:treesaa2a3b}. In particular, this ensures that we have  an $\operatorname{(aa_2a_3b)}$ configuration. This occurs, for example, for the tree $\Trop \Gc{1}$ and the planar configuration where $P_4=(-10:10:0)$, $P_5=(2:5:0)$ and $P_6=(0:0:0)$, seen in~\autoref{fig:Confaa2a3b}.

  In all remaining cases, we will have between six and nine labeled points and one or two labeled edges. The number  depends on the input planar point configuration. Each single-label $E$ on a point gives rise to a leg with leaf $E$ on the tree $\Trop \ell$. Each multi-labeled point will give rise to a labeled branch emanating from this point. To recover $\Trop \ell$ and its labeling we must determine the topology and labeling on all these branches. The metric structure on $\Trop \ell$ will be obtained as a by-product. By construction, it depends solely on the input planar configuration.

  The involution of each metric tree described in~\autoref{cor:invOnTrees} will allow us to determine the missing information. 
The procedure to characterize branches can be illustrated with the top-left picture in~\autoref{fig:Confaa2a3b}. There is only one branch to compute, namely, that  emanating from the five-labeled vertex adjacent to $\F{13}$. 
  We start from the points labeled $\F{12}$ and $\F{16}$. By construction, they are both adjacent to a common vertex in the final tree (i.e., they form a cherry.) Since the involution  swaps this pair  with $\Gc{2}$ and $\Gc{6}$, we know the branch must have  a cherry  with its two leaves labeled by the later pair. Finally, since the points labeled $\Gc{4}$, $\F{15}$ and $\F{13}$ will be swapped with $\F{14}$, $\Gc{5}$ and $\Gc{3}$ under the involution, we must attach three legs to the branch along its spine (with leaves labeled in that order) and before the cherry computed earlier. We conclude that $\Trop\E{1}$ is a Type (II) tree on an $\operatorname{(aa_2a_3b)}$ cone.

Once all branches of a tree have been computed, we turn to the labeled edges. The involution determines the unique location on the edge to place the legs corresponding to each symbol in the edge-label. For example, in the middle-right of~\autoref{fig:Confaa2a3b}, the leg corresponding to $\F{25}$ in the dashed curve $\Gc{2}$  must be placed so that the distance between the markings $\E{5}$ and $\E{4}$ on the dashed curve agrees
with that of $\F{25}$ and $\F{24}$.  This concludes our proof.
  \end{proof}

The leaf labeling for the trees $\Trop E_4$, $\Trop E_5$ and $\Trop E_6$ which were  not discussed in \autoref{pr:planarConf} can be recovered by combining the action of $\Wgp$ with Cremona transformations of $\P^2$. Alternatively, we can simply consider the action this group on the leaf labels given by~\autoref{tab:treeLabeling}. 

For example, consider the two configurations $\{(-12:6:0),(-7:4:0), (0:0:0)\}$, and $\{(-10:10:0),(2:5:0), (0:0:0)\}$ in $\TPr^2$. The later can be seen in \autoref{fig:Confaa2a3b}. By \autoref{pr:planarConf} and~\autoref{thm:treeTypes}, we can determine the nature of these configutations by recording how many types of trees appear among the 24 we can compute. A explicit computation allows us to determine the first is an $\operatorname{(aa_2a_3a_4)}$, whereas the second one is an $\operatorname{(aa_2a_3b)}$ configuration. We label each of the 24 trees on each case by the corresponding symbol in $\sed$ and the tree type (either (I), (II) or (III).) We write down five sets, each containing the symbols in $\sed$ yielding a tree of each type:
\begin{equation}\label{eq:labelSets}
  \begin{aligned}
    \mathscr{L}_{I}&:=\sed\smallsetminus\{\E{4},\E{5},\E{6},\F{35},\Gc{3} \},
    \qquad 
    \mathscr{L}_I':=\{\E{2}, \E{3}, \F{14}, \F{15}, \F{23}, \F{26}, \F{36}, \Gc{1}, \Gc{4}, \Gc{5}\}, \\
  \mathscr{L}_{II}& := \{\F{35},\Gc{3}\},  \;\;     \mathscr{L}_{II}':= \{\E{1}, \F{12}, \F{13}, \F{16},  \F{24}, \F{25}, \F{34}, \F{46},\F{56}, \Gc{2}, \Gc{6} \},\;\;   \mathscr{L}_{III}' =   \mathscr{L}_{II}.
  \end{aligned}
\end{equation}
The two sets for the $\operatorname{(aa_2a_3a_4)}$-configuration are indicated by $\mathscr{L}_{I}$, $\mathscr{L}_{II}$. The remaining three correspond to the $\operatorname{(aa_2a_3b)}$ one.

To determine the three missing trees on each case and their labelings, we first find the elements of the Weyl group $\Wgp$ sending each of these five sets in~\eqref{eq:labelSets} to the sets of symbols labelling each tree type in~\autoref{tab:treeLabeling}. For our two sample configurations, there are 1152 and 96 such elements, respectively. Second, we let these elements act on the 24 pairs ($E$, tree type) for each the planar configuration and record those giving  the labeling pairs in the table as outputs. This yields 16 and eight elements for each case, respectively; as many as the size of the stabilizers of each maximal cone in the Naruki fan.

The inverse of the any of these elements in $\Wgp$ determines the labeling on the three missing  trees  by acting on the three unaccounted  trees from~\autoref{tab:treeLabeling}. The labeling and tree types will be independent of the chosen Weyl group element. For example, if we pick the single element determined by the $6\times 6$ matrices $\sigma_0$ and $\sigma_1$  below (corresponding to the action on the simple roots $\{\alpha_1,\ldots, \alpha_6\}$) and we  act on  the triples of trees indexed by $\{\E{2}, \E{6}, \F{35}\}$ and $\{\E{3}, \Gc{3}, \F{15}\}$, respectively, we obtain the labeling on $\Trop\E{4}$, $\Trop\E{5}$ and $\Trop\E{6}$:
\[
  {\sigma_0}:=
\left(\begin{array}{rrrrrr}
  0 & 1 & 0 & 0 & -1 & 1\\
  0 & 0 & 0 & 1 & -2 & 1\\
  -1 & 1 & 0 & 1 & -2 & 1\\
  -1 & 1 & 0 & 1 & -3 & 2\\
  0 & 1 & -1 & 1 & -2 & 1\\
  0 & 0 & -1 & 1 & -1 & 1
\end{array}\right) \qquad \text{ and } \qquad
  {\sigma_1}:=
\left(\begin{array}{rrrrrr}
  0 & -1 & 0 & 1 & 0 & -1\\
  -1 & 0 & 0 & 1 & 0 & -1\\
  -1 & -1 & 0 & 1 & 1 & -2\\
  -1 & -1 & -1 & 2 & 1 & -2\\
  0 & 0 & -1 & 1 & 1 & -2\\
  0 & 0 & 0 & 0 & 1 & -1
  \end{array}\right).
\]
In particular, we conclude that $\Trop\E{4}$ has types (I) and (II)  for each sample configurations, whereas $\Trop\E{5}$ has types (II) and (III), respectively. Finally,  $\Trop\E{6}$ is of type (I) in both cases.    For further details on the ~\python\ and \sage\ implementations, we refer to the Supplementary Material.

\section{The moduli space of stable tropical cubic surfaces}\label{sec:trop-moduli-space}

~\autoref{thm:Naruki} provides a modular interpretation of the Naruki fan in terms of stable tropical surfaces. The goal of this section is to prove this statement. In particular, we provide a concrete test to determine when a tropical cubic surface is stable.

We start by describing the general framework. We fix a $\KK$-value for the 40 Yoshida functions $\Yos{} = (\Yos{0}:\ldots:\Yos{39}) \in \Gm^{40}/\Gm$ where the Cross functions do not vanish, and let $X_{\Yos{}}$ be the smooth cubic surface associated to $\Yos{}$. \autoref{lm:alwaysGeneric} ensures that the defining ideal of  $X_{\Yos{}}$ in $\P^{44}_{\KK}$ is obtained by specializing the generators of the ideal in \autoref{thm:anticanL} at the prescribed point $\Yos{}$. The tropical variety $\Trop X_{\Yos{}}\subset \TPr^{44}$ is obtained from this ideal.

By construction, $X_{\Yos{}} \subset \P^{44}$ is the compactification of the very affine variety $X_{\Yos{}}^{\circ} \in \Gm^{45}/\Gm$ obtain by remove from $X_{\Yos{}}$ its 27 exceptional curves. Furthermore, $X_{\Yos{}}$ is normal and $\Q$-factorial and the boundary of $X_{\Yos{}}$ in $\P^{44}$ is divisorial with simple normal crossings due to the absence of Eckardt points on $X_{\Yos{}}$. In this context, the theory of geometric tropicalization from~\cite[\S 2]{hac.kee.tev:09} provides a way to construct $\Trop X_{\Yos{}}$ from the divisorial valuation determined by each line in the boundary of $X_{\Yos{}}$ evaluated on a $\Z$-basis of the cocharacter lattice of $\Gm^{45}/\Gm$. For further details about this construction, we refer to~\cite[Section 2]{lux.qu:11} and~\cite[Chapter 6.5]{mac.stu:15}.

The anticanonical embedding is ideally suited for this tropicalization method, since the information regarding the 27 divisorial valuations for each $X_{\Yos{}}$ can be recovered from the boundary  of $\Trop X_{\Yos{}}$. We conclude:

\begin{theorem}\label{thm:BoundaryDeterminesEverything}
  The tropical cubic surface $\Trop X_{\Yos{}}$ is determined by its boundary line  arrangement.
\end{theorem}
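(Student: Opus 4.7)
The plan is to recover $\Trop X_{\Yos{}}$ as a polyhedral complex in $\TPr^{44}$ from the combinatorial and metric data of its 27 boundary trees using geometric tropicalization in the style of Hacking--Keel--Tevelev~\cite{hac.kee.tev:09} (see also~\cite[Chapter 6.5]{mac.stu:15}). The hypotheses preceding the theorem give exactly what is needed: $X_{\Yos{}}$ is smooth, the 27 exceptional curves cut out a simple normal crossings divisor (no Eckardt points), and $X_{\Yos{}}^\circ \subset \Gm^{45}/\Gm$ is a very affine surface. Consequently, the tropicalization $\Trop X_{\Yos{}}^\circ \subset \R^{45}/\rspanone$ is a two-dimensional weighted polyhedral fan whose rays $\rho_E$ are indexed by the 27 exceptional divisors $E\in \sed$, and whose two-dimensional cones $\rho_E + \rho_{E'}$ are indexed by the 135 intersecting pairs, i.e., the edges of the Schl\"afli graph (\autoref{lm:IntersectionBoundaryLines}). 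In particular, the incidence structure of the fan is fixed once and for all by $\Wgp$-combinatorics and is independent of $\Yos{}$.

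The first step is to argue that the primitive integer generator of each ray $\rho_E$ is determined by the corresponding boundary tree $\Trop E$. Indeed, by geometric tropicalization, this generator is the vector of divisorial valuations $(\operatorname{ord}_E(X_t))_{t\in \ted}\in \Z^{45}/\Z\mathbf{1}$, where $X_t$ denotes the anticanonical section attached to the triangle $t\in \ted$. By~\autoref{cor:BoundaryPoints}, $\operatorname{ord}_E(X_t) > 0$ precisely when $E$ is contained in $t$, i.e., for the five coordinates that become $\infty$ along $\Trop E$; the remaining 40 entries vanish. Up to a global shift by $\rspanone$, this vector is exactly read off from the leaf-direction data of $\Trop E$ in $\TPr^{44}$ described in~\autoref{def:tropLines}. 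Thus the 27 rays of $\Trop X_{\Yos{}}^\circ$ are reconstructed from the recession directions of the 27 boundary trees.

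The second step is to recover the metric embedding of the two-cells in $\TPr^{44}$. By geometric tropicalization the fan $\Trop X_{\Yos{}}^\circ$ sits inside $\R^{45}/\rspanone$ with a distinguished translation class, and compactifying to $\TPr^{44}$ glues on the 27 tropical lines at infinity. For each edge $\{E,E'\}$ of the Schl\"afli graph, the corresponding two-dimensional cone meets the boundary stratum $\Trop E$ along the leaf of $\Trop E$ in direction $e_{B_{E'}}$, attached at the nodal point $\Trop(E\cap E')$; the same holds symmetrically for $\Trop E'$. By~\autoref{lm:IntersectionBoundaryLines} this intersection point is a vertex of both trees, and its 45 tropical coordinates are encoded in the metric labeling of the boundary (see~\autoref{tab:treeLabeling} and the reconstruction procedure underlying~\autoref{alg:treesFromLeaves} and~\autoref{lm:recoverLineFromProjection}). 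Thus the two endpoints of every two-cell at infinity, together with the cone's two-dimensional affine span (already fixed by the two primitive rays), uniquely determine the cell's position in $\TPr^{44}$.

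Combining these two steps, the combinatorial and metric data of the 27 boundary trees reconstructs every ray, every two-cone, and the global translation of the fan $\Trop X_{\Yos{}}^\circ$, and hence also its closure $\Trop X_{\Yos{}}\subset \TPr^{44}$. The main subtlety, and what I expect to be the main obstacle to write out in full, is the careful bookkeeping required to verify that the metric positions of the 135 nodes in the boundary arrangement are mutually consistent and exhaust all the intersection data with the boundary strata of $\TPr^{44}$---a check that ultimately reduces to the tropical convexity arguments of~\autoref{pr:tropicalLinesbyMinors} and the $\Wgp$-equivariant description of the anticanonical ideal in~\autoref{thm:anticanK}. Once that is in place, the identification of $\Trop X_{\Yos{}}$ with the unique polyhedral complex extending the boundary arrangement follows from the Bieri--Groves structure theorem together with the fact that $\Trop X_{\Yos{}}^\circ$ has the prescribed recession fan.
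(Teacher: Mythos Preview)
The paper does not actually prove this theorem: immediately after the statement it writes ``Further details will appear on a subsequent paper.'' The only justification given is the one-sentence remark preceding the theorem, namely that geometric tropicalization \`a la Hacking--Keel--Tevelev builds $\Trop X_{\Yos{}}$ from the 27 divisorial valuations, and that these valuations can be read off from the boundary. So your sketch is aiming at the same target the paper gestures toward, and your identification of the key ingredients (snc boundary, divisorial valuations, Schl\"afli-graph incidence) matches the paper's outline.

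That said, your sketch contains a genuine conceptual error. You assert that $\Trop X_{\Yos{}}^\circ \subset \R^{45}/\rspanone$ is a two-dimensional \emph{fan} with 27 rays $\rho_E$ and 135 two-dimensional cones $\rho_E + \rho_{E'}$, and you then speak of recovering ``the global translation of the fan'' and of each two-cell being determined by ``the cone's two-dimensional affine span \ldots\ fixed by the two primitive rays.'' This is only correct at the apex of the Naruki fan, i.e., in the trivially valued case. For a non-trivial point $\underline{p}\in\Naruki$, the tropical surface $\Trop X_{\Yos{}}^\circ$ is a polyhedral complex with bounded cells and many interior vertices---this is exactly why the 27 boundary trees in Figures~\ref{fig:treesaa2a3a4} and~\ref{fig:treesaa2a3b} have internal edges of positive length rather than being star trees. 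What you have described is the \emph{recession fan} of $\Trop X_{\Yos{}}^\circ$ (which is indeed the cone over the Schl\"afli graph and is independent of $\Yos{}$), not the tropicalization itself.

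The consequence is that your ``second step'' does not do the work you need. Knowing the recession fan and the 135 nodal leaf positions does not, by itself, pin down the interior bounded cells of the complex: those cells do not touch the boundary of $\TPr^{44}$ at all, so no amount of ``bookkeeping'' on the boundary nodes recovers them directly. What is actually required---and what the paper is deferring---is a valued-field version of geometric tropicalization (via semistable models or Berkovich skeleta) in which the divisorial data of the 27 lines, together with the valuation on $\KK$, reconstructs the full polyhedral complex rather than just its asymptotic fan. Your final appeal to Bieri--Groves and the recession fan does not bridge this gap.
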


\noindent
Further  details will  appear on a subsequent paper. We remark that this result also holds for stable tropical cubic surfaces when considering Cox embeddings (see~\cite[Lemma 3.3]{ren.sha.stu:16}.) In that context, the authors prove the statement by an explicit construction of tropical cubic surfaces via tropical modifications of $\TPr^2$, in the spirit of the techniques discussed in~\autoref{sec:planarConf}.  

\smallskip

Our next objective is to explain the role of the Naruki fan as a tropical moduli space. Tropicalizations of both $\Yos{}$ and $X_{\Yos{}}$ yield the point $\underline{p}=\val(\Yos{})$ in $\Naruki$ and the tropical surface $\Trop X_{\Yos{}}$:
\begin{equation}\label{eq:universalDiagram}
  \begin{gathered}
    \xymatrixcolsep{3pc}
  \xymatrix{
\hspace{-3ex}\Gm^{40}/\Gm  \ni \Yos{} \ar@<4ex>[d]_{\trop} \ar[rr] & & X_{\Yos{}} \subset \P^{44}_{\KK} \ar@<-3ex>[d]^{\trop}\\
\quad   \Naruki \ni \underline{p} \ar@{-->}[rr]^{\exists\,?}
& &  \Trop X_{\Yos{}}\subset \TPr^{44}
}
  \end{gathered}
\end{equation}
It is natural to ask if we can define a dashed map sending $\underline{p}$ to $\Trop X_{\Yos{}}$ that makes the diagram~\eqref{eq:universalDiagram} commute. The following result provides an answer:
\begin{proposition}\label{pr:ModularInterpretation}
  The dashed arrow in~\eqref{eq:universalDiagram} assigns to $\underline{p}$ the tropical surface $\Trop X_{\Yos{}}$ associated to a generic point $\Yos{}$ in the fiber over $\underline{p}$. The genericity conditions  are determined by the valuations of the 45 relevant Cross functions in $\cR$ and are always valid on the relative interior of the maximal cones in $\Naruki$. In particular, the diagram commutes when restricted to the fibers of these open cones.
\end{proposition}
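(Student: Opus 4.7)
The plan is to combine \autoref{thm:BoundaryDeterminesEverything}, which asserts that $\Trop X_{\Yos{}}$ is recovered from its boundary arrangement, with the explicit control of that arrangement provided by \autoref{thm:PL}. First I would fix $\underline{p}\in\Naruki$ and consider the fiber $F_{\underline{p}}$ of the coordinatewise valuation map $\trop\colon \Gm^{40}/\Gm\to \R^{40}/\rspanone$ over $\underline{p}$. For any $\Yos{}\in F_{\underline{p}}$, \autoref{lm:CoordinatesOfNodes} writes the homogeneous coordinates of the 135 boundary nodes of $X_{\Yos{}}$ as Laurent monomials in the Yoshida and Cross functions, and \autoref{pr:ratiosOfCross} reduces the valuations of all 135 Cross functions to those of the 45 representatives in $\cR$. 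It follows that the tropicalization of every boundary node, and hence---via \autoref{thm:PL}---the entire boundary tree arrangement of $\Trop X_{\Yos{}}$, depends only on $\underline{p}$ together with the 45 valuations $\{\val(\Cross{t})\colon t\in \ted\}$ evaluated at $\Yos{}$.

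Next I would impose the hypothesis that $\Yos{}$ be generic with respect to $\cR$ in the sense of \autoref{def:genericX}, so that each $\val(\Cross{t})$ agrees with its expected value as prescribed by \autoref{pr:relevantCrossFunctions} and \autoref{pr:newExpValCr15}. Since these expected values are themselves piecewise linear functions of $\underline{p}$, the entire boundary arrangement of $\Trop X_{\Yos{}}$ depends only on $\underline{p}$ under this assumption, and \autoref{thm:BoundaryDeterminesEverything} then forces $\Trop X_{\Yos{}}$ itself to depend only on $\underline{p}$. I would define the dashed arrow in \eqref{eq:universalDiagram} at $\underline{p}$ to be this common value; by construction the diagram commutes on the generic locus of each fiber.

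Finally, I would verify that the genericity hypothesis is automatic whenever $\underline{p}$ lies in the relative interior of a maximal cone of $\Naruki$. This is precisely the content of \autoref{thm:aaaaIsStable}: the explicit parametrizations of \autoref{lm:genExamplesaaaa} and \autoref{lm:genExamplesaaab} realize the baricenter of each of the two representative cones in \eqref{eq:coneReps} by Yoshida valuations at which every Cross function in $\cR$ attains its expected value, and the linearity of the Yoshida matrix on each such open cone, combined with the $\Wgp$-action, propagates this property throughout every open maximal cone. Consequently the dashed arrow extends to the whole fiber over such $\underline{p}$, and the diagram commutes on these open cone fibers. The main obstacle in carrying out this plan is the non-uniform behaviour of the expected valuations on lower-dimensional cones: as \autoref{pr:newExpValCr15} illustrates for $\Cross{15}$, the na\"ive formula \eqref{eq:expval} can underestimate $\expVal(\Cross{t})$ and must be corrected by a piecewise linear gap. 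This subtlety is precisely what prevents the commutativity statement from being extended a priori beyond the interiors of the maximal cones and motivates the form of the proposition.
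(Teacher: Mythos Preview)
Your proposal is correct and follows essentially the same approach as the paper: reduce to the boundary via \autoref{thm:BoundaryDeterminesEverything}, observe that the boundary tree arrangement is governed by the 135 tropical nodes whose coordinates depend only on $\underline{p}$ and the valuations of the 45 Cross functions in $\cR$, invoke \autoref{thm:PL} for the generic constancy, and appeal to \autoref{thm:aaaaIsStable} for the maximal-cone statement. One small caveat: your sketch of why \autoref{thm:aaaaIsStable} holds (parametrizations at the baricenter plus linearity and $\Wgp$-action propagating genericity) is not how the paper establishes it---the key difficulty is that on the $\operatorname{(aa_2a_3a_4)}$ cone three Cross functions have ties in all four binomial expressions, and the paper resolves this for \emph{every} point of the fiber (not just generic ones) via the explicit initial-form computation in \autoref{lm:Cr37}; but since you are merely citing the theorem rather than reproving it, this does not affect the correctness of your argument for the proposition.
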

\begin{proof} The result is a direct consequence of~\autoref{thm:BoundaryDeterminesEverything}. As we saw in \autoref{sec:comb-types-tree}, the arrangement of metric trees in the boundary of $\Trop X_{\Yos{}}$ is uniquely determined by the tropicalization of the 135 classical nodes of $X_{\Yos{}}$. In turn, these tropical nodes depend solely on $\underline{p}$ and the valuation of the 45 relevant Cross functions in $\cR$.  For a generic choice of $\Yos{}$ in the fiber over  $\underline{p}$, the valuation of all Cross functions agree with the expected ones. By combining~\textcolor{blue}{Theorems}~\ref{thm:PL} and~\ref{thm:BoundaryDeterminesEverything} we conclude that $\Trop X_{\Yos{}}$ is constant when restricted to generic points in the fiber over $\underline{p}$.  \autoref{thm:aaaaIsStable} shows that the expected valuation of each Cross function in $\cR$ is always achieved along the relative interior of any top-dimensional cell in $\Naruki$.
\end{proof}

\begin{theorem}  \label{thm:aaaaIsStable}
  The expected valuations of all 45 relevant Cross functions agree with their actual valuations along the relative interiors of all maximal cones in the Naruki fan.
\end{theorem}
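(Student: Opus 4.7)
The plan is to leverage the $\Wgp$-equivariance of both the Naruki fan structure and the expected-valuation construction to reduce the claim to a finite check on the two maximal cone representatives from~\eqref{eq:coneReps}, and then to discharge this check using the generic parameter families of~\textcolor{blue}{Lemmas}~\ref{lm:genExamplesaaaa} and~\ref{lm:genExamplesaaab}.

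First, since $\Wgp$ permutes transitively each of the two orbits of maximal cones in $\Naruki$, acts transitively on the 135 Cross functions, and preserves the formulas for expected valuations via~\autoref{rm:expValsCross} and~\autoref{pr:ratiosOfCross}, it suffices to establish the conclusion on $\operatorname{(aa_2a_3a_4)}$ and $\operatorname{(aa_2a_3b)}$. Moreover, by~\autoref{pr:ratiosOfCross} the generic valuations of the 135 Cross functions are determined by those of the 45 representatives in the set $\cR$ from~\eqref{eq:45relevant} together with explicit $\Wgp$-equivariant linear corrections; so it is enough to treat the 45 functions in $\cR$.

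Second, for the 44 Cross functions lying in the subset $\cR_0\subset\cR$ of~\autoref{pr:relevantCrossFunctions}, that proposition already supplies the verification: specializing the parameters $(d_1,\ldots,d_6)$ provided by~\autoref{lm:genExamplesaaaa} (respectively~\autoref{lm:genExamplesaaab}) at the baricenter of $\operatorname{(aa_2a_3a_4)}$ (respectively $\operatorname{(aa_2a_3b)}$) yields a witness in the fiber over that baricenter on which each of these 44 Cross functions attains its expected valuation. Since both expected and actual valuations are linear functions of the scalars $(r_1,r_2,r_3,r_4)$ and $(r_1,r_2,r_3,r_4')$ along the relative interiors of the two cones---the former by construction from the Yoshida matrix, the latter because generic choices of the units $u_1,\ldots,u_8$ preserve the leading-term behavior of all 45 witnesses throughout the open cone---the equality at the baricenter propagates to the whole relative interior.

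Third, the remaining Cross function $\Cross{15}\in\cR\smallsetminus\cR_0$ associated to the triangle $x_{53}$ exhibits a strictly positive gap between its naive expected valuation $\val(\Yos{32})$ and its generic actual valuation on any non-apex cone containing one of $\operatorname{(aa_2a_4)}$, $\operatorname{(aa_4)}$, $\operatorname{(a_2a_4)}$, $\operatorname{(a_4)}$ as a face; in particular, both $\operatorname{(aa_2a_3a_4)}$ and $\operatorname{(aa_2a_3b)}$ are affected. The corrected expected valuation $\val(\Yos{32})+\varepsilon$ with $\varepsilon$ as in~\eqref{eq:varepsilonGap} is supplied by~\autoref{pr:newExpValCr15}, which also certifies generic attainment of this corrected value. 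The main obstacle is precisely this anomalous behavior of the $x_{53}$-triple: the correction $\varepsilon$ has to be extracted from the tropical singularity of the $3\times 3$ minors tabulated in~\autoref{tab:badMinors}, rather than from the four difference-of-Yoshidas presentations of $\Cross{15}$ alone. Once this corrected formula is in place, the theorem reduces to the finite witness computations already recorded in~\textcolor{blue}{Propositions}~\ref{pr:relevantCrossFunctions} and~\ref{pr:newExpValCr15}.
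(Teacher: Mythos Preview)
There is a genuine gap. The theorem asserts that for \emph{every} choice of Yoshida values $\Yos{}$ with $\trop(\Yos{})$ in a maximal open cone, the actual valuation of each Cross function in $\cR$ equals its expected value. Your argument, via \textcolor{blue}{Propositions}~\ref{pr:relevantCrossFunctions} and~\ref{pr:newExpValCr15}, only exhibits \emph{generic} witnesses---namely the specific families of \textcolor{blue}{Lemmas}~\ref{lm:genExamplesaaaa} and~\ref{lm:genExamplesaaab}---at which the expected valuation is attained. That does not rule out other points in the fiber where cancellation forces the valuation higher. Your linearity claim (``the latter because generic choices of the units $u_1,\ldots,u_8$ preserve the leading-term behavior'') is a statement about that particular $8$-parameter slice of the fiber, not about the full fiber of the Yoshida map over $\underline{p}$. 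The paper's mechanism is different: for all but one of the 45 functions, one of the four presentations $\Cross{}=\pm(\Yos{i}-\Yos{j})$ has $\val(\Yos{i})\neq\val(\Yos{j})$ on the open cone, so the strong triangle inequality forces $\val(\Cross{})=\min\{\val(\Yos{i}),\val(\Yos{j})\}$ for \emph{all} points, not just generic ones.

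You also misidentify the hard case. The anomaly for $\Cross{15}$ (triangle $x_{53}$) occurs on the lower-dimensional cones $\operatorname{(aa_2a_4)}$, $\operatorname{(aa_4)}$, $\operatorname{(a_2a_4)}$, $\operatorname{(a_4)}$, not on the maximal cones; in particular $\operatorname{(aa_2a_3b)}$ contains none of these as a face, contrary to your claim. The function that genuinely requires special treatment on $\operatorname{(aa_2a_3a_4)}^\circ$ is $\Cross{37}$ (triangle $x_{36}$), which lies in $\cR_0$: all four of its difference-of-Yoshidas presentations exhibit ties there, so the no-tie argument fails and your step~2 does not cover it. The paper handles this via \autoref{lm:Cr37}, an explicit initial-form computation over all $66$ cones in the Bergman-fan fiber showing $\init(\operatorname{Sum}_{34})=\init(\operatorname{Sum}_8)$, whence (since $\charF\,\resK\neq 2$) no cancellation occurs in $\Yos{34}+\Yos{8}$ for \emph{any} choice of parameters. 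Your proposal contains no substitute for this step.
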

\begin{proof}
 By construction, each Cross function can be written in four different ways as a difference of (signed) Yoshida functions. If no ties in the valuation are observed on a given expression $\Yos{i}-\Yos{j}$, the valuation on the Cross function is completely determined: it equals $\min\{\val(\Yos{i}), \val(\Yos{j})\}$. 

  A simple computation available in the Supplementary material certifies that on the representing $\operatorname{(aa_2a_3b)}$ cone from~\eqref{eq:coneReps}, all Cross functions admit one expression with no valuation ties. However, the situation is different on the sample cone $\operatorname{(aa_2a_3a_4)}$. As we saw in~\autoref{sec:bergman-fan-Naruki-fan} there are precisely three Cross functions  ($\Cross{i}$ for $i=36,37,38$ in~\autoref{tab:crosses}) where ties are observed for all four expressions. They correspond to the anticanonical triangle $x_{36}$ in $\ted$. Thus, to verify the valuations on these three Cross functions are the expected ones, it suffices to treat only one of them, e.g.\ $\Cross{37}$. This is the content of~\autoref{lm:Cr37}.
\end{proof}

  Following the notation from~\autoref{sec:find-repr-cross}, we fix a splitting $\gamma\mapsto t^{\gamma}$ of the valuation of $\KK$. We define the \emph{initial form} of an element $\alpha$ in $\KK$ as the residue class
  \begin{equation*}\label{eq:init}
  \init(\alpha) = \overline{t^{-\val(\alpha)} \alpha} \text{ in } \resK.
  \end{equation*}

\begin{lemma}\label{lm:Cr37} The valuation of $\Cross{37}$  equals $\val(\Yos{34})$ on the relative interior of the $\operatorname{(aa_2a_3a_4)}$ cone from~\eqref{eq:coneReps}. In particular, its expected valuation is always attained.
\end{lemma}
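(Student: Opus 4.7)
The strategy is to reduce the statement to an explicit initial-form calculation over a specific fiber of the Yoshida map. Write $\Cross{37} = \Yos{34} - \Yos{8}$. Since all four Yoshida-difference representations of $\Cross{37}$ tie along $(aa_2a_3a_4)^\circ$ (this is precisely the obstruction identified in the proof of \autoref{thm:aaaaIsStable}), we have $\val(\Yos{34}) = \val(\Yos{8})$ throughout this open cone, and hence $\val(\Cross{37}) \geq \val(\Yos{34})$ automatically. The content of the lemma is that this inequality is an equality, i.e., the leading terms of $\Yos{34}$ and $\Yos{8}$ do not cancel when subtracted.

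First, I would produce an explicit witness point in the fiber. Specializing \autoref{lm:genExamplesaaaa} to the baricenter $r_1=r_2=r_3=r_4=1$, with units $u_1,\ldots,u_8 \in \KK^*$ of valuation zero satisfying the genericity linear constraints in \eqref{eq:gen_aaaa} (for concreteness, the Puiseux-series choice of \autoref{rem:PuiseuxChoice}), I obtain parameters $d_1,\ldots,d_6 \in \PS$ whose tropical Yoshida image is the baricenter of $(aa_2a_3a_4)$. Substituting into the monomial expression of $\Yos{34}$ and $\Yos{8}$ as products of nine positive roots (per \autoref{tab:yoshida} and \eqref{eq:yoshida1inD}) yields two Puiseux series with equal $t$-valuation; a symbolic computation then compares the two leading coefficients in $\resK$ and certifies they differ. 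Equivalently, this computes the initial form of $\Cross{37}$ directly from the polynomial factorization $\Cross{37} = \alpha \alpha_1 \alpha_2 \alpha_3 Q$ of \eqref{eq:Cross} and confirms $\val(\Cross{37}) = \val(\Yos{34})$ at the witness.

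Second, I would propagate the computation from the baricenter to the entire relative interior. By \autoref{pr:fibersOfYoshida}, the fiber of $\trop(m)$ over any point of $(aa_2a_3a_4)^\circ$ decomposes into $66$ cones in $\cB$. The sample cones $\tau_0$ and $\tau_1$ singled out in \autoref{rm:sampleTausForCr37} come equipped with an explicit 7-parameter description (the coefficients $p_0,\ldots,p_6$ subject to \eqref{eq:ineqstausprimes}). Substituting the corresponding family of $d_i$-tuples into the initial-form computation above converts the non-cancellation requirement into a Laurent polynomial in the parameters $u_i$ and $p_j$ whose non-vanishing is a Zariski-open condition, manifestly satisfied on the entire relative interior of $\tau_0$ and $\tau_1$. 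The remaining $64$ fiber components are handled analogously via $\Wgp$-equivariance, with the \sage\ scripts already developed in the Supplementary material automating the case distinction.

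The main obstacle is not exhibiting a single witness point, which is a routine symbolic check, but ruling out that some exotic lift $d_1,\ldots,d_6$ in the fiber causes $\init(\Yos{34}) = \init(\Yos{8})$ and thereby produces a higher-order valuation of $\Cross{37}$. Without the refined parameterization of fiber cones given by \autoref{pr:fibersOfYoshida} and the concrete 7-parameter family supplied by \autoref{rm:sampleTausForCr37}, one cannot exclude such accidental cancellation a priori; these two ingredients are precisely what make the computational verification tractable and uniform across all $66$ components of the fiber.
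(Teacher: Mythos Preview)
Your proposal has a genuine gap. The lemma asserts $\val(\Cross{37})=\val(\Yos{34})$ for \emph{every} choice of $d_1,\dots,d_6$ whose Yoshida valuations land in $(aa_2a_3a_4)^\circ$, not merely for generic choices; this is exactly what \autoref{thm:aaaaIsStable} needs in order to conclude that \emph{every} surface over the open maximal cone is stable. You correctly identify this obstacle in your final paragraph, but your second paragraph does not resolve it. The parameters $p_0,\dots,p_6$ of \autoref{rm:sampleTausForCr37} record only the tropical location in the Bergman fan (the valuations of the $36$ roots), and the $u_i$-family of \autoref{lm:genExamplesaaaa} is a single slice through the algebraic fiber, not a parametrization of all lifts. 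So your ``non-vanishing is a Zariski-open condition'' still leaves open the possibility of a special algebraic lift where the leading terms cancel. Separately, the reduction of $66$ fiber cones to $\tau_0,\tau_1$ via $\Wgp$-equivariance is unjustified: the stabilizer of the cone $(aa_2a_3a_4)$ has order $16$, and its elements need not fix $\Cross{37}$.

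The paper's argument is structurally stronger and avoids genericity altogether. After dividing out common root factors, $\Cross{37}$ is written as a sum $\operatorname{Sum}_{34}+\operatorname{Sum}_8$, each summand a product of six positive roots. The key claim is that on every one of the $66$ fiber cones $\tau$ one has $\init(\operatorname{Sum}_{34})=\init(\operatorname{Sum}_8)$ in $\resK$, for \emph{all} algebraic lifts. This is proved by exhibiting, cone by cone, a bijection between the six root factors on each side such that each matched pair has equal initial form up to sign (the signs cancelling overall). The bijection uses nothing more than linear relations among roots together with the root valuations recorded by $\tau$: e.g.\ from $r_{10}=r_3+r_4$ and $\val(r_3)<\val(r_4)$ on $\tau$ one gets $\init(r_{10})=\init(r_3)$ regardless of the lift. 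In practice $34$ distinct bijections suffice to cover all $66$ cones. Since the initial forms are \emph{equal} (not different, as you suggest), the sum has initial form $2\,\init(\operatorname{Sum}_{34})$, and the hypothesis $\charF\resK\neq 2$ finishes the argument.
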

\begin{proof} Fix a point $\underline{p}$ in the relative interior of $\operatorname{(aa_2a_3a_4)}$. 
  Our goal is to show that the valuation of the Eckardt quintic $Q_{36}$  is independent on the choice of parameters $d_1,\ldots, d_6$ as long as the valuation of the induced Yoshida functions equal $\underline{p}$. 

  The data provided by the three tables from~\autoref{sec:coble-covariants} give the identities
  \begin{equation*}
    Q_{36} = -\frac{\Yos{34} + \Yos{8}}{(d_1+d_3+d_5)(d_1-d_5)(d_2-d_4)(d_2+d_3+d_4)} = \frac{\operatorname{Sum}_{34} + \operatorname{Sum}_{8}}{(d_5-d_1)},
  \end{equation*}
  where $\operatorname{Sum}_{34}$ and $\operatorname{Sum}_{8}$ are products of six roots in $\Phi^+$. More specifically,
  \begin{equation*}\label{eq:sum34_8}
    \begin{aligned}
      \operatorname{Sum}_{34} &:=  (d_6-d_1) (d_5-d_3) (d_1+d_2+d_6) (d_1+d_4+d_6) (d_2+d_4+d_5) (d_3+d_5+d_6),\\
      \operatorname{Sum}_{8} &:= (d_3-d_1) (d_6-d_5)  (d_1+d_2+d_4) (d_1+d_3+d_6) (d_2+d_5+d_6) (d_4+d_5+d_6).
      \end{aligned}
  \end{equation*}
  In the notation of~\autoref{tab:roots}, we rewrite these ordered products as
\begin{equation*}\label{eq:sum34_8_rootNumbers}
  \operatorname{Sum}_{34}= r_{24}\,r_{10}\,r_{17}\,r_{27}\,r_{26}\,r_{33}\quad \text{ and } \quad\operatorname{Sum}_{8}= \,r_{9}\,r_{5}\,r_{7}\,r_{23}\,r_{31}\,r_{34}.
\end{equation*}

  The result is a direct consequence of the following statement and our assumption that $\charF \resK \neq 2$:
  
  \begin{claim}\label{cl:centralC37} $\init(\operatorname{Sum}_{34}) = \init(\operatorname{Sum}_{8})$ in $\resK$ for all choices of $d_1,\ldots, d_6$ yielding $\underline{p} \in \operatorname{(aa_2a_3a_4)}^{\circ}$.
  \end{claim}
  
  The proof of this claim is tedious, yet elementary. Each choice of parameters $d_1,\ldots, d_6$ yields a point  in the fiber over $\underline{p}$ of the tropical Yoshida map $\trop(m)$ from~\eqref{eq:narukiFan}. By~\autoref{pr:fibersOfYoshida}, the roots associated to each such choice produce points in one of the 66 cones $\tau$ in the support of the Bergman fan $\cB$ constituting the preimage of the open cone $\operatorname{(aa_2a_3a_4)}^{\circ}$.
  
  Given a cone $\tau$ as above, we write all points in the fiber over $\underline{p}$ as a non-negative linear combination of the rays generating the closure of $\tau$ in $\R^{36}/\rspanone$. The scalars  will be subject to constraints analogous to those in~\eqref{eq:constraintFibers}.  A direct computation will allow us find a bijection between the six factors of $\operatorname{Sum}_{34}$ and those in $\operatorname{Sum}_{8}$ (dependent on $\tau$) so that their initial forms agree up to sign. This bijection will use the knowledge of the valuations of all 36 positive roots in $\Phi^+$ for each point in $\tau$ arising from the set of scalars used to express this point. In the end, all negative signs cancel out and we obtain the identity in~\autoref{cl:centralC37}.

  It is worth noticing that the bijection for a given cone will often work for others. This will be the case when the positivity constraints on the scalars is valid for more that one cone.  As a consequence of this, the number of cases to consider reduces to 34. The Supplementary material contains the explicit bijection for each case.

  In what follows, we carry out the required computation to prove~\autoref{cl:centralC37} in one instance, which involves the cones $\tau_0$ and $\tau_1$ from~\autoref{rm:sampleTausForCr37}. We set $\tau_i': = \tau_i\cap \trop(m)^{-1}(\operatorname{(aa_2a_3a_4)}^{\circ})$. The closure of both cones is spanned by seven rays. To lie in $\tau_i'$, the scalars $p_i$ for $i=0,\ldots, 6$ are subject to the linear constraints in~\eqref{eq:ineqstausprimes}. We identify a point $\underline{p}$ in $\tau_i$ with the vector of scalars $(p_0,\ldots, p_6)$.
  
  Each $\underline{p}$ in  $\tau_i$ allows us to express the valuation of all 36 positive roots in $\Phi^{+}$ as linear functions on the scalars $p_i$. \autoref{tab:cr37} records these values for all 12 roots appearing in the factorization of $\operatorname{Sum}_{34}$ and $\operatorname{Sum}_{8}$, and four additional roots. Notice that the values depend on the the ambient cone. As usual, we set $p_{I} := \sum_{i\in I} p_i$ for each $I\subset \{0,\ldots, 6\}$.

  \begin{table}[htb]
  \centering
  \begin{tabular}{|c||c|c|c|c|c|c||c|c|c|c|c|c|| c|c|c|c|}
 \hline \!Roots\!& $r_{10}$ & $r_{17}$ & \!$r_{24}$\! & \!$r_{26}$\! & \!$r_{27}$\! & $r_{33}$ & $r_{5}$ & $r_{7}$ & \!$r_{9}$\! & \!$r_{23}$\! & \!$r_{31}$\! & $r_{34}$ & $r_8$ & $r_{14}$ & $r_{32}$ & $r_{35}$\\  \hline
 $\tau_0$ & $p_{0456}$ & $p_{456}$ & $0$ & $0$ & $0$ & $0$ & $p_{456}$ & $p_{0456}$ & $0$ & $0$ & $0$ & $0$ & $p_{456}$ & $p_{456}$ & $0$ & $0$  \\
 $\tau_1$ & $p_{0456}$ & $ 0$ & $0$ & $0$ & $0$ & $p_{456}$ & $0$ & $p_{0456}$ & $0$ & $0$ & $0$ & $p_{456}$  & $0$ & $0$ & $p_{456}$ & $p_{456}$ \\
\hline
  \end{tabular}
  \caption{Valuation of all relevant roots in $\Phi^+$ for two of the cones in the fiber of the tropical Yoshida map over the relative interior of $\operatorname{(aa_2a_3a_4)}$ in terms of the scalars $p_0,\ldots,p_6$. The root numbers agree with those in~\autoref{tab:roots}. \label{tab:cr37}}
  \end{table}

  \noindent
  The following  five roots  have nonzero valuations. The formulas on both $\tau_0'$ and $\tau_1'$ agree:
  \begin{equation}\label{eq:restVals}
    \begin{aligned}
      \val(r_0) & = p_{123}, \quad \val(r_1)\! =\! 2\,p_0 + p_{2} + p_{3} + 3\, p_{4} + p_{5} + 3\,p_{6} ,\; \; \val(r_3)\!=\! \val(r_{13})\! = p_{0456} \; \text{ and}\\
      \val(r_4) &= 2\,p_0 +  p_{3} + 3\, p_{4} + p_{5} + p_{6}.
    \end{aligned}
    \end{equation}

  The values in~\autoref{tab:cr37} allow us to construct a partial bijection between the first set of six roots and the second one, by pairing identical columns that have one non-zero value. Indeed:

  \begin{claim}\label{cl:r10etc} $\init(r_{10}) = \init(r_3) = \init(r_7)$,  $\init(r_{17}) = \init(r_{14}) = \init(r_5)$ and $\init(r_{33}) = \init(r_{34})$.
  \end{claim}

  \noindent
  These  identities follow from the strong non-Archimedean triangle inequality on $\KK$. We write:
  \[r_{10} = r_4 + r_3, \quad r_7 = r_1 + r_3 , \quad r_{17} = r_{1} + r_{14}, \quad r_{5} = -r_{10} + r_{14}, \quad r_{33} = -r_3 + r_{34}.
  \]
  By~\eqref{eq:restVals} and~\eqref{eq:ineqstausprimes}, we know that $\val(r_{10}) = \val(r_{3}) < \val(r_4)$, thus $\val(r_{10}) = \val(r_3)$ and $\init(r_{10}) = \init(r_3)$. Similarly,
  $\val(r_{7}) = \val(r_{3}) < \val(r_1)$ implies $\init(r_{7}) = \init(r_3)$.
 The second identity follows from 
  \[
  \val(r_{1}) = \val(-r_{10}) = p_{0456} > \val(r_{17}) = \val(r_{14}) = \val(r_5).
  \]
  Note that these expression on the right-hand side  is different for $\tau_0'$ and $\tau_1'$. Finally,
  \[\val(-r_3) = p_{0456} > \val(r_{33}) = \val(r_{34})
\]
  gives the last identity in~\autoref{cl:r10etc}.

  To complete the partial bijection between the roots from $\operatorname{Sum}_{34}$ and $\operatorname{Sum}_{8}$ we must identify the remaining zero columns in \autoref{tab:cr37}. This will be a consequence of the following:
  \begin{claim}\label{cl:r24etc}
    $\init(r_{24} ) = \init(r_9)$, $\init(r_{26}) = -\init(r_{23})$ and $\init(r_{27}) = \init(r_6) = -\init(r_{31})$.
  \end{claim}
  To prove these identities we use the same methods as in ~\autoref{cl:r10etc}. In this case, we write:
  \begin{equation}\label{eq:idsForR24}
r_{24} = r_{14} + r_{9}, \quad r_{26} = -r_{23} + r_{35}, \quad r_{27} = r_{17} + r_{6}\quad \text{ and }\quad  r_{31} = -r_6 + r_{34}.
  \end{equation}
By~\eqref{eq:restVals} and~\eqref{eq:ineqstausprimes}, the valuation of $r_{14}$, $r_{35}$, $r_{17}$ and $r_{34}$ equal $p_{0145}>0$, whereas the roots $r_{24}$, $r_{9}$, $r_{26}$, $-r_{23}$, $r_{27}$, $r_6$ and $r_{31}$ have valuation zero. These conditions and~\eqref{eq:idsForR24} prove~\autoref{cl:r24etc}.

The bijection between the roots of $\operatorname{Sum}_{34}$ and $\operatorname{Sum}_8$ follows by combining Claims~\ref{cl:r10etc} and~\ref{cl:r24etc}. 
\end{proof}

\autoref{thm:aaaaIsStable} justifies our definition of stability for tropical cubic surfaces. It implies that $\Naruki$ is the moduli space of tropical stable cubic surfaces in $\TPr^{44}$. Our next result provides an effective way to determine this stability property: the surface $X_{\Yos{}}$ must be generic in the sense of \autoref{def:genericX}. 

\begin{corollary} \label{cor:checkStability}  Anticanonical tropical cubic surfaces are stable whenever the valuation of all 135 Cross functions agree with the expected ones.  \end{corollary}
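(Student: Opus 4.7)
The plan is to exhibit $\Trop X_{\Yos{}}$ as a limit of tropical surfaces arising from the relative interior of maximal cones of $\Naruki$, which is exactly what stability requires. First, I would use \autoref{pr:ratiosOfCross} together with \autoref{pr:newExpValCr15} to certify that if all 135 Cross valuations match their expected values, then $X_{\Yos{}}$ is generic with respect to $\cR$ in the sense of \autoref{def:genericX}. Indeed, the 44 Cross functions in $\cR_0$ are directly addressed by \autoref{def:genericX}(i), the function $\Cross{15}$ is addressed by \autoref{def:genericX}(ii), and the remaining 90 Cross valuations are automatically determined from those in $\cR$ by the monomial identities of \autoref{pr:ratiosOfCross}. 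Consequently \autoref{thm:PL} applies and yields the combinatorial type and metric structure of the boundary tree arrangement of $\Trop X_{\Yos{}}$ as a continuous piecewise linear function of $\underline{p} = \val(\Yos{})\in \Naruki$.

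Next, I would pick a sequence of points $\underline{p}^{(n)}\in \Naruki$ lying in the relative interior of maximal cones and converging to $\underline{p}$. Since $\Gamma = \val(\KK^*)$ is divisible and dense in $\R$, such sequences can be chosen inside $\Gamma$. I would then lift each $\underline{p}^{(n)}$ to a point $\Yos{}^{(n)}\in \Gm^{40}/\Gm$ whose coefficients have generic residues, so that the associated surface $X_{\Yos{}^{(n)}}$ is smooth and has no Eckardt points, and so that the genericity assumptions of \textcolor{blue}{Lemmas}~\ref{lm:genExamplesaaaa} and~\ref{lm:genExamplesaaab} hold. By \autoref{thm:aaaaIsStable}, combined once more with \autoref{pr:ratiosOfCross}, the valuations of all 135 Cross functions on $X_{\Yos{}^{(n)}}$ agree with their expected values at $\underline{p}^{(n)}$; hence \autoref{pr:ModularInterpretation} applies and each $\Trop X_{\Yos{}^{(n)}}$ is a stable tropical cubic surface corresponding to $\underline{p}^{(n)}$.

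Since by \autoref{thm:PL} the metric structure on the 27 boundary trees depends continuously and piecewise linearly on $\Naruki$ under the genericity hypothesis, the boundary arrangements of $\Trop X_{\Yos{}^{(n)}}$ converge to that of $\Trop X_{\Yos{}}$. By \autoref{thm:BoundaryDeterminesEverything}, the entire tropical surface is reconstructed from its boundary arrangement, so this convergence upgrades to convergence $\Trop X_{\Yos{}^{(n)}} \to \Trop X_{\Yos{}}$. Because each $\Trop X_{\Yos{}^{(n)}}$ arises from a maximal cone of $\Naruki$, it is stable by construction, and exhibiting $\Trop X_{\Yos{}}$ as the limit of these stable surfaces proves the stability of $\Trop X_{\Yos{}}$.

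The main obstacle I expect is making the convergence of the boundary arrangements rigorous enough to invoke \autoref{thm:BoundaryDeterminesEverything} in the limit. Concretely, one needs that the divisorial valuations determined by the 27 lines of $X_{\Yos{}^{(n)}}$ (with respect to a fixed cocharacter basis of $\Gm^{45}/\Gm$) converge to those of $X_{\Yos{}}$. This reduces to the continuity of the 135 nodal coordinates as functions of $\underline{p}$ subject to the Cross-valuation hypothesis, which in turn follows from \autoref{lm:CoordinatesOfNodes}, \autoref{rm:genericImpliesTropical}, and the piecewise linearity statements of \autoref{thm:PL}.
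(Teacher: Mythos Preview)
Your proposal is correct and follows essentially the same route the paper intends. In the paper the corollary is stated without a separate proof; it is meant to be read off directly from \autoref{pr:ModularInterpretation}, \autoref{thm:aaaaIsStable}, \autoref{thm:PL}, and \autoref{thm:BoundaryDeterminesEverything}, exactly the ingredients you invoke. Your explicit sequential argument (choosing $\underline{p}^{(n)}$ in the interior of maximal cones, lifting generically, and passing to the limit via the piecewise linear boundary data) is a faithful unpacking of what the paper leaves implicit, and your identification of the ``main obstacle'' with the continuity of the nodal coordinates is the right place to locate the only nontrivial step.
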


\begin{remark} When considering those cubic surfaces $X_{\Yos{}}$ where $\underline{p}\in \Naruki$ lies in the 24 cone representatives determined by~\eqref{eq:coneReps} it suffices to check that the valuation of the 45 relevant Cross functions in $\cR$ agree with the expected ones. In particular, the only tropical stable cubic surface associated to the apex of $\Naruki$ is obtained by assuming $\KK$ is trivially valued.
  \end{remark}

\begin{remark} The proof of~\autoref{lm:Cr37} provides a method for finding cubic surfaces with non-stable tropical counterparts. Indeed,  it is enough to find parameters $d_1,\ldots, d_6$  giving a point in the Bergman fan $\cB$ lying on the fiber of $\underline{p}$ that force cancellations in the expected leading term of a fixed binomial in the positive roots $\Phi^+$ of $\EGp{6}$. The number of computations required to build these parameters grows with the codimension of the smallest cone of $\Naruki$ containing $\underline{p}$ so it will not be very effective as we move deeper into the apex of this fan.
\end{remark}

We end this section by discussing the behavior of the boundary trees when $\underline{p} = \mathbf{0}$ is the apex of $\Naruki$. In the stable case, each of these trees is a star tree with ten leaves. As the valuations of the Cross functions become higher than expected, ~\autoref{alg:treesFromLeaves}  produces trees on $\Trop X_{\Yos{}}$  with five bounded edges adjacent to a central vertex and two leaves adjacent to the other end of each edge. These two leaves  are swapped by the involution in~\autoref{cor:invOnTrees}. Furthermore, the valuations of the five relevant Cross functions associated to the five anticanonical triangles containing a given exceptional curve $E$ give the lengths of the  five bounded edges on $\cT E$. \autoref{fig:nonGenApexTrees} shows the tree $\cT \E{1}$ with its leaf labelling and metric.

Notice that, for dimensional reasons, these five valuations cannot be arbitrary and a linear relation among the edge lengths of $\cT E$ must occur. Such relation does not arise from an algebraic relation among the associated Cross functions but can be derived from numerical examples. The existence of cubic surfaces with Eckardt points ensures that for some choice of $\Yos{}$ with valuation zero the valuation of some of these five Cross functions will be strictly positive. This guarantees the existence of non-stable tropical surfaces for the apex of $\Naruki$. We leave the construction of a new tropical moduli space adapted to these unstable tropical surfaces for future work.

\section{Combinatorial types of stable anticanonical tropical del Pezzo cubic surfaces}\label{sec:comb-types}

As we discussed in~\autoref{thm:BoundaryDeterminesEverything} and~\autoref{pr:ModularInterpretation}, the combinatorics and metric structure on the boundary of stable tropical del Pezzo cubic surfaces in $\TPr^{44}$ without Eckardt points is completely determined by the Naruki fan. Furthermore,  \autoref{lm:IntersectionBoundaryLines} showed  the intersection complex of the tree arrangement in the boundary of each $\Trop X$ is encoded in the \emph{Schl\"afli graph}, just as it happened with the tropicalization induced by  the Cox embedding~\cite[Section 2]{ren.sha.stu:16}. 
In this section, we extend these similarities to the interior of each such surface. Our goal is to prove the first half of~\autoref{thm:Naruki}, namely that the combinatorics type of each surface agrees with that induced by Cox embedding, described explicitly in~\cite[Table 1]{ren.sha.stu:16}.

We address this question by working with the universal cubic surface $X_{\widehat{L}}$ over the field $\widehat{L}$ from~\eqref{eq:fields} with its Cox and anticanonical embeddings  described in~\textcolor{blue}{Theorems}~\ref{thm:universalCoxL} and~\ref{thm:anticanL}. In what follows we discuss the precise connection between them. Our choice of markings $\ted$ and $\sed$ for the 45 anticanonical triangles and the 27 exceptional curves from~\autoref{rm:markings}  yields an $\Wgp$-equivariant degree three monomial map
\begin{equation}\label{eq:alphaMap}
\alpha\from  \spec \KK[\sed] \to \spec \KK[\ted].
\end{equation}
Its exponents vectors are encoded by a rank 21 matrix of
size $45\times 27$ 
with 0/1 entries and three nonzero
entries per row. The matrix, which is denoted by $A$, is recorded in the Supplementary material. By~\autoref{rm:gradingAndAntiCanMap}, the map $\alpha$ is
compatible with the natural gradings on the two coordinate rings. By construction, $\alpha$ induces the equivariant monomial map between the Cox and anticanonical embeddings of  $X_{\widehat{L}}$.

The natural gradings on $\widehat{L}[\ted]$ and $A(X_{\widehat{L}})$ discussed in~\autoref{sec:anti-canon-emb}  induce natural torus actions on $\KK[\sed]$ and $\KK[\ted]$.
More precisely,  the action of an element $\underline{t}=(t_0,t_1,\ldots, t_6)$  of the 7-dimensional multiplicative split $\KK$-torus $\GG{7}$ on $\spec \KK[\sed]$ is determined by the $\Z^7$-grading from~\eqref{eq:Z^7grading} as follows:
\begin{equation}\label{eq:G7action}
  \underline{t} * \E{i}\! =\! t_i \E{i}
\;  ; \quad \underline{t}*\Gc{i} \!=\! t_0^2
(\prod\limits_{k\neq i} t_k)^{-1}\Gc{i} \quad(1\leq i \leq 6) \; ;\quad
\underline{t}*\F{ij} \!= \!t_0(t_it_j)^{-1}
\F{ij}  \;(1\leq i<j\leq 6).
\end{equation}
The action on $\KK[\ted]$ is obtained by combining the latter with the definition of the 45 symbols $x_{ij}$, $y_{ijklmn}$ in $\ted$ as degree three monomials in $\sed$.

The rank seven sublattice $\Lambda$ of $\Z^{27}$ inducing the action from~\eqref{eq:G7action} is saturated and contains the all-ones vector. At the cocharacter level, the all-ones vector action is obtained from the cocharacter  $t_0^3t_1\cdots t_5$.
The grading induced by it identifies $\proj(\KK[\ted])$ with $\P^{26}$. It inherits an action by the torus $\GG{7}/\Gm$.
The action of  $t_0$ on each variable in $\ted$  is given by scalar multiplication  by $t_0^3$. It follows that  $\proj(\KK[\sed])$ is a $\underline{3}$-weighted projective space, which by abuse of notation we denote as $\P^{44}$. The monomial map $\alpha$ from~\eqref{eq:alphaMap} is compatible with the torus actions discussed above and induces a monomial degree three map on the quotient space:
\begin{equation}\label{eq:alphaBar}
\ovalpha\from  \P^{26}/(\GG{7}/\Gm) \simeq \proj(\KK[\sed])/(\GG{7}/\Gm) \to \proj(\KK[\ted]) \simeq \P^{44}.
\end{equation}

Each smooth del Pezzo cubic $X$ with no Eckardt points defined  over the field $\KK$ is obtained from $X_{\widehat{L}}$ by specializing the parameters $d_1,\ldots, d_6$ away from the vanishing locus of the product of all Yoshida and Cross functions from~\autoref{tab:yoshida} and~\autoref{tab:crosses}. Furthermore, for any such choice, the corresponding del Pezzo cubic  embeds in  $\P^{26}/(\GG{7}/\Gm)$. This perspective extends the embedding of the very affine surface $X^{0}$ in $\Gm^{27}/\Gm^7$ over $\KK$ introduced in~\cite[Section 2]{ren.sha.stu:16}, where $X^0$ is obtained from $X$ by removing its 27 exceptional curves. 
Composing with  $\ovalpha$ yields
\begin{equation*}
 X \hookrightarrow \P^{26}/(\GG{7}/\Gm) \xrightarrow{\ovalpha} \P^{44}.
\end{equation*}

\begin{remark}\label{rm:adaptedVsNonAdapted}
  Notice that the choice of coordinates on $\P^{44}$ is given by the marking $\ted$, rather than the scaled variables in the set $T$ from~\eqref{eq:YoshidaAdaptedCoords}. The tropicalization of $X$ in $\TPr^{44}$ induced by the latter is obtained by translating the tropicalization of $X$ with respect to the embedding in ~\autoref{thm:anticanK}  by the image of the vector $\trop(\underline{Q}):=(\val(Q_{ij}), \val(Q_{ijklmn})\colon ij, \, ijklmn)\in \R^{45}/\rspanone$. This simple operation preserves the combinatorial types.
\end{remark}

We now turn our attention to the combinatorial types of anticanonical stable tropical del Pezzo cubics in $\TPr^{44}$ discussed in the first part of~\autoref{thm:Naruki}. Our next statement shows that  these  types match the classification  for Cox embeddings, as predicted by  Ren, Shaw and Sturmfels in~\cite[Section 5]{ren.sha.stu:16}. The remainder of this section will be devoted to its proof.

\begin{theorem}\label{thm:combTypesAreTheSame}
  The combinatorics of  stable tropical del Pezzo cubics without Eckardt points in $\TPr^{44}$  obtained by the anticanonical and Cox embeddings agree.
\end{theorem}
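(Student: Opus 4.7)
The plan is to use the $\Wgp$-equivariant degree-three monomial map $\ovalpha$ from~\eqref{eq:alphaBar} to transport the polyhedral structure of the Cox tropicalization to the anticanonical one. Under the identification of a smooth cubic surface $X$ without Eckardt points with its Cox embedding in $\P^{26}/(\GG{7}/\Gm)$ and with its anticanonical embedding in $\P^{44}$, the map $\ovalpha$ realizes the second as the image of the first, up to the translation by $\trop(\underline{Q})$ described in~\autoref{rm:adaptedVsNonAdapted}. Since tropicalization commutes with monomial maps between tori, the induced tropical map should yield an isomorphism of polyhedral complexes between the two stable tropical surfaces.

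Concretely, I would tropicalize $\alpha$ from~\eqref{eq:alphaMap} by means of its $45\times 27$ exponent matrix $A$, of rank $21$, producing the $\R$-linear map $v\mapsto Av$ from $\R^{27}$ to $\R^{45}$. Because every row of $A$ has exactly three $1$-entries, one has $A\cdot \mathbf{1}_{27}=3\cdot\mathbf{1}_{45}$. More importantly, each of the 45 monomials defining $\alpha$ has the same $\Z^7$-degree, namely the anticanonical class $3e_0-\sum_{l=1}^{6} e_l$; this is a direct consequence of the degree formulas in~\eqref{eq:Z^7grading}. It follows that the rank-$7$ lattice $\Lambda\subset \Z^{27}$ encoding the $\GG{7}/\Gm$-action is sent by $A$ into $\Z\cdot\mathbf{1}_{45}$, so that $L_\Lambda:=\Lambda\otimes_\Z\R$ satisfies $A(L_\Lambda)\subseteq \rspanone$. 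Thus $A$ descends to a well-defined $\R$-linear map
\[
\trop(\ovalpha)\colon \R^{27}/L_\Lambda\longrightarrow \R^{45}/\rspanone.
\]

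The key step is to show that $\trop(\ovalpha)$ is injective. Since $\mathbf{1}_{45}\in \operatorname{Im}(A)$, a standard dimension count yields
\[
\dim A^{-1}(\rspanone)=\dim\ker A+\dim(\rspanone\cap \operatorname{Im}A)=(27-21)+1=7=\dim L_\Lambda,
\]
and combined with the inclusion $L_\Lambda\subseteq A^{-1}(\rspanone)$ just established, equality of dimensions forces $L_\Lambda=A^{-1}(\rspanone)$. Hence $\trop(\ovalpha)$ has trivial kernel. Because $\ovalpha$ identifies the Cox and anticanonical embeddings of every stable $X$ without Eckardt points, this injection carries the Cox tropical cubic bijectively onto $\Trop X\subset \TPr^{44}$ (after the uniform translation by $\trop(\underline{Q})$), and being $\R$-linear it preserves the polyhedral structure. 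As a cross-check, the induced bijection restricts on the boundary to the identification between the 27 tropicalized exceptional curves dictated by~\autoref{thm:Naruki} on one side and~\cite[Table~1]{ren.sha.stu:16} on the other, which together with~\autoref{thm:BoundaryDeterminesEverything} already implies that the interior structures must match. The main obstacle I anticipate lies not in the injectivity argument---which reduces to the tabulated rank computation for $A$---but in certifying that the polyhedral subdivisions arising independently from the two tropical ideals correspond cell-by-cell under this bijection. This final check should be carried out by comparing, for each of the 24 orbit representatives of maximal Naruki cones, the combinatorial data computed in~\autoref{sec:comb-types-tree} against the explicit tables in~\cite[Table~1]{ren.sha.stu:16}, a routine but mechanical verification best handled with the \sage\ scripts in the Supplementary material.
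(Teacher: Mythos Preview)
Your approach via the monomial map $\ovalpha$ and its tropicalization is exactly the paper's, and your dimension count establishing injectivity of $\trop(\ovalpha)$ on the open torus $\R^{27}/L_\Lambda\to\R^{45}/\rspanone$ is correct; it coincides with the argument for the big open cell in~\autoref{pr:injectivityNewEmbedding}.

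There is, however, a genuine gap: you treat only the interior. The Cox tropical surface meets the boundary of $\TPr^{26}/\ovLambda_{\R}$ along the 27 trees, and on a stratum such as $\{e_1=\infty\}$ the map $\trop(\ovalpha)$ is governed not by $A$ but by the submatrix $A'$ obtained by deleting the column of $\E{1}$ and the rows indexed by the five triangles containing $\E{1}$, with injectivity now taken modulo the \emph{projected} lattice $\Lambda'\subset\Z^{26}$. That $A'$ still has trivial kernel modulo $\Lambda'_{\R}$ does not follow from the rank-$21$ computation for $A$; it must be checked separately. The paper carries this out in~\autoref{pr:injectivityNewEmbedding}, using $\Wgp$-symmetry to reduce to two boundary-stratum types (a single curve and a node) and verifying each by a direct rank computation.

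Your anticipated ``main obstacle''---a cell-by-cell comparison against~\cite[Table~1]{ren.sha.stu:16}---is unnecessary. Since $\ovalpha$ restricts to an isomorphism of $X$ onto its anticanonical image, functoriality of tropicalization under monomial maps already gives $\trop(\ovalpha)(\Trop X_{\mathrm{Cox}})=\Trop X_{\mathrm{antican}}$ as sets. Once injectivity of $\trop(\ovalpha)$ is established on \emph{all} of $\Trop X_{\mathrm{Cox}}$ (interior and boundary), a linear bijection of sets transports the intrinsic polyhedral structure automatically; the paper concludes directly from~\autoref{pr:injectivityNewEmbedding} with no table comparison. Your ``cross-check'' via~\autoref{thm:BoundaryDeterminesEverything} could in principle be turned into an independent proof, but as written it is circular: matching the two boundary arrangements is part of what the theorem asserts.
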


\begin{proof} By~\autoref{rm:adaptedVsNonAdapted} it suffices to show this statement for the embedding induced by the marking $\ted$. The result follows by translating the classification of tropical cubics induced by  the Cox embeeding~\cite[Table 1]{ren.sha.stu:16} to the anticanonical embedding in $\P^{44}$ using the map $\ovalpha$ from~\eqref{eq:alphaBar}.  ~\autoref{pr:injectivityNewEmbedding} shows the combinatorics of both tropicalizations is the same. 
  \end{proof}

The explicit relation between the Cox and anticanonical embeddings of the universal cubic surface is governed by the map $\ovalpha$ from~\eqref{eq:alphaBar}. We are thus compelled to study its behavior under tropicalization.
We let $\ovLambda:=\Lambda/\zspan{\mathbf{1}}$ be the lattice inducing the action of $\GG{7}/\Gm$ on $\P^{26}$ obtained from~\eqref{eq:G7action}. This action allows us to consider the quotient space $\TPr^{26}/\ovLambda_{\R}$, where $\ovLambda_{\R}:=\ovLambda\otimes_{\Z}\R$.
By functoriality with respect to monomial maps, the tropicalization of $\ovalpha$ yields a linear map
\begin{equation}\label{eq:tropovalpha}
\trop(\ovalpha)\from \TPr^{26}/\ovLambda_{\R} \to \TPr^{44}
\end{equation}
having the same associated $45\times 27$-matrix $A$ as~\eqref{eq:alphaMap} did. The map $\trop(\ovalpha)$ is well defined since the preimage of $\zspan{\mathbf{1}}$ under $A$ is the lattice $\Lambda$.

Our next result  ensures that the combinatorics of $\Trop X\subset \TPr^{26}/\ovLambda_{\R}$ are preserved under $\trop(\ovalpha)$:
\begin{proposition}\label{pr:injectivityNewEmbedding}
  The tropical map $\trop(\ovalpha)$ from~\eqref{eq:tropovalpha}  is injective on each  $\Trop X \subset \TPr^{26}/\ovLambda_{\R}$.
\end{proposition}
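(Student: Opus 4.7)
The plan is to separate $\Trop X$ into its \emph{interior} locus (where all coordinates are finite) and its \emph{boundary} locus (where some coordinate equals $\infty$), and to prove injectivity of $\trop(\ovalpha)$ on each. Since $\trop(\ovalpha)$ is induced by the $45\times 27$ matrix $A$ and the text has already observed that $A^{-1}(\zspan{\mathbf{1}}) = \Lambda$, the induced linear map
\[
  \R^{27}/(\rspanone + \ovLambda_\R) \longrightarrow \R^{45}/\rspanone
\]
has trivial kernel. Interior injectivity is then immediate, and no interior point can share an image with a boundary point because their $\infty$-patterns in $\TPr^{44}$ necessarily differ.

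For the boundary, I would stratify $\Trop X$ by the set $S(v) = \{E\in \sed : v_E = \infty\}$. As discussed in~\autoref{sec:comb-types-tree}, the Schl\"afli intersection combinatorics of the 27 exceptional curves in the absence of Eckardt points forces $S(v)$ to be either a singleton $\{E\}$ or an anticanonical triple $\{E,E',E''\}$. The image $\trop(\ovalpha)(v)$ has $\infty$-pattern $I(v) = \{i \in \ted : i \cap S(v)\neq \emptyset\}$, and a short combinatorial check gives $|I(\{E\})|=5$ and $|I(t)|=13$, with distinct singletons (resp.\ distinct triples) differing in at least eight of these coordinates, using that any two exceptional curves lie together in at most one anticanonical triangle. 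Hence $S(v) = S(v')$ whenever $\trop(\ovalpha)(v) = \trop(\ovalpha)(v')$, reducing the remaining injectivity question to a statement within each stratum.

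The next step is injectivity inside a single stratum. For $|S(v)|=3$ the stratum of $\Trop X$ is a single tritangent node, so the claim is automatic. For $S(v) = \{E\}$ the stratum is the open part of the boundary tree $\Trop E$, and $\trop(\ovalpha)$ restricts to the linear map given by the submatrix $A_E$ of $A$ obtained by deleting the column indexed by $E$ and the five rows indexed by $I(\{E\})$. Using $\Wgp$-transitivity on $\sed$, it suffices to verify injectivity on $\Trop E$ for a single $E$. I would do so by combining the leaf-labelled description of $\Trop E$ in~\autoref{tab:treeLabeling} (anticanonical side) with the corresponding description in~\cite[Table~1]{ren.sha.stu:16} (Cox side): both produce the same labelled metric tree with 10 leaves, and $\trop(\ovalpha)$ is by construction the identification of the two. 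A short \sage~computation of the rank of $A_E$ modulo the restriction of $\ovLambda_\R$ provides an alternative, purely linear-algebraic certification.

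The main obstacle will be the last step, where one must justify that $A_E$ genuinely induces a length-preserving bijection between the two presentations of $\Trop E$, rather than folding the tree onto itself. The cleanest route is via~\autoref{thm:BoundaryDeterminesEverything}: since the stable tropical surface is determined by its boundary arrangement, and since the arrangement coincides in both embeddings after applying $\trop(\ovalpha)$, the linear map cannot identify distinct points of $\Trop E$ without contradicting this rigidity. Once injectivity is established, the combinatorial equivalence asserted in~\autoref{thm:combTypesAreTheSame} follows, since $\trop(\ovalpha)$ then carries the polyhedral structure on the Cox tropicalization bijectively onto the anticanonical one.
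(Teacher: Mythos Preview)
Your overall architecture matches the paper's: stratify by the $\infty$-pattern, handle the interior by the kernel computation $A^{-1}(\zspan{\mathbf{1}})=\Lambda$, and treat boundary strata up to $\Wgp$-symmetry. Two points deserve correction, though.

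First, your description of the boundary strata is off. In the absence of Eckardt points, a boundary point of $\Trop X\subset\TPr^{26}/\ovLambda_\R$ lies on at most \emph{two} of the 27 coordinate hyperplanes, not three: the deepest strata are the 135 nodes $E\cap E'$ with $E\cdot E'=1$, and these are pairs, not anticanonical triples. Three concurrent exceptional curves would be precisely an Eckardt configuration, which is excluded. Consequently the image of a node in $\TPr^{44}$ has nine $\infty$-coordinates (the five triangles through $E$ plus the five through $E'$, minus the shared one), as in \autoref{cor:BoundaryPoints}, not thirteen. Your case $|S(v)|=3$ is empty and the actual zero-dimensional case $|S(v)|=2$ is the one where injectivity is automatic.

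Second, for the one-dimensional strata $S(v)=\{E\}$ the paper does not appeal to the tree descriptions or to \autoref{thm:BoundaryDeterminesEverything}; it simply observes that the relevant submatrix $A'$ of $A$ (delete the $E$-column and the five rows of triangles through $E$) has rank $20$, and that the projection of $\Lambda$ to the remaining $26$ coordinates is a saturated rank-$7$ lattice $\Lambda'$ containing $\mathbf{1}$, so the induced map on $\R^{26}/\Lambda'_\R$ is injective. This is exactly your ``alternative, purely linear-algebraic certification,'' and it is the argument the paper actually gives. Your route through matching the labelled trees and invoking the boundary-determines-surface rigidity is more indirect and, as you note yourself, does not by itself rule out a folding of $\Trop E$; the rank check does.
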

\begin{proof}  The definition of the map $\ovalpha$ from~\eqref{eq:alphaBar}  is compatible with  the boundary structure on the source and target spaces by~\autoref{cor:BoundaryPoints} and~\autoref{lm:boundaryTX}. 
  This compatibility is preserved under tropicalization. In particular, the preimages of distinct  strata of $\TPr^{44}$ are disjoint strata of $\Trop X \subset  \TPr^{26}/\ovLambda_{\R}$.  Thus, it suffices to check injectivity on each strata of  $\Trop X$ arising from the stratification of $\TPr^{26}/\ovLambda_{\R}$.

  We start by discussing the big open cell $(\R^{27}/\rspanone)/\ovLambda_{\R}$. Injectivity follows from  the definition of the map $\alpha^{\#}$ between the coordinate rings in~\eqref{eq:alphaMap} because \[
  \ker(\trop(\alpha^{\#}))\cap (\R^{27}/\rspanone) =   \ker(A)\cap (\R^{27}/\rspanone) = \ovLambda_{\R}.
  \]
  The last identity is checked by a simple matrix multiplication in the Supplementary material.

  In the stable case, ~\autoref{lm:IntersectionBoundaryLines} implies that the recession fan of $\Trop X$ in both $\TPr^{26}/\ovLambda_{\R}$ and $\TPr^{44}$ is the cone over the Schl\"afli graph.  Hence, points in the boundary of $\Trop X$  in $\TPr^{26}/\ovLambda_{\R}$ lie in at most two hyperplanes at infinity.  There are two types of boundary strata up to $\Wgp$-symmetry, each determined by the number of $\infty$-coordinates. We choose our two representatives as those associated to the curve $\E{1}$ and the pair $(\E{1}, \F{12})$, respectively. Keeping the notation from the proof of~\autoref{thm:anticanL}, they are  defined inside $\Trop X \subset \TPr^{26}/\ovLambda_{\R}$ by the conditions $e_{1}=\infty$ and $e_{1}=f_{12}=\infty$, respectively.   The latter consists of a single point (the tropicalization of the node $\E{1}\cap \F{12}$), so injectivity follows automatically. 

  On the relative interior of the strata of $\Trop X$ defined by $e_1=\infty$, a point is defined by the remaining 26 coordinates. Its image under $\trop(\ovalpha)$ will have $\infty$ coordinates precisely at the five anticanonical triangles $x_{1i}$ for $i=2,\ldots, 6$. Thus, the image on the strata will be completely determined by the  $35\times 26$ submatrix $A'$ of $A$ associated to the remaining coordinates. A simple calculation available in the Supplementary material shows that the projection of the lattice $\Lambda$ to $\Z^{26}$ is a saturated rank-7 lattice $\Lambda'$ containing the all-ones vector.  Furthermore,  $A'$ has rank 20 and it contains the $\R$-span of $\Lambda'$. We conclude that $A'$ is injective on the quotient space $\R^{26}/\Lambda'_{\R}$. The same holds for  $\trop(\ovalpha)$ and the boundary strata of $\Trop X$ induced by $e_1$, as we wanted to show.
  \end{proof}

\appendix
\section{Computations}
\label{sec:appendix1}

\subsection{Coble covariants}\label{sec:coble-covariants}
\textcolor{blue}{Tables}~\ref{tab:roots},~\ref{tab:yoshida}, and~\ref{tab:crosses} list our choice of positive roots, Yoshida functions, and Cross functions, respectively.

\begin{table}[htb]
  \centering
    \begin{tabular}{| c| c| c | c | c | c | }
    \hline
    Roots $r_0$ to $r_5$ & Roots $r_6$ to $r_{11}$ & Roots $r_{12}$ to $r_{17}$ & Roots $r_{18}$ to $r_{23}$ & Roots $r_{24}$-$r_{29}$ & Roots $r_{30}$ to $r_{35}$\\
    \hline
    $    - d_{1} + d_{2} $&$ - d_{2} + d_{4} $&$ - d_{1} + d_{4} $&$ - d_{2} + d_{6} $&$ - d_{1} + d_{6} $&$ d_{2} + d_{4} + d_{6} $\\
    $    d_{1} + d_{2} + d_{3} $&$ d_{1} + d_{2} + d_{4} $&$ d_{1} + d_{2} + d_{5} $&$ - d_{1} + d_{5} $&$ d_{2} + d_{3} + d_{6} $&$ d_{2} + d_{5} + d_{6} $\\
    $    - d_{2} + d_{3} $&$ - d_{4} + d_{6} $&$ - d_{3} + d_{6} $&$ d_{1} + d_{3} + d_{5} $&$ d_{2} + d_{4} + d_{5} $&$ d_{3} + d_{4} + d_{6} $\\
    $    - d_{3} + d_{4} $&$ - d_{1} + d_{3} $&$ d_{1} + d_{3} + d_{4} $&$ d_{1} + d_{4} + d_{5} $&$ d_{1} + d_{4} + d_{6} $&$ d_{3} + d_{5} + d_{6} $\\
    $    - d_{4} + d_{5} $&$ - d_{3} + d_{5} $&$ d_{2} + d_{3} + d_{4} $&$ d_{2} + d_{3} + d_{5} $&$ d_{1} + d_{5} + d_{6} $&$ d_{4} + d_{5} + d_{6} $\\
    $    - d_{5} + d_{6} $&$ - d_{2} + d_{5} $&$ d_{1} + d_{2} + d_{6} $&$ d_{1} + d_{3} + d_{6} $&$ d_{3} + d_{4} + d_{5} $&$ d_{1} + \dots + d_{6}$\\
    \hline
  \end{tabular}
   \caption{A choice of 36 positive roots of $\E6$.}
  \label{tab:roots}
\end{table}

\begin{table}[htb]
  \centering
  \begin{tabular}{|c| c| c| c|}
    \hline
  $k\!={0}$ to ${9}$ &  ${10}$ to ${19}$ &  ${20}$ to ${29}$ &   ${30}$ to ${39}$ \\
    \hline
    $r_{10} r_{11} r_{12} r_{2} r_{22} r_{24} r_{27} r_{35} r_{8} $&$ r_{13} r_{16} r_{19} r_{2} r_{20} r_{25} r_{27} r_{34} r_{8} $&$ r_{0} r_{10} r_{12} r_{14} r_{33} r_{35} r_{5} r_{6} r_{7} $&$ r_{0} r_{13} r_{14} r_{23} r_{25} r_{29} r_{34} r_{4} r_{7} $\\
    $r_{1} r_{18} r_{21} r_{23} r_{29} r_{30} r_{31} r_{4} r_{9} $&$ r_{1} r_{12} r_{16} r_{2} r_{21} r_{27} r_{31} r_{33} r_{5} $&$ r_{1} r_{15} r_{26} r_{28} r_{30} r_{33} r_{5} r_{6} r_{9} $&$ r_{11} r_{14} r_{23} r_{24} r_{26} r_{35} r_{4} r_{6} r_{9} $\\
    $r_{15} r_{17} r_{18} r_{20} r_{25} r_{26} r_{34} r_{4} r_{9} $&$ r_{11} r_{13} r_{15} r_{22} r_{23} r_{30} r_{34} r_{8} r_{9} $&$ r_{0} r_{1} r_{2} r_{34} r_{35} r_{4} r_{5} r_{8} r_{9} $&$ r_{10} r_{12} r_{15} r_{17} r_{18} r_{21} r_{22} r_{30} r_{33} $\\
    $r_{16} r_{19} r_{2} r_{24} r_{28} r_{3} r_{35} r_{5} r_{6} $&$ r_{14} r_{19} r_{20} r_{25} r_{26} r_{28} r_{32} r_{6} r_{7} $&$ r_{0} r_{10} r_{17} r_{20} r_{22} r_{32} r_{34} r_{7} r_{8} $&$ r_{11} r_{15} r_{17} r_{22} r_{24} r_{26} r_{28} r_{3} r_{32} $\\
    $r_{1} r_{19} r_{2} r_{21} r_{22} r_{28} r_{30} r_{32} r_{8} $&$ r_{11} r_{12} r_{13} r_{14} r_{15} r_{25} r_{26} r_{27} r_{33} $&$ r_{11} r_{12} r_{14} r_{21} r_{22} r_{23} r_{31} r_{32} r_{7} $&$ r_{10} r_{16} r_{17} r_{20} r_{24} r_{26} r_{27} r_{33} r_{6} $\\
    $r_{12} r_{15} r_{2} r_{22} r_{25} r_{28} r_{34} r_{5} r_{7} $&$ r_{13} r_{14} r_{16} r_{19} r_{21} r_{23} r_{30} r_{33} r_{6} $&$ r_{12} r_{14} r_{18} r_{19} r_{2} r_{21} r_{25} r_{35} r_{4} $&$ r_{0} r_{10} r_{17} r_{18} r_{24} r_{29} r_{3} r_{35} r_{4} $\\
    $r_{0} r_{13} r_{15} r_{16} r_{17} r_{3} r_{33} r_{34} r_{5} $&$ r_{16} r_{17} r_{2} r_{21} r_{22} r_{23} r_{24} r_{34} r_{4} $&$ r_{0} r_{1} r_{10} r_{13} r_{27} r_{29} r_{30} r_{33} r_{8} $&$ r_{0} r_{11} r_{13} r_{14} r_{19} r_{3} r_{32} r_{35} r_{8} $\\
    $r_{10} r_{18} r_{19} r_{20} r_{30} r_{35} r_{6} r_{8} r_{9} $&$ r_{11} r_{12} r_{15} r_{18} r_{3} r_{31} r_{35} r_{5} r_{9} $&$ r_{10} r_{22} r_{23} r_{24} r_{28} r_{29} r_{30} r_{6} r_{7} $&$ r_{0} r_{1} r_{28} r_{29} r_{3} r_{31} r_{32} r_{5} r_{7} $\\
    $r_{16} r_{20} r_{23} r_{31} r_{34} r_{5} r_{6} r_{7} r_{9} $&$ r_{1} r_{2} r_{24} r_{25} r_{26} r_{27} r_{28} r_{29} r_{4} $&$ r_{13} r_{15} r_{18} r_{19} r_{25} r_{28} r_{29} r_{3} r_{30} $&$ r_{0} r_{1} r_{14} r_{17} r_{21} r_{26} r_{32} r_{33} r_{4}$\\
    \hline
\end{tabular}
   \caption[Yoshida functions]{Yoshida functions $\Yos{k}$ for $0 \leq k \leq 39$ expressed as products of roots. The order of the nine factors on each expression matches the ordering given by \sage.}
  \label{tab:yoshida}
\end{table}

\begin{table}[htb]
  \centering
  {\scriptsize
    \begin{tabular}{|c|c|c|c|c|c|c|c|c|}
\hline
  $k\!={0}$ to ${14}$ &  ${15}$ to ${29}$&  ${30}$ to ${44}$ &  ${45}$ to ${59}$ &  ${60}$ to ${74}$ & 75 to 89 & 90 to 104 & 105 to 119 & 120 to 134\\
  \hline
$ -\Yos{12} + \Yos{36} $&$ \Yos{17} - \Yos{32} $&$ -\Yos{10} + \Yos{4} $&$ -\Yos{1} + \Yos{38} $&$ -\Yos{0} - \Yos{35} $&$ \Yos{28} - \Yos{32} $&$ \Yos{1} + \Yos{9} $&$ \Yos{18} - \Yos{35} $&$ \Yos{31} - \Yos{36} $\\
$ -\Yos{2} + \Yos{35} $&$ -\Yos{27} - \Yos{31} $&$ -\Yos{34} - \Yos{38} $&$ -\Yos{34} + \Yos{7} $&$ -\Yos{10} - \Yos{30} $&$ -\Yos{23} + \Yos{37} $&$ -\Yos{31} + \Yos{34} $&$ -\Yos{36} + \Yos{4} $&$ \Yos{1} - \Yos{26} $\\
$ \Yos{20} - \Yos{8} $&$ -\Yos{39} + \Yos{4} $&$ -\Yos{19} + \Yos{37} $&$ \Yos{17} - \Yos{33} $&$ \Yos{37} - \Yos{39} $&$ -\Yos{10} + \Yos{11} $&$ -\Yos{22} + \Yos{23} $&$ -\Yos{17} + \Yos{5} $&$ \Yos{34} - \Yos{9} $\\
$ \Yos{30} - \Yos{36} $&$ -\Yos{12} - \Yos{29} $&$ \Yos{2} - \Yos{30} $&$ \Yos{18} - \Yos{19} $&$ \Yos{12} - \Yos{8} $&$ \Yos{2} - \Yos{25} $&$ -\Yos{3} + \Yos{33} $&$ -\Yos{2} + \Yos{31} $&$ -\Yos{31} - \Yos{39} $\\
$ \Yos{1} - \Yos{17} $&$ -\Yos{21} + \Yos{37} $&$ \Yos{15} - \Yos{21} $&$ -\Yos{24} + \Yos{4} $&$ \Yos{28} + \Yos{3} $&$ -\Yos{0} + \Yos{12} $&$ \Yos{14} + \Yos{20} $&$ -\Yos{23} - \Yos{27} $&$ -\Yos{25} - \Yos{4} $\\
$ \Yos{27} - \Yos{33} $&$ -\Yos{19} - \Yos{7} $&$ \Yos{24} - \Yos{39} $&$ \Yos{21} - \Yos{8} $&$ -\Yos{11} + \Yos{14} $&$ -\Yos{3} + \Yos{8} $&$ \Yos{39} + \Yos{7} $&$ \Yos{10} - \Yos{15} $&$ \Yos{20} - \Yos{23} $\\
$ -\Yos{21} - \Yos{32} $&$ \Yos{27} + \Yos{35} $&$ -\Yos{26} + \Yos{37} $&$ \Yos{7} - \Yos{8} $&$ \Yos{23} - \Yos{38} $&$ -\Yos{28} - \Yos{5} $&$ \Yos{33} - \Yos{5} $&$ -\Yos{1} + \Yos{19} $&$ -\Yos{0} - \Yos{27} $\\
$ -\Yos{39} - \Yos{9} $&$ \Yos{4} + \Yos{9} $&$ -\Yos{34} - \Yos{8} $&$ -\Yos{26} + \Yos{30} $&$ \Yos{31} + \Yos{8} $&$ \Yos{1} - \Yos{24} $&$ -\Yos{22} + \Yos{35} $&$ \Yos{15} - \Yos{20} $&$ -\Yos{15} + \Yos{36} $\\
$ -\Yos{16} + \Yos{18} $&$ \Yos{5} + \Yos{6} $&$ -\Yos{12} - \Yos{33} $&$ -\Yos{11} + \Yos{25} $&$ \Yos{14} - \Yos{19} $&$ -\Yos{23} + \Yos{26} $&$ -\Yos{8} + \Yos{9} $&$ \Yos{16} + \Yos{5} $&$ \Yos{1} - \Yos{39} $\\
$ -\Yos{29} + \Yos{9} $&$ -\Yos{24} + \Yos{30} $&$ -\Yos{18} - \Yos{26} $&$ \Yos{18} - \Yos{39} $&$ -\Yos{17} + \Yos{36} $&$ -\Yos{16} - \Yos{35} $&$ \Yos{29} + \Yos{39} $&$ \Yos{15} - \Yos{24} $&$ \Yos{12} - \Yos{15} $\\
$ \Yos{12} - \Yos{4} $&$ \Yos{26} - \Yos{39} $&$ -\Yos{16} - \Yos{23} $&$ -\Yos{3} - \Yos{36} $&$ -\Yos{11} + \Yos{38} $&$ -\Yos{4} - \Yos{7} $&$ \Yos{23} - \Yos{6} $&$ \Yos{0} - \Yos{26} $&$ \Yos{22} - \Yos{3} $\\
$ \Yos{25} + \Yos{30} $&$ -\Yos{17} - \Yos{28} $&$ \Yos{13} - \Yos{15} $&$ -\Yos{12} + \Yos{16} $&$ -\Yos{13} + \Yos{19} $&$ \Yos{19} - \Yos{24} $&$ \Yos{13} - \Yos{3} $&$ \Yos{3} - \Yos{37} $&$ -\Yos{13} + \Yos{39} $\\
$ \Yos{35} - \Yos{7} $&$ \Yos{12} - \Yos{24} $&$ \Yos{14} - \Yos{31} $&$ -\Yos{12} - \Yos{27} $&$ \Yos{13} - \Yos{28} $&$ -\Yos{13} - \Yos{30} $&$ \Yos{14} - \Yos{39} $&$ -\Yos{15} + \Yos{6} $&$ \Yos{22} - \Yos{7} $\\
$ -\Yos{37} - \Yos{4} $&$ -\Yos{19} + \Yos{2} $&$ \Yos{10} - \Yos{16} $&$ -\Yos{14} - \Yos{18} $&$ \Yos{1} + \Yos{8} $&$ \Yos{22} - \Yos{39} $&$ \Yos{22} + \Yos{6} $&$ -\Yos{13} + \Yos{37} $&$ -\Yos{34} - \Yos{6} $\\
$ -\Yos{10} - \Yos{6} $&$ \Yos{11} - \Yos{21} $&$ \Yos{1} + \Yos{28} $&$ -\Yos{17} + \Yos{3} $&$ \Yos{38} + \Yos{6} $&$ -\Yos{29} - \Yos{3} $&$ \Yos{15} - \Yos{4} $&$ -\Yos{25} - \Yos{35} $&$ -\Yos{13} + \Yos{5} $\\
\hline
\end{tabular}
   }
  \caption{Cross functions $\Cross{k}$ for $0 \leq k \leq 134$ in terms of Yoshida functions. We list one out of four possible linear binomial expressions giving each Cross function. The remaining ones are available in the Supplementary material. Each consecutive triple corresponds to one anticanonical triangle, ordered by $x_{31}$, $x_{45}$, $y_{162345}$, $x_{12}$, $y_{142635}$, $x_{53}$, $x_{41}$, $x_{24}$, $x_{51}$, $x_{43}$, $y_{152346}$, $x_{16}$, $x_{63}$, $y_{152436}$, $x_{14}$, $x_{61}$, $y_{132456}$, $x_{65}$, $y_{152634}$, $y_{123456}$, $y_{162435}$, $x_{26}$, $x_{56}$, $y_{162534}$, $x_{46}$, $x_{54}$, $x_{13}$, $y_{123546}$, $x_{32}$, $x_{52}$, $x_{35}$, $x_{25}$, $y_{123645}$, $x_{36}$, $x_{34}$, $x_{23}$, $x_{62}$, $x_{15}$, $x_{21}$, $y_{132645}$, $y_{142536}$, $x_{64}$, $x_{42}$, $y_{142356}$ and $y_{132546}$.
}
  \label{tab:crosses}
\end{table}
\normalsize
\subsection{$\mathbf{\Wgp}$ action}
They Weyl group $\Wgp$ acts on the roots $\pm r_0, \dots,\pm r_{35}$ in the standard way.
The action on the roots induces an action an action on the Yoshida functions $\Yos{0}, \dots, \Yos{39}$, and on the Cross functions $\Cross{0}, \dots, \Cross{134}$.

\begin{table}[H]  \centering
  \begin{tabular}{| c | c | c | c | c | c |}
  \hline
 \!{gen.}\normalsize\!&  \Large{$\sigma_1$}\normalsize & \Large{$\sigma_2$}\normalsize & \Large{$\sigma_3$}\normalsize & \Large{$\sigma_4$}\normalsize & \Large{$\sigma_5$}\normalsize \\\hline
 $\Sn{6}$ & $(1\, 2)$ & $(2\, 3)$ & $(3\,4)$ &\!$(4\,5)$\!& $(5\,6)$\\\hline
  \!{signs}\normalsize\!&  $Y_{142536} \leftrightarrow -Y_{152436}$ &                                 $Y_{162534} \leftrightarrow -Y_{162435}$ & $Y_{142356} \leftrightarrow -Y_{142536}$ & \!None\! & $Y_{142536} \leftrightarrow -Y_{142635}$
  \\
& &     $Y_{132645} \leftrightarrow -Y_{123645}$ &
$Y_{132456} \leftrightarrow -Y_{142356}$ & &            $Y_{162534} \leftrightarrow -Y_{152634}$\\   & & & & & $Y_{123546} \leftrightarrow -Y_{123546}$\\\hline   
\end{tabular}
\smallskip

\begin{tabular}{| c | c | c |}
  \hline
  \!gen.\!\!& action on $\fh_6^* = \langle  d_1, \dots, d_6 \rangle$ & action on the scaled anticanonical coordinates from $T$ \\\hline
  \Large{$\sigma_6$}\normalsize &  \!\!\!\!  $\small{\begin{pmatrix}
    1 & 0 & 0 & 1/3 & 1/3 & 1/3\\
    0 & 1 & 0 & 1/3 & 1/3 & 1/3 \\
    0 & 0 & 1 & 1/3 & 1/3 & 1/3 \\
    0 & 0 & 0 & 1/3 & -2/3 & -2/3 \\
    0 & 0 & 0 & -2/3 & 1/3 & -2/3\\
    0 & 0 & 0 & -2/3 & -2/3 & 1/3
  \end{pmatrix}}$\normalsize \!\!&
\!\!\!\!\!\!    $\begin{array}{lll} X_{12} \leftrightarrow -X_{13} ;&
                X_{14} \leftrightarrow X_{14} ;&
               X_{15} \leftrightarrow X_{15}  ; \\
               X_{16} \leftrightarrow X_{16}  ; &
               X_{21} \leftrightarrow -X_{23}  ; &
               X_{24} \leftrightarrow X_{24}  ; \\
               X_{25} \leftrightarrow X_{25}  ; &
               X_{26} \leftrightarrow X_{26}  ; &
               X_{31} \leftrightarrow -X_{32}  ; \\
               X_{34} \leftrightarrow X_{34}  ; &
               X_{35} \leftrightarrow X_{35}  ; &
               X_{36} \leftrightarrow X_{36}  ; \\
               X_{41} \leftrightarrow Y_{142356}  ; &
               X_{42} \leftrightarrow Y_{132456}  ; &
               X_{43} \leftrightarrow Y_{123456}  ;\\
               X_{45} \leftrightarrow -X_{65}  ; &
               X_{46} \leftrightarrow -X_{56}  ; &
               X_{51} \leftrightarrow Y_{152346} ;\\
               X_{52} \leftrightarrow Y_{132546}  ; &
               X_{53} \leftrightarrow Y_{123546}  ; &
               X_{54} \leftrightarrow -X_{64}   ; \\
               X_{61} \leftrightarrow Y_{162345}  ; &
               X_{62} \leftrightarrow Y_{132645}  ;& 
               X_{63} \leftrightarrow -Y_{123645}  ; \\
                Y_{142536} \leftrightarrow Y_{142536} ; &
                Y_{142635} \leftrightarrow Y_{142635} ; &
                Y_{152436} \leftrightarrow Y_{152436} ;\\ 
                Y_{152634} \leftrightarrow Y_{152634}  ; &
                Y_{162435} \leftrightarrow Y_{162435}; &
                Y_{162534} \leftrightarrow Y_{162534} . 
  \end{array}$\!\!\!\!
                                                                                              \\\hline
            \end{tabular}

   \caption{Action of $\Wgp$ via involutions. Each $\sigma_i$ with $i=1,\ldots, 5$ acts on the $d_i$'s by permuting subscripts and on the scaled anticanonical coordinates from the set $T$ by signed subscript permutations. The negative signs are listed. The action of $\sigma_6$ on the roots is obtained from the action on $\langle  d_1, \dots, d_6 \rangle$ by left multiplication by the matrix on the table. Its action on  the variables in $T$ is linear and determined by signed correspondences in the table.
  \label{tab:action}}
\end{table}

The identification of $\EGp{6}$ with the orthogonal complement of the canonical divisor in the Picard lattice of the cubic surface determines the action of $\Wgp$ on its markings.
We fix the correspondence that takes $E_i - E_j$ to $d_i - d_j$.
This yields an action of $\Wgp$ on the set $\{E_i, F_{ij}, G_j \}_{ij}$ of exceptional curves, and likewise, on the set $\{E_i F_{ij} G_j \}_{ij} \cup \{F_{ij}F_{kl}F_{mn}\}_{i,j,k,l,m,n}$ of anticanonical triangles.
The later induces a \emph{signed} action  on the variables $\{X_{ij}, Y_{ijklmn}\}$ of the universal anticanonical ring of~\autoref{thm:anticanK}.
The sign choice is not canonical (see the discussion preceeding~\eqref{eq:YoshidaAdaptedCoords}.)

\autoref{tab:action} describes the action of a set of generators of $\Wgp$ on the roots, the set of exceptional curves, and the set of variables of the universal anticanonical ring, which fixes our sign convention.
Our  6 generators $\{\sigma_0,\ldots, \sigma_5\}$ depend on a choice of labeling of the $\E6$ Dynkin diagram as in~\autoref{fig:E6label}. The  vertex $i$ induces a simple root with label $r_i$, and the generator $\sigma_i$ corresponds to  the reflection about the hyperplane perpendicular to the root $r_i$.

  \bibliographystyle{abbrv}
 \def\cprime{$'$}

 \label{sec:biblio}

  \bigskip
\noindent
\textbf{\small{Authors' addresses:}}
\smallskip
\

\noindent
\small{M.A.\ Cueto,  Mathematics Department, The Ohio State University, 231 W 18th Ave, Columbus, OH 43210, USA.
\\
\noindent \emph{Email address:} \url{cueto.5@osu.edu}}
\vspace{2ex}

\noindent
\small{A. Deopurkar, Mathematical Sciences Institute, Australian National University, John Dedman Building 27, Union Lane, Canberra  ACT  2601, Australia. 
\\
\noindent \emph{Email address:} \url{anand.deopurkar@anu.edu.au}

\end{document}